\DeclareMathAlphabet{\pazocal}{OMS}{zplm}{m}{n}
\newtheorem{theorem}{Theorem}[section]
\newtheorem{lemma}[theorem]{Lemma}
\newtheorem{proposition}[theorem]{Proposition}
\newtheorem{corollary}[theorem]{Corollary}
\theoremstyle{definition}
\newtheorem{example}[theorem]{Example}
\newtheorem{examples}[theorem]{Examples}
\theoremstyle{remark}
\newtheorem{remark}[theorem]{Remark}
\numberwithin{equation}{section}
\numberwithin{equation}{section}
\newcommand{\set}[1]{\left\{#1\right\}} %write sets
\newcommand{\bj}[1]{\left(#1\right)}
\newcommand{\qtq}[1]{\quad \text{#1}\quad }%\quad\text{and}\quad
\providecommand{\abs}[1]{\left\lvert#1\right\rvert}
\newcommand{\N}{\ensuremath{\mathbb{N}}} %postive integers
\newcommand{\A}{\pazocal{A}} %sets of numbers
\newcommand{\U}{\pazocal{U}} %sets of numbers
\newcommand{\V}{\pazocal{V}} %sets of numbers
\newcommand{\C}{\pazocal{C}} %sets of numbers
\newcommand{\f}{\infty}
\def\@tocline#1#2#3#4#5#6#7{\relax
	\ifnum #1>\c@tocdepth
	\else
	\par \addpenalty\@secpenalty\addvspace{#2}
	\begingroup \hyphenpenalty\@M
	\@ifempty{#4}{
		\@tempdima\csname r@tocindent\number#1\endcsname\relax
	}{
		\@tempdima#4\relax
	}
	\parindent\z@ \leftskip#3\relax \advance\leftskip\@tempdima\relax
	\rightskip\@pnumwidth plus4em \parfillskip-\@pnumwidth
	#5\leavevmode\hskip-\@tempdima
	\ifcase #1
	\or\or \hskip 1em \or \hskip 2em \else \hskip 3em \fi
	#6\nobreak\relax
	\dotfill\hbox to\@pnumwidth{\@tocpagenum{#7}}\par
	\nobreak
	\endgroup
	\fi}
\begin{document}
		
\title{Topology of univoque sets in double-base expansions} 
		\vspace{-5em}
\author[V. Komornik]{Vilmos Komornik}
\address{Département de mathématique,
	Université de Strasbourg,
	7 rue René Descartes,
	67084 Strasbourg Cedex, France}
\email{vilmos.komornik@math.unistra.fr}

\author[Y. Li]{Yichang Li}
\address{\footnotesize{School of Mathematical Sciences, Shenzhen University, Shenzhen 518060, People's Republic of China.}}
\email{2200201006@email.szu.edu.cn} 

\author[Y. Zou]{Yuru Zou}
\address{School of Mathematical Sciences,
	Shenzhen University,
	Shenzhen 518060,
	People's Republic of China}
\email{yuruzou@szu.edu.cn}

\begin{abstract}
Given two real numbers $q_0,q_1>1$ satisfying $q_0+q_1\geq q_0q_1$ and two real numbers $d_0\ne d_1$, by a {double-base expansion}  of a real number $x$ we mean a sequence $(i_k)\in \{0,1\}^{\infty}$ such that
\begin{equation*}
x=\sum_{k=1}^{\infty}\frac{d_{i_k}}{q_{{i_1}}q_{{i_2}}\cdots q_{{i_k}}}.
\end{equation*}
We denote by  $\U_{{q_0,q_1}}$ the set of numbers $x$ having a unique expansion.
The topological properties of  $\U_{{q_0,q_1}}$ have been investigated in the equal-base case  $q_0=q_1$ for a long time. 
We extend this research to the case  $q_0\neq q_1$. 
While many results remain valid, a great number of new phenomena  appear due to the increased complexity of double-base expansions.
\end{abstract}
		\maketitle
%\begin{keyword}
%double-base expansion, unique expansion, quasi-greedy expansion, quasi-lazy expansion, Cantor set, topology, lexicographic order, cardinality

%\MSC 11A63 \sep  11B83, 37B10, 68R15
%%28A78, 37B40, 
%\end{keyword}

\section{Introduction}\label{s1}
The study of \emph{non-integer base expansions}  started with the pioneering papers of Rényi \cite{Ren1957} and Parry \cite{Par1960}. Since then hundreds of papers have been devoted to the study of expansions of real numbers of the form 
\begin{equation}\label{11}
x=\pi_{q,D}((d_i)):=\mathop{\sum}\limits_{i=1}^\f\dfrac{d_i}{q^i},
\end{equation}
where $q>1$ is a given real number, and $(d_i)$ is a sequence of \emph{digits}, belonging to a finite \emph{alphabet} $D$ of real numbers.
Many remarkable results have been discovered, revealing deep connections to various fields of mathematics, including number theory \cite{Sid2009, KomLor2007}, topology \cite{DeVKomLor2016, DeVKomLor2022}, ergodic theory \cite{KalKonLanLi2020}, Diophantine approximation and dynamical systems \cite{ChaCisDaj2023}. 
	
Concerning the original alphabet $\{0,1\}$, Erdős et al. \cite{ErdHorJoo1991, ErdJoo1992} discovered in the 1990's that for each $k \in \mathbb{N} \cup \{\aleph_0\} \cup \{2^{\aleph_0} \}$ there exist infinitely many bases $q \in (1,2)$ such that $x=1$ has exactly $k$ different expansions of the form \eqref{11}. 
Subsequently the unique expansions have been intensively studied, and a surprisingly rich theory has emerged \cite{ErdJooKom1990, KomLor1998, GleSid2001, Ped2005, KomLor2007, DeVKom2009, KomLaiPed2011, DeVKom2011, Kom2012, Bak2014, DeVKomLor2016, BakSte2017, KomKonLi2017, All2017, AllKon2019, AlcBarBakKon2019, Ste2020, ZouLiLuKom2021, DeVKomLor2022}.
An essentially complete theory was presented in the papers \cite{DeVKomLor2016, KomKonLi2017, AllKon2019, DeVKomLor2022};  it was also shown that the theory remains valid for the  more general alphabets $\set{0,1,\ldots,M}$, where $M$ is an arbitrary positive integer.
The paper \cite{DeVKomLor2016} was devoted to the study of bases in which the number $1$ has a unique expansion.
Based on these results, the papers \cite{KomKonLi2017, AllKon2019, DeVKomLor2022} were devoted to the sets of numbers having unique expansions in a fixed base.

In the past few years the expansions \eqref{11} have been generalized by Neuh\"auserer \cite{Neu2021}, Li \cite{Li2021} and in \cite{KomLuZou2022} to \emph{multiple-base expansions} of the form
\begin{equation}\label{12}
x=\pi_S((i_k)):=\sum_{k=1}^{\infty}\frac{d_{i_k}}{q_{{i_1}}q_{{i_2}}\cdots q_{{i_k}}},\quad  (i_k)\in \{0,1,\ldots, M\}^\infty,	
\end{equation}
where $S=\set{(d_0,q_0), (d_1,q_1),\ldots, (d_M,q_M)}$ is a given finite \emph{digit-base system} of pairs of real numbers satisfying $q_0, q_1,\ldots, q_M>1$.
Although these generalized expansions have a much higher complexity
(see, e.g., \cite{KomSteZou2024}), most theorems of \cite{DeVKomLor2016} could be generalized in \cite{HuBarZou2024}
to all \emph{double-base expansions}, i.e., to expansions of the form \eqref{12} with $M=1$.
A lot of new phenomena have appeared that do not occur in the equal-base case $q_0=\cdots=q_M$.
The purpose of this paper is to similarly extend many theorems of \cite{DeVKomLor2022} to this more general framework.

Before stating the main results of this paper, let us recall the theorems of \cite{DeVKomLor2022} that we are going to generalize.
We need some definitions and notations.
Unless stated otherwise, in this paper by a \emph{sequence} we always mean an element of $\set{0,1}^{\infty}$, i.e., a sequence of zeros and ones.
We systematically use the notations of symbolic dynamics for sequences $(x_i)$ like $x_1x_2\cdots ,$ $0^{\infty}$, $1^{\infty}$, $(10)^{\infty}$ or $(10)^k1^{\infty}$.

We  systematically use the lexicographical order between sequences: we write $(x_i)\prec (y_i)$ or $(y_i)\succ (x_i)$ if there exists an index $n\in \mathbb{N}$ such that $x_i=y_i$ for all $i<n$, and $x_n<y_n$.
Furthermore, we write $(x_i)\preceq(y_i)$ or $(y_i)\succeq(x_i)$ if
$(x_i)\prec (y_i)$ or if $(x_i)=(y_i)$. 
The \emph{reflection}  of a sequence $(x_i)$ is defined by the formula $\overline{(x_i)}:=(1-x_i)$, i.e., we exchange the digits $0$ and $1$.
We denote by $\sigma$ the \emph{right shift} of sequences, so that 
\begin{equation*}
\sigma^n(x_1x_2\cdots)=x_{n+1}x_{n+2}\cdots \qtq{for every integer}n\ge 0.
\end{equation*}

We  also consider the lexicographical order between finite words of digits of the same length, and the reflection of a word is defined similarly to the reflection of sequences.

A sequence $(x_i)$ is called 
\begin{itemize}
\item 	\emph{finite} if it ends with $10^{\infty}$, and \emph{infinite} otherwise;
\item  \emph{co-finite} if its reflection is finite, i.e., if it ends with $01^{\infty}$, and \emph{co-infinite} otherwise;
\item  \emph{doubly infinite} if it is both infinite and co-infinite, i.e., if it contains infinitely many zero digits and infinitely many one digits. 
\end{itemize}

\begin{remark}\label{r11}
There are only countably many finite or co-finite sequences, so that ``most'' sequences are doubly infinite.
\end{remark}

Now we consider the expansions of the form
\begin{equation}\label{13}
x=\pi_{q}((d_i)):=\mathop{\sum}\limits_{i=1}^\f\dfrac{d_i}{q^i}, \quad (d_i)\in\set{0,1}^{\infty}
\end{equation}
with a given base $q>1$ on the alphabet $\{0,1\}$.
Observe that if $x$ has an expansion, then $x\in J_q:=[0,\frac{1}{q-1}]$.
The converse is not true in general:
\begin{equation*}
\set{\pi_{q}((d_i))\ :\ (d_i)\in\set{0,1}^{\infty}}=J_q
\Longleftrightarrow q\in (1,2].
\end{equation*}
Moreover, if $q\in (1,2]$, then every $x\in J_q:=[0,\frac{1}{q-1}]$ has a lexicographically largest expansion $b(x,q)=(b_i(x,q))$, and a lexicographically largest infinite expansion $a(x,q)=(a_i(x,q))$, called the \emph{greedy} and \emph{quasi-greedy} expansions of $x$ in base $q$, respectively.

Following \cite{KomLor2007} and \cite{DeVKomLor2016} we introduce the  sets
\begin{align*}
&\U:=\set{q\in (1,2]\ :\ 1\text{ has a unique expansion in base }q},\\
&\V:=\set{q\in (1,2]\ :\ 1\text{ has a unique doubly infinite expansion in base }q}.
\intertext{Then the topological closure of $\U$ has an analogous characterization:}
&\overline{\U}=\set{q\in (1,2]\ :\ 1\text{ has a unique  infinite expansion in base }q}.
\end{align*}

We recall that
\begin{equation*}
\U\subsetneqq\overline{\U}\subsetneqq\V\qtq{with}\abs{\V\setminus\overline{\U}}=\abs{\overline{\U}\setminus\U}=\aleph_0;
\end{equation*}
here and in the sequel $\abs{A}$ denotes the cardinality of a set $A$.
Furthermore, $\V$ is compact  and $\overline{\U}$ is a \emph{Cantor set}, i.e., a non-empty compact set having neither isolated, nor interior points.
Their smallest elements are the Golden ratio and the \emph{Komornik--Loreti constant}, respectively, and their largest element is $2$, also belonging to $\U$.

As in \cite{DeVKom2009} and \cite{DeVKomLor2022} we  introduce the following sets for each fixed base $q\in (1,2]$:
\begin{align*}
&\U_q:=\set{x\in J_q\ :\ x\text{ has a unique expansion in base }q},\\
&\overline{\U}_q\text{ is the topological closure of }\U_q,\\
&\V_q:=\set{x\in J_q\ :\ x\text{ has at most  one doubly infinite expansion in base }q}.
\intertext{Then $\V_2:=J_2=[0,1]$, and
}
&\V_q:=\set{x\in J_q\ :\ x\text{ has a unique doubly infinite expansion in base }q}\text{ if }q\in (1,2).
\end{align*}
We recall that
\begin{equation*}
\U_q\subseteqq\overline{\U}_q\subseteqq\V_q\qtq{with}\abs{\V_q\setminus\U_q}\le\aleph_0,
\end{equation*}
and that $\V_q$ is compact.
Finally, we introduce the following partition of $\V_q\setminus\U_q$, where $\alpha(q)=a(1,q)$ denotes the quasi-greedy expansion of $1$ in base $q$:
\begin{equation}\label{e:one-base-A-B}
\begin{split}
&A_q:=\set{x\in \V_q\setminus\U_q: \sigma^i(a(x,q))=\alpha(q)\text{ for at least one digit } a_i(x,q)=0},\\
&B_q:=\set{x\in \V_q\setminus\U_q: \sigma^i(a(x,q))\prec\alpha(q)\text{ for all $i$ with } a_i(x,q)=0}.\\
\end{split}
\end{equation}
Equivalently, $A_q$ and $B_q$ are the sets of numbers $x\in\V_q\setminus\U_q$ whose greedy expansions $b(x,q)$ are finite and infinite, respectively.

In the following two theorems we recall the results of \cite[Theorems 1.2, 1.4, 1.5, 1.10 and 1.12]{DeVKomLor2022} in the case of the alphabet $\set{0,1}$.
(The case of the more general alphabets $\set{0,1,\ldots,M}$ is completely analogous: we only have to define the reflection of a sequence by the formula $\overline{(x_i)}:=(M-x_i)$, and  change $2$ to $M+1$ in Theorem \ref{t:results2 of DeVKomLor2022}.)

\begin{theorem}\label{t:results1 of DeVKomLor2022}\label{t12} \
\begin{enumerate}[\upshape (i)]
\item If $q\in \U$, then every $x\in \V_q\setminus\U_q$ has exactly two expansions.

\item If $q\in \V\setminus\U$, then every $x\in \V_q\setminus\U_q$ has exactly $\aleph_0$ expansions.
\end{enumerate}
\end{theorem}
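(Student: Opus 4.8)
The plan is to count the expansions of $x$ through the branching structure of the greedy algorithm. Writing $r_m:=q^{m-1}\bigl(x-\sum_{i<m}c_iq^{-i}\bigr)\in J_q$ for the tail value once the digits $c_1\cdots c_{m-1}$ are fixed, the digit $c_m=1$ is admissible iff $r_m\ge 1/q$ and $c_m=0$ is admissible iff $r_m\le 1/(q(q-1))$; a \emph{branch} occurs exactly when $r_m$ lies in the switch region $S:=[1/q,1/(q(q-1))]$, where both digits are allowed. The expansions of $x$ are then the infinite paths of the resulting binary tree, and I would first record the cheap half of the count: since $x\in\V_q$ has at most one doubly infinite expansion while there are only countably many finite or co-finite sequences (Remark \ref{r11}), $x$ has at most $\aleph_0$ expansions. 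This already yields the upper bound in (ii).

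Next I would analyse the tree through the quasi-greedy expansion $a=a(x,q)$ and the partition $A_q,B_q$. For $x\in A_q$ the condition $\sigma^n(a)=\alpha(q)$ with $a_n=0$ says exactly that from position $n$ the tail of $a$ equals $\alpha:=\alpha(q)$ and the remainder there equals $1/q$, the left endpoint of $S$; switching $a_n=0$ to $1$ produces the finite greedy expansion $a_1\cdots a_{n-1}10^\infty$. For $x\in B_q$ the failure of uniqueness forces $a_m=1$ with $\sigma^m(a)\preceq\overline\alpha$ for some $m$; the key point is that $x\in\V_q$ upgrades this to the equality $\sigma^m(a)=\overline\alpha$ (remainder $=1/(q(q-1))$, the right endpoint), because a strict inequality would let one switch $a_m$ to $0$ and continue with the quasi-greedy expansion of a value in $(1,\tfrac1{q-1})$ — which is doubly infinite, since a quasi-greedy expansion ends in $1^\infty$ only at the extreme point $\tfrac1{q-1}$ — yielding a second doubly infinite expansion of $x$. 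The same argument rules out every interior branch $\tfrac1q<r_m<\tfrac1{q(q-1)}$. Thus $a=a_1\cdots a_{n-1}\,0\,\alpha$ in case $A_q$ and $a=a_1\cdots a_{n-1}\,1\,\overline\alpha$ in case $B_q$.

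I would then finish by grafting expansions of $1$. In case $A_q$ the sequences $a_1\cdots a_{n-1}0\,e$, with $e$ ranging over the expansions of $1$, are pairwise distinct expansions of $x$ (they all have $\pi_q$-value $x$ because $\pi_q(e)=1=\pi_q(\alpha)$); in case $B_q$ one uses $a_1\cdots a_{n-1}1\,\overline e$ instead. For (ii), $q\in\V\setminus\U$, the number $1$ has $\aleph_0$ expansions — non-uniqueness of $1$ forces $\alpha$ to be periodic or to satisfy $\sigma^k\alpha=\overline\alpha$, each of which produces infinitely many finite or co-finite expansions of $1$ — so $x$ has at least, hence exactly, $\aleph_0$ expansions. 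For (i), $q\in\U$, the sequence $\alpha$ is univoque, i.e.\ $\overline\alpha\prec\sigma^k\alpha\prec\alpha$ for all $k\ge1$; in particular $\alpha$ is aperiodic and $\sigma^k\alpha\ne\overline\alpha$. This excludes two left-endpoint positions, two right-endpoint positions, and the coexistence of one of each (each scenario forces $\sigma^k\alpha\in\{\alpha,\overline\alpha\}$), so together with the exclusion of interior branches the tree has at most one branch. As $x\notin\U_q$ forces at least one branch, there is exactly one, and both of its continuations are branch-free ($0^\infty$ or $1^\infty$ on one side, the unique expansion of $1$ resp.\ its reflection on the other). Hence $x$ has exactly two expansions.

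The main obstacle is the middle step: proving that all branching is confined to the single primary switch together with the tail $\alpha$ (resp.\ $\overline\alpha$), i.e.\ that the stem $a_1\cdots a_{n-1}$ hides no further branch and that every expansion of $x$ arises from one grafted expansion of $1$. This is exactly where the hypothesis $x\in\V_q$ (to kill doubly infinite offshoots, hence all interior branches and all branches with $\sigma^m(a)\prec\overline\alpha$), the global inequalities $\overline\alpha\preceq\sigma^k\alpha\preceq\alpha$ valid for $q\in\V$, and, in case (i), their strict form for $q\in\U$, must be combined to make the count collapse to $1+\#\{\text{expansions of }1\}$. I would also isolate the degenerate base $q=2$ (where $S$ is a single point) and set up the reflection $x\mapsto\tfrac1{q-1}-x$, $(c_i)\mapsto(\overline{c_i})$ cleanly, since it is what makes case $B_q$ a mirror of case $A_q$.
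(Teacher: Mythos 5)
Your proposal is correct, but it takes a genuinely different route from the one this paper supplies. (Note that Theorem \ref{t12} is recalled here from de Vries--Komornik--Loreti; the paper's own argument for it is the specialization, via Remark \ref{r110qqq}, of its double-base machinery in Section \ref{s3}.) The paper never looks at the branching tree of remainders: it sandwiches every expansion of $x\in\V_q\setminus\U_q$ between the canonical expansions $l(x)\preceq m(x)=a(x)\preceq b(x)$ and then enumerates, by purely lexicographic arguments, all expansions lying in the two gaps --- Lemmas \ref{l:lA} and \ref{l:lB} show there are none when the inequalities $\sigma^k(\alpha)\prec\alpha$ are strict, and exactly the countably many sequences $c^N=a_1\cdots a_{n-1}(0\alpha_1\cdots\alpha_{k-1})^N10^\infty$ when $\alpha$ has period $k$ --- after which Lemma \ref{l:condition} and Proposition \ref{p38} assemble the counts case by case. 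You instead reduce everything to ``one plus the number of expansions of $1$'' by grafting expansions of $1$ (resp.\ their reflections) at an endpoint branch, kill interior branches and stem branches with the at-most-one-doubly-infinite-expansion property of $\V_q$ (plus, in case (i), the observation that two endpoint branches on one path would force $\sigma^k\alpha\in\{\alpha,\overline\alpha\}$), and get the $\aleph_0$ upper bound for free from Remark \ref{r11}. Your route is shorter and more conceptual for the equal-base statement, and it explains the dichotomy $2$ versus $\aleph_0$ as uniqueness versus non-uniqueness of the expansion of $1$; what it gives up is exactly what this paper needs, since it leans on the reflection symmetry $x\mapsto\frac{1}{q-1}-x$ (so that $\mu=\overline\alpha$ and $B_q$ mirrors $A_q$) and on identifying both grafted subtrees with expansions of the single number $1$ --- features unavailable when $q_0\neq q_1$, where the lexicographic enumeration is what produces the finer asymmetric counts ($2$, $3$ or $\aleph_0$) of Theorem \ref{T:expansion}. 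If you write your version up, the points still to be made rigorous are the translation between the lexicographic conditions and the value conditions $r_m\in S$ (tails of quasi-greedy expansions are doubly infinite for $q<2$, so $\sigma^m(a)\prec\overline\alpha$ really does place the remainder strictly below the right endpoint of $S$), and the verification that $1$ has $\aleph_0$ expansions for every $q\in\V\setminus\U$.
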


\begin{theorem}\label{t:results2 of DeVKomLor2022}\label{t13} \
\begin{enumerate}[\upshape (i)]
\item Let $q\in \overline{\U}.$ 

\begin{enumerate}[\upshape (a)]
\item $|\V_q\setminus\U_q|=\aleph_0$ and $\V_q\setminus\U_q$ is dense in $\V_q$.

\item If $q=2$, then $\overline{\U_q}=\V_q=J_q=[0,1]$.

\item If $q\in \overline{\U}\setminus\{2\}$, then  $\overline{\U_q}=\V_q$ is a Cantor set. 
Furthermore, $J_q\setminus \V_q$ is the union of infinitely many disjoint open intervals $(x_L,x_R)$, where $x_L$ and $x_R$ run over $A_q$ and $B_q$, respectively. 
More precisely,
\begin{equation*}
\text{if }b(x_L,q)=b_1\cdots b_n0^\infty\text{ with }b_n=1,
\text{ then }b(x_R,q)=b_1\cdots b_{n}\overline{\alpha(q)}.
\end{equation*}
\end{enumerate}

\item Let $q\in \V\setminus\overline{\U}$. 
\begin{enumerate}[\upshape (a)]

\item	The sets $\U_q$ and $\V_q$ are closed. 
\item $|\V_q\setminus\U_q|=\aleph_0$, and $\V_q\setminus\U_q$ is a discrete set, dense in $\V_q$. 

\item Each connected component $(x_L,x_R)$ of $J_q\setminus \U_q$ contains infinitely many elements of $\V_q$, forming an increasing sequence $(x_k)^\infty_{k=-\infty}$ satisfying 
\begin{equation*}
x_k\rightarrow x_L \text{ as } k\rightarrow-\infty,\text{ and }x_k\rightarrow x_R \text{ as } k\rightarrow\infty.
\end{equation*}
Moreover, each $x_k$ has a finite greedy expansion
\begin{equation*}
b(x_k,q)=b_1\cdots b_n0^\infty\text{ with } b_n=1,
\end{equation*}	
and then
\begin{equation*}
a(x_{k+1},q)=b_1\cdots b_n\overline{\alpha(q)}.
\end{equation*}
\end{enumerate}

\item If $q\in (1,2]\setminus\V$, then $\U_q=\overline{\U_q}=\V_q.$
\end{enumerate}
\end{theorem}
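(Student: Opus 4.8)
The plan is to translate every assertion into the lexicographic characterisations of $\U_q$, $\overline{\U_q}$ and $\V_q$ in terms of the quasi-greedy expansion $\alpha=\alpha(q)$ of $1$. First I would record the standard facts from the de~Vries--Komornik framework: a sequence $(x_i)$ is the unique expansion of its value iff $\sigma^n((x_i))\prec\alpha$ whenever $x_n=0$ and $\overline{\sigma^n((x_i))}\prec\alpha$ whenever $x_n=1$; and $x\in\V_q$ iff its quasi-greedy expansion $(a_i)=a(x,q)$ satisfies the same two inequalities with $\prec$ relaxed to $\preceq$ (the inequality after a $0$ being exactly the quasi-greedy property, hence automatic). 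Since $x\mapsto a(x,q)$ is an order isomorphism onto its image, the chain $\U_q\subseteq\overline{\U_q}\subseteq\V_q$ becomes a combinatorial question about when these inequalities are strict or attained with equality. The points of $\V_q\setminus\U_q=A_q\cup B_q$ are precisely those whose quasi-greedy expansion attains equality: at a digit $0$ (the set $A_q$, equivalently finite greedy expansion) or only at a digit $1$ (the set $B_q$, infinite greedy expansion). The whole theorem is then governed by a single dichotomy: whether an equality point is approximable by genuinely strict sequences from both sides ($q\in\overline{\U}$), from one side only ($q\in\V\setminus\overline{\U}$), or whether it cannot occur at all ($q\notin\V$).

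I would dispose of part (iii) first. Here $q\notin\V$ means $\overline{\sigma^m(\alpha)}\succ\alpha$ for some $m$ with $\alpha_m=1$, and I claim neither $\alpha$ nor $\overline\alpha$ can occur as a tail of the quasi-greedy expansion of a point of $\V_q$: the tail $\alpha$ would violate the $\V_q$-condition at its $1$ in position $m$, while the tail $\overline\alpha$ would violate quasi-greediness at its $0$ in position $m$, since in both cases the relevant shift equals $\overline{\sigma^m(\alpha)}\succ\alpha$. As the two forms $w\alpha$ and $w\overline\alpha$ are exactly the equality points of $A_q$ and $B_q$, this forces $\V_q\setminus\U_q=\emptyset$, whence $\U_q=\overline{\U_q}=\V_q$.

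For part (i) I would use that $q\in\overline{\U}$ makes every equality point a two-sided limit of admissible strict sequences. Countability and density in (a) follow because the members of $A_q\cup B_q$ are precisely the $x$ with quasi-greedy expansion $w\alpha$ or $w\overline\alpha$ for an admissible finite word $w$, a countable set that is dense because $w$ may be any admissible prefix. For (c) each equality point is a two-sided limit of strict (hence unique) expansions, giving $\U_q$ dense in $\V_q$ and therefore $\overline{\U_q}=\V_q$; compactness is inherited from $\V_q$, the absence of interior points follows from density of the complementary gaps $J_q\setminus\V_q$, and the absence of isolated points from the two-sided approximation, so $\V_q$ is a Cantor set. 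The gap description reduces to checking that, starting from a finite greedy expansion $b(x_L,q)=b_1\cdots b_n0^\infty$ with $b_n=1$, the next point of $\V_q$ to the right is the point $x_R$ with $b(x_R,q)=b_1\cdots b_n\overline\alpha$, no element of $\V_q$ lying strictly in between; this identifies the endpoints as $x_L\in A_q$ and $x_R\in B_q$. Case (b) is immediate, since $q=2$ gives $\alpha=1^\infty$ and $J_q=[0,1]$, and only the dyadic rationals fail to be unique.

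The substantial work, and the main obstacle, is part (ii). When $q\in\V\setminus\overline{\U}$ the equality bound is attained but $\alpha$ is isolated on one side among admissible constraint sequences, so the strict inequalities defining $\U_q$ are stably strict; from this I would deduce that $\U_q$ itself (not merely $\overline{\U_q}$) is closed, proving (a), and that each equality point is isolated within $\V_q\setminus\U_q$ while the set still accumulates onto every point of $\V_q$, proving (b). The delicate point is (c): inside a connected component $(x_L,x_R)$ of the open set $J_q\setminus\U_q$ I must build the bi-infinite increasing ladder $(x_k)_{k=-\infty}^{\infty}$. I would generate it by iterating the substitution $b_1\cdots b_n0^\infty\mapsto b_1\cdots b_n\overline\alpha$ and its inverse, tracking how the branch position of the expansion migrates, then verify monotonicity and the two limits $x_k\to x_L$, $x_k\to x_R$ lexicographically, together with the bookkeeping $a(x_{k+1},q)=b_1\cdots b_n\overline\alpha$. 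Controlling the accumulation at both endpoints and the exact greedy/quasi-greedy correspondence simultaneously, using only the one-sided structure available when $q\notin\overline{\U}$, is the technical heart of the proof.
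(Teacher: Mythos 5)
First, a point of reference: the paper does not prove this theorem at all — it is recalled verbatim from \cite{DeVKomLor2022} — so your proposal can only be measured against the paper's own machinery for the double-base generalizations (Theorems \ref{t115} and \ref{t118}, proved in Sections 4--6), specialized to the equal-base case $\mu=\overline{\alpha}$. Within that framework your lexicographic translation is correct: $\V_q\setminus\U_q$ is exactly the set of points whose quasi-greedy expansion has a tail $0\alpha$ or $1\overline{\alpha}$, and your argument for part (iii) is essentially sound, modulo one small missing step — your two ``violation'' computations need the index $m$ with $\overline{\sigma^m(\alpha)}\succ\alpha$ to satisfy $\alpha_m=1$, which holds for the \emph{minimal} such $m$ but requires the short argument of Lemma \ref{l:outV1}\,(i).

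The genuine gap is that the ``single dichotomy'' on which you hang parts (i) and (ii) is asserted, never proved, and it \emph{is} the theorem. Concretely: (1) for $q\in\overline{\U}$, the claim that every $x\in\V_q\setminus\U_q$ is a two-sided limit of points of $\U_q$ — which is what yields $\overline{\U_q}=\V_q$ and the Cantor property in (i)(c) — does not follow from the definition of $\overline{\U}$, which is a closure in the space of \emph{bases}, i.e.\ a property of $\alpha(q)$; transferring it to point approximation inside $J_q$ requires a construction, namely the analogue of Lemma \ref{l:density3}, where unique expansions converging to $x$ are built by interleaving long blocks $\alpha_1\cdots\alpha_{m_1}\overline{\alpha_1}\cdots\overline{\alpha_{m_2}}\alpha_1\cdots\alpha_{m_3}\cdots$ with cut points supplied by Lemma \ref{l:important1}. (2) The density claims in (i)(a) and (ii)(b) likewise need the truncation construction of Lemma \ref{l:density1}: appending $\alpha$ to an \emph{arbitrary} admissible prefix need not produce an admissible quasi-greedy sequence, so ``$w$ may be any admissible prefix'' does not suffice; the truncation positions must be chosen at the special indices provided by Lemma \ref{l:important1}. (3) Your dichotomy is in fact misstated in the middle case: for $q\in\V\setminus\overline{\U}$ the points of $\V_q\setminus\U_q$ are isolated in $\V_q$ from \emph{both} sides (this, via the mechanism of finite greedy/co-finite lazy expansions as in Lemmas \ref{l:neighbor} and \ref{l:isolated}, is exactly why $\V_q\setminus\U_q$ is discrete and $\U_q$ is closed), not ``approximable from one side only''. (4) Part (ii)(c), the bi-infinite ladder with its greedy/quasi-greedy bookkeeping, you explicitly leave as ``the technical heart'', yet it is a substantive assertion of the statement, proved in the paper's setting in Theorem \ref{t118}\,(ii). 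As it stands, the proposal establishes part (iii) and reduces the remainder to unproven claims that constitute the core content.
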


Table \ref{t:one-base-A-B} gives an overview of the main topological properties of $\U_q$, $\overline\U_q$ and $\V_q$ in the equal-base case, contained in Theorems \ref{t:results1 of DeVKomLor2022} and \ref{t:results2 of DeVKomLor2022}, with some further information on the number of expansions, proved in \cite{DeVKomLor2022}.
We also recall from \cite{DeVKomLor2022} that $A_q$ and $B_q$ always form a partition of $V_q\setminus\U_q$, i.e.,
\begin{equation*}
\V_q\setminus\U_q=A_q\cup B_q\qtq{and}A_q\cap B_q=\emptyset.
\end{equation*}
Furthermore,
\begin{itemize}
\item $\abs{A_q}=\aleph_0$ if $q\in\V$; otherwise $A_q=\emptyset$;
\item $\abs{B_q}=\aleph_0$ if $2\ne q\in\overline{\U}$; otherwise $B_q=\emptyset$.
\end{itemize}

In Table \ref{t:one-base-A-B} $|A'_x|$ and $|B'_x|$ denote the number of expansions of each $x\in A_q$ and $x\in B_q$, respectively.

\color{black}
\begin{table*}[h]
\begin{center}
\begin{tabular}{|c|c|c|c|}
\hline
%Case of Lemma \ref{l:HuBarZou} & 
$q\in $ 
& Inclusions 
& $|A'_x|$ 
& $|B'_x|$\\			
\hline

%$\C$ & 
$\{2\}$
& $\U_q\subsetneqq\overline\U_{q}=\V_{q}$ 
& 2 
& $B_q=\emptyset$\\
			
%\eqref{21} & 
${\U}\setminus\{2\}$ 
& $\U_q\subsetneqq\overline\U_{q}=\V_{q}$ 
& 2 
& $2$\\
			
%\eqref{28}$\setminus\C$ & 
$\overline{\U}\setminus\U$ 
& $\U_q\subsetneqq\overline\U_{q}=\V_{q}$ 
& $\aleph_0$ 
&$\aleph_0$\\
			
%\eqref{29} & 
$\V\setminus\overline{\U}$ 
& $\U_{q}=\overline{\U}_{q}\subsetneqq \V_{q}$ 
& $\aleph_0$ 
& $B_q=\emptyset$\\
			
%\eqref{32} & 
$(1,2]\setminus\V$ 
& $\U_q=\overline\U_{q}=\V_{q}$ 
& $A_q=\emptyset$ 
& $B_q=\emptyset$\\		
\hline
\end{tabular}	
\end{center}
\caption{Overview of the equal-base case
% for $q\in (1,2]$\\
%$\V_{q}$ is a Cantor set if $q\in\overline{\U}\setminus \{2\}$.
} \label{t:one-base-A-B}
\end{table*}

Now we proceed to the formulation of our generalizations to  double-base expansions.
Since every system $S = \set{(d_0, q_0),(d_1, q_1)}$ is isomorphic to $S = \set{(0, q_0),(1, q_1)}$ by \cite[Lemma 3.1]{KomSteZou2024}, throughout this paper we restrict ourselves to the simpler system $S=\set{(0, q_0),(1, q_1)}$, i.e., we consider expansions of the form
\begin{equation*}\label{14}
x=\pi_Q((i_k))
:=\sum_{k=1}^{\infty}\frac{i_k}{q_{{i_1}}q_{{i_2}}\cdots q_{{i_k}}},\quad  (i_k)\in \{0,1\}^\infty,
\end{equation*}
where $Q:=(q_0,q_1)\in(1,\infty)^2$ is a given \emph{double-base}.
In the equal-base case $q_0=q_1$ they reduce to the expansions \eqref{13}.

We recall from \cite{KomLuZou2022, KomSteZou2024} that
\begin{equation*}
0=\pi_Q(0^{\infty})\le\pi_Q((i_k))\le
\pi_Q(1^{\infty})=\frac{1}{q_1-1}
\end{equation*}
for every sequence $(i_k)$; therefore we now define
$J_Q:=[0,\frac{1}{q_1-1}]$.
The role of the interval $(1,2]$ of bases $q$ is taken by the set
\begin{equation*}
\A:=\set{Q=(q_0,q_1)\in (1,\infty)^2\ :\ q_0+q_1\ge q_0q_1}
\end{equation*}
(see Figure 1) because
\begin{equation*}
\set{\pi_Q((d_i))\ :\ (d_i)\in\set{0,1}^{\infty}}=J_Q
\Longleftrightarrow Q\in \A.
\end{equation*}

Furthermore, if $Q\in \A$, then every $x\in J_Q$ has a (lexicographically) largest  expansion $b(x,Q)=(b_i(x,Q))$, a largest infinite expansion $a(x,Q)=(a_i(x,Q))$, a smallest co-infinite expansion $m(x,Q)=(m_i(x,Q))$, and a smallest  expansion $l(x,Q)=(l_i(x,Q))$.
They are called the \emph{greedy}, \emph{quasi-greedy}, \emph{quasi-lazy} and \emph{lazy}  expansions of $x$ (in the double-base $Q$), respectively.
Finally, a sequence is called \emph{greedy} (\emph{quasi-greedy}, \emph{quasi-lazy}, \emph{lazy}) if it is the \emph{greedy} (\emph{quasi-greedy}, \emph{quasi-lazy}, \emph{lazy}) expansion of some number $x\in J_Q$.

For simplicity, instead of
\begin{equation*}
b(x,Q),\quad
a(x,Q),\quad
m(x,Q)\qtq{and}
l(x,Q)
\end{equation*}
we often write 
\begin{equation*}
b(x)=(b_i(x)),\quad
a(x)=(a_i(x)),\quad
m(x)=(m_i(x))\qtq{and}
l(x)=(l_i(x))
\end{equation*}
when $Q$ is fixed, and even
\begin{equation*}
(b_i),\quad (a_i),\quad (m_i)\qtq{and}(l_i)
\end{equation*}
when both $Q$ and $x$ are given.

The role of the critical base $q=2$ is taken over by the double-bases belonging to the curve
\begin{equation*}
\C:=\set{Q=(q_0,q_1)\in (1,\infty)^2\ :\ q_0+q_1=q_0q_1};
\end{equation*}
see Figure 1 again.

\begin{figure}
\centering
\begin{tikzpicture}
		\begin{axis}[ 
			xmin=1, xmax=6, 
			ymin=1, ymax=6, 
			axis lines=middle, 
			xlabel={$q_0$}, 
			ylabel={$q_1$}, 
			xtick={1,2,3,4,5}, 
			ytick={1,2,3,4,5}, 
			domain=1.01:6, 
			samples=200, 
			axis line style={-stealth}, 
			xlabel style={at={(ticklabel* cs:1)}, anchor=north west}, 
			ylabel style={at={(ticklabel* cs:1)}, anchor=south east}, 
			]
			
			\addplot [
			thick,
			blue,
			] {x / (x - 1)}; 
			
			fill between [of=curve and top];
						
		\end{axis}
		\begin{scope}
			\node at (-0.24,-0.335) {$1$};
		\end{scope}
		
		\draw [thick] (0,0) -- (1.25,1.25);
\end{tikzpicture}
\caption{The blue curve is  $\C$, the region below $\C$ is $\A\setminus\C$; the black segment shows the classical case $q_0=q_1$.}
\end{figure}
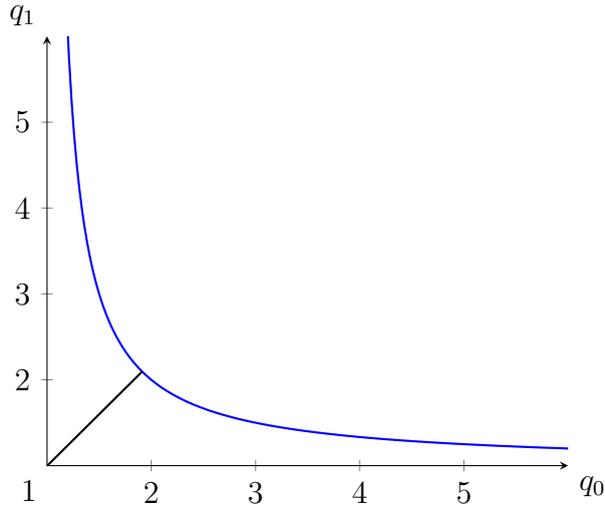

Observe that
\begin{equation*}
q_0+q_1=q_0q_1\Longleftrightarrow \frac{1}{q_0}+\frac{1}{q_1}=1,
\end{equation*}
so that $\C$ is formed by the pairs of \emph{conjugate exponents} in Young's classical inequality.

It was shown in \cite{KomLuZou2022} that the role played by $1$ and $1/(q_1-1)-1$ is now taken over by the two numbers
\begin{equation*}
r_Q:=\frac{q_0}{q_1}\qtq{and} \ell_Q:=\frac{q_1}{q_0(q_1-1)}-1.
\end{equation*} 
We let $\alpha(Q)$ and $\mu(Q)$ denote the quasi-greedy expansion of $r_Q$ and the quasi-lazy expansion of $\ell_Q$, respectively.
When  $Q$ is fixed, also write $\alpha=(\alpha_i)$ and $\mu=({\mu}_i)$ for simplicity.

\begin{remark}\label{r14}\ 
We often use the following observations in the sequel.

\begin{enumerate}[\upshape (i)]
\item If $q_0+q_1<q_0q_1$, then $r_Q$ and $\ell_Q$ have no expansions because
\begin{equation*}
r_Q=\frac{q_0}{q_1}>\frac{1}{q_1-1}\qtq{and}
\ell_Q=\frac{q_1}{q_0(q_1-1)}-1<0
\end{equation*}
by a direct computation.

\item If $q_0+q_1=q_0q_1$, i.e., if $Q\in\C$, then  $r_Q=1/(q_1-1)$ and $\ell_Q=0$.
They have the unique expansions $1^{\infty}$ and $0^{\infty}$, respectively, so that
\begin{equation*}
\alpha(Q)=1^{\infty}\qtq{and}
\mu(Q)=0^{\infty}.
\end{equation*}

\item If $q_0+q_1>q_0q_1$, i.e., if $Q=(q_0,q_1)\in\A\setminus\C$, then  $r_Q$ and $\ell_Q$ belong to the interior of the interval $J_Q$ by a similar computation, and hence their expansions are different from $1^{\infty}$ and $0^{\infty}$.

Furthermore, $r_Q>1/q_1$ and $1/(q_1-1)>\ell_Q$ by a direct computation; this implies by the definition of the quasi-greedy and quasi-lazy algorithms (we recall them at the beginning of Section \ref{s2}) that $\alpha(Q)$  starts with 1, and $\mu(Q)$  starts  with 0.

Therefore we have
\begin{equation*}
0^{\infty}\prec\mu(Q)\prec\alpha(Q)\prec 1^{\infty}.
\end{equation*}

\item A direct computation shows that
\begin{equation*}
\pi_Q\bj{0\alpha(Q)}=\frac{1}{q_1}
=\pi_Q(10^\infty)\qtq{and}
\pi_Q\bj{1\mu(Q)}=\frac{1}{q_0(q_1-1)}
=\pi_Q(01^\infty).
\end{equation*}

\item We show in Remark \ref{r22} (i)--(ii) below that 
\begin{equation*}
\mu\preceq\sigma^i(\mu)\qtq{and}\sigma^j(\alpha)\preceq\alpha\qtq{for all}i,j\in\N_0.
\end{equation*}
\end{enumerate}
\end{remark}

In this paper, $\N$ and $\N_0$ denote the sets of positive and nonnegative integers, respectively. 
In \cite{HuBarZou2024} the sets $\U, \overline{\U}, \V$ have been extended to the framework $Q=(q_0,q_1)\in\A$ as follows:

\begin{align*}
&\U:=\set{Q\in \A: \ell_Q\text{ and } r_Q \text{ have  unique expansions}},\\
&\overline{\U}\text{ is the topological closure of }\U,\\
&\V:=\set{Q\in \A: \sigma^i(\mu(Q)) \preceq \alpha(Q) \text{ and } \sigma^j(\alpha(Q)) \succeq \mu(Q) \text{  for all }i, j\in\N},\\
\end{align*}
It was also shown that $\V$ is closed, and $\U\subsetneqq\overline{\U}\subsetneqq\V$.

The above asymmetry between the definitions of $\U$ and $\V$ is only apparent:

\begin{proposition}\label{p15qqq}\
Let $Q\in \A$.
\begin{enumerate}[\upshape (i)]
\item $Q$ belongs to $\U$ if and only if
\begin{equation*}
\sigma^i(\mu(Q)) \prec \alpha(Q) \text{ and } \sigma^j(\alpha(Q)) \succ \mu(Q) \text{  for all }i, j\in\N.
\end{equation*}

\item $Q$ belongs to $\V$ if and only if $\ell_Q$ and $r_Q$ have  unique doubly infinite expansions.
\end{enumerate}
\end{proposition}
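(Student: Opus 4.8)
The plan is to reduce both equivalences to per-point statements about the two distinguished numbers $r_Q$ and $\ell_Q$, and then to feed in the lexicographic (Parry/univoque-type) descriptions of the greedy, quasi-greedy, quasi-lazy and lazy expansions recalled at the beginning of Section~\ref{s2}. Throughout I would exploit the reflection symmetry $0\leftrightarrow 1$: passing to the reflected system interchanges $r_Q$ with $\ell_Q$ and interchanges $\alpha(Q)$ with $\mu(Q)$ up to reflection, so it is enough to analyse one of the two numbers and transfer the conclusion to the other. The intrinsic monotonicity $\sigma^j(\alpha)\preceq\alpha$ and $\mu\preceq\sigma^i(\mu)$ from Remark~\ref{r14}(v), together with the facts that $\alpha$ starts with $1$ and $\mu$ starts with $0$, will be the basic combinatorial input, and the two switch identities $\pi_Q(0\alpha)=\pi_Q(10^\infty)$ and $\pi_Q(1\mu)=\pi_Q(01^\infty)$ of Remark~\ref{r14}(iv) will govern when one expansion can be converted into another.

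For part~(ii) I expect the cleaner argument, because requiring expansions to be \emph{doubly infinite} filters out precisely the finite and co-finite boundary expansions. I would first show that $r_Q$ has a unique doubly infinite expansion if and only if $\mu\preceq\sigma^j(\alpha)$ for every $j\in\N$: the quasi-greedy expansion $\alpha$ is the largest infinite expansion of $r_Q$, and a competing doubly infinite expansion can be manufactured exactly when some shift of $\alpha$ falls strictly below $\mu$, i.e. when a digit $1$ admits the downward switch $\pi_Q(1\mu)=\pi_Q(01^\infty)$. By the reflection symmetry, $\ell_Q$ has a unique doubly infinite expansion if and only if $\sigma^i(\mu)\preceq\alpha$ for all $i\in\N$. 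Conjoining the two characterisations reproduces verbatim the defining inequalities of $\V$, which yields~(ii).

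For part~(i) the same scheme applies, but now with \emph{strict} inequalities and with the extra obligation of excluding the finite greedy expansion of $r_Q$ and the co-finite lazy expansion of $\ell_Q$. I would argue that $r_Q$ has a unique expansion if and only if (a) $\sigma^n(\alpha)\prec\alpha$ whenever $\alpha_n=0$, so that the greedy expansion is infinite and equals $\alpha$, and (b) $\sigma^n(\alpha)\succ\mu$ whenever $\alpha_n=1$, so that no digit $1$ can be switched down; the dual conditions on $\mu$ characterise uniqueness of $\ell_Q$. The task is then to prove that these four digit-restricted conditions are together equivalent to the two \emph{unrestricted} strict inequalities $\sigma^i(\mu)\prec\alpha$ and $\sigma^j(\alpha)\succ\mu$ of the statement. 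One direction (upgrading to all shifts the inequalities that are a priori only required at positions carrying a prescribed digit) should follow from the monotonicity bounds and the leading-digit normalisation by a short case analysis on $\alpha_{n+1}$.

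The step I expect to be the genuine obstacle is the converse at the \emph{self}-referential conditions (a) and its dual. Finiteness of the greedy expansion of $r_Q$ is a property of $\alpha$ \emph{alone}, namely a return $\sigma^n(\alpha)=\alpha$ at a position with $\alpha_n=0$ (equivalently $\alpha$ of the periodic switch form), whereas the conditions in the statement are \emph{cross}-comparisons between $\alpha$ and $\mu$; closing this gap therefore cannot rely on lexicographic bookkeeping only, but must use the finer information on how $\alpha(Q)$ and $\mu(Q)$ are tied together through the single parameter $Q$. Once this is in place, the assembly is routine: the strict inequalities together with the per-point characterisations give $Q\in\U$, the weak inequalities give $Q\in\V$, and the known inclusion $\U\subseteq\V$ serves as a consistency check. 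The only remaining care is the strict-versus-weak bookkeeping at the countably many boundary double-bases, where an equality $\sigma^j(\alpha)=\mu$ or $\sigma^i(\mu)=\alpha$ marks exactly the passage from a unique expansion to a merely unique doubly infinite one.
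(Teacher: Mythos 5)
Your part (ii) is essentially the paper's own argument: after treating $Q\in\C$ separately, membership in $\V$ reduces via Lemma \ref{l:monotonicity} to $a(r_Q)=m(r_Q)$ and $a(\ell_Q)=m(\ell_Q)$, and Lemma \ref{l:quasi-gr-la} converts this into the statement about unique doubly infinite expansions; your ``switch'' mechanism is exactly the content of those lemmas, and the per-point splitting you propose does work there, because the upgrade from digit-restricted to unrestricted weak inequalities only uses the automatic monotonicity properties of Remark \ref{r22} (i)--(ii).

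For part (i), the step you yourself flag as ``the genuine obstacle'' is a real gap in your proposal --- you never derive the self-referential conditions (a) and its dual from the two cross inequalities --- and the reason you could not close it by lexicographic bookkeeping is that the implication is \emph{false}. Your reduction is correct: $Q\in\U$ if and only if $\alpha$ is both greedy and lazy and $\mu$ is both greedy and lazy, i.e.\ if and only if, in addition to the cross conditions ($\sigma^n(\alpha)\succ\mu$ whenever $\alpha_n=1$, $\sigma^n(\mu)\prec\alpha$ whenever $\mu_n=0$, which by Remark \ref{r22} (iii) upgrade to the displayed inequalities), one also has $\sigma^n(\alpha)\prec\alpha$ whenever $\alpha_n=0$ and $\sigma^n(\mu)\succ\mu$ whenever $\mu_n=1$. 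These last two do not follow, and no argument can make them follow. Concretely, take the equal base $q_0=q_1=q$ with $q^3=q^2+q+1$ (the Tribonacci base, which lies in $\A$ since $q<2$): then $\alpha(Q)=(110)^\infty$ and, by Remark \ref{r110qqq}, $\mu(Q)=(001)^\infty$; every shift of $\mu$ lies strictly below $\alpha$ and every shift of $\alpha$ lies strictly above $\mu$, yet $r_Q=1$ has the two expansions $1110^\infty$ and $(110)^\infty$, so $Q\notin\U$. The same conflict is visible inside the paper itself: the pair $(\mu,\alpha)=((01)^\infty,(110)^\infty)$ of Examples \ref{e18} (viii) satisfies the displayed condition of Proposition \ref{p15qqq}, while Lemma \ref{l:HuBarZou} places such $Q$ in $\overline{\U}\setminus\U$. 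So Proposition \ref{p15qqq} (i) as stated is incorrect, and the paper's own one-line proof (``follows from Lemma \ref{l:monotonicity} and Remark \ref{r22}'') establishes only the forward implication plus the cross conditions; it silently skips exactly the step you identified. The statement your framework \emph{does} prove is the one in Lemma \ref{l:HuBarZou}: for $Q\notin\C$, $Q\in\U$ if and only if $\mu\prec\sigma^i(\mu)\prec\alpha$ and $\mu\prec\sigma^j(\alpha)\prec\alpha$ for all $i,j\in\N$. The added strict self-comparisons exclude precisely the periodic returns $\sigma^n(\alpha)=\alpha$ with $\alpha_n=0$ (and dually for $\mu$) that make the greedy expansion of $r_Q$ finite, respectively the lazy expansion of $\ell_Q$ co-finite; for a quasi-greedy $\alpha\neq 1^\infty$ such a return can only occur at a digit $\alpha_n=0$, which is why the digit-restricted and unrestricted forms of the self-conditions coincide and your four-condition reduction matches Lemma \ref{l:HuBarZou} exactly.
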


Now we extend the definition of the sets $\U_q, \overline{\U}_q$ and $\V_q$ to  all double-bases $Q\in \A$ as  follows:
\begin{align*}
&\U_Q \text{ is the set of numbers }x\in J_Q\text{  with an expansion }(x_i)\text{ satisfying } \\ 
&\qquad\sigma^j((x_i)) \prec \alpha(Q) \quad \text{whenever } x_j = 0, \text{ and}\\
&\qquad\sigma^j((x_i)) \succ \mu(Q) \quad \text{whenever } x_j = 1.\\
&\overline{\U}_Q\text{ is the topological closure of }\U_Q,\\
&\V_Q \text{ is the set of numbers }x\in J_Q\text{ satisfying } \\ 
&\qquad\sigma^j(m(x))\preceq\alpha(Q) \quad \text{ whenever }m_j(x)=0, \text{ and}\\
&\qquad\sigma^j(a(x))\succeq\mu(Q) \quad \text{ whenever }a_j(x)=1.
\end{align*}

\begin{remark}\label{r16}
It follows from the definitions that $\U_Q\subseteqq\V_Q$, and $\V_Q\setminus\U_Q$ is a countable set.
\end{remark}

The following alternative descriptions hold:

\begin{proposition}\label{p17qqq}
Let $Q=(q_0,q_1) \in \A$.

\begin{enumerate}[\upshape (i)]
\item $\U_Q=\set{x\in J_Q\ :\ x\text{ has a unique expansion}}$.

\item $\V_Q=\set{x\in J_Q\ :\ x\text{ has at most  one doubly infinite expansion}}$.

\item $\V_Q=J_Q$\quad if\quad $Q\in \C$.

\item  $\V_Q=\set{x\in J_Q\ :\ x\text{ has a unique doubly infinite expansion }}$\quad if\quad $Q \in \A\setminus\C$.
\end{enumerate}
\end{proposition}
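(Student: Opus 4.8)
The plan is to reduce all four assertions to the lexicographic descriptions of the greedy, quasi-greedy, lazy and quasi-lazy expansions recalled at the start of Section~\ref{s2}, which I summarise as follows: $b(x)$ is characterised by $b_j=0\Rightarrow\sigma^j b(x)\prec\alpha$; $l(x)$ by $l_j=1\Rightarrow\sigma^j l(x)\succ\mu$; $a(x)$ by being infinite and satisfying $a_j=0\Rightarrow\sigma^j a(x)\preceq\alpha$; and $m(x)$ by being co-infinite and satisfying $m_j=1\Rightarrow\sigma^j m(x)\succeq\mu$. I will also use that $l(x)\preceq(x_i)\preceq b(x)$ for every expansion $(x_i)$ of $x$, so that $x$ has a unique expansion if and only if $b(x)=l(x)$. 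The one real difficulty is that, unlike in the equal-base case, lexicographic order is \emph{not} compatible with numerical order when $Q\in\A\setminus\C$, where $\pi_Q(10^\infty)=1/q_1<1/(q_0(q_1-1))=\pi_Q(01^\infty)$; I will therefore never translate a lexicographic inequality into an inequality of values, and argue only through the characterisations above and the switch identities of Remark~\ref{r14}(iv).

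For (i), an expansion witnessing $x\in\U_Q$ satisfies the greedy condition at each of its zeros and the lazy condition at each of its ones, hence is simultaneously the greedy and the lazy expansion; thus $b(x)=l(x)$ and $x$ has a unique expansion. Conversely, if $x$ has a unique expansion then $b(x)=l(x)$, and this common sequence obeys both defining inequalities of $\U_Q$. No case distinction is needed.

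For (iii), if $Q\in\C$ then $\alpha=1^\infty$ and $\mu=0^\infty$ by Remark~\ref{r14}(ii), so the two inequalities defining $\V_Q$ hold vacuously and $\V_Q=J_Q$. To obtain (ii) here I will show that every $x$ has at most one doubly infinite expansion. If $(s_i)$ is doubly infinite and $(t_i)\neq(s_i)$ is another expansion of $x$, then the tails from the first index $n$ where they differ have equal value, $\pi_Q(\sigma^{n-1}(s))=\pi_Q(\sigma^{n-1}(t))$; assuming $s_n=0$, $t_n=1$ (the reverse case is symmetric) this reads $\tfrac1{q_0}\pi_Q(\sigma^n s)=\tfrac1{q_1}(1+\pi_Q(\sigma^n t))$, and together with $0\le\pi_Q(\cdot)\le1/(q_1-1)$ and the identity $1/(q_0(q_1-1))=1/q_1$ valid on $\C$ it forces $\sigma^n s=1^\infty$ and $\sigma^n t=0^\infty$. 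Then $(s_i)$ ends in $01^\infty$, contradicting its double infiniteness. Hence a doubly infinite expansion is the only expansion of its value, so $\{x:\ x\text{ has at most one doubly infinite expansion}\}=J_Q=\V_Q$.

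For (iv) assume $Q\in\A\setminus\C$, so $\alpha\prec1^\infty$ and $\mu\succ0^\infty$ by Remark~\ref{r14}(iii). The decisive structural step is that $a(x)$ and $m(x)$ are then both doubly infinite: were $a(x)$ to end in $01^\infty$, at the last zero $j$ one would have $\sigma^j a(x)=1^\infty\preceq\alpha$, contradicting $\alpha\prec1^\infty$, so the (infinite) sequence $a(x)$ is also co-infinite; dually $m(x)$ cannot end in $10^\infty$, so the (co-infinite) sequence $m(x)$ is also infinite. Consequently $m(x)$ is the smallest and $a(x)$ the largest doubly infinite expansion of $x$, whence $m(x)\preceq a(x)$ and $x$ has a unique doubly infinite expansion if and only if $m(x)=a(x)$. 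If $m(x)=a(x)=:w$, then $w$ is at once quasi-greedy and quasi-lazy, so it satisfies both characterising conditions, which are exactly the inequalities defining $\V_Q$; thus $x\in\V_Q$. Conversely, if $x\in\V_Q$ then $m(x)$ is infinite (structural step) and satisfies $m_j=0\Rightarrow\sigma^j m(x)\preceq\alpha$, i.e.\ it fulfils the conditions characterising the quasi-greedy expansion, whence $m(x)=a(x)$. This proves (iv), and (ii) follows for $Q\in\A\setminus\C$ because $a(x)$ is always doubly infinite, so ``at most one'' coincides with ``exactly one'', which is $\V_Q$ by (iv). I expect the structural step of (iv)---that $a(x)$ and $m(x)$ are doubly infinite---to be the crux, since it is precisely where the asymmetry of $\A\setminus\C$ is used and where one must stay at the lexicographic level so that the failure of monotonicity of $\pi_Q$ never enters.
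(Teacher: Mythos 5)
Your proposal is correct and follows essentially the same path as the paper: part (i) via the lexicographic characterizations of greedy and lazy expansions (Lemma~\ref{l21qqq} (i), (iii)); part (iii) trivially from $\mu=0^\infty$ and $\alpha=1^\infty$; and parts (ii), (iv) via your ``structural step'' that $a(x)$ and $m(x)$ are doubly infinite when $Q\in\A\setminus\C$, which is exactly the paper's Lemma~\ref{l24}, followed by the chain of equivalences $x\in\V_Q\Leftrightarrow a(x)=m(x)\Leftrightarrow x$ has a unique doubly infinite expansion, which is the paper's Lemma~\ref{l25}. The only deviation is the sub-case $Q\in\C$ of (ii), where the paper invokes Lemma~\ref{l23} (since $\mu=0^\infty$ every infinite expansion is lazy and since $\alpha=1^\infty$ every co-infinite expansion is greedy, so a doubly infinite expansion is simultaneously greedy and lazy, hence the unique expansion of its value), whereas you argue by a direct value computation at the first index where two expansions differ, forcing tails $1^\infty$ and $0^\infty$; both arguments are valid, yours being slightly more computational but self-contained.
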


In order to extend Theorems \ref{t:results1 of DeVKomLor2022} and \ref{t:results2 of DeVKomLor2022} to double-base expansions, we need to distinguish twelve classes of double-bases in $\A$.
In the statement of the following lemma \emph{we use an exceptional convention:} when we write
\begin{equation*}
\mu\preceq\sigma^{i}(\mu)\qtq{for all}i\in\N,
\end{equation*}
then we assume not only that these weak inequalities hold, but also that equality holds for at least one $i\in\N$.
Similar conventions are adopted when we write
\begin{equation*}
\sigma^{i}(\mu)\preceq\alpha,\quad
\mu\preceq\sigma^{j}(\alpha)\qtq{and}
\sigma^{j}(\alpha)\preceq\alpha.
\end{equation*}
Using this convention the twelve cases of the following lemma are disjoint:

\begin{lemma}\label{l:HuBarZou}\label{l18qqq}
Let $Q\in \A$, and write $\mu=(\mu_i):=\mu(Q)$ and $\alpha=(\alpha_i):=\alpha(Q)$ for brevity.
Consider the following conditions:
\begin{enumerate}[\upshape (i)]
\item $\mu\prec\sigma^{i}(\mu)\prec\alpha\text{ and }\mu\prec\sigma^{j}(\alpha)\prec\alpha\qtq{for all}i,j\in\N;\label{21}$
\item $\mu\prec\sigma^{i}(\mu)\prec\alpha\text{ and }\mu\preceq \sigma^{j}(\alpha)\prec\alpha\qtq{for all}i,j\in\N;
\label{22}$
\item $\mu\prec\sigma^{i}(\mu)\preceq\alpha\text{ and }\mu\prec\sigma^{j}(\alpha)\prec\alpha\qtq{for all}i,j\in\N;\label{25}$
\item $\mu\prec\sigma^{i}(\mu)\prec\alpha\text{ and }\mu\prec\sigma^{j}(\alpha)\preceq\alpha\qtq{for all}i,j\in\N;\label{23}$
\item $\mu\preceq\sigma^{i}(\mu)\prec\alpha\text{ and }\mu\prec\sigma^{j}(\alpha)\prec\alpha\qtq{for all}i,j\in\N;\label{26}$
\item $\mu\prec\sigma^{i}(\mu)\preceq \alpha\text{ and }\mu\prec\sigma^{j}(\alpha)\preceq\alpha\qtq{for all}i,j\in\N;\label{24}$
\item $\mu\preceq\sigma^{i}(\mu)\prec\alpha\text{ and }\mu\preceq\sigma^{j}(\alpha)\prec\alpha\qtq{for all}i,j\in\N;\label{27}$
\item $\mu\preceq\sigma^{i}(\mu)\prec\alpha\text{ and }\mu\prec\sigma^{j}(\alpha)\preceq\alpha\qtq{for all}i,j\in\N;\label{28}$
\item $\mu\preceq\sigma^{i}(\mu)\preceq\alpha\text{ and }\mu\preceq\sigma^{j}(\alpha)\preceq \alpha\qtq{for all}i,j\in\N;\label{29}$
\item $\mu_i=0$ \text{ and } $\sigma^i(\mu)\succ\alpha\text{ for at least one } i\in \N, \text{ and } \mu\prec\sigma^{j}(\alpha)\qtq{for all}j\in\N;\label{30}$
\item $\sigma^{i}(\mu)\prec\alpha\qtq{for all}i\in\N, \text{ and }
\alpha_j=1 \text{ and }\mu\succ\sigma^j(\alpha)\qtq{for at least one}j\in\N;\label{31}$
\item \text{There exist }$i,j\in\N$\text{ such that }$\mu_i=0$, $\sigma^i(\mu)\succ\alpha$, $\alpha_j=1$ and $\mu\succ\sigma^j(\alpha)$.\label{32}
\end{enumerate}
Then
\begin{align*}
Q\in \C&\Longrightarrow (\mu,\alpha)\text{ satisfies \eqref{28}},\\
Q\in \U \setminus\C&\Longleftrightarrow (\mu,\alpha)\text{ satisfies \eqref{21}},\\
Q\in \overline{\U}&\Longleftrightarrow (\mu,\alpha)\text{ satisfies \eqref{21}--\eqref{28}},\\
Q\in \V&\Longleftrightarrow (\mu,\alpha)\text{ satisfies \eqref{21}--\eqref{29}},\\
Q\in \A \setminus\V&\Longleftrightarrow (\mu,\alpha)\text{ satisfies \eqref{30}--\eqref{32}}.
\end{align*}
\end{lemma}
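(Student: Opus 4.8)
The plan is to reduce everything to the relative positions of the shifts of $\mu$ and $\alpha$ and then read off the five assertions. Throughout I would use the universal monotonicity $\mu\preceq\sigma^i(\mu)$ and $\sigma^j(\alpha)\preceq\alpha$ of Remark \ref{r14}(v); together with the trichotomy for the two ``cross'' comparisons (whether $\sigma^i(\mu)$ stays below $\alpha$, meets $\alpha$, or exceeds it, and symmetrically for $\sigma^j(\alpha)$ against $\mu$) and the dichotomy for the two ``self'' comparisons (whether $\mu\prec\sigma^i(\mu)$ and $\sigma^j(\alpha)\prec\alpha$ are ever equalities), this yields a finite list of sign patterns. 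Disjointness of the twelve cases is then immediate from the exceptional convention, so the real work is to show that exactly these patterns occur and to attach the correct sets to them.

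The three direct matches come first. For $Q\in\C$, Remark \ref{r14}(ii) gives $\mu=0^\infty$ and $\alpha=1^\infty$, and a one-line check places $(\mu,\alpha)$ in \eqref{28}. For $\U$, Proposition \ref{p15qqq}(i) already gives $Q\in\U$ if and only if $\sigma^i(\mu)\prec\alpha$ and $\sigma^j(\alpha)\succ\mu$ for all $i,j$, i.e.\ the two strict cross conditions; to upgrade these to \eqref{21} for $Q\in\U\setminus\C$ I must rule out periodicity of $\mu$ and of $\alpha$. This rests on a short combinatorial fact: a quasi-greedy sequence other than $1^\infty$ with $\sigma^p(\alpha)=\alpha$ has its period block ending in $0$, so the defining inequality of $\U_Q$ for $r_Q$ would fail at position $p$; the mirror statement excludes a periodic $\mu$. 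The same fact shows that \eqref{23} and \eqref{26} are empty and that \eqref{28} reduces to $\C$, so that $Q\in\U\setminus\C$ is equivalent to \eqref{21}. For $\V$ the definition is already symbolic: $Q\in\V$ says precisely that no $\sigma^i(\mu)$ exceeds $\alpha$ and no $\sigma^j(\alpha)$ drops below $\mu$, which---after checking that an overshoot may be taken at a digit $0$ of $\mu$ and a digit $1$ of $\alpha$---is the union \eqref{21}--\eqref{29}, whence $\A\setminus\V$ is \eqref{30}--\eqref{32}.

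Exhaustiveness among the remaining patterns follows from a single closing-up principle powered by Remark \ref{r14}(v). If some shift of one sequence equals the other, say $\sigma^{i_0}(\mu)=\alpha$, then $\sigma^j(\alpha)=\sigma^{i_0+j}(\mu)\succeq\mu$ for all $j$, so the $\alpha$-side cross comparison can never fall below $\mu$; moreover an equality $\sigma^{j_0}(\alpha)=\mu$ then forces $\sigma^{i_0+j_0}(\mu)=\mu$, i.e.\ periodicity of $\mu$ (and symmetrically of $\alpha$). Tracking these implications shows that a two-sided equality forces both $\mu$ and $\alpha$ periodic, which is exactly \eqref{29}; that a strict overshoot $\sigma^i(\mu)\succ\alpha$ is incompatible with an equality $\sigma^{j}(\alpha)=\mu$, and symmetrically, which pins $\A\setminus\V$ to \eqref{30}--\eqref{32}; and that every sign pattern absent from the list is impossible.

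The genuinely hard step is the identification of $\overline{\U}$ with the union \eqref{21}--\eqref{28}, since the closure is a topological object and only the inclusions $\U\subsetneqq\overline{\U}\subsetneqq\V$ are given. I would prove the two halves separately. That \eqref{29} lies outside $\overline{\U}$ I would deduce from the rigid two-sided periodicity just obtained, exhibiting such a $Q$ as a configuration that no sequence of univoque bases can approach. For the reverse inclusion I would show that every base lying in one of the cases \eqref{22}--\eqref{28}, carrying at most one cross-equality, is a limit of bases in \eqref{21}: the idea is to perturb $Q$ so as to break the single equality while preserving all strict inequalities, which requires the continuity and monotone dependence of $\alpha(Q)$ and $\mu(Q)$ on $Q$. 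Establishing this dependence precisely, and proving that a two-sided equality truly cannot be approximated, is the main obstacle, and is the point at which the argument must adapt the equal-base closure theorem of \cite{DeVKomLor2022}.
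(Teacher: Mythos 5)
Your reduction to sign patterns, the closing-up principle, and your treatment of the last assertion are sound in outline; in particular, your argument that a cross-equality $\sigma^{j}(\alpha)=\mu$ is incompatible with an overshoot $\sigma^{i}(\mu)\succ\alpha$ (and its mirror) is essentially the paper's own proof of the $\A\setminus\V$ part, given in Lemma \ref{l:outV1}. But there is a concrete error in the middle of your argument: you claim that the periodicity-exclusion fact shows that cases \eqref{23} and \eqref{26} are empty and that \eqref{28} reduces to $\C$. That fact only shows that a periodic quasi-greedy $\alpha\ne 1^\infty$ (resp.\ a periodic quasi-lazy $\mu\ne 0^\infty$) violates the lexicographic uniqueness condition for $r_Q$ (resp.\ $\ell_Q$), i.e.\ it shows $Q\notin\U$; it does not make these configurations impossible. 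In fact they are exactly (part of) $\overline{\U}\setminus\U$ and are far from empty: by \cite[Theorem 1]{HuBarZou2024} the pairs $\mu=0(01)^\infty$, $\alpha=(110)^\infty$ (case \eqref{23}), $\mu=(01)^\infty$, $\alpha=1(110)^\infty$ (case \eqref{26}) and $\mu=(01)^\infty$, $\alpha=(110)^\infty$ (case \eqref{28}, outside $\C$) are all realized by double-bases in $\A$ (see Examples \ref{e18}), and Remark \ref{r19} together with Examples \ref{e72} shows that \eqref{23} and \eqref{26} even contain $2^{\aleph_0}$ double-bases. The error is harmless for your equivalence $Q\in\U\setminus\C\Leftrightarrow$ \eqref{21}, where the periodicity argument is used correctly, but it undermines your $\overline{\U}$ step: believing those cases empty, your perturbation plan only addresses the cross-equalities $\sigma^{i}(\mu)=\alpha$ and $\mu=\sigma^{j}(\alpha)$, whereas the cases \eqref{23}, \eqref{26}, \eqref{24}, \eqref{27} and \eqref{28} that must be placed inside $\overline{\U}$ all involve self-equalities (periodic $\mu$ or $\alpha$) that your plan never treats.

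Beyond that, the step you yourself identify as the hard one --- $Q\in\overline{\U}\Leftrightarrow$ \eqref{21}--\eqref{28} --- is left as a plan rather than a proof: the continuity and monotone dependence of $Q\mapsto(\mu(Q),\alpha(Q))$, the approximation of each equality case by bases in \eqref{21}, and the proof that case \eqref{29} cannot be approximated are all deferred to an unspecified adaptation of the equal-base theory, so the proposal is incomplete precisely where completeness was needed. For comparison, the paper does not reprove this part at all: it imports the entire classification of $\C$, $\U\setminus\C$, $\overline{\U}$ and $\V$ by conditions \eqref{21}--\eqref{29} from \cite[Proposition 3.3 and Lemmas 3.4, 5.4, 5.6]{HuBarZou2024}, and the only portion proved in this paper is the partition of $\A\setminus\V$ into \eqref{30}--\eqref{32} via the two short lexicographic arguments of Lemma \ref{l:outV1} (the violating index occurs at a digit $\mu_k=0$, resp.\ $\alpha_k=1$, and a one-sided violation forces the other cross-inequalities to be strict). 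A correct blind proof should either cite that prior work, as the paper does, or supply the closure arguments in full; yours does neither for the $\overline{\U}$ assertion.
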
 

Lemma \ref{l:HuBarZou} extends \cite[Proposition 3.3 and Lemmas 3.4,  5.4, 5.6]{HuBarZou2024} where $\V$ was partitioned into the sets satisfying the conditions (i)--(ix).
The remaining part of Lemma \ref{l:HuBarZou} on the partition of $\A\setminus\V$ into the sets satisfying the conditions (x)--(xii) will be proved in Lemma \ref{l:outV1}, in the last section of the paper, and will only be used there.

We show in Example \ref{e18} that all cases of Lemma \ref{l:HuBarZou} may occur.

\begin{remark}\label{r19}
Since there are only countable many periodic sequences, the sets of double-bases satisfying the condition (viii) or (ix) are countable.

The sets of double-bases satisfying condition (vi) or (vii) are also countable.
By symmetry we prove this for the condition (vi).
Since $\alpha$ is periodic by assumption, there are only countably many choices for $\alpha$.
Furthermore, for each fixed $\alpha$ there are only countable many choices for $\mu$ because $\mu$ ends with $\alpha$.

We show in Example \ref{e72} that the remaining eight sets are uncountable.
We recall from \cite{HuBarZou2024} that the Hausdorff dimension of $\overline{\U}\setminus\U$ is at least one.
\end{remark}

\begin{remark}\label{r110qqq}
In the equal-base case $q_0=q_1$ where $\mu(Q)$ is the reflection of $\alpha(Q)$, only the four cases (i), (viii), (ix) and (xii) of Lemma \ref{l:HuBarZou} may occur, corresponding to the cases $q\in\U$, $q\in\overline{\U}\setminus\U$, $q\in\V\setminus\overline{\U}$ and $(1,2]\setminus\V$, respectively, while $q\in\C$ corresponds to the case $q=2$.

The results of this paper show that various new phenomena occur in the remaining eight cases with respect to the classical case developed in \cite{DeVKom2009} and \cite{DeVKomLor2022}.
\end{remark}

Finally, we generalize the sets $A_q$ and $B_q$ to all $Q \in \A$:
\begin{align*}
&A_Q:=\{x\in \V_Q:\sigma^j(a(x))=\alpha(Q) \text{ for at least one digit }a_j(x)=0\},\\
&B_Q:=\{x\in \V_Q:\sigma^j(m(x))=\mu(Q) \text{ for at least one digit }m_j(x)=1\}.
\end{align*}
It follows the lexicographic characterizations of $\U_Q$ and $\V_Q$ that
\begin{equation*}
A_Q\cup B_Q=\V_Q\setminus\U_Q.
\end{equation*}

An alternative description is the following:

\begin{proposition}\label{p111qqq}
Let $Q \in \A$.
Then
\begin{align*}
&A_Q:=\set{x\in \V_Q: \text{ it's greedy expansion is finite}},\\
&B_Q:=\set{x\in \V_Q: \text{ it's lazy expansion is co-finite}}.
\end{align*}
\end{proposition}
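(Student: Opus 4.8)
The plan is to prove the characterization of $A_Q$ in full and to deduce the one for $B_Q$ by the order-reversing symmetry $0\leftrightarrow 1$, which interchanges greedy with lazy, quasi-greedy with quasi-lazy, ``finite'' with ``co-finite'', and swaps the roles of $\alpha(Q)$ and $\mu(Q)$. The engine of the whole argument is the correspondence between the greedy expansion $b(x)$ and the quasi-greedy expansion $a(x)$: for every $x\in J_Q$ one has $a(x)=b(x)$ when $b(x)$ is infinite, whereas if $b(x)=b_1\cdots b_n0^\infty$ with $b_n=1$ is finite, then $a(x)=b_1\cdots b_{n-1}0\,\alpha(Q)$. I would first record this correspondence; it is the double-base analogue of the classical single-base fact and follows from the greedy and quasi-greedy algorithms recalled at the beginning of Section \ref{s2}, together with the identity $\pi_Q(10^\infty)=\pi_Q(0\alpha(Q))$ of Remark \ref{r14}(iv).

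Granting this, the forward direction is immediate: if $x\in\V_Q$ has finite greedy expansion $b(x)=b_1\cdots b_n0^\infty$ with $b_n=1$, then $a(x)=b_1\cdots b_{n-1}0\,\alpha(Q)$ has $a_n(x)=0$ and $\sigma^n(a(x))=\alpha(Q)$, so $x\in A_Q$ by definition. For the converse, suppose $x\in A_Q$, i.e. $a_j(x)=0$ and $\sigma^j(a(x))=\alpha(Q)$ for some $j$, so that $a(x)=a_1\cdots a_{j-1}0\,\alpha(Q)$. Applying the value-preserving substitution $0\,\alpha(Q)\mapsto 10^\infty$ at position $j$ produces a genuine expansion $y:=a_1\cdots a_{j-1}10^\infty$ of $x$ that is finite and satisfies $y\succ a(x)$, since the two words agree up to position $j-1$ and $1>0$ at position $j$. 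As $a(x)$ is by definition the largest \emph{infinite} expansion of $x$, while $b(x)$ is the largest expansion of all, the existence of the strictly larger expansion $y$ forces $b(x)\succeq y\succ a(x)$; by the dichotomy above, $b(x)\neq a(x)$ can only happen when $b(x)$ is finite. Hence $x$ has a finite greedy expansion, as claimed.

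The statement for $B_Q$ is the mirror image. Reflection $\overline{(x_i)}=(1-x_i)$ turns the greedy/quasi-greedy pair into the lazy/quasi-lazy pair, ``finite'' into ``co-finite'', and $\alpha(Q)$ into $\mu(Q)$, while the substitution identity $\pi_Q(01^\infty)=\pi_Q(1\mu(Q))$ of Remark \ref{r14}(iv) replaces the one used above. Thus, if $l(x)$ is co-finite, say $l(x)=l_1\cdots l_n1^\infty$ with $l_n=0$, then the quasi-lazy expansion is $m(x)=l_1\cdots l_{n-1}1\,\mu(Q)$, which has $m_n(x)=1$ and $\sigma^n(m(x))=\mu(Q)$, so $x\in B_Q$; conversely, if $x\in B_Q$ with $m(x)=m_1\cdots m_{j-1}1\,\mu(Q)$, then $m_1\cdots m_{j-1}01^\infty$ is a strictly smaller co-finite expansion of $x$, which (as $m(x)$ is the smallest co-infinite expansion and $l(x)$ is the smallest expansion of all) forces the lazy expansion $l(x)$ to be co-finite.

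I expect the only genuine subtlety to be the double-base form of the greedy$\leftrightarrow$quasi-greedy (and lazy$\leftrightarrow$quasi-lazy) correspondence. Unlike the single-base case, the weight attached to position $n+1$ is $1/(q_{w_1}\cdots q_{w_n})$ and so depends on the entire prefix $w=w_1\cdots w_n$; one must check that the tail identity of Remark \ref{r14}(iv) nevertheless transplants after any such prefix, i.e. that $\pi_Q(w10^\infty)=\pi_Q(w0\,\alpha(Q))$ holds, so that the replacement is value-preserving wherever it is applied. This is a short computation — the two competing tails have equal $\pi_Q$-value and are scaled by the same positive factor — and once it is in place the extremal ``largest/smallest'' characterizations of the four expansions make the remaining lexicographic comparisons routine.
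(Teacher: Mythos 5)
Your proposal is correct and takes essentially the same route as the paper: the paper deduces both characterizations by combining Lemma \ref{l:lA}\,(i) and Lemma \ref{l:lB}\,(i), whose proofs are exactly your argument---the value-preserving substitution $0\,\alpha(Q)\leftrightarrow 10^\infty$ of Remark \ref{r14}\,(iv) shows that $a(x)$ is not the largest expansion, hence $b(x)$ is finite, and conversely the greedy/quasi-greedy correspondence you record at the outset is precisely the paper's Lemma \ref{l:rela-quasi-gl} (quoted from \cite{KomLuZou2022}), with $B_Q$ handled by the same reflection symmetry. Your explicit check that the tail identity transplants after an arbitrary prefix (scaling by $1/(q_{w_1}\cdots q_{w_n})$) is a point the paper leaves implicit, but it is the same proof.
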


\begin{remark}\label{r112}
It follows from Proposition \ref{p111qqq} that our new definition reduces to the old one in the equal-base case if $q\in(1,2)$.
For $q=2$ the two definitions are different: while $A_{2,2}=A_2$ is a countably infinite set, $B_{2,2}=A_{2,2}$, and  $B_2=\emptyset$.

While in the equal-base case $A_q$ and $B_q$ form a disjoint partition of $\V_q\setminus\U_q$, now $A_Q$ and $B_Q$  cover $\V_Q\setminus\U_Q$ with a possible overlap; see Tables \ref{t:one-base-A-B} and \ref{t:two-base-A-B}, and Examples \ref{e:ABnondisjoint} below.
\end{remark}

Now we are ready to state our  main results.
In the following theorems we refer to the conditions \eqref{21}--\eqref{31} of Lemma \ref{l:HuBarZou}, and write $(\mu,\alpha):=(\mu(Q),\alpha(Q))$ for brevity.

\begin{theorem}\label{T:expansion}\label{t113}\
\begin{enumerate}[\upshape (i)]
\item If $Q\in\U $, i.e., if $q\in\C$ or $(\mu,\alpha)$ satisfies \eqref{21}, then every $x\in\V_Q\setminus\U_Q$ has exactly two expansions.

\item  Let $Q\in\overline{\U }\setminus{\U }$.

\begin{enumerate}[\upshape (a)]
\item If $(\mu,\alpha)$ satisfies \eqref{22} or \eqref{25}, then every  $x\in\V_Q\setminus\U_Q$ has two or three  expansions.

\item If $(\mu,\alpha)$ satisfies \eqref{23} or \eqref{26}, then every  $x\in\V_Q\setminus\U_Q$ has two or $\aleph_0$ expansions.

\item If $(\mu,\alpha)$ satisfies \eqref{24} or \eqref{27} or \eqref{28}, then every $x\in\V_Q\setminus\U_Q$ has exactly $\aleph_0$ expansions.
\end{enumerate} 

\item If $(\mu,\alpha)$ satisfies \eqref{29}, i.e., if $Q\in{\V \setminus\overline{\U }}$, then every  $x\in\V_Q\setminus\U_Q$ has exactly $\aleph_0$ expansions.

\item If $(\mu,\alpha)$ satisfies \eqref{30} or \eqref{31}, then every  $x\in\V_Q\setminus\U_Q$ has two or $\aleph_0$ expansions.
\end{enumerate} 
\end{theorem}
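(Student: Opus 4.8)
The plan is to reduce the whole statement to a local \emph{switching} analysis built on the two identities of Remark~\ref{r14}~(iv). Write $E(x)$ for the set of all expansions of $x$. I would first record a bound valid in every case: since $x\in\V_Q$ has at most one doubly infinite expansion (Proposition~\ref{p17qqq}~(ii)), all but at most one element of $E(x)$ end in $0^\infty$ or $1^\infty$, and there are only countably many such sequences, so $\abs{E(x)}\le\aleph_0$ always; it remains to produce the exact finite value or to exhibit infinitely many expansions. The branching lemma I would prove is that, for $x\in\V_Q$ (whose expansions have tails squeezed between $\mu$ and $\alpha$), the only admissible single-digit changes are the canonical switches $0\alpha\leftrightarrow 10^\infty$ and $1\mu\leftrightarrow 01^\infty$: a position $n$ with $x_n=0$ can be switched precisely when $\sigma^{n}((x_i))=\alpha$, and one with $x_n=1$ precisely when $\sigma^{n}((x_i))=\mu$. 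Thus $E(x)$ is the connected component of any one expansion in the (undirected) graph whose edges are these reversible switches, and $\abs{E(x)}$ is the number of its vertices.

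Next I would use $\V_Q\setminus\U_Q=A_Q\cup B_Q$ and Proposition~\ref{p111qqq} to start the cascade from a canonical vertex: for $x\in A_Q$ from the finite greedy expansion $b(x)=w10^\infty$, for $x\in B_Q$ from the co-finite lazy expansion; the two are exchanged by the reflection $(x_i)\mapsto\overline{(x_i)}$, which swaps $q_0\leftrightarrow q_1$ and hence $\alpha\leftrightarrow\mu$, and each group of conditions in the statement is invariant under this swap. Taking $x\in A_Q$, the switch at the terminal $1$ replaces $w10^\infty$ by the quasi-greedy expansion $w0\alpha$, and all further expansions come from switches inside the $\alpha$-block(s); the exceptional convention of Lemma~\ref{l:HuBarZou} fixes exactly which internal switches exist, giving four local rules. (a) If $\alpha$ is univoque, $\mu\prec\sigma^{j}(\alpha)\prec\alpha$ for all $j$, then $0\alpha$ has no internal branch point. (b) If $\alpha$ collides exactly once with a univoque $\mu$, a unique $j$ with $\sigma^{j}(\alpha)=\mu$, the single internal switch $1\mu\mapsto 01^\infty$ produces one extra, co-finite expansion that is a leaf of the graph. (c) If $\alpha$ is periodic, $\sigma^{j}(\alpha)=\alpha$, then a periodic quasi-greedy sequence ends each period in $0$, so a switch-up point appears at the head of every repeated block and $\abs{E(x)}=\aleph_0$. (d) If $\alpha$ collides with a $\mu$ that is itself periodic or collides back with $\alpha$, the internal switch opens onto a cascading tail and again $\abs{E(x)}=\aleph_0$.

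Assembling these rules with the reflected analysis gives the theorem, the only subtlety being the overlap $A_Q\cap B_Q$: rule~(b) shows that a single $\alpha$--$\mu$ collision \emph{bridges} a finite greedy expansion to a co-finite one, so the three-expansion points are exactly those in $A_Q\cap B_Q$ while one-sided points have two. Concretely, \eqref{21} makes $\alpha$ and $\mu$ both univoque and, together with the degenerate case $Q\in\C$ (where $\alpha=1^\infty$, $\mu=0^\infty$ by Remark~\ref{r14}~(ii) and the two identities collapse to $01^\infty\leftrightarrow 10^\infty$), yields exactly two expansions, proving~(i); \eqref{22} and \eqref{25} introduce one bridging collision, giving two or three (case~(ii)(a)); \eqref{23} and \eqref{26} pair a univoque side with a periodic side, giving two or $\aleph_0$ (case~(ii)(b)); \eqref{24}, \eqref{27}, \eqref{28} force a cascade on both sides, giving $\aleph_0$ (case~(ii)(c))---note that for $Q\in\overline{\U}\setminus\U$ the periodic sequences are non-degenerate, so \eqref{28} is genuinely infinite here, unlike its $\C$-subcase absorbed into~(i); and \eqref{29} makes both sides periodic, giving $\aleph_0$ (case~(iii)). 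For $Q\in\A\setminus\V$, conditions \eqref{30} and \eqref{31} leave the univoque-versus-periodic type of $\alpha$ (resp.\ $\mu$) open, so the relevant side gives two or $\aleph_0$; the extra feature $\sigma^{i}(\mu)\succ\alpha$ (resp.\ $\sigma^{j}(\alpha)\prec\mu$) is absorbed through the quasi-lazy description of $B_Q$ (resp.\ $A_Q$) in Proposition~\ref{p111qqq}, yielding case~(iv).

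The main obstacle is the termination and overlap bookkeeping in the finite cases (a) and (b): after the advertised switch one must prove that the modified tail creates no new branch point, and that the finite prefix $w$ contributes none, so that $\abs{E(x)}$ is exactly two or exactly three rather than merely bounded below. The decisive mechanism is that the strict inequalities of each condition forbid spurious collisions---a prefix collision $\sigma^{k}(w0\alpha)=\mu$ would force $\sigma^{m}(\mu)=\alpha$, excluded by $\sigma^{i}(\mu)\prec\alpha$, and a prefix collision with $\alpha$ would force $\alpha$ periodic, excluded by $\sigma^{j}(\alpha)\prec\alpha$---so the convention that ``$\preceq$'' forces exactly one equality is precisely what makes the cascade stop where claimed and identifies the three-expansion points with $A_Q\cap B_Q$. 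The parallel delicate step in the infinite cases is to verify that the $\aleph_0$ sequences produced from a periodic block are pairwise distinct and individually valid; this rests on the trailing-zero normal form of a periodic quasi-greedy $\alpha$ (and, by reflection, of $\mu$), which together with the bound $\abs{E(x)}\le\aleph_0$ from the first paragraph pins the cardinality to exactly $\aleph_0$.
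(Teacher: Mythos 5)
Your overall strategy --- generate expansions from the two identities $\pi_Q(0\alpha)=\pi_Q(10^\infty)$ and $\pi_Q(1\mu)=\pi_Q(01^\infty)$ of Remark \ref{r14} (iv) and then count case by case --- is in substance the paper's own (Proposition \ref{p38} in Section \ref{s3} and Lemma \ref{l:outV2} in Section \ref{s6}), and your final bookkeeping of the cases agrees with it. The genuine gap is that everything rests on your ``branching lemma'', which is both misstated and unproven, and it is exactly where the paper's hard work lies. The ``precisely when'' claim is false: two expansions of a point $x\in\V_Q\setminus\U_Q$ can first differ at a position $n$ without either of them having the canonical form there. Take case \eqref{29} with $\mu=(01)^\infty$, $\alpha=(10)^\infty$, and $x=1/q_1$ (which lies in $A_Q$ by Lemma \ref{l31}). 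Then $10^\infty$ and $001^\infty$ are both expansions of $x$, since
\begin{equation*}
\pi_Q(001^\infty)=\pi_Q(01\mu)=\pi_Q\bj{(01)^\infty}=\pi_Q(0\alpha)=\pi_Q(10^\infty),
\end{equation*}
and they differ already at position $1$; yet $\sigma^1(001^\infty)=01^\infty\ne\alpha$ and $\sigma^1(10^\infty)=0^\infty\ne\mu$, so no single canonical switch joins them (they are joined only by a chain of two switches, at positions $2$ and then $1$). Your supporting premise is also wrong: for $x\in\V_Q$ only the doubly infinite expansion $a(x)=m(x)$ has all tails squeezed between $\mu$ and $\alpha$; the finite and co-finite expansions such as $b(x)$, $l(x)$ and the intermediate ones do not, and these are precisely the sequences among which spurious branches must be excluded. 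Hence ``$E(x)$ is the connected component under canonical switches'' does not follow from any local, digit-by-digit statement; it is a global completeness claim, and proving it is the actual content of Lemmas \ref{l:lA} (ii)--(iii) and \ref{l:lB} (ii)--(iii): any expansion above $a(x)$ is finite because $a(x)$ is the largest infinite expansion, and rewriting its tail $10^\infty$ as $0\alpha$ yields an infinite expansion exceeding $a(x)$ unless the expansion has the prescribed form $c^N$. Your one-sentence substitute (``the strict inequalities forbid spurious collisions'') does not address this, because, as the example shows, the expansions to be excluded need not arise from collisions at all.

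There is a second, independent gap in part (iv). Under condition \eqref{31} you tacitly assume that only the $\mu$-side contributes, i.e.\ that $A_Q=\emptyset$ and that every $x\in\V_Q\setminus\U_Q$ satisfies $m(x)=a(x)=b(x)$. Neither is automatic: $A_Q=\emptyset$ is Lemma \ref{l:outV2} (i-a), whose proof needs Lemma \ref{l:outV1} (at the minimal $k$ with $\sigma^k(\alpha)\prec\mu$ one has $\alpha_k=1$) to show that an expansion ending in $0\alpha$ would violate the quasi-lazy inequality defining $\V_Q$; and $a(x)=b(x)$ then follows from $x\notin A_Q$ via Proposition \ref{p111qqq}. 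Without these facts the dichotomy ``two or $\aleph_0$'' in (iv) is unsupported. In summary, the skeleton and the announced counts match the paper, but as written the proposal only yields lower bounds on the number of expansions; the exact counts require the completeness and emptiness arguments that are missing.
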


\begin{remark}\label{r114}
More precise results will be given in Proposition \ref{p38} and Lemma \ref{l62} for the cases (ii-a), (ii-b) and (iv).
We recall that these cases do not occur in the classical case $q_0=q_1$ where $\mu(Q)$ is the reflection of $\alpha(Q)$.

The case \eqref{32} is absent from Theorem \ref{T:expansion}: in fact, we have $\U_Q=\V_Q$ in this case by Theorem \ref{T:topology1} (viii).
\end{remark}

The following theorem gives the relevant topological properties of sets $\U_Q$ and $\V_Q$. 
We write $(\mu,\alpha)$ instead of $(\mu(Q),\alpha(Q))$ for brevity.
 
\begin{theorem}\label{T:topology1}\label{t115} 
Let $Q\in \A.$
	
\begin{enumerate}[\upshape (i)]
\item $\V_Q$ is closed, and $\U_Q\subseteq\overline{\U}_Q\subseteq\V_Q$. 

\item If $(\mu,\alpha)$ satisfies one of the conditions \eqref{21}--\eqref{31}, then $|\V_Q\setminus\U_Q|=\aleph_0$, and $\V_Q\setminus\U_Q$ is dense in $\V_Q$.
		
\item If $(\mu,\alpha)$ satisfies \eqref{29}, i.e., if $Q\in{\V \setminus\overline{\U }}$, then  $\U_Q$ is closed, and
$\V_Q\setminus\U_Q$ is a discrete set, and $\V_Q$ is not a Cantor set.

\item If $Q \in \C$, then $\overline\U_Q=\V_Q=J_Q$.

\item If $(\mu,\alpha)$ satisfies \eqref{21} or \eqref{23} or \eqref{26} or \eqref{28}$\setminus\C$, then $\U_Q\subsetneqq\overline{\U}_Q=\V_Q$, and $\V_Q$ is a Cantor set.

\item If $(\mu,\alpha)$ satisfies \eqref{22} or \eqref{25} or \eqref{24} or \eqref{27}, then $\overline{\U}_Q\subsetneqq\V_Q$, and $\V_Q$ is not a Cantor set.

Furthermore,
\begin{equation*}
\V_Q\setminus\overline\U_Q\text{ is discrete }\Longleftrightarrow
\U_Q\text{ is closed }\Longleftrightarrow
\begin{cases}
1/(q_0(q_1-1))\notin\overline\U_Q&\text{in cases \eqref{22}  and \eqref{27}},\\
1/q_1\notin\overline\U_Q&\text{in cases \eqref{25} and \eqref{24}.}
\end{cases}
\end{equation*}

\item If $(\mu,\alpha)$ satisfies  \eqref{30} or \eqref{31}, then $\U_Q\subsetneqq\V_Q$, 
and 
\begin{equation*}
\U_Q\text{ is closed }\Longleftrightarrow
\begin{cases}
1/(q_0(q_1-1))\notin\overline\U_Q&\text{in case \eqref{31}},\\
1/q_1\notin\overline\U_Q&\text{in case \eqref{30}.}
\end{cases}
\end{equation*}
Furthermore, $\V_Q\setminus\U_Q$ is a non-empty discrete set if $\U_Q$ is closed, and $\overline\U_Q=\V_Q$ otherwise.

\item If $(\mu,\alpha)$ satisfies \eqref{32},  then $\U_Q=\overline\U_Q=\V_Q$. 
\end{enumerate}
\end{theorem}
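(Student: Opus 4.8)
The plan is to translate every topological assertion into symbolic dynamics through the lexicographic characterizations of $\U_Q$ and $\V_Q$, and then to read the topology off the relations between $\mu$, $\alpha$ and their shifts recorded in Lemma~\ref{l:HuBarZou}. The basic dictionary is that the coding map $\pi_Q\colon\{0,1\}^\infty\to J_Q$ is continuous and order-preserving, so that closedness, density, and the absence of isolated or interior points can all be detected on the level of greedy, quasi-greedy, lazy and quasi-lazy expansions. Throughout I would exploit Remark~\ref{r14}(iv), which singles out the two distinguished numbers $1/q_1=\pi_Q(0\alpha)=\pi_Q(10^\infty)$ and $1/(q_0(q_1-1))=\pi_Q(1\mu)=\pi_Q(01^\infty)$ around which all non-uniqueness is organized.

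First I would dispatch part (i). The set $\V_Q$ is cut out by the two closed lexicographic conditions $\sigma^j(m(x))\preceq\alpha$ (when $m_j(x)=0$) and $\sigma^j(a(x))\succeq\mu$ (when $a_j(x)=1$); using the standard monotonicity and one-sided semicontinuity of the quasi-greedy and quasi-lazy maps $x\mapsto a(x)$, $x\mapsto m(x)$, these conditions pass to limits, so $\V_Q$ is closed. The inclusions $\U_Q\subseteq\overline{\U}_Q\subseteq\V_Q$ are then immediate from Remark~\ref{r16} (which gives $\U_Q\subseteq\V_Q$) together with the closedness just proved. For part (ii), the countability $\abs{\V_Q\setminus\U_Q}\le\aleph_0$ is again Remark~\ref{r16}; the equality $\aleph_0$ and the density in $\V_Q$ I would obtain by exhibiting, near any expansion allowed in $\V_Q$, infinitely many numbers of $\V_Q\setminus\U_Q$ produced by truncating an expansion at a $0$ (resp. a $1$) and appending $\alpha$ (resp. $\mu$), the two continuations being non-unique precisely because of Remark~\ref{r14}(iv).

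The heart of the argument is the Cantor/non-Cantor dichotomy of parts (iii)--(viii), which I would reduce to two questions: whether $\V_Q$ has interior points, and whether $\V_Q\setminus\U_Q$ consists of points isolated in $\V_Q$ or of points accumulated by $\U_Q$. Interior points occur exactly when $Q\in\C$, giving part (iv) ($\V_Q=J_Q$ is a full interval); away from $\C$ the strict inequalities force empty interior. For the isolated-point analysis I would follow the equal-base template of Theorem~\ref{t13}: each $x\in A_Q$ has a finite greedy expansion $b_1\cdots b_n0^\infty$ with $b_n=1$, and the behaviour of $\V_Q$ immediately on either side of the associated non-uniqueness point is dictated by whether the companion expansion obtained from $b_1\cdots b_n$ (continued by $\alpha$ on one side and by the lazy continuation of $\mu$ on the other) is accumulated by $\U_Q$ or is separated from it, with the symmetric statement for $x\in B_Q$. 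When both one-sided continuations are accumulated by $\U_Q$ (conditions \eqref{21}, \eqref{23}, \eqref{26}, \eqref{28}$\setminus\C$), the set $\V_Q\setminus\U_Q$ is a dense set of non-isolated points, so $\V_Q=\overline{\U}_Q$ is a Cantor set, proving (v); when both sides are instead separated from $\U_Q$, as happens under condition \eqref{29}, every point of $\V_Q\setminus\U_Q$ is isolated, $\U_Q$ is closed, and $\V_Q$ fails to be a Cantor set, which is part (iii); the mixed situations land in (vi) or (vii).

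The main obstacle is parts (vi) and (vii), where the same symbolic configuration can yield either a closed $\U_Q$ with discrete $\V_Q\setminus\U_Q$ or a non-closed $\U_Q$ with $\overline{\U}_Q=\V_Q$, the alternative being governed exactly by whether the distinguished point $1/q_1$ or $1/(q_0(q_1-1))$ belongs to $\overline{\U}_Q$. Here I would analyse the quasi-greedy expansion $\alpha$ (resp. quasi-lazy expansion $\mu$) of these boundary points directly: $1/q_1$ fails to be a limit of $\U_Q$ precisely when $\alpha$ is itself univoque in the relevant one-sided sense, and symmetrically for $\mu$ and $1/(q_0(q_1-1))$. Establishing these equivalences requires a careful split according to which of the weak inequalities in conditions \eqref{22}, \eqref{25}, \eqref{24}, \eqref{27}, \eqref{30} and \eqref{31} actually degenerate to equalities (recalling the exceptional convention of Lemma~\ref{l:HuBarZou}); it is exactly the independence of $\mu$ from $\alpha$ in the double-base setting — they need no longer be reflections of one another — that forces the asymmetric left/right bookkeeping and makes this the most delicate part of the proof. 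Finally, part (viii) follows because condition \eqref{32} forces both one-sided non-uniqueness mechanisms to fail at once, collapsing $\U_Q=\overline{\U}_Q=\V_Q$.
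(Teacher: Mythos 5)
Your overall architecture (reduce everything to lexicographic conditions, then decide the Cantor/non-Cantor and closed/non-closed dichotomies by asking whether points of $A_Q\cup B_Q$ are isolated in $\V_Q$ or accumulated by $\U_Q$) matches the paper's strategy, and your treatment of (i), (iii), (iv) and (viii) is essentially the paper's. But there are two genuine gaps. The first is in part (v): you assert that in cases \eqref{21}, \eqref{23}, \eqref{26}, \eqref{28} the points of $\V_Q\setminus\U_Q$ are "accumulated by $\U_Q$", but this is exactly the hard statement, and it does not follow from any truncate-and-append recipe. A sequence ending in $\alpha$ or in $\mu$ is never a unique expansion, so to approximate $x\in A_Q$ with $a(x)=a_1\cdots a_{n-1}0\alpha$ by elements of $\U_Q$ one must build hybrid sequences of the form $a_1\cdots a_{n-1}0\,\alpha_1\cdots\alpha_{m_1}\mu_1\cdots\mu_{m_2}\alpha_1\cdots\alpha_{m_3}\cdots$, where the cut points $m_1\le m_2\le\cdots$ are produced by repeated application of Lemma~\ref{l:important1} and are taken large enough that the finitely many lexicographic inequalities created at each junction become strict; this interleaving construction (the paper's Lemma~\ref{l:density3}) is the technical core of (v), and it also uses the strictness hypotheses of the four cases in an essential way. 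The same objection applies to your density argument in (ii): truncating "at a $0$" at an arbitrary position and appending $\alpha$ need not produce a point of $\V_Q$; the truncation indices must come from Lemma~\ref{l:important1}.

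The second gap concerns $Q\in\A\setminus\V$, i.e.\ cases \eqref{30} and \eqref{31}, which enter both (ii) and (vii). Your proposal handles these by "a careful split according to which weak inequalities degenerate to equalities", but no analysis of $(\mu,\alpha)$ alone in the spirit of your sketch can succeed, because in these cases the pair $(\mu,\alpha)$ violates the standing inequalities on which all the earlier lemmas rest. The paper's resolution (Lemma~\ref{l:outV2}) is a reduction: in case \eqref{31} one takes the smallest $n$ with $\sigma^n(\alpha)\prec\mu$, proves $\alpha_n=1$, replaces $\alpha$ by the periodic sequence $\alpha'=(\alpha_1\cdots\alpha_n^-)^\infty$, shows $\mu\preceq\sigma^i(\alpha')\preceq\alpha'$ for all $i$, and then proves that the auxiliary sets $\U_{Q'},\V_{Q'}$ defined by the pair $(\mu,\alpha')$ satisfy $\V_{Q'}=\V_Q$ and $\U_{Q'}\subseteq\U_Q$; since $(\mu,\alpha')$ now satisfies one of \eqref{21}--\eqref{29}, the already-established parts of the theorem can be transported back to $Q$, yielding both the density claim of (ii) and the trichotomy of (vii) (with three subcases according to whether $\sigma^i(\mu)\prec\alpha'$ and $\mu\prec\sigma^j(\alpha')$ always hold, or $\sigma^i(\alpha')=\mu$, or $\sigma^t(\mu)=\alpha'$). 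Without this change of comparison sequence, your criterion "$1/q_1\notin\overline\U_Q$ iff $\alpha$ is univoque in a one-sided sense" has no proof route in cases \eqref{30}--\eqref{31}, and even in cases \eqref{22}, \eqref{25}, \eqref{24}, \eqref{27} the equivalence with discreteness of $\V_Q\setminus\overline\U_Q$ requires the two-directional argument of the paper's Lemma~\ref{l53} (one direction constructs accumulation points of $\V_Q$ inside $B_Q\setminus A_Q$ via Lemma~\ref{l:important1}; the other shows $B_Q\setminus A_Q\subseteq\overline\U_Q$ by splicing unique expansions of points $z_k\searrow 1/(q_0(q_1-1))$ onto finite prefixes), none of which is visible in your outline.
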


Table \ref{t:two-base-A-B} gives an overview of the main topological properties of $\U_Q$, $\overline\U_Q$ and $\V_Q$ in the double-base case, proved in Theorems \ref{T:expansion} and \ref{T:topology1}, with some further information proved in Sections \ref{s:3}--\ref{s7} below.

In Table \ref{t:two-base-A-B} $|A'_x|$ and $|B'_x|$ denote the number of expansions of each $x\in A_Q$ and $x\in B_Q$, respectively.

Comparing to Table \ref{t:one-base-A-B} we see that the double-base case is much more complex.
For example, contrary to the equal-base case,
\begin{itemize}
\item $\U_Q$ may be closed even if $Q\in\overline\U$;

\item $\U_Q$ may be not closed even if $Q\in\A\setminus\overline\U$;

\item there exist double-bases for which the three sets $\U_Q$, $\overline\U_Q$ and $\V_Q$ are different;

\item there exist double-bases for which $V_Q\setminus\overline{\U}_Q$ is nonempty and non-discrete;

\item $A_Q$ and $B_Q$ are nonempty for all $Q\in\V$;

\item $A_Q$ and $B_Q$ may cover $V_Q\setminus\U_Q$ with an overlap.
\end{itemize}
\
\begin{table*}[h]
\begin{center}
\begin{tabular}{|c|c|c|c|c|c|}
\hline
Case
& $Q\in $
& Inclusions 
& $A_Q$ and $B_Q$ 
& $|A'_x|$ 
&$|B'_x|$\\ 
			
\hline
$\C$ 
& $\C$ 
& $\U_Q\subsetneqq\overline\U_Q=\V_Q$ 
& $A_Q=B_Q$ 
& 2 
& 2\\
			
\eqref{21} 
& $\U\setminus\C$ & $\U_Q\subsetneqq\overline\U_Q=\V_Q$ 
& $A_Q\cap B_Q=\emptyset$ 
& 2 
& 2\\
			
\eqref{22} 
& $\overline{\U}\setminus\U$ 
& $\U_Q=\overline{\U}_Q\subsetneqq\V_Q$ or $\U_Q\subsetneqq\overline{\U}_Q\subsetneqq\V_Q$ 
& $A_Q\subsetneqq B_Q$ 
& 3 
& $2$ or $3$\\

\eqref{25} 
& $\overline{\U}\setminus\U$ 
& $\U_Q=\overline{\U}_Q\subsetneqq\V_Q$ or $\U_Q\subsetneqq\overline{\U}_Q\subsetneqq\V_Q$ 
& $B_Q\subsetneqq A_Q$ 
& $2$ or $3$ 
& 3\\

\eqref{23} 
& $\overline{\U}\setminus\U$ 
& $\U_Q\subsetneqq\overline\U_Q=\V_Q$ 
& $A_Q\cap B_Q=\emptyset$ 
& $\aleph_0$ 
& 2\\
			
\eqref{26} & $\overline{\U}\setminus\U$ &   $\U_Q\subsetneqq\overline\U_Q=\V_Q$ & $A_Q\cap B_Q=\emptyset$ & 2 & $\aleph_0$\\
			
\eqref{24}
& $\overline{\U}\setminus\U$ 
& $\U_Q=\overline{\U}_Q\subsetneqq\V_Q$ or $\U_Q\subsetneqq\overline{\U}_Q\subsetneqq\V_Q$ 
& $B_Q\subsetneqq A_Q$ 
& $\aleph_0$ 
& $\aleph_0$\\
			
\eqref{27} 
& $\overline{\U}\setminus\U$ 
& $\U_Q=\overline{\U}_Q\subsetneqq\V_Q$ or $\U_Q\subsetneqq\overline{\U}_Q\subsetneqq\V_Q$
& $A_Q\subsetneqq B_Q$ 
& $\aleph_0$ 
& $\aleph_0$ \\
			
\eqref{28}$\setminus\C$ 
& $\overline{\U}\setminus\U$ 
& $\U_Q\subsetneqq\overline\U_Q=\V_Q$ 
& $A_Q\cap B_Q=\emptyset$ 
& $\aleph_0$ 
& $\aleph_0$ \\
			
\eqref{29}
& $\V\setminus\overline{\U}$
& $\U_Q=\overline{\U}_Q\subsetneqq \V_Q$ 
& $A_Q=B_Q$ 
& $\aleph_0$ 
& $\aleph_0$\\
			
\eqref{30}
& $\A\setminus\V$ 
& $\U_q\subsetneqq\overline\U_{q}=\V_{q}$ or $\U_q=\overline\U_{q}\subsetneqq\V_{q}$ 
& $B_Q\subsetneqq A_Q$ 
& $\aleph_0$ or $2$ 
& $B_q=\emptyset$\\
			
\eqref{31}
& $\A\setminus\V$ 
& $\U_q\subsetneqq\overline\U_{q}=\V_{q}$ or $\U_q=\overline\U_{q}\subsetneqq\V_{q}$ 
& $A_Q\subsetneqq B_Q$ 
& $A_q=\emptyset$ 
& $2$ or $\aleph_0$\\
			
\eqref{32}
& $\A\setminus\V$ 
& $\U_Q=\overline{\U}_Q=\V_Q$ 
& $A_Q=B_Q=\emptyset$ 
& $A_q=\emptyset$ 
& $B_q=\emptyset$\\
			
\hline
\end{tabular}	
\end{center}
\caption{Overview of the double-base case
% for $Q\in \A$
%\\
%$A_q$ and $B_q$ cover $\V_q\setminus\U_q$\\
%$\V_{q}$ is a Cantor set in cases $\eqref{21}$, $\eqref{23}$, $\eqref{26}$ and $\eqref{28}$.
}
\label{t:two-base-A-B}
\end{table*}

\begin{corollary}\label{c116qqq}
Let $Q\in\A$. 
The following relations hold:
\begin{align*}
&Q\in \U \Longleftrightarrow \ell_Q\text{ and  } r_Q\in \U_Q,\\
&Q\in \V \Longleftrightarrow \ell_Q \text{ and }r_Q\in \V_Q,\\
&\ell_Q \text{ and }r_Q\in \overline{\U}_Q\Longrightarrow Q\in\overline{\U}.
\end{align*}
\end{corollary}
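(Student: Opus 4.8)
The plan is to settle the three relations in increasing order of difficulty, leaning on the reinterpretations of $\U_Q$ and $\V_Q$ in Proposition \ref{p17qqq} and on the intrinsic descriptions of $\U$ and $\V$ in Proposition \ref{p15qqq}. The first equivalence is a pure definition chase: by the definition of $\U$ recalled above, $Q\in\U$ means precisely that $\ell_Q$ and $r_Q$ have unique expansions, while Proposition \ref{p17qqq}(i) identifies $\U_Q$ with the set of $x\in J_Q$ having a unique expansion. Hence $Q\in\U$ iff $\ell_Q,r_Q\in\U_Q$, with nothing more to check.

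For the second equivalence I would pair Proposition \ref{p15qqq}(ii), which reads $Q\in\V$ iff $\ell_Q$ and $r_Q$ have unique doubly infinite expansions, with Proposition \ref{p17qqq}(ii), which describes $\V_Q$ as the set of $x$ having \emph{at most} one doubly infinite expansion. The forward direction is then immediate, since a unique doubly infinite expansion is in particular the only one. The reverse direction is the one delicate point: from ``at most one'' I must recover ``exactly one'', that is, I must produce at least one doubly infinite expansion for each of $\ell_Q$ and $r_Q$. The natural candidates are the quasi-lazy expansion $\mu=m(\ell_Q)$ and the quasi-greedy expansion $\alpha=a(r_Q)$, so the crux is to verify that these two distinguished expansions are always doubly infinite. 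For $Q\in\C$ this is the convention $\mu=0^\infty$, $\alpha=1^\infty$ of Remark \ref{r14}(ii); for $Q\in\A\setminus\C$ I would note that $\alpha$ is infinite by construction and argue it cannot be co-finite, for if $\alpha$ ended in $01^\infty$ then the shift immediately past its last $0$ would equal $1^\infty$, contradicting the shift-maximality $\sigma^j(\alpha)\preceq\alpha$ of Remark \ref{r14}(v) together with $\alpha\prec1^\infty$ from Remark \ref{r14}(iii). The symmetric argument, using shift-minimality $\mu\preceq\sigma^i(\mu)$ and $0^\infty\prec\mu$, shows $\mu$ is doubly infinite as well. I expect this upgrade from ``at most one'' to ``exactly one'' to be the main obstacle; once it is in place, $\ell_Q,r_Q\in\V_Q$ forces a unique doubly infinite expansion for each, whence $Q\in\V$.

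The third statement I would prove by contraposition. Assume $Q\notin\overline{\U}$. If $Q\notin\V$, then by the second equivalence at least one of $\ell_Q,r_Q$ fails to lie in $\V_Q$, and since $\overline{\U}_Q\subseteq\V_Q$ by Theorem \ref{T:topology1}(i) that point escapes $\overline{\U}_Q$ as well. Otherwise $Q\in\V\setminus\overline{\U}$, which by Lemma \ref{l18qqq} is exactly case \eqref{29}; here Theorem \ref{T:topology1}(iii) supplies the key identity $\overline{\U}_Q=\U_Q$ (as $\U_Q$ is closed). If both $\ell_Q$ and $r_Q$ belonged to $\overline{\U}_Q=\U_Q$ they would have unique expansions, forcing $Q\in\U\subseteq\overline{\U}$ by the first equivalence, a contradiction; hence at least one of them avoids $\overline{\U}_Q$. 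In both branches the contrapositive is established, and the only external input to watch is the legitimacy of the identity $\overline{\U}_Q=\U_Q$ in case \eqref{29}.
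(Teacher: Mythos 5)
Your proof is correct and follows the same overall architecture as the paper's: the two equivalences come from matching Propositions \ref{p15qqq} and \ref{p17qqq} against the definitions of $\U$, $\V$, $\U_Q$, $\V_Q$, and the third implication is proved by contraposition with the same case split $Q\in\A\setminus\V$ versus $Q\in\V\setminus\overline{\U}$, keyed on the identity $\overline{\U}_Q=\U_Q$ in case \eqref{29}. Three small differences are worth recording. First, where the paper simply declares the two equivalences to follow from the definitions, you supply the one genuinely nontrivial point --- that $\mu$ and $\alpha$ are themselves doubly infinite, upgrading ``at most one'' to ``exactly one'' doubly infinite expansion; this is exactly the content of Lemma \ref{l:doublyinfinite} (equivalently Proposition \ref{p17qqq} (iv)), so citing that result would have shortened your argument, but your direct derivation from Remark \ref{r14} is sound, including the corner case $Q\in\C$ where both sides of the equivalence hold trivially. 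Second, in the case $Q\notin\V$ you deduce that one of $\ell_Q,r_Q$ leaves $\V_Q$ from your already-established second equivalence together with $\overline{\U}_Q\subseteq\V_Q$, whereas the paper invokes conditions \eqref{30}--\eqref{32} of Lemma \ref{l:HuBarZou}, whose proof is deferred to Lemma \ref{l:outV1} in the last section; your route avoids that forward reference, which is a small but genuine improvement in the logical ordering. Third, in the case $Q\in\V\setminus\overline{\U}$ you reach the contradiction via the first equivalence (both points in $\U_Q$ would force $Q\in\U$) instead of checking directly from the lexicographic conditions that $\ell_Q,r_Q\notin\U_Q$, and you cite Theorem \ref{T:topology1} (iii) for the closedness of $\U_Q$, which is the correct reference --- the paper's own proof mis-cites part (iv) at that step.
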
	 
  
\begin{example}\label{e117}
The last implication cannot be reversed in general.
For example, if $\mu(Q)=(01)^\infty$ and $\alpha(Q)=11(01)^\infty$,\footnote{Case \eqref{27} of Lemma \ref{l:HuBarZou}.} then $Q\in\overline{\U }$, but none of $\ell_Q$ and $r_Q$ belongs to $\U_Q=\{0, 1/(q_1-1)\}$.

We recall from \cite[Corollary 1.8 ]{DeVKomLor2022} that  the reverse implication holds if $q_0=q_1$.
\end{example}

Finally we describe the finer structure of $\V_Q$ and $\U_Q$ for $Q \in \V\setminus \C$ and $Q\in{\V \setminus\overline{\U }}$, respectively.

\begin{theorem}\label{T:topology2}\label{t118}\
Let $Q \in \V\setminus \C$.

\begin{enumerate}[\upshape (i)]
\item $J_Q\setminus \V_Q$ is a  union of $\aleph_0$  disjoint open sets $(x_L,x_R)$, where $x_L$ and $x_R$ run over $A_Q$ and $B_Q$, respectively. 
Furthermore, 
\begin{equation*}
b(x_L)=b_1\cdots b_{n-1}10^\infty\Longleftrightarrow
l(x_R)=b_1\cdots b_{n-1}01^\infty.
\end{equation*}

\item If $(\mu,\alpha)$ satisfies the condition \eqref{29}, i.e., if $Q\in{\V \setminus\overline{\U }}$, then $J_Q\setminus \U_Q$ is an open set. 
Furthermore, each connected component $(x_L,x_R)$ of $J_Q\setminus \U_Q$ contains infinitely many elements of $\V_Q$, forming an increasing sequence $(x_k)^\infty_{k=-\infty}$ satisfying 
\begin{equation*}
x_k\rightarrow x_L \text{ as } k\rightarrow-\infty,\qtq{and}x_k\rightarrow x_R \text{ as } k\rightarrow\infty.
\end{equation*}
Moreover, each $x_k$ has a finite greedy expansion
\begin{equation*}
b(x_k)=b_1\cdots b_n0^\infty\text{ with } b_n=1,\qtq{and then}
a(x_{k+1},q)=b_1\cdots b_n\mu(Q).
\end{equation*}	
\end{enumerate}
\end{theorem}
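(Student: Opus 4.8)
The plan is to deduce both parts from the lexicographic descriptions of $\U_Q$ and $\V_Q$ together with the structural facts already recorded in Theorem \ref{T:topology1}, and to reduce part (ii) to part (i).

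\emph{Part (i).} Since $\V_Q$ is closed by Theorem \ref{T:topology1}(i) and the endpoints $0=\pi_Q(0^\infty)$ and $\tfrac1{q_1-1}=\pi_Q(1^\infty)$ lie in $\U_Q\subseteq\V_Q$, the set $J_Q\setminus\V_Q$ is a countable disjoint union of open intervals $(x_L,x_R)$ with $x_L,x_R\in\V_Q$. I would first build a candidate family directly from $A_Q$. Given $x_L\in A_Q$, Proposition \ref{p111qqq} supplies a finite greedy expansion $b(x_L)=b_1\cdots b_{n-1}10^\infty$, and I set $x_R:=\pi_Q(b_1\cdots b_{n-1}01^\infty)$. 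Using $\pi_Q(10^\infty)=\pi_Q(0\alpha)$ and $\pi_Q(01^\infty)=\pi_Q(1\mu)$ from Remark \ref{r14}(iv), a one-line computation gives
\[
x_R-x_L=\frac{q_0+q_1-q_0q_1}{q_{b_1}\cdots q_{b_{n-1}}\,q_0q_1(q_1-1)},
\]
which is strictly positive exactly because $Q\in\A\setminus\C$; this is where the hypothesis $Q\notin\C$ is used. By the lazy (equivalently, reflected greedy) admissibility theory of \cite{KomLuZou2022,HuBarZou2024}, $b_1\cdots b_{n-1}01^\infty$ is the lazy expansion of $x_R$, and being co-finite it places $x_R\in B_Q$ via Proposition \ref{p111qqq}. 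This already yields the stated equivalence $b(x_L)=b_1\cdots b_{n-1}10^\infty\Leftrightarrow l(x_R)=b_1\cdots b_{n-1}01^\infty$ and the bijection between left and right endpoints.

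The crux is then to show $(x_L,x_R)\cap\V_Q=\emptyset$ and that these intervals exhaust $J_Q\setminus\V_Q$. For emptiness, take $x\in(x_L,x_R)$ and note that $x_R$ admits the expansion $b_1\cdots b_{n-1}1\mu$ while $\mu$ begins with $0$ (Remark \ref{r14}(iii)). Using that $b(x_L)$ is greedy and that $\pi_Q$ is non-decreasing for the lexicographic order, one checks that $x>x_L$ forbids any earlier divergence, forcing $b(x)$ to agree with $b_1\cdots b_{n-1}1$ on its first $n$ digits, while $x<x_R$ forces $0^\infty\prec\sigma^n(b(x))\prec\mu$. Thus $b(x)$ has a digit $1$ at position $n$ with tail $\prec\mu$; passing to the quasi-greedy $a(x)$ only replaces a terminal $10^\infty$ occurring strictly beyond position $n$ by $0\alpha$, so that still $a_n(x)=1$ and $\sigma^n(a(x))\prec\mu$, contradicting the defining condition $\sigma^j(a(x))\succeq\mu$ whenever $a_j(x)=1$. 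Hence $x\notin\V_Q$. For exhaustion, given $y\notin\V_Q$ I would locate the first index $n$ at which the relevant defining inequality fails, read off the prefix $b_1\cdots b_{n-1}$, check that $b_1\cdots b_{n-1}10^\infty$ is a genuine finite greedy expansion (no earlier violation), and conclude $y\in(x_L,x_R)$ with $x_L\in A_Q$; the dual failure mode produces the same interval from the right. I expect this exhaustion step to be the main obstacle: verifying admissibility of the located prefix and reconciling the two failure modes is precisely the point where the classical argument of \cite[proof of Theorem 1.5]{DeVKomLor2022} must be adapted by systematically replacing the reflection $\overline{\alpha}$ with $\mu$. Finiteness of the count follows since $A_Q$ is countably infinite.

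\emph{Part (ii).} Here $(\mu,\alpha)$ satisfies \eqref{29}, so Theorem \ref{T:topology1}(iii) gives that $\U_Q$ is closed (whence $J_Q\setminus\U_Q$ is open) and that $\V_Q\setminus\U_Q$ is discrete, while in this case $A_Q=B_Q=\V_Q\setminus\U_Q$. Fix a component $(x_L,x_R)$ of $J_Q\setminus\U_Q$, so $x_L,x_R\in\U_Q$, and note that its interior meets $\V_Q$ only in the discrete set $\V_Q\setminus\U_Q$. For each such point $x_k$, with finite greedy expansion $b(x_k)=b_1\cdots b_n0^\infty$ and $b_n=1$, part (i) identifies its immediate right neighbour in $\V_Q$ as the right endpoint $x_{k+1}$ of the adjacent gap, where $x_{k+1}=\pi_Q(b_1\cdots b_n\mu)$; since $\sigma^i(\mu)\preceq\alpha$ for all $i$ (as $Q\in\V$), the word $b_1\cdots b_n\mu$ is the largest infinite expansion, giving the announced $a(x_{k+1},q)=b_1\cdots b_n\mu$. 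Thus the points of $\V_Q\cap(x_L,x_R)$ form a chain, each with an immediate predecessor and successor. Finally, $\V_Q\setminus\U_Q$ is dense in $\V_Q$ by Theorem \ref{T:topology1}(ii), so $x_L$ and $x_R$ are limits of such points; since a left neighbourhood of $x_L$ and a right neighbourhood of $x_R$ lie in $\U_Q$, these approaches must come from inside $(x_L,x_R)$, forcing the chain to be infinite in both directions and to accumulate exactly at $x_L$ (as $k\to-\infty$) and $x_R$ (as $k\to\infty$). A locally finite, successor-closed subset of an open interval accumulating at both ends is order-isomorphic to $\Z$, which yields the required increasing bi-infinite sequence $(x_k)_{k=-\infty}^\infty$.
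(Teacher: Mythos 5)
Your proposal contains genuine gaps; the two most serious are in part (i). First, the conclusion $x_R\in B_Q$ does not follow from Proposition \ref{p111qqq} plus ``lazy admissibility theory'': $B_Q$ is by definition a subset of $\V_Q$, so after producing the lazy expansion $l(x_R)=b_1\cdots b_{n-1}01^\infty$ you must still prove $x_R\in\V_Q$, i.e.\ that $m(x_R)=b_1\cdots b_{n-1}1\mu$ is also the quasi-greedy expansion of $x_R$. This amounts to checking $\sigma^j(b_1\cdots b_{n-1}1\mu)\preceq\alpha$ at every zero digit, and it is exactly here that the standing hypothesis $Q\in\V$ (the inequalities $\sigma^{i}(\mu)\preceq\alpha$ and $\mu\preceq\sigma^{j}(\alpha)$ of Lemma \ref{l:HuBarZou}) enters essentially; even the lazy admissibility of $b_1\cdots b_{n-1}01^\infty$ needs that $a(x_L)=m(x_L)$ is quasi-lazy (i.e.\ $x_L\in\V_Q$, via Lemmas \ref{l:quasi-gr-la} and \ref{l:monotonicity}(iv)), not merely that $b(x_L)$ is greedy. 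Second, you explicitly defer the exhaustion step (``I expect this exhaustion step to be the main obstacle''), but that step is the core of the paper's proof: starting from the \emph{minimal} index $N$ at which a defining inequality of $\V_Q$ fails --- formulated for $m(x)$ or $a(x)$, not for $b(x)$ as in your sketch --- one truncates to $m_1\cdots m_{N-1}01^\infty$, shows it is lazy (Lemma \ref{l:trunca}), and then shows the resulting $x_R$ lies in $\V_Q$ via a strictness argument in which equality $m_{j+1}\cdots m_{N-1}0=\alpha_1\cdots\alpha_{N-j}$ is excluded by the minimality of $N$. A plan that this ``should adapt'' from \cite{DeVKomLor2022} is not a proof, and the adaptation is nontrivial precisely because $\mu$ is no longer the reflection of $\alpha$.

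In part (ii), your reduction of the successor relation $a(x_{k+1})=b_1\cdots b_n\mu$ to part (i) is a legitimate and arguably cleaner route than the paper's, which re-verifies directly that $b_1\cdots b_n\mu$ is both quasi-greedy and quasi-lazy. However, your argument for the two-sided accumulation is flawed: the claim that ``a left neighbourhood of $x_L$ and a right neighbourhood of $x_R$ lie in $\U_Q$'' is unjustified and in general false (an endpoint of a complementary interval of a closed set is typically a limit of other complementary intervals), and plain density of $\V_Q\setminus\U_Q$ in $\V_Q$ gives no control on the \emph{side} from which the approximating points come. The paper avoids this by the directional statement of Lemma \ref{l:density1}: there is an \emph{increasing} sequence in $A_Q$ converging to $x_R$ and a \emph{decreasing} sequence in $B_Q$ converging to $x_L$, so the approximants eventually lie inside $(x_L,x_R)$. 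Alternatively your argument can be repaired using part (i) itself: if $\V_Q\cap(x_L,x_L+\varepsilon)=\emptyset$ for some $\varepsilon>0$, then $x_L$ would be the left endpoint of a component of $J_Q\setminus\V_Q$, hence $x_L\in A_Q$, contradicting $x_L\in\U_Q$. As written, though, part (ii) rests on a false premise and part (i) on two unproved steps.
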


The rest of the paper is organized as follows.
In Section 2 we recall some relevant results on double-base expansions, and  we prove Propositions \ref{p15qqq} and \ref{p17qqq}.
In Section 3 we prove Proposition \ref{p111qqq}, and Theorem \ref{T:expansion} (i)--(iii).
Theorem \ref{T:topology1} (i)--(iv) (except (ii) for $Q\in\A\setminus\V$) and Corollary \ref{c116qqq} are proved in  Section 4.
Theorems \ref{T:topology1} (v)--(vi) and \ref{T:topology2} are proved in  Section 5, and the remaining parts of Theorems \ref{T:expansion} and \ref{T:topology1} are proved in  Section 6; the section titles give more precision.
Finally, in Section 7 we illustrate our theorems by many examples.

The results of this paper show that many important theorems of the classical theory may be generalized to double-bases. 
There remains a lot of other results on equal-base expansions that could similarly be extended to the more general framework.
	
\section{Proof of Propositions \ref{p15qqq} and \ref{p17qqq}}\label{s2}

For the convenience of the reader we recall from \cite{KomLuZou2022} some results  concerning the  greedy, quasi-greedy, lazy and quasi-lazy expansions.
In this section we fix an arbitrary $Q=(q_0,q_1)\in \A$, and we write  
\begin{equation*}
b(x),\quad
a(x),\quad
m(x),\quad
l(x),\quad
\alpha\qtq{and}
\mu
\end{equation*}
instead of
\begin{equation*}
b(x,Q),\quad
a(x,Q),\quad
m(x,Q),\quad
l(x,Q),\quad
\alpha(Q)\qtq{and}
\mu(Q).
\end{equation*}
We recall from the introduction that
\begin{equation*}
\alpha=a(r_Q)=a\bj{\frac{q_0}{q_1}}
\qtq{and}
\mu=m(\ell_Q)=m\bj{\frac{q_1}{q_0(q_1-1)}-1}.
\end{equation*}

The greedy expansion $b(x)=(b_i)$ of every $x\in J_Q$ is obtained by the following \emph{algorithm}: if the digits $b_1,\cdots,b_{N-1}$ have been already defined for some positive integer $N$ (no assumption if $N=1$), then let $b_N$ be the largest digit in $\{0,1\}$ such that  
\begin{equation}\label{e:21}
\sum_{i=1}^{N}\frac{b_i}{q_{b_1}\cdots q_{b_{i}}} \leq x.
\end{equation} 
If we change $b_i$ to $a_i$, and we write a strict inequality in \eqref{e:21}, then we obtain the quasi-greedy expansion $a(x)=(a_i)$ of every $x\in J_Q\setminus\set{0}$.
Furthermore, $a(0)=1^{\infty}$.

Similarly, the lazy expansion $l(x)=(l_i)$ of every $x\in J_Q$ is obtained by the following algorithm: if the digits $l_1,\cdots,l_{N-1}$ have been already defined for some positive integer $N$ (no assumption if $N=1$), then let $l_N$ be the smallest digit in $\{0,1\}$ such that 
\begin{equation}\label{e:l26}
\sum_{i=1}^{N}\frac{l_i}{q_{l_1}\cdots q_{l_{i}}}+\frac{1}{q_{l_1}\cdots q_{l_{N}}(q_1-1)} \geq x .
\end{equation}
If we change $l_i$ to $m_i$, and we write a strict inequality in \eqref{e:l26}, then we obtain the quasi-lazy expansion $m(x)=(m_i)$ of every $x\in J_Q\setminus\set{1/(q_1-1)}$.
Furthermore, $m(1/(q_1-1))=0^{\infty}$.

It follows from the definitions of these expansions that
\begin{equation*}
l(x)\preceq m(x)\preceq a(x)\preceq b(x)\qtq{for every}x\in  J_Q.
\end{equation*}

\begin{lemma}\cite[Theorem 2]{KomLuZou2022}\label{l:monotonicity}
\label{l21qqq}
Fix $Q\in \A$.
\begin{enumerate}[\upshape (i)]
\item The \emph{greedy map} $x\mapsto b(x)$ is a strictly increasing bijection from $J_Q$ onto the set of all sequences $(j_i)$ satisfying
\begin{equation*}
\sigma^n((j_{i}))\prec\alpha\qtq{whenever} j_n=0.
\end{equation*}
		
\item The \emph{quasi-greedy map} $x\mapsto a(x)$ is a strictly increasing bijection from $J_Q$ onto the set of all infinite sequences $(j_i)$ satisfying
\begin{equation*}
\sigma^n((j_{i}))\preceq\alpha\qtq{whenever} j_n=0.
\end{equation*}
		
\item The \emph{lazy map} $x\mapsto l(x)$ is a strictly increasing bijection from $J_Q$ onto the set of all sequences $(j_i)$ satisfying
\begin{equation*}
\sigma^n((j_{i}))\succ\mu \qtq{whenever} j_n=1.
\end{equation*}
		
\item The \emph{quasi-lazy map} $x\mapsto m(x)$ is a strictly increasing bijection from $J_Q$ onto the set of all co-infinite sequences $(j_i)$ satisfying
\begin{equation*}
\sigma^n((j_{i}))\succeq \mu \qtq{whenever} j_n=1.
\end{equation*}
\end{enumerate}
\end{lemma}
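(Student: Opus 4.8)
The plan is to derive everything from the greedy and lazy \emph{algorithms} themselves, since these are defined directly through the numeric inequalities \eqref{e:21} and \eqref{e:l26} rather than through the lexicographic order; this is what lets me avoid circularity. I would treat the greedy case (i) in detail, deduce the quasi-greedy case (ii) from it, and obtain (iii)--(iv) by rerunning the argument in dual form, equivalently via the digit reflection $(x_i)\mapsto\overline{(x_i)}$ combined with the swap $q_0\leftrightarrow q_1$, under which $\mu$, $\ell_Q$ and the identity $\pi_Q(1\mu)=\pi_Q(01^\infty)$ play exactly the roles of $\alpha$, $r_Q$ and $\pi_Q(0\alpha)=\pi_Q(10^\infty)$ from Remark \ref{r14}(iv).

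First I would record the \emph{tail recursion} behind the greedy algorithm. Writing $t_n:=\pi_Q(\sigma^n(b(x)))$ for the value expanded by the tail, the algorithm gives $t_n=q_0t_{n-1}$ when $b_n=0$ and $t_n=q_1t_{n-1}-1$ when $b_n=1$, and the greedy choice is $b_n=1$ if and only if $t_{n-1}\ge 1/q_1$. Two consequences follow immediately and without circularity: the tail of a greedy expansion is again greedy, i.e.\ $\sigma^n(b(x))=b(t_n)$; and the greedy map is \emph{strictly increasing}, hence injective. For the latter, if $x<y$ then at the first index $k$ where the expansions could differ the common-prefix tails satisfy $t_{k-1}^x<t_{k-1}^y$, so $b_k^x=1$ would force $b_k^y=1$; thus the first discrepancy must be $b_k^x=0$, $b_k^y=1$, giving $b(x)\prec b(y)$.

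Next I would prove \emph{necessity} of the lexicographic condition. If $b_n=0$, the recursion gives $t_{n-1}<1/q_1$ and hence $t_n<q_0/q_1=r_Q$; since $\sigma^n(b(x))=b(t_n)$, it remains to prove the comparison lemma that $y<r_Q$ implies $b(y)\prec\alpha=a(r_Q)$. This I would establish by splitting on whether $b(y)$ is finite: if $b(y)$ is infinite then $b(y)=a(y)\prec a(r_Q)=\alpha$ by strict monotonicity of the quasi-greedy map (proved exactly as for the greedy map); if $b(y)=c_1\cdots c_{k-1}10^\infty$ is finite then $\pi_Q(10^\infty)=\pi_Q(0\alpha)$ yields $a(y)=c_1\cdots c_{k-1}0\alpha\prec\alpha$, and a short case analysis at the position of first difference, using the self-referential inequality $\sigma^j(\alpha)\preceq\alpha$ of Remark \ref{r14}(v), upgrades this to $b(y)\prec\alpha$.

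The crux, and the step I expect to be the main obstacle, is \emph{sufficiency}: given a sequence $(j_i)$ with $\sigma^n(j)\prec\alpha$ whenever $j_n=0$, I must show that the greedy algorithm applied to $x:=\pi_Q((j_i))$ reproduces $(j_i)$, which reduces to showing that $\sigma^n(j)\prec\alpha$ forces $\pi_Q(\sigma^n(j))<r_Q=\pi_Q(\alpha)$ so that the recursion selects the digit $j_n$. The difficulty here is genuinely of double-base type: unlike the single-base setting, $\pi_Q$ is \emph{not} monotone with respect to the lexicographic order on all of $\{0,1\}^\infty$ --- indeed $01^\infty\prec10^\infty$ while $\pi_Q(01^\infty)\ge\pi_Q(10^\infty)$ for $Q\in\A$, since $q_0+q_1\ge q_0q_1$ gives $1/(q_0(q_1-1))\ge 1/q_1$ --- so numeric and lexicographic order agree only after restricting to the admissible class. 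I would therefore prove separately that $\pi_Q$ is strictly increasing \emph{on the set of greedy-admissible sequences} (a class closed under the shift), by induction on the index of first difference, the base of the induction being precisely the extremal identity that $r_Q=\pi_Q(\alpha)$ is the supremum of $\pi_Q$ over admissible sequences $\preceq\alpha$; this is where the self-referential extremal properties of $\alpha$ (and, dually, of $\mu$) do the essential work. Granting this, injectivity together with sufficiency yields the bijection onto the admissible class, and every $x\in J_Q$ lies in the range because $Q\in\A$ guarantees at least one expansion. The quasi-greedy statement (ii) then follows by the same scheme with $\prec$ relaxed to $\preceq$ and the range restricted to infinite sequences (the largest infinite expansion), while (iii) and (iv) follow by the dual argument indicated above.
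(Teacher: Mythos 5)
The paper never proves this lemma: it is imported wholesale from \cite[Theorem 2]{KomLuZou2022} (see the citation in the lemma header), so your sketch has to be judged against the standard proof in that reference rather than against anything in the present text. Your architecture is exactly that standard one: work from the numeric algorithms via the tail recursion $t_n=q_0t_{n-1}$ (digit $0$) and $t_n=q_1t_{n-1}-1$ (digit $1$), get strict monotonicity of the maps and necessity of the lexicographic inequalities from the recursion together with the comparison lemma $y<r_Q\Rightarrow b(y)\prec\alpha$, prove sufficiency by restricting $\pi_Q$ to the admissible class, and transfer to (iii)--(iv) by reflection combined with the swap $q_0\leftrightarrow q_1$. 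Your observation that $\pi_Q$ fails to be lexicographically monotone on all of $\set{0,1}^{\infty}$ precisely when $Q\in\A$ (since $01^\infty\prec 10^\infty$ but $\pi_Q(01^\infty)\ge\pi_Q(10^\infty)$) correctly isolates the double-base difficulty, and the finite/infinite case split in the comparison lemma works as you describe.

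The gap is in the crux step, and you only half-acknowledge it. ``Induction on the index of first difference'' cannot establish either the extremal identity or the order-preservation: after splitting two admissible sequences at their first difference $m$, you are left comparing $\sigma^m((c_i))$ with $\alpha$, whose first difference can sit at an arbitrary position, so no parameter decreases --- the recursion simply does not terminate --- and the ``base case'' you invoke (that $r_Q=\pi_Q(\alpha)$ dominates $\pi_Q$ over admissible sequences $\preceq\alpha$) is itself the heart of the matter, not something that may be assumed. The repair is to iterate the first-difference decomposition and close it with a contraction estimate: one step gives
\begin{equation*}
\pi_Q(\alpha)-\pi_Q((c_i))\ \ge\ \frac{1}{q_{\alpha_1}\cdots q_{\alpha_{m-1}}q_0}\bigl(r_Q-\pi_Q(\sigma^{m}((c_i)))\bigr),
\end{equation*}
and since each prefactor is at most $1/q_0<1$ while all bracketed quantities are bounded (every value lies in $J_Q$), running the iteration indefinitely forces $\pi_Q((c_i))\le r_Q$. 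You also need a strict form (admissible and $\sigma^n((j_i))\prec\alpha$ implies $\pi_Q(\sigma^n((j_i)))<r_Q$) for the greedy algorithm to reproduce a digit $0$; that takes one more application of the decomposition plus the fact that $\alpha$ is infinite, so the discarded term $\pi_Q(\sigma^k(\alpha))/q_1$ is strictly positive. With these two repairs (and the routine verification, via the invariant $t_n\in J_Q$, which is where $Q\in\A$ enters, that the greedy algorithm actually converges to $x$), your outline does become a complete proof.
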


\begin{remark}\label{r22}
Sometimes the inequalities of Lemma \ref{l21qqq} are satisfied for \emph{all} $n\ge 1$.
Two important examples are $\mu=(\mu_i):=\mu(Q)$ and $\alpha=(\alpha_i):=\alpha(Q)$ for $Q\in\A$.
\begin{enumerate}[\upshape (i)]
\item We have
\begin{equation*}
\sigma^n(\mu)\succeq\mu\qtq{for all}n\ge 0.
\end{equation*}
For the proof first we observe that if this $\sigma^k(\mu)\ge\mu$ for some $k\ge 0$, and $\mu_{k+1}=\cdots\mu_n=0$ for some $n>k$, then the inequalities trivially also holds for $n$ in place of $k$.
The case $k=0$ being obvious, it remains to observe that for any $n\ge 1$ with $\mu_n=0$ we have either $\mu_1=\cdots\mu_n=0$, or there exists a $k<n$ such that $\mu_k=1$, and $\mu_{k+1}=\cdots\mu_n=0$.

\item By reflection, we obtain from (i) that
\begin{equation*}
\sigma^n(\alpha)\preceq\alpha\qtq{for all}n\ge 0.
\end{equation*}

\item Since $\mu\prec\alpha$ by Remark \ref{r14} (v) we obtain similarly that if
\begin{align*}
\sigma^i(\mu)\prec\alpha\qtq{whenever}\mu_i=0,
\intertext{and}
\sigma^j(\alpha)\succ\mu\qtq{whenever}\alpha_j=1,
\end{align*}
then in fact both inequalities hold for all $i,j\ge 0$.

\item Similarly, if
\begin{align*}
\sigma^i(\mu)\preceq\alpha\qtq{whenever}\mu_i=0,
\intertext{and}
\sigma^j(\alpha)\succeq\mu\qtq{whenever}\alpha_j=1,
\end{align*}
then in fact both inequalities hold for all $i,j\ge 0$.
\end{enumerate}
\end{remark}

\begin{lemma}\cite[Proposition 13]{KomLuZou2022}\label{l:rela-quasi-gl}\label{l22qqq}
Let $x\in J_Q$.

\begin{enumerate}[\upshape (i)]
\item If $b(x)$ is infinite, then $a(x)=b(x)$.
If $b(x)=(b_i)$ has a last nonzero element $b_k=1$, then
\begin{equation*}
a(x)=b_1\cdots b_{k-1}0\alpha(Q).
\end{equation*} 

\item If $l(x)$ is co-infinite, then $m(x)=l(x)$. 
If $l(x)=(l_i)$ has a last zero element $l_k=0$, then
\begin{equation*}
m(x)=l_1\cdots l_{k-1}1\mu(Q).
\end{equation*}
\end{enumerate}
\end{lemma}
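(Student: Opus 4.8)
The plan is to prove both statements by the same mechanism, applied to the greedy/quasi-greedy pair in part (i) and then to the lazy/quasi-lazy pair in part (ii); I describe (i) in full and indicate the mirror changes for (ii). The key tool is the monotonicity characterization of Lemma~\ref{l:monotonicity}: a sequence is quasi-greedy exactly when it is infinite and satisfies $\sigma^n((j_i))\preceq\alpha$ whenever $j_n=0$, and quasi-lazy exactly when it is co-infinite and satisfies $\sigma^n((j_i))\succeq\mu$ whenever $j_n=1$. So the whole proof reduces to exhibiting a candidate sequence, checking it is an expansion of $x$, and verifying these lexicographic admissibility conditions; the bijectivity in Lemma~\ref{l:monotonicity} then forces the candidate to be $a(x)$ (resp.\ $m(x)$).

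For the first assertion of (i), if $b(x)$ is infinite then by Lemma~\ref{l:monotonicity}(i) it satisfies $\sigma^n(b(x))\prec\alpha$ whenever $b_n=0$, which a fortiori gives $\preceq$; being infinite, $b(x)$ is therefore a quasi-greedy sequence, and since $\pi_Q(b(x))=x$ the bijection yields $a(x)=b(x)$. In the finite case, write $b(x)=b_1\cdots b_{k-1}10^\infty$ and set $c:=b_1\cdots b_{k-1}0\alpha$. First I would check $\pi_Q(c)=x$: factoring any expansion as $\pi_Q(wt)=\pi_Q(w0^\infty)+(q_{b_1}\cdots q_{b_{k-1}})^{-1}\pi_Q(t)$ for the prefix $w=b_1\cdots b_{k-1}$ and tail $t$ starting at position $k$, the replacement of the tail $10^\infty$ by $0\alpha$ preserves the value because $\pi_Q(0\alpha)=1/q_1=\pi_Q(10^\infty)$ by Remark~\ref{r14}(iv). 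Moreover $c$ is infinite, since $\alpha=\alpha(Q)$ is infinite.

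Then I would verify $\sigma^n(c)\preceq\alpha$ at every position with $c_n=0$, splitting into two ranges. Since the tail of $c$ from position $k+1$ onwards is exactly $\alpha$, for $n\ge k$ with $c_n=0$ we have $\sigma^n(c)=\sigma^{\,n-k}(\alpha)\preceq\alpha$ by Remark~\ref{r22}(ii), with equality at $n=k$. For $n\le k-1$ with $b_n=0$, I compare $\sigma^n(c)=b_{n+1}\cdots b_{k-1}0\alpha$ with $\sigma^n(b(x))=b_{n+1}\cdots b_{k-1}10^\infty$: they share the same prefix and then $c$ carries a $0$ where $b(x)$ carries a $1$, so $\sigma^n(c)\prec\sigma^n(b(x))\prec\alpha$, the last inequality being the greedy condition of Lemma~\ref{l:monotonicity}(i). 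Hence $c$ is quasi-greedy, and the bijection forces $c=a(x)$. Part (ii) is the exact mirror image: one uses $\pi_Q(1\mu)=1/(q_0(q_1-1))=\pi_Q(01^\infty)$ from Remark~\ref{r14}(iv), the co-infiniteness of $\mu$, the self-admissibility $\sigma^m(\mu)\succeq\mu$ of Remark~\ref{r22}(i), and the lazy/quasi-lazy characterization of Lemma~\ref{l:monotonicity}(iii)--(iv).

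The step I expect to need the most care is the value-preservation check, precisely because in a genuine double-base the weight of position $k$ depends on all preceding digits through the product $q_{b_1}\cdots q_{b_{k-1}}$. The point is that this product factors out cleanly, so the identities $\pi_Q(10^\infty)=\pi_Q(0\alpha)$ and $\pi_Q(01^\infty)=\pi_Q(1\mu)$ of Remark~\ref{r14}(iv) --- which are statements about tails alone --- suffice regardless of the prefix; everything else is a routine lexicographic comparison once the characterizations of Lemma~\ref{l:monotonicity} and the self-admissibility of Remark~\ref{r22} are in hand.
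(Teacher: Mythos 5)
Your proposal is correct, but there is nothing in this paper to compare it against: the paper does not prove this lemma at all, it simply imports it as \cite[Proposition 13]{KomLuZou2022}. What you have done is supply a self-contained derivation from the other quoted ingredients, namely the lexicographic characterizations of the four expansion maps (Lemma \ref{l:monotonicity}), the tail identities $\pi_Q(0\alpha)=\pi_Q(10^\infty)$ and $\pi_Q(1\mu)=\pi_Q(01^\infty)$ of Remark \ref{r14} (iv), and the self-admissibility relations $\sigma^n(\alpha)\preceq\alpha$, $\sigma^n(\mu)\succeq\mu$ of Remark \ref{r22}. All the steps check out: the factorization $\pi_Q(wt)=\pi_Q(w0^\infty)+(q_{b_1}\cdots q_{b_{k-1}})^{-1}\pi_Q(t)$ is valid precisely because the double-base weights of the tail depend on the prefix only through that product; the candidate $b_1\cdots b_{k-1}0\alpha$ is infinite because $\alpha$, being a quasi-greedy sequence, is infinite; the admissibility check correctly splits into the range $n\ge k$ (handled by Remark \ref{r22} (ii), with equality at $n=k$) and the range $n<k$ (where $\sigma^n(c)\prec\sigma^n(b(x))\prec\alpha$ by the greedy condition); and the bijectivity in Lemma \ref{l:monotonicity} (ii) legitimately pins the candidate down as $a(x)$, since any admissible infinite sequence $c$ equals $a(y)$ for the unique $y=\pi_Q(c)$. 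The argument is also non-circular within this paper's logic, since Remark \ref{r22} is proved there directly from Lemma \ref{l:monotonicity} without appeal to the present lemma. One cosmetic caveat: the paper's stated conventions $a(0)=1^\infty$ and $m(1/(q_1-1))=0^\infty$ in Section \ref{s2} are evidently typos (they contradict the bijection of Lemma \ref{l:monotonicity}); your treatment of these endpoint cases through the ``infinite $b(x)$'' branch gives the correct values $a(0)=0^\infty$ and $m(1/(q_1-1))=1^\infty$.
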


Let us consider a special case:
\begin{lemma}\label{l23}
Let $Q\in\C$. 
\begin{enumerate}[\upshape (i)]
\item For any $x\in J_Q$, then there are two possibilities:
\begin{enumerate}[\upshape (a)]
\item $x$ has a unique expansion, and it is doubly infinite.
\item $x$ has exactly two expansions:  $b(x)=m(x)$ and  $a(x)=l(x)$, and none of them is doubly infinite.
\end{enumerate}

\item $A_Q=B_Q=\V_Q\setminus\U_Q$.

\item The following relations hold:
\begin{equation*}
\U_Q\subsetneqq\overline\U_Q=\V_Q=J_Q, \qtq{and}
\abs{J_Q\setminus\U_Q}=\aleph_0.
\end{equation*} 
\end{enumerate}
\end{lemma}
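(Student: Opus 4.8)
The plan is to analyze the special structure imposed by $Q\in\C$, where Remark \ref{r14} (ii) gives the crucial simplification $\alpha=1^\infty$ and $\mu=0^\infty$. I would first record what the lexicographic characterizations from Lemma \ref{l21qqq} become under these values. Since $\alpha=1^\infty$, the greedy condition ``$\sigma^n((j_i))\prec 1^\infty$ whenever $j_n=0$'' is vacuous, so every sequence is greedy; dually every sequence is lazy since $\mu=0^\infty$. For the quasi-greedy map, the condition ``$\sigma^n((j_i))\preceq 1^\infty$ whenever $j_n=0$'' holds automatically, so $a(x)$ is simply the largest infinite expansion, and symmetrically $m(x)$ is the smallest co-infinite expansion.

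For part (i), I would fix $x\in J_Q$ and split on whether $b(x)$ is infinite. If $b(x)$ is infinite, then by Lemma \ref{l22qqq} (i) we have $a(x)=b(x)$, and I want to argue this forces $l(x)=m(x)=b(x)$ as well, giving a unique expansion. The key is that with $\alpha=1^\infty,\mu=0^\infty$ the greedy and lazy algorithms coincide on such $x$: I would check that if the greedy expansion never terminates, the lazy expansion cannot differ from it (any discrepancy would produce a finite/co-finite tail contradicting infiniteness of $b(x)$), and then note that an infinite expansion that is also the smallest must be doubly infinite — it is infinite by hypothesis, and it must be co-infinite because $\mu=0^\infty$ allows the digit $1$ to be followed freely, so a non-co-infinite (i.e.\ ending in $1^\infty$) tail could be lowered. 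If instead $b(x)$ ends in a last $1$, say $b(x)=b_1\cdots b_{k-1}10^\infty$, Lemma \ref{l22qqq} (i) gives $a(x)=b_1\cdots b_{k-1}0\alpha=b_1\cdots b_{k-1}01^\infty$, and the two expansions $b(x)$ and $a(x)$ represent the same $x$ by Remark \ref{r14} (iv) (the identity $\pi_Q(10^\infty)=\pi_Q(01^\infty)$ applied after the common prefix). Here $b(x)=m(x)$ since $b(x)$ is co-finite hence the smallest co-infinite, and $a(x)=l(x)$ by the dual of Lemma \ref{l22qqq} (ii); neither is doubly infinite. I would finally confirm these are the only two expansions, e.g.\ because any expansion lies between $l(x)$ and $b(x)$ and here the interval collapses to exactly these two sequences.

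For part (ii), I would unwind the definitions of $A_Q$ and $B_Q$ using $\alpha=1^\infty$, $\mu=0^\infty$. An element $x\in\V_Q\setminus\U_Q$ is exactly one with two expansions, so by part (i) it has $a(x)=b_1\cdots b_{k-1}01^\infty$ with a zero digit whose shifted tail is $1^\infty=\alpha$, placing $x\in A_Q$; dually $m(x)=b_1\cdots b_{k-1}10^\infty$ exhibits a one digit whose tail equals $0^\infty=\mu$, placing $x\in B_Q$. Thus every non-unique point lies in both sets, giving $A_Q=B_Q=\V_Q\setminus\U_Q$. Conversely, points of $\U_Q$ are excluded from both by definition, so the three sets coincide.

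For part (iii), the equalities $\V_Q=J_Q$ and $\overline\U_Q=\V_Q$ follow from Proposition \ref{p17qqq} (iii) and Theorem \ref{T:topology1} (iv); I may simply cite these, or give the self-contained argument that every $x$ satisfies the (vacuous, since $\alpha=1^\infty$, $\mu=0^\infty$) defining inequalities of $\V_Q$. For $\U_Q\subsetneqq\V_Q$ and the cardinality $\abs{J_Q\setminus\U_Q}=\aleph_0$, I would identify $J_Q\setminus\U_Q$ with the set of $x$ admitting two expansions, which by part (i) is exactly the set of $x$ whose greedy expansion is finite with a last $1$; these are indexed by the finite words $b_1\cdots b_{k-1}1$, a countably infinite family, and it is nonempty (take $b(x)=10^\infty$), so the inclusion is strict and the cardinality is $\aleph_0$. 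The main obstacle I anticipate is the careful verification in part (i) that the infinite-greedy case genuinely yields a \emph{unique} and \emph{doubly infinite} expansion — ensuring no second expansion sneaks in and that co-infiniteness truly holds — since everything else reduces quickly once $\alpha=1^\infty$ and $\mu=0^\infty$ trivialize the lexicographic constraints.
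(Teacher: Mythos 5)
Your overall strategy coincides with the paper's — specialize the lexicographic characterizations to $\alpha=1^\infty$, $\mu=0^\infty$ and exploit $\pi_Q(01^\infty)=\pi_Q(10^\infty)$ — but your opening ``simplification'' is false, and it is precisely what your key steps need. With $\alpha=1^\infty$ the greedy condition of Lemma \ref{l21qqq} (i) reads $\sigma^n((j_i))\prec 1^\infty$ whenever $j_n=0$; this \emph{strict} inequality is not vacuous: it fails exactly when some $0$ is followed by the tail $1^\infty$. So it is false that every sequence is greedy (for instance $01^\infty$ is not greedy; its value has greedy expansion $10^\infty$), and dually false that every sequence is lazy; only the weak-inequality (quasi-greedy/quasi-lazy) conditions trivialize. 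The correct statement — for $Q\in\C$ the greedy sequences are exactly the co-infinite ones and the lazy sequences exactly the infinite ones — is the crux of the paper's proof, and it is what your unproved steps require. In case (a), $l(x)=b(x)$ follows in one line from ``$b(x)$ infinite $\Rightarrow b(x)$ lazy $\Rightarrow b(x)=l(x)$''; your parenthetical discrepancy argument can be fleshed out (the first disagreement between $l(x)$ and $b(x)$ forces, for $Q\in\C$, the tails $01^\infty$ and $10^\infty$), but as recorded your justification rests on the vacuousness claim, which, applied as stated, would give $l(x)=b(x)$ for \emph{every} $x$, i.e.\ that every number has a unique expansion — absurd. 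In case (b), ``$b(x)=m(x)$ since $b(x)$ is co-finite hence the smallest co-infinite'' is both a misnomer ($b_1\cdots b_{k-1}10^\infty$ is finite and co-infinite) and a non sequitur, and $a(x)=l(x)$ is not a ``dual of Lemma \ref{l22qqq} (ii)'' — that lemma relates $l$ and $m$ only. Both facts are instances of the correct characterization: every co-infinite expansion is greedy, hence $m(x)=b(x)$; every infinite expansion is lazy, hence $a(x)=l(x)$.

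Part (iii) is circular as written: your primary route to $\overline\U_Q=\V_Q$ is Theorem \ref{T:topology1} (iv), but the paper proves that very statement \emph{by} Lemma \ref{l23} (iii), so it cannot be invoked here; and your self-contained alternative (the vacuous defining inequalities of $\V_Q$) yields only $\V_Q=J_Q$, not the density of $\U_Q$. You actually possess the missing ingredient: you prove that $J_Q\setminus\U_Q$ is countable, and since every nonempty open subinterval of the nondegenerate interval $J_Q$ is uncountable, countability of the complement forces $\U_Q$ to be dense, whence $\overline\U_Q=J_Q$ — this is exactly the paper's argument — but you never draw that inference. Finally, for $\abs{J_Q\setminus\U_Q}=\aleph_0$ you must exhibit infinitely many two-expansion points, not just one: this is immediate here because every finite word followed by $10^\infty$ is co-infinite, hence greedy (e.g.\ the points with greedy expansions $0^{n-1}10^\infty$, $n\in\N$), but it needs to be said.
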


\begin{proof}
(i) Since $\mu=0^\infty$ in this case, every infinite expansion is lazy by Lemma \ref{l:monotonicity} (iii).
In particular, $a(x)=l(x)$.

Similarly, since $\alpha=1^\infty$, every co-infinite expansion is greedy by Lemma \ref{l:monotonicity} (iii).
In particular, $m(x)=b(x)$.

It follows that if $x$ has a doubly infinite expansion, then it is necessarily equal to both $l(x)$ and $b(x)$, whence $x$ has a unique expansion.

If $b(x)$ is infinite, then $b(x)=a(x)$, and hence $b(x)=l(x)$, so that $x$ has a unique expansion.
It is doubly infinite because it is also equal to $a(x)$ and $m(x)$ by uniqueness, and therefore it is both infinite and co-infinite.

If $b(x)$ is finite, then it has the form $b(x)=b_1\cdots b_k10^{\infty}$ for some integer $k$, and then $a(x)=b_1\cdots b_k01^{\infty}$ by Lemma \ref{l:rela-quasi-gl} (i).
Since there is no sequence between $10^{\infty}$ and $01^{\infty}$, there is no expansion of $x$ between $b(x)$ and $a(x)=l(x)$.
Hence $x$ has exactly two expansions:  $b(x)=m(x)$ and  $a(x)=l(x)$, and none of them is doubly infinite by a preceding observation.
\medskip

(ii) If $x\in \V_Q\setminus\U_Q$, then the proof of (i) shows that $a(x)$ ends with $01^{\infty}$ and $m(x)$ ends with $10^{\infty}$.
$\mu(Q)=0^\infty$ and $\alpha(Q)=1^\infty$, hence $x\in A_Q$ and $x\in B_Q$ by the definition of these sets.
\medskip

(iii) Since $\mu=0^\infty$ and $\alpha=1^\infty$, $\V_Q=J_Q$ by the definition of $\V_Q$.

If $x\in J_Q\setminus\U_Q$, then $a(x)\ne b(x)$ by (i), and the set of such numbers is countable by Remark \ref{r11}.
Therefore $J_Q\setminus\U_Q$ is countable, and this implies the relation $\overline\U_Q=J_Q$.
Finally, $J_Q\setminus\U_Q$ is infinite because  $1/q_1^n$ has two expansions for every $n\in\N$: $0^{n-1}10^{\infty}$ and $0^n1^{\infty}$.
\end{proof}

Now we consider the case $Q\in \A\setminus \C$.

\begin{lemma}\label{l:doublyinfinite}\label{l24}
If $Q\in \A\setminus \C$ and $x\in J_Q$, then both expansions $a(x)$ and $m(x)$ are doubly infinite.
\end{lemma}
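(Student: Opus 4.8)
The plan is to read off both statements directly from the lexicographic characterizations of the quasi-greedy and quasi-lazy maps in Lemma \ref{l:monotonicity}, using the single extra input that $Q\in\A\setminus\C$ forces the \emph{strict} separations $0^\infty\prec\mu$ and $\alpha\prec 1^\infty$ (Remark \ref{r14}(iii)). Note first that $a(x)$ is already infinite and $m(x)$ is already co-infinite, simply by their definitions as the largest infinite expansion and the smallest co-infinite expansion. Hence the entire content of the lemma is to show that, in addition, $a(x)$ is co-infinite and $m(x)$ is infinite.

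For $a(x)$ I would argue by contradiction. Suppose $a(x)$ is co-finite, i.e. it ends with $01^\infty$. Then it has a last zero digit, say $a_k=0$ with $a_i=1$ for all $i>k$, so that $\sigma^k(a(x))=1^\infty$. Since $a_k=0$, the admissibility condition of Lemma \ref{l:monotonicity}(ii) forces $\sigma^k(a(x))\preceq\alpha$, that is $1^\infty\preceq\alpha$, which contradicts $\alpha\prec 1^\infty$. Therefore $a(x)$ is co-infinite, and being infinite by definition, it is doubly infinite. The assertion for $m(x)$ is the exact dual, obtained under the symmetry greedy/$\alpha\leftrightarrow$ lazy/$\mu$ and $0\leftrightarrow 1$: if $m(x)$ were finite it would end with $10^\infty$, so it would have a last one digit $m_k=1$ with $\sigma^k(m(x))=0^\infty$; the admissibility condition of Lemma \ref{l:monotonicity}(iv) then forces $\sigma^k(m(x))\succeq\mu$, i.e. $0^\infty\succeq\mu$, contradicting $0^\infty\prec\mu$. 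Hence $m(x)$ is infinite, and co-infinite by definition, so it is doubly infinite.

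There is essentially no hard step: the whole proof rests on the fact that the two inequalities $0^\infty\prec\mu$ and $\alpha\prec 1^\infty$ are genuinely strict, which is precisely the hypothesis $Q\notin\C$. This is also exactly where the present situation diverges from the boundary case $Q\in\C$ of Lemma \ref{l23}, in which $\mu=0^\infty$ and $\alpha=1^\infty$ and the two contradictions above evaporate, allowing the unique doubly infinite expansions of that case. Finally, since the sequences $0^\infty$ and $1^\infty$ end with neither $10^\infty$ nor $01^\infty$, they count as doubly infinite, so the argument applies uniformly and no separate treatment of the endpoints $x=0$ and $x=1/(q_1-1)$ is needed.
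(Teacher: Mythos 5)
Your proof is correct and takes essentially the same route as the paper's: both deduce the missing half (co-infiniteness of $a(x)$, infiniteness of $m(x)$) from the lexicographic characterizations in Lemma~\ref{l:monotonicity} combined with the strict inequalities $0^\infty\prec\mu(Q)$ and $\alpha(Q)\prec 1^\infty$ of Remark~\ref{r14}~(iii), exactly as the paper does. The only, harmless, difference is that the paper first treats the endpoints $x=0$ and $x=1/(q_1-1)$ separately, whereas you observe that the same argument covers them uniformly.
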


\begin{proof}
The numbers $x=0$ and $x=1/(q_1-1)$ have the unique expansions $0^\infty$ and $1^\infty$, respectively, and both are doubly infinite.
	
If $x\in (0, 1/(q_1-1))$,  then the expansion $a(x)\neq 1^\infty$ is infinite by definition, and it remains to show that it cannot end with $01^\infty$.
This follows from Lemma \ref{l:monotonicity} and Remark \ref{r14} because $\alpha<1^{\infty}$ if $Q\in \A\setminus \C$.

The proof for $m(x)$ is analogous.
\end{proof}

For our next lemma we recall that for any given $Q\in\A$, $\V_Q$ is the set of numbers $x\in J_Q$ satisfying the following two conditions:
\begin{align}
&\sigma^j(m(x))\preceq\alpha \quad \text{ whenever }m_j(x)=0,\label{e:24}\\
&\sigma^j(a(x))\succeq\mu \quad \text{ whenever }a_j(x)=1.\label{e:25}
\end{align}

\begin{lemma}\label{l:quasi-gr-la}\label{l25}
If $Q\in \A\setminus \C$ and $x\in J_Q$, then the following properties are equivalent:
\begin{enumerate}[\upshape (i)]
\item $x\in\V_Q$;
\item $a(x)=m(x)$;
\item $x$ has a unique doubly infinite expansion.
\end{enumerate}
\end{lemma}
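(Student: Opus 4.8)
The plan is to prove the three equivalences via the cycle (iii)$\Rightarrow$(ii)$\Rightarrow$(i)$\Rightarrow$(iii), exploiting the characterizations of the quasi-greedy and quasi-lazy maps in Lemma \ref{l:monotonicity} together with the doubly-infinite property established in Lemma \ref{l24}. The key observation driving everything is that for $Q\in\A\setminus\C$ we always have $0^\infty\prec\mu\prec\alpha\prec 1^\infty$ (Remark \ref{r14} (iii)), and that the defining inequalities $l(x)\preceq m(x)\preceq a(x)\preceq b(x)$ hold for every $x$. Since both $a(x)$ and $m(x)$ are doubly infinite by Lemma \ref{l24}, a doubly infinite expansion of $x$ must be squeezed between $m(x)$ and $a(x)$, which already points toward the rigidity I want to extract.

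\textbf{Step (iii)$\Rightarrow$(ii).} Suppose $x$ has a unique doubly infinite expansion. By Lemma \ref{l24} both $a(x)$ and $m(x)$ are doubly infinite expansions of $x$, so uniqueness forces $a(x)=m(x)$ immediately.

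\textbf{Step (ii)$\Rightarrow$(i).} Assume $a(x)=m(x)=:(c_i)$. I must verify the two defining conditions \eqref{e:24}--\eqref{e:25} of $\V_Q$. Whenever $c_j=1$, Lemma \ref{l:monotonicity} (ii) applied to the quasi-greedy sequence $a(x)$ gives no constraint, but using that $(c_i)=m(x)$ is quasi-lazy, Lemma \ref{l:monotonicity} (iv) yields $\sigma^j((c_i))\succeq\mu$, which is exactly \eqref{e:25}. Symmetrically, whenever $c_j=0$, applying Lemma \ref{l:monotonicity} (ii) to $(c_i)=a(x)$ gives $\sigma^j((c_i))\preceq\alpha$, which is \eqref{e:24}. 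Hence $x\in\V_Q$.

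\textbf{Step (i)$\Rightarrow$(iii).} This is the step I expect to be the main obstacle, since it requires upgrading the membership conditions into genuine uniqueness. Assume $x\in\V_Q$, so \eqref{e:24}--\eqref{e:25} hold. I would first argue that these conditions force $a(x)=m(x)$: since $m(x)\preceq a(x)$ always, it suffices to rule out strict inequality, and one does this by showing that if $m(x)\prec a(x)$ then the common prefix breaks at a position whose tail violates one of \eqref{e:24}--\eqref{e:25} (using $\mu\prec\alpha$ to force a contradiction at the first place the two sequences disagree). Once $a(x)=m(x)=:(c_i)$ is a doubly infinite expansion, I must show it is the \emph{only} expansion that is doubly infinite. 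Any expansion $(y_i)$ of $x$ satisfies $m(x)\preceq (y_i)\preceq b(x)$ and $l(x)\preceq(y_i)\preceq a(x)$; if $(y_i)$ is doubly infinite, then since $m(x)=a(x)=(c_i)$ pins both the quasi-lazy lower and quasi-greedy upper envelope of infinite/co-infinite expansions together, any doubly infinite $(y_i)$ is trapped between $m(x)$ and $a(x)$ and must equal $(c_i)$. The delicate point is handling the boundary relationship between $a(x),m(x)$ and the truly extremal $b(x),l(x)$ via Lemma \ref{l22qqq}, ensuring that a doubly infinite expansion cannot coincide with a finite $b(x)$ or co-finite $l(x)$; this is where the doubly-infinite conclusion of Lemma \ref{l24} for $a(x)$ and $m(x)$ does the essential work, collapsing the admissible window to a single sequence.
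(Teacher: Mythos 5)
Your steps (iii)$\Rightarrow$(ii) and (ii)$\Rightarrow$(i) are correct and coincide with the paper's arguments. The genuine gap is inside your step (i)$\Rightarrow$(iii), in the sub-claim that $x\in\V_Q$ forces $a(x)=m(x)$. You propose to look at the first position $n$ where $m(x)$ and $a(x)$ disagree (so $m_n(x)=0$ and $a_n(x)=1$) and to argue that one of the tails ``violates \eqref{e:24}--\eqref{e:25}, using $\mu\prec\alpha$''. But no violation occurs there: hypotheses \eqref{e:24}--\eqref{e:25} give precisely $\sigma^n(m(x))\preceq\alpha$ and $\sigma^n(a(x))\succeq\mu$, and these two inequalities are perfectly compatible with each other and with $\mu\prec\alpha$ (for instance $\sigma^n(m(x))=\mu$, $\sigma^n(a(x))=\alpha$ satisfies everything). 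A purely lexicographic comparison at the first disagreement cannot succeed, because \eqref{e:24}--\eqref{e:25} are cross-type conditions: they compare a quasi-lazy object with $\alpha$ (which need not be a quasi-lazy sequence for general $Q\in\A\setminus\C$) and a quasi-greedy object with $\mu$ (which need not be quasi-greedy), so the monotonicity statements of Lemma \ref{l21qqq} cannot convert them into value inequalities. Any rescue of your route must exploit that the two tails have linked values, $\pi_Q\bj{0\,\sigma^n(m(x))}=\pi_Q\bj{1\,\sigma^n(a(x))}$, together with the observation that under \eqref{e:24}--\eqref{e:25} every tail of $a(x)$ and of $m(x)$ is simultaneously quasi-greedy and quasi-lazy, and the identities of Remark \ref{r14} (iv); none of this appears in your sketch.

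The missing idea --- which is the paper's entire proof of this implication --- is to apply the bijection of Lemma \ref{l21qqq} (iv) to the whole sequence $a(x)$ at once: by Lemma \ref{l24}, $a(x)$ is doubly infinite, in particular co-infinite, and \eqref{e:25} says exactly that $a(x)$ satisfies the quasi-lazy lexicographic condition; hence $a(x)$ is the quasi-lazy expansion of its own value $x$, i.e.\ $a(x)=m(x)$, with no digit-by-digit comparison needed. A smaller inaccuracy in your uniqueness part: you assert that every expansion $(y_i)$ of $x$ satisfies $m(x)\preceq(y_i)$ and $(y_i)\preceq a(x)$; this is false in general, since expansions below $m(x)$ (co-finite ones) and above $a(x)$ (finite ones) may exist. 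The correct statement, which is what the paper uses and what your ``envelope'' remark gestures at, is that every expansion $\succ a(x)$ is finite and every expansion $\prec m(x)$ is co-finite, so a doubly infinite expansion is forced to lie between $m(x)$ and $a(x)$ and hence to equal them.
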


\begin{proof}
(i) $\Longrightarrow$ (ii) 
If $x\in \V_Q$, then $a(x)$ is co-infinite by Lemma \ref{l:doublyinfinite}, and hence $a(x)=m(x)$ by  \eqref{e:25} and
Lemma \ref{l:monotonicity} (ii), (iv).
\medskip

(ii) $\Longrightarrow$ (iii) 
Since $a(x)=m(x)$ is doubly infinite, it remains to show that  no other expansion $c(x)$ of $x$ is doubly infinite.
This follows by recalling that every expansion $c(x)>a(x)$ of $x$ is finite because $a(x)$ is the largest infinite expansion of $x$, and every expansion $c(x)<m(x)$ of $x$ is co-finite because $m(x)$ is the smallest co-infinite expansion of $x$.
\medskip

(iii) $\Longrightarrow$ (ii) 
If $x$ has a unique doubly infinite expansion, then $a(x)=m(x)$ by Lemma \ref{l:doublyinfinite}.
\medskip

(ii) $\Longrightarrow$ (i) 
If $a(x)=m(x)$, then Lemma \ref{l:monotonicity} (ii), (iv) imply \eqref{e:24} and \eqref{e:25}.
\end{proof}

\begin{proof}[Proof of Proposition \ref{p15qqq}]
(i) follows from Lemma \ref{l:monotonicity} and Remark \ref{r22}.

(ii) If $Q\in \C$, then $\mu=0^\infty$ and $\alpha=1^\infty$ by Remark \ref{r14}.
Hence $\ell_Q$ and $r_Q$ have unique doubly infinite double-base expansions, and the definition of $Q\in \V$ is also trivially satisfied.

Henceforth we assume that $Q\in \A \setminus\C$.
It follows from Lemma \ref{l:monotonicity} and the definition of $\V $ that $Q\in \V$ if and only if $m(\ell_Q)=a(\ell_Q)$ and $m(r_Q)=a(r_Q)$.
By Lemma \ref{l:quasi-gr-la} this is equivalent to the property that $\ell_Q$ and $r_Q$ have unique doubly infinite expansions.
\end{proof}

\begin{proof}[Proof of  Proposition \ref{p17qqq}]
(i) follows from Lemma \ref{l:monotonicity} (i) and (iii).

(iii) If $Q\in \C$, then $\mu=0^\infty$ and $\alpha=1^\infty$, and hence the definition of $\V_Q$ is trivially satisfied for every $x\in J_Q$.

(iv) It is contained in Lemma \ref{l:quasi-gr-la}.

(ii) This follows from (iv) if $Q \in \A\setminus\C$, and from (iii) and Lemma \ref{l23} if $Q\in \C$.
\end{proof}

\section{Proof of Proposition \ref{p111qqq} and Theorem \ref{T:expansion} (i)--(iii)}\label{s:3}\label{s3}

In this section we determine the number of expansions of every $x\in\V_Q\setminus\U_Q$ when $Q\in \V $. 
The situation being rather complex, we summarize the results to be proved in Table \ref{t:two-base-A-B}; see also Lemma \ref{l:A-B-set} and Proposition \ref{p38}. 
Where we write $\mu$ and $\alpha$ instead of $\mu(Q)$ and $\alpha(Q)$, and we use the notations 
\begin{align*}
A_x'&:=\{c:\pi_Q(c)=x\}\text{ if } x\in A_Q,\\
B_x'&:=\{c:\pi_Q(c)=x\}\text{ if } x\in B_Q.
\end{align*}

We recall that
\begin{align*}
&A_Q:=\{x\in \V_Q\setminus\U_Q:\sigma^j(a(x))=\alpha(Q) \text{ for at least one digit }a_j(x)=0\},\\
&B_Q:=\{x\in \V_Q\setminus\U_Q:\sigma^j(m(x))=\mu(Q) \text{ for at least one digit }m_j(x)=1\}.
\end{align*}
Furthermore, we recall the relations
\begin{align}
\sigma^j(\alpha(Q))&\preceq \alpha(Q)\text{ for all } j\ge 0\label{e:3101}
\intertext{and}
\mu(Q)&\preceq\sigma^i(\mu(Q))\text{ for all } i\ge 0.\label{e:3102} 
\end{align}

In this section we often write
\begin{equation*}
\mu(Q)=\mu=(\mu_i),\quad
m(x)=(m_i),\quad
\alpha(Q)=\alpha=(\alpha_i)\qtq{and}
a(x)=(a_i)
\end{equation*}
for brevity, when $Q\in\A$ and $x\in J_Q$ are given.

In the following lemma we refer to the conditions of Lemma \ref{l:HuBarZou}:

\begin{lemma} \label{l31}\
\begin{enumerate}[\upshape (i)]
\item If $(\mu,\alpha)$ satisfies one of the conditions \eqref{21}--\eqref{30}, then $1/q_1^k\in A_Q$ for every $k\in\N$.

\item If $(\mu,\alpha)$ satisfies one of the conditions \eqref{21}--\eqref{29} and \eqref{31}, then $1/(q_0^k(q_1-1))\in B_Q$ for every $k\in\N$.

\item If $Q\in \V$, then $A_Q\neq \emptyset$ and $B_Q\neq \emptyset$.
\end{enumerate}
\end{lemma}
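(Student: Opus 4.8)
The plan is to prove the three parts of Lemma~\ref{l31} by explicitly writing down the greedy/quasi-greedy and lazy/quasi-lazy expansions of the candidate numbers and checking membership in $A_Q$ and $B_Q$ directly from their definitions. The key observation is that by Remark~\ref{r14}~(iv) we have $\pi_Q(0\alpha)=1/q_1=\pi_Q(10^\infty)$, so that $1/q_1$ has the two expansions $10^\infty$ and $0\alpha$; iterating, $1/q_1^k$ has expansions $0^{k-1}10^\infty$ and $0^{k-1}\alpha$ (and possibly more). Symmetrically, $\pi_Q(1\mu)=1/(q_0(q_1-1))=\pi_Q(01^\infty)$ gives $1/(q_0^k(q_1-1))$ the two expansions $1^{k-1}01^\infty$ and $1^{k-1}\mu$.

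For part~(i), I would first compute that $a(1/q_1^k)=0^{k-1}\alpha$: this is the largest \emph{infinite} expansion, which one verifies using Lemma~\ref{l21qqq}~(ii) together with the fact that $\sigma^n(\alpha)\preceq\alpha$ (Remark~\ref{r22}~(ii)) and that $0^{k-1}\alpha$ dominates the finite expansion $0^{k-1}10^\infty$. Then $\sigma^{k-1}(a(1/q_1^k))=\alpha$ with digit $a_{k-1}=0$ (for $k\ge 2$; for $k=1$ one takes the leading $0$), so the defining condition for $A_Q$ is met, \emph{provided} $1/q_1^k$ actually lies in $\V_Q\setminus\U_Q$. Membership in $\V_Q$ follows from Lemma~\ref{l25} once I check $a=m$ fails only in the expected way, but more directly: I must verify $1/q_1^k\notin\U_Q$ (it has two distinct expansions, so by Proposition~\ref{p17qqq}~(i) it is not in $\U_Q$) and $1/q_1^k\in\V_Q$ by checking the two lexicographic conditions defining $\V_Q$ against $m(1/q_1^k)$ and $a(1/q_1^k)$, using the hypotheses \eqref{21}--\eqref{30} which guarantee $\sigma^i(\mu)\preceq\alpha$ and $\sigma^j(\alpha)\succeq\mu$ in the relevant places. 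Part~(ii) is the mirror image under reflection, exchanging the roles of $\alpha$ and $\mu$, of $q_1$ and $q_0$, and of greedy and lazy; I would invoke the reflection symmetry explicitly rather than repeat the computation.

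Part~(iii) is then immediate: if $Q\in\V$, then by Lemma~\ref{l:HuBarZou} the pair $(\mu,\alpha)$ satisfies one of \eqref{21}--\eqref{29}, all of which are covered by both~(i) and~(ii), so $1/q_1\in A_Q$ and $1/(q_0(q_1-1))\in B_Q$, giving $A_Q\neq\emptyset$ and $B_Q\neq\emptyset$.

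The main obstacle I anticipate is the case analysis at the \emph{boundary} of the conditions, precisely where the exceptional convention of Lemma~\ref{l:HuBarZou} matters: in conditions such as \eqref{25}, \eqref{24} or \eqref{30} one has $\sigma^i(\mu)\preceq\alpha$ with equality for some $i$, and I must make sure that the verification of $1/q_1^k\in\V_Q$ (which requires $\sigma^j(m(1/q_1^k))\preceq\alpha$ whenever the $j$th digit of $m$ is $0$) does not fail because of an \emph{equality} turning into strict violation after shifting. Handling~\eqref{30} is the most delicate, since there $\sigma^i(\mu)\succ\alpha$ for some $i$, so I cannot use~\eqref{e:3102} freely on the $\mu$ side and must argue that the particular shifts arising in $m(1/q_1^k)$ still respect the $\V_Q$ inequalities; this is exactly why \eqref{30} appears in~(i) but \eqref{31} does not, and the careful bookkeeping of which shifted tails of $\mu$ and $\alpha$ actually occur will be the crux of the proof.
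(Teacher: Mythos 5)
Your proposal is correct and is essentially the paper's own argument: the same family of points (greedy expansion $0^{k-1}10^\infty$, quasi-greedy expansion $0^{k}\alpha$ --- your ``$0^{k-1}\alpha$'' is an off-by-one slip, harmless since it only reindexes the same family), the same verification that the $A_Q$-condition holds once membership in $\V_Q$ is established, the same reflection symmetry for (ii), and the same deduction of (iii) from (i) and (ii) via Lemma \ref{l:HuBarZou}. The difficulty you flag for case \eqref{30} dissolves exactly along the lines you indicate: for $Q\notin\C$ the sequence $0^k\alpha$ is co-infinite and satisfies the quasi-lazy characterization of Lemma \ref{l21qqq} (iv) because $\sigma^j(\alpha)\succeq\mu$ for all $j$ in every one of the cases \eqref{21}--\eqref{30}, hence $m(x)=a(x)=0^k\alpha$ and only tails of $\alpha$ (never of $\mu$) occur among the shifts to be checked, so the inequality $\sigma^i(\mu)\succ\alpha$ from \eqref{30} is never invoked --- which is precisely the paper's one-line argument via Lemma \ref{l25}, with $Q\in\C$ handled separately by the trivial identity $\V_Q=J_Q$.
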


\begin{proof}
(i) Fix an arbitrary $k\ge 0$, and set $x_k:=1/q_0^kq_1$.
It follows from Lemma \ref{l:rela-quasi-gl} that 
\begin{equation*}
b(x_k)=0^k10^{\infty}\qtq{and}a(x_k)=0^{k+1}\alpha(Q).
\end{equation*}
In view of the definition of $A_Q$ it only remains to prove that $x_k\in \V_Q$.
This is true for $Q\in \C$ because then $\V_Q=J_Q$ by Lemma \ref{l23}.

Otherwise we have $0^\infty\prec\mu(Q)\preceq\sigma^j(\alpha(Q))$ for all $j\ge 1$  by Remark \ref{r14} and Lemma \ref{l:HuBarZou}.
Hence $a(x_k)=0^{k+1}\alpha(Q)$ is co-infinite, and therefore $m(x_k)=a(x_k)$ by Lemma \ref{l:monotonicity} (iv).
Applying Lemma \ref{l:quasi-gr-la} we conclude that $x_k\in \V_Q$.
\medskip

(ii) The proof is similar to that of (i).
\medskip

(iii) follows from (i) and (ii).
\end{proof}
\color{black}

\begin{lemma}\label{l:lA}\label{l33}
Let $Q\in\A$ and  $x\in A_Q$.
\begin{enumerate}[\upshape (i)]
\item There exists a  positive integer $n$ such that
\begin{equation*}
b(x)=a_1a_2\cdots a_{n-1}10^\infty\qtq{and}
a(x)=a_1a_2\cdots a_{n-1}0\alpha_1\alpha_2\cdots .
\end{equation*}

\item If $\alpha(Q)=1^\infty$ or if the inequalities in \eqref{e:3101} are strict,  then there is no expansion between $a(x)$ and $b(x)$.

\item If $\alpha(Q)\neq 1^\infty$, and equality holds in \eqref{e:3101} for a smallest positive integer $k$, then $k\ge 2$, $\alpha_k=0$, and all expansions between $a(x)$ and $b(x)$ are given by the sequences
\begin{equation*}
c^N:=a_1\cdots a_{n-1}(0\alpha_1\cdots \alpha_{k-1})^N10^\infty, \quad N=1,2,\ldots,
\end{equation*}
with $n$ as in (i).
\color{black}
\end{enumerate}
\end{lemma}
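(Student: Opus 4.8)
The plan is to obtain (i) directly from the relation between the greedy and quasi-greedy expansions, and to reduce (ii) and (iii) to a self-similar question about the expansions of $r_Q$. For (i): by Proposition~\ref{p111qqq} the greedy expansion of $x\in A_Q$ is finite, so it has a last nonzero digit in some position $n$, say $b(x)=b_1\cdots b_{n-1}10^\infty$. Lemma~\ref{l22qqq}(i) then gives $a(x)=b_1\cdots b_{n-1}0\alpha$; writing $a_i$ for the digits of $a(x)$ we have $a_i=b_i$ for $i<n$, and both displayed formulas follow at once. I then set $w:=a_1\cdots a_{n-1}$, so that $a(x)=w0\alpha$ and $b(x)=w10^\infty$.

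The common reduction for (ii) and (iii) runs as follows. I would first show that every expansion $c$ with $a(x)\prec c\prec b(x)$ begins with $w$: since $a(x)$ and $b(x)$ share the prefix $w$, a first disagreement of $c$ with $w$ in a position $\le n-1$ would force either $c\prec a(x)$ or $c\succ b(x)$. Writing $c=wd$ and using that $c$ and $a(x)$ agree on $w$, the defining formula for $\pi_Q$ yields $\pi_Q(d)=\pi_Q(0\alpha)=1/q_1$ (Remark~\ref{r14}(iv)), with $0\alpha\prec d\prec 10^\infty$. As $d\prec 10^\infty$ forces $d=0d'$, stripping the leading $0$ gives $\pi_Q(d')=r_Q$ with $d'\succ\alpha=a(r_Q)$. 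Thus the expansions strictly between $a(x)$ and $b(x)$ correspond, via $c=w0d'$, exactly to the expansions $d'$ of $r_Q$ with $d'\succ a(r_Q)$, which are automatically finite and $\preceq b(r_Q)$.

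For (ii): if $\alpha=1^\infty$ there is no sequence strictly between $01^\infty$ and $10^\infty$, so nothing lies between $a(x)$ and $b(x)$. If instead the inequalities in \eqref{e:3101} are strict, then in particular $\sigma^n(\alpha)\prec\alpha$ whenever $\alpha_n=0$, so by Lemma~\ref{l21qqq}(i) $\alpha$ is greedy, i.e.\ $b(r_Q)=a(r_Q)=\alpha$. Hence no expansion of $r_Q$ exceeds $\alpha$, no admissible $d'$ exists, and there is no expansion strictly between $a(x)$ and $b(x)$.

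For (iii): equality in \eqref{e:3101} at the smallest $k\ge 1$ means $\sigma^k(\alpha)=\alpha$, so $\alpha=(\alpha_1\cdots\alpha_k)^\infty$ has minimal period $k$. I would prove $\alpha_k=0$ by contradiction: if $\alpha_k=1$, let $m<k$ be the largest index with $\alpha_m=0$; periodicity gives $\sigma^m(\alpha)=1^{k-m}\alpha$, which begins with strictly more leading ones than $\alpha$, contradicting $\sigma^m(\alpha)\preceq\alpha$ from Remark~\ref{r22}(ii). Since $\alpha_1=1\neq 0=\alpha_k$ we also get $k\ge 2$, and Lemma~\ref{l22qqq}(i) identifies $b(r_Q)=\alpha_1\cdots\alpha_{k-1}10^\infty$. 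The heart of the matter is to show that every expansion $d'$ of $r_Q$ with $\alpha\prec d'\preceq b(r_Q)$ has the stated form. Using the renormalization $r_Q=P+R\,r_Q$ coming from $\sigma^k(\alpha)=\alpha$, where $P$ and $R$ are the value and scaling factor of the block $\alpha_1\cdots\alpha_{k-1}0$, I would argue: $d'$ agrees with $\alpha_1\cdots\alpha_{k-1}$ on its first $k-1$ digits (same sandwiching as before); if $d'_k=1$ then $d'=b(r_Q)$, while if $d'_k=0$ then $\pi_Q(\sigma^k(d'))=r_Q$ and $\sigma^k(d')\succ\alpha$, so $\sigma^k(d')$ again lies in the same range. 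Iterating peels off blocks $\alpha_1\cdots\alpha_{k-1}0$, and the process must terminate, for otherwise $d'=\alpha$, contradicting $d'\succ\alpha$. This produces $d'=(\alpha_1\cdots\alpha_{k-1}0)^{N-1}\alpha_1\cdots\alpha_{k-1}10^\infty$ for some $N\ge 1$, whence $c=w0d'$ is precisely $c^N$; the converse (each $c^N$ is indeed an expansion of $x$) follows from the same identity $r_Q=P+R\,r_Q$ via a finite geometric sum. I expect the main obstacle to be exactly this completeness step: setting up the value-invariance $\pi_Q(\sigma^k(d'))=r_Q$ under peeling and justifying its termination, together with the combinatorial fact $\alpha_k=0$; the remaining steps are routine lexicographic bookkeeping.
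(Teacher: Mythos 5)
Your proof of (i) is circular within the paper's logical structure: you invoke Proposition \ref{p111qqq} to conclude that the greedy expansion of $x\in A_Q$ is finite, but that proposition is itself deduced in the paper by combining Lemma \ref{l:lA}(i) and Lemma \ref{l:lB}(i) --- the very statement being proved. The repair is short and is exactly the paper's argument: by the definition of $A_Q$, $a(x)$ ends with $0\alpha(Q)$, and since $\pi_Q(0\alpha(Q))=\pi_Q(10^\infty)$ by Remark \ref{r14}(iv), replacing this tail produces an expansion of $x$ lexicographically larger than $a(x)$; hence $b(x)\neq a(x)$, so $b(x)$ is finite by Lemma \ref{l22qqq}(i), and the same lemma then gives the displayed form of $a(x)$. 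With this substitution part (i) stands, and nothing later in your argument depends on the proposition.

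For (ii) and (iii) your argument is correct but takes a genuinely different route from the paper's. You strip the common prefix and translate ``expansions of $x$ strictly between $a(x)$ and $b(x)$'' into ``expansions $d'$ of $r_Q$ with $d'\succ\alpha$''; then (ii) reduces to the observation that strictness in \eqref{e:3101} makes $\alpha$ a greedy sequence by Lemma \ref{l21qqq}(i), so $b(r_Q)=\alpha$ and no admissible $d'$ exists, while (iii) becomes a self-similar peeling argument in which each admissible $d'$ either equals $b(r_Q)=\alpha_1\cdots\alpha_{k-1}10^\infty$ or begins with the block $\alpha_1\cdots\alpha_{k-1}0$, after which $\sigma^k(d')$ is again admissible, non-termination forcing the contradiction $d'=\alpha$. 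The paper instead works directly with expansions of $x$: for (ii) it shows an intermediate expansion would force $\alpha\preceq\sigma^k(\alpha)$ for some $k$ with $\alpha_k=0$; for (iii) it first checks the $c^N$ are expansions and then rules out anything strictly between $c^{N+1}$ and $c^N$ by a two-case lexicographic analysis. Your route localizes the work at $r_Q$ and exploits the periodicity more transparently, at the cost of having to identify $b(r_Q)$ --- a step you assert from Lemma \ref{l22qqq}(i) but should justify: $b(r_Q)$ is finite because $\alpha=\alpha_1\cdots\alpha_{k-1}0\alpha$ yields a larger expansion of $r_Q$, and its last nonzero digit sits at position exactly $k$ by the minimality of $k$ together with the maximality of the greedy expansion. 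One further small point: the index $m<k$ with $\alpha_m=0$ in your proof of $\alpha_k=0$ exists only because otherwise $\alpha=1^\infty$, which is excluded; this deserves a word, and it also covers the degenerate case $k=1$.
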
	
	
\begin{proof}
(i)  By the definition of $A_Q$, $a(x)$ ends with $0\alpha(Q)$.
Since $\pi_Q(0\alpha(Q))=\pi_Q(10^{\infty})$, this implies that $a(x)$ is not the largest expansion of $x$.
Therefore $x$ has a finite greedy expansion, and we conclude by applying Lemma \ref{l:rela-quasi-gl}.
\medskip

(ii) If $\alpha(Q)=1^\infty$, then using (i) we get
\begin{equation*}
b(x)=a_1a_2\cdots a_{n-1}10^\infty\text{ and }
a(x)=a_1a_2\cdots a_{n-1}01^\infty
\end{equation*}
for some positive integer $n$.
This implies our claim because there is no sequence between $01^\infty$ and $10^\infty$.

Now assume that all  inequalities in \eqref{e:3101} are strict, and assume on the contrary that $x$ has an expansion $(x_i)$ satisfying the inequalities
\begin{equation*}
a(x)=a_1a_2\cdots a_{n-1}0\alpha_1\alpha_2\cdots 
\prec(x_i)
\prec a_1a_2\cdots a_{n-1}10^\infty=b(x).
\end{equation*}
Since $(x_i)\succ a(x)$, and $a(x)$ is the largest infinite expansion of $x$, and since $\alpha_1(Q)=1$ for every $Q\in\A$, there exists a positive integer $k$ such that $\alpha_k=0$, and 
\begin{equation*}
(x_i)=a_1a_2\cdots a_{n-1}0\alpha_1\cdots\alpha_{k-1}10^\infty.
\end{equation*}
Then
\begin{equation*}
(y_i):=a_1a_2\cdots a_{n-1}0\alpha_1\cdots\alpha_{k-1}0\alpha_1\alpha_2\cdots
=a_1a_2\cdots a_{n-1}0\alpha_1\cdots\alpha_{k-1}\alpha_k\alpha_1\alpha_2\cdots
\end{equation*}
is an infinite expansion of $x$, and therefore $(y_i)\preceq a(x)$.
This implies the inequality
\begin{equation*}
\alpha_1\alpha_2\cdots\preceq 
\alpha_{k+1}\alpha_{k+1}\cdots,
\end{equation*}
contradicting our assumption that the inequalities in \eqref{e:3101}  are strict.
\medskip
	
(iii) By our assumption we have $\alpha(Q)=(\alpha_1\cdots\alpha_k)^{\infty}$.

Furthermore, we have $k\ge 2$ and $\alpha_k=1$.
Indeed, in case $k=1$ we would obtain $\alpha(Q)=1^{\infty}$, which is excluded.
(Note that $\alpha_1=1$ for all $Q\in\A$.)
Next, in case $\alpha_k=1$ we would infer from the inequality
\begin{equation*}
\sigma^{k-1}(\alpha(Q))=1\alpha(Q)\preceq\alpha(Q),
\end{equation*}
the excluded case $\alpha(Q)=1^{\infty}$.

Using these relations we infer from (i) that
\begin{equation*}\label{e:312}
a(x)=a_1\cdots a_{n-1}(0\alpha_1\cdots\alpha_{k-1})^{\infty} \text{ and } b(x)=a_1\cdots a_{n-1}10^\infty.
\end{equation*}
It follows that the sequences $c^N$ are expansions of $x$.
Indeed, using again the relation $\alpha_k=0$, we have
\begin{align*}
\pi_Q(c^N)
&=\pi_Q\bj{a_1\cdots a_{n-1}(0\alpha_1\cdots \alpha_{k-1})^N10^\infty}\\
&=\pi_Q\bj{a_1\cdots a_{n-1}(0\alpha_1\cdots \alpha_{k-1})^N0(\alpha_1\cdots \alpha_k)^\infty}\\
&=\pi_Q\bj{a_1\cdots a_{n-1}0(\alpha_1\cdots \alpha_k)^\infty}\\
&=\pi_Q\bj{a(x)}=x.
\end{align*}
In the last step we used (i).

To complete the proof we assume on the contrary that there exists an expansion $(x_i)$ of $x$ and a positive integer $N$ such that $c^{N+1}\prec(x_i)\prec c^N$.
Hence we obtain that 
\begin{align*}
&(x_i)\text{ starts with }a_1\cdots a_{n-1}(0\alpha_1\cdots\alpha_{k-1})^N0,\\
&x_{n+kN+1}\cdots x_{n+kN+k}\succ \alpha_1\cdots\alpha_{k-1}1,\\
&x_{n+kN+1}\cdots x_{n+kN+k-1}\succ \alpha_1\cdots\alpha_{k-1},
\intertext{and}
&(x_i)\text{ and ends with }10^{\infty}.
\end{align*}
If the last nonzero digit of $(x_i)$ is $x_{\ell}=1$ with $\ell\ge n+k(N+1)+1$, then replacing $10^{\infty}$ with $0\alpha_1\alpha_2\cdots$ 
we obtain from $(x_i)$ an infinite expansion $(y_i)$ starting with
\begin{equation*}
x_1\cdots x_{n+kN+k}\succ 
a_1\cdots a_{n-1}(0\alpha_1\cdots\alpha_{k-1})^{N+1}1.
\end{equation*}
This is impossible,  because $\alpha_k=0$, and therefore $(y_i)\succ a(x)$.

It remains to consider the cases where say $\ell=n+kN+j$ with some $1\le j\le k$.
In fact, we cannot have $j=k$, because then $(x_i)=c^{N+1}$.
Thus we have $1\le j\le k-1$.

Observe that
\begin{equation*}
(y_i):=a_1\cdots a_{n-1}(0\alpha_1\cdots\alpha_{k-1})^N0x_{n+kN+1}\cdots x_{n+kN+j-1}0\alpha_1\alpha_2\cdots
\end{equation*}
is an infinite expansion of $x$, and 
\begin{equation*}
x_{n+kN+1}\cdots x_{n+kN+j-1}x_{n+kN+j}
=x_{n+kN+1}\cdots x_{n+kN+j-1}1\succ\alpha_1\cdots\alpha_j.
\end{equation*}
We distinguish two cases.
If 
\begin{equation*}
x_{n+kN+1}\cdots x_{n+kN+j-1}\succ \alpha_1\cdots\alpha_{j-1},
\end{equation*}
then
\begin{align*}
a_1\cdots a_{n-1}(0\alpha_1\cdots\alpha_{k-1})^N0x_{n+kN+1}\cdots x_{n+kN+j-1}
\succ
a_1\cdots a_{n-1}(0\alpha_1\cdots\alpha_{k-1})^N0\alpha_1\cdots\alpha_{j-1}.
\end{align*}
This implies that $(y_i)\succ a(x)$, which is impossible because $a(x)$ is the largest infinite expansion of $x$.

If
\begin{equation*}
x_{n+kN+1}\cdots x_{n+kN+j-1}=\alpha_1\cdots\alpha_{j-1},
\end{equation*}
then we have necessarily $\alpha_j=0$, and
\begin{align*}
(y_i)&=a_1\cdots a_{n-1}(0\alpha_1\cdots\alpha_{k-1})^N0x_{n+kN+1}\cdots x_{n+kN+j-1}0\alpha_1\alpha_2\cdots\\
&=a_1\cdots a_{n-1}(0\alpha_1\cdots\alpha_{k-1})^N0\alpha_1\cdots\alpha_{j-1}\alpha_j\alpha_1\alpha_2\cdots
\end{align*}
Since $1\le j<k$, using the minimality of $k$ we obtain that
\begin{equation*}
\alpha_1\cdots\alpha_{j-1}\alpha_j\alpha_1\alpha_2\cdots
\succ
\alpha_1\cdots\alpha_{j-1}\alpha_j\alpha_{j+1}\alpha_{j+1}\cdots,
\end{equation*}
whence $(y_i)\succ a(x)$ again, a contradiction.
\end{proof}

We obtain the following lemma by symmetry.

\begin{lemma}\label{l:lB} \label{l34}
Let $Q\in\A$ and $x\in B _Q$.

\begin{enumerate}[\upshape (i)]
\item There exists a  positive integer $n$ such that
\begin{equation*}
l(x)=m_1m_2\cdots m_{n-1}01^\infty\qtq{and}
m(x)=m_1m_2\cdots m_{n-1}1\mu_1\mu_2\cdots .
\end{equation*}

\item If $\mu(Q)=0^\infty$, or if the inequalities in \eqref{e:3102} are strict, then there is no expansion between $m(x)$ and $l(x)$.\label{371-2}

\item If $\mu(Q)\neq 0^\infty$, and equality holds in \eqref{e:3102} for a smallest positive integer $k$, then $k\ge 2$, $\mu_k=1$, and all expansions between $m(x)$ and $l(x)$ are given by the sequences
\begin{equation*}\label{371-3}
m_1\cdots m_{n-1}(1\mu_1\cdots \mu_{k-1})^N01^\infty, \quad N=1,2,\ldots,
\end{equation*}
with $k$ as in (i).
\end{enumerate}
\end{lemma}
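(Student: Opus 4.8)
The plan is to deduce Lemma \ref{l34} from Lemma \ref{l33} by the reflection symmetry $x\mapsto\overline{(x_i)}$ that interchanges the roles of the digit $0$ and the digit $1$. Under reflection, greedy expansions become lazy expansions, quasi-greedy becomes quasi-lazy, the sequence $\alpha(Q)$ corresponds to $\mu(Q)$, the inequality \eqref{e:3101} for $\alpha$ corresponds to the inequality \eqref{e:3102} for $\mu$, and the set $A_Q$ corresponds to $B_Q$. Concretely, reflection sends the characterization $\sigma^j(a(x))=\alpha$ with $a_j(x)=0$ (the defining property of $A_Q$) to $\sigma^j(m(x))=\mu$ with $m_j(x)=1$ (the defining property of $B_Q$), so an $x\in B_Q$ is precisely the ``reflection'' of a member of $A_Q$ in this symbolic sense.

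First I would make the reflection dictionary precise, invoking Lemma \ref{l21qqq}: the greedy map (characterized by $\sigma^n\prec\alpha$ whenever the digit is $0$) and the lazy map (characterized by $\sigma^n\succ\mu$ whenever the digit is $1$) are exchanged under the digit-swap $i_k\mapsto 1-i_k$, and likewise the quasi-greedy and quasi-lazy maps. Thus the statements about $b(x)$ and $a(x)$ in Lemma \ref{l33} translate termwise into statements about $l(x)$ and $m(x)$: the conclusion $b(x)=a_1\cdots a_{n-1}10^\infty$, $a(x)=a_1\cdots a_{n-1}0\alpha_1\alpha_2\cdots$ of part (i) reflects to $l(x)=m_1\cdots m_{n-1}01^\infty$, $m(x)=m_1\cdots m_{n-1}1\mu_1\mu_2\cdots$, which is exactly part (i) here. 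This step uses Lemma \ref{l22qqq} (ii) directly for the explicit form of $m(x)$, so I would simply cite it rather than rederive it.

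Next I would carry the same translation through parts (ii) and (iii). Part (ii): the hypotheses $\alpha(Q)=1^\infty$ or the strict form of \eqref{e:3101} reflect to $\mu(Q)=0^\infty$ or the strict form of \eqref{e:3102}, and ``no sequence lies strictly between $01^\infty$ and $10^\infty$'' is self-dual; so the conclusion that no expansion lies between $m(x)$ and $l(x)$ follows verbatim. Part (iii): the hypothesis that equality first holds in \eqref{e:3101} at $k$ reflects to equality first holding in \eqref{e:3102} at $k$; the properties $k\ge 2$ and $\alpha_k=0$ become $k\ge 2$ and $\mu_k=1$; and the family of intermediate expansions $c^N=a_1\cdots a_{n-1}(0\alpha_1\cdots\alpha_{k-1})^N10^\infty$ reflects to $m_1\cdots m_{n-1}(1\mu_1\cdots\mu_{k-1})^N01^\infty$, which is exactly the claimed family. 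Because Lemma \ref{l33} already establishes that these are \emph{all} intermediate expansions, the same conclusion holds after reflection.

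The one point requiring genuine care, rather than mechanical translation, is to verify that reflection is an order-reversing involution on $\{0,1\}^\infty$ compatible with $\pi_Q$ in the sense needed, i.e.\ that $x\in B_Q$ if and only if its reflected data arises from a point in $A_{Q'}$ for the appropriately reflected base, and that this genuinely swaps $a\leftrightarrow m$ and $\alpha\leftrightarrow\mu$. In the double-base setting the two bases $q_0,q_1$ play asymmetric roles, so unlike the equal-base case the reflection does not fix $Q$; one must confirm that the relevant structural facts (Lemmas \ref{l21qqq} and \ref{l22qqq}, and the inequalities \eqref{e:3101}--\eqref{e:3102}) are preserved under the combined map that swaps the digits \emph{and} the two bases. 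I expect this bookkeeping to be the main obstacle: once it is checked that reflection is a symmetry of the whole setup interchanging $A$ and $B$ and all the associated objects, every assertion of Lemma \ref{l34} is the literal image of the corresponding assertion of Lemma \ref{l33}, and the proof is complete.
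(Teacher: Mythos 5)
Your proposal is correct and is exactly the paper's own argument: the paper derives Lemma \ref{l:lB} from Lemma \ref{l:lA} purely ``by symmetry'', i.e.\ by the digit reflection $(x_i)\mapsto(1-x_i)$ combined with the base swap $(q_0,q_1)\mapsto(q_1,q_0)$, which interchanges greedy/lazy, quasi-greedy/quasi-lazy, $\alpha\leftrightarrow\mu$, \eqref{e:3101}$\leftrightarrow$\eqref{e:3102}, and $A_Q\leftrightarrow B_{Q'}$. If anything, your write-up is more careful than the paper's, since you explicitly flag (and correctly describe) the bookkeeping point that the reflection does not fix $Q$ in the double-base setting.
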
	

\begin{proof}[Proof of Proposition \ref{p111qqq}]
Combine Lemma \ref{l:lA} (i) and Lemma \ref{l:lB} (i).
\end{proof}	

\begin{lemma}\label{l:condition}\label{l35qqq}
Fix $Q\in\V\setminus \C$.
\begin{enumerate}[\upshape (i)]
\item If $\sigma^j(\mu(Q))=\alpha(Q)$ for some $j\ge 1$ and $x\in B_Q$, then $a(x)=m(x)\prec b(x)$ and $x\in A_Q$.\footnote{Cases (iii), (vi), (ix) of Lemma \ref{l:HuBarZou}.} \label{c2} 

\item If $\mu(Q)=\sigma^j(\alpha(Q))$ for some $j\ge 1$ and $x\in A_Q$, then $a(x)=m(x)\succ l(x)$ and $x\in B_Q$.\footnote{Cases (ii), (vii), (ix) of Lemma \ref{l:HuBarZou}.} \label{c3}

\item If $\sigma^i(\mu(Q))\prec\alpha(Q)$ for all $j\ge 1$ and $x\in B_Q\setminus A_Q$, then $m(x)=a(x)=b(x)$.\footnote{Cases (i), (ii), (iv), (v), (vii), (viii) of Lemma \ref{l:HuBarZou}.}\label{c4}
		
\item If $\mu(Q)\prec\sigma^i(\alpha(Q))$ for all $j\ge 1$ and $x\in A_Q\setminus B_Q$, then $m(x)=a(x)=l(x)$.\footnote{Cases (i), (iii), (iv), (v), (vi), (viii) of Lemma \ref{l:HuBarZou}.} \label{c5}
\end{enumerate}		
\end{lemma}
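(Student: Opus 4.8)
The plan is to prove each of the four implications by combining the structural description of elements of $A_Q$ and $B_Q$ from Lemmas \ref{l:lA} and \ref{l:lB} with the lexicographic characterizations of the greedy, quasi-greedy, lazy and quasi-lazy maps in Lemma \ref{l:monotonicity}, together with the membership condition $x\in\V_Q$ furnished by Lemma \ref{l:quasi-gr-la}, namely $a(x)=m(x)$. Throughout I will exploit the symmetry (reflection $\overline{(x_i)}=(1-x_i)$ exchanging the roles of $\mu$ and $\alpha$, of $A_Q$ and $B_Q$, and of greedy/lazy) so that it suffices to prove, say, parts \eqref{c2} and \eqref{c4} in detail, obtaining \eqref{c3} and \eqref{c5} by reflection.

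For part \eqref{c4}, I would start from $x\in B_Q\setminus A_Q$. Since $x\in\V_Q$ and $Q\in\A\setminus\C$, Lemma \ref{l:quasi-gr-la} gives $a(x)=m(x)$ immediately, so the content is the claim $a(x)=b(x)$, i.e.\ that $a(x)$ is already greedy, equivalently \emph{infinite}. By Lemma \ref{l:lB}(i), since $x\in B_Q$ the quasi-lazy expansion has the form $m(x)=m_1\cdots m_{n-1}1\mu_1\mu_2\cdots$, i.e.\ $\sigma^{n}(m(x))=\mu(Q)$ with $m_n=1$. The hypothesis $x\notin A_Q$ means that $\sigma^j(a(x))\prec\alpha(Q)$ at every zero digit $a_j(x)=0$ (strict, since equality would put $x$ in $A_Q$); by Lemma \ref{l:lA}(i) this is exactly the statement that the greedy expansion $b(x)$ is infinite, whence $a(x)=b(x)$ by Lemma \ref{l:rela-quasi-gl}(i). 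The remaining point is to check that nothing in the hypothesis $\sigma^i(\mu(Q))\prec\alpha(Q)$ for all $i\ge 1$ forces a collision; this hypothesis guarantees that the tail $\mu_1\mu_2\cdots$ appended after position $n$ does not itself create a forbidden block under reflection, so that $m(x)$ genuinely equals the infinite expansion $a(x)$. Part \eqref{c5} follows by reflection, replacing $b(x)$ by $l(x)$ and the ``infinite'' condition by ``co-infinite''.

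For part \eqref{c2}, I would take $x\in B_Q$ with $\sigma^{j}(\mu(Q))=\alpha(Q)$ for some $j\ge 1$. Again $a(x)=m(x)$ by Lemma \ref{l:quasi-gr-la}. By Lemma \ref{l:lB}(i), $m(x)=m_1\cdots m_{n-1}1\mu_1\mu_2\cdots$, so the tail of $a(x)=m(x)$ contains $\mu_1\mu_2\cdots$, which by hypothesis contains the block $\alpha(Q)=\sigma^j(\mu)$ starting at a position following a zero digit $\mu_j=0$ (note $\alpha_1=1$ forces the preceding digit of $\mu$ to be $0$). Concretely, locating the index at which $\sigma^{n+j}(a(x))=\alpha(Q)$ with the corresponding digit of $a(x)$ equal to $0$ verifies the defining condition of $A_Q$, giving $x\in A_Q$; the strict inequality $a(x)=m(x)\prec b(x)$ then comes from Lemma \ref{l:lA}(i), since $x\in A_Q$ forces the greedy expansion to be finite and strictly larger than the infinite $a(x)$. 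Part \eqref{c3} is the reflection.

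\textbf{The main obstacle} I anticipate is the careful bookkeeping of indices in part \eqref{c2}: one must verify that the occurrence of $\alpha(Q)$ inside the tail $\mu_1\mu_2\cdots$ of $m(x)$ sits at a position where the relevant digit of $a(x)$ is genuinely $0$, so that the defining inequality $\sigma^{j'}(a(x))=\alpha(Q)$ for a zero digit is met precisely as required by the definition of $A_Q$, rather than merely as an inequality $\preceq$. This rests on the monotonicity relations $\sigma^i(\mu)\succeq\mu$ and $\sigma^i(\alpha)\preceq\alpha$ from Remark \ref{r22}, which prevent any spurious earlier occurrence from interfering and pin down the alignment. Once this alignment is established, the strict/non-strict distinctions in all four parts follow mechanically from whether the relevant expansion is finite, co-finite, infinite or co-infinite, via Lemmas \ref{l:rela-quasi-gl}, \ref{l:lA} and \ref{l:lB}.
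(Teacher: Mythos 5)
Your parts (iii) and (iv) are correct, and your route there is in fact cleaner than the paper's: since $a(x)$ is quasi-greedy, $\sigma^j(a(x))\preceq\alpha(Q)$ at every zero digit, and $x\notin A_Q$ upgrades this to a strict inequality, which already forces $b(x)$ to be infinite and hence $a(x)=b(x)$; the paper instead verifies the greedy condition on $m(x)$ digit by digit, splitting at the start of the tail $1\mu(Q)$, and your argument even shows the hypothesis on $(\mu,\alpha)$ is not needed for this implication. The only slip is cosmetic: the equivalence between strict inequalities at zero digits of $a(x)$ and infiniteness of $b(x)$ is Lemma \ref{l:rela-quasi-gl} (i) (or Proposition \ref{p111qqq}), not Lemma \ref{l:lA} (i).

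The genuine gap is in part (i), and it propagates to part (ii) by reflection. Writing $a(x)=m(x)=m_1\cdots m_{n-1}1\mu(Q)$ and $\sigma^j(\mu(Q))=\alpha(Q)$, membership in $A_Q$ requires the digit of $a(x)$ immediately preceding this occurrence of $\alpha(Q)$, namely $\mu_j$, to equal $0$ --- exactly the point you flag as the main obstacle. Both of your justifications fail. The parenthetical ``$\alpha_1=1$ forces the preceding digit of $\mu$ to be $0$'' is a non sequitur: nothing prevents a $1$ from preceding a $1$. And the relations of Remark \ref{r22}, $\sigma^i(\mu)\succeq\mu$ and $\sigma^i(\alpha)\preceq\alpha$, are too weak to pin down the alignment: the pair $\mu=0011(10)^\infty$, $\alpha=(10)^\infty$ satisfies all of them (and the lexicographic characterizations of quasi-lazy and quasi-greedy sequences), yet $\sigma^4(\mu)=\alpha$ with $\mu_4=1$, so the occurrence handed to you by the hypothesis may be a bad one; taking instead $\mu=0001\alpha$ with $\alpha$ an aperiodic quasi-greedy sequence of Thue--Morse type (zero blocks of length at most $2$), one even gets pairs satisfying Remark \ref{r22} in which the \emph{unique} occurrence of $\alpha$ inside $\mu$ is preceded by a $1$. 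What actually excludes this is the hypothesis $Q\in\V\setminus\C$, which you never invoke at this step: take $j$ minimal with $\sigma^j(\mu)=\alpha$; if $j=1$ then $\mu_1=0$ by Remark \ref{r14} (iii), while if $j\ge 2$ and $\mu_j$ were $1$, the defining inequality $\sigma^{j-1}(\mu)\preceq\alpha$ of $\V$ would give $1\alpha\preceq\alpha$, forcing $\alpha=1^\infty$, i.e.\ $Q\in\C$, a contradiction. This two-line argument is precisely the paper's (its equations \eqref{e:317bis}--\eqref{e:318bis}); without it, your parts (i) and (ii) do not go through.
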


\begin{proof}
(i) By our assumption there exists a smallest positive integer $k$ such that 
\begin{equation}\label{e:317bis}
\mu(Q)=\mu_1\cdots\mu_k\alpha(Q).
\end{equation}
Furthermore, we must have
\begin{equation}\label{e:318bis}
\mu_k=0.
\end{equation}
For otherwise we would have $\mu_k=1$, and hence
\begin{equation*}
\alpha(Q)\succeq\sigma^{k-1}(\mu(Q))=1\alpha(Q),
\end{equation*}
implying  $\alpha(Q)=1^\infty$, contradicting our assumption $Q\notin\C$.
Here the inequality $\alpha(Q)\succeq\sigma^{k-1}(\mu(Q))$  follows from the minimality of $k$ if $k\ge 2$.
For $k=1$ it follows from the fact that $\alpha_1=1$ and $\mu_1=0$ for all $Q\in\A$; this follows from \cite[Theorem 1]{HuBarZou2024} for $Q\in\A\setminus\C$, and from Remark \ref{r14} (ii) for $Q\in\C$.

Since $x\in B_Q\subseteqq \V_Q$, by Lemmas \ref{l:quasi-gr-la} and \ref{l:lB} we have
\begin{equation}\label{e:319bis}
a(x)=m(x)=m_1m_2\cdots m_{n-1}1\mu(Q)\qtq{and} l(x)=m_1m_2\cdots m_{n-1}01^\infty,
\end{equation}
and \eqref{e:317bis}--\eqref{e:319bis} imply that $x\in A_Q$.
Finally, applying Lemma \ref{l:lA} (ii) we get
\begin{equation*}
b(x)=m_1m_2\cdots m_{n-1}1\mu_1\cdots\mu_{k-1}10^\infty,
\end{equation*}
so that  $a(x)=m(x)\prec b(x)$.  
\medskip 

(ii) follows from (i) by symmetry. 
\medskip

(iii)  Let $x\in B_Q\setminus A_Q$.
Then $m(x)=a(x)$ by Lemma \ref{l:quasi-gr-la}, and
Furthermore, 
\begin{equation*}
m(x)=m_1m_2\cdots m_{k-1}1\mu(Q)
\end{equation*}
for some $k\ge 1$ by Lemma \ref{l:lB} (i).

It remains to show that $b(x)=m(x)$, i.e., that $m(x)$ satisfies the lexicographic condition of Lemma \ref{l:monotonicity} (i).
Thanks to our assumption on $(\mu,\alpha)$ this is satisfied for every digit $m_j=0$ with $j>k$.
It remains to show that
\begin{equation}\label{e:320bis} 
m_{j+1}\cdots m_{k-1}1\mu(Q)\prec\alpha(Q) \text{ whenever }  1\leq j\le k-1 \text{ and } m_j=0.
\end{equation}

Since $a(x)=m(x)$, by Lemma \ref{l:monotonicity} (ii) we have
\begin{equation*}
m_{j+1}\cdots m_{k-1}1\preceq\alpha_1\cdots \alpha_{k-j}.
\end{equation*}
If this inequality is strict, then \eqref{e:320bis} obviously holds.
If this is an equality, and $\mu(Q)\prec \sigma^{k-j}(\alpha(Q))$, then \eqref{e:320bis} holds again.
Since $Q\in\V\setminus \C$ by our assumption, Lemma \ref{l:HuBarZou} implies the weak inequality $\mu(Q)\preceq \sigma^{k-j}(\alpha(Q))$, so that the only remaining case is where
\begin{equation*}
m_{j+1}\cdots m_{k-1}1\mu(Q)=\alpha(Q).
\end{equation*}
Then the inequality \eqref{e:320bis} fails, but this case is excluded by our assumption $x\notin A_Q$ because the properties
\begin{equation*}
a_j=m_j=0\text{ and }m_{j+1}\cdots m_{k-1}1\mu(Q)=\sigma^{j}(a(x))=\alpha(Q)
\end{equation*}
imply $x\in A_Q$ by definition.
\medskip

(iv)  follows from (iii) by symmetry. 
\end{proof}

In the following two lemmas we clarify the inclusion relations $A_Q$ and $B_Q$.

\begin{lemma}\label{l36qqq}
If $Q\in\C$, then $A_Q=B_Q=\V_Q\setminus\U_Q$, and every $x\in\V_Q\setminus\U_Q$ has exactly $2$ expansions.
\end{lemma}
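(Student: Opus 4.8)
The plan is to reduce the entire statement to Lemma \ref{l23}, which already treats the case $Q\in\C$ in detail. First I would recall from Remark \ref{r14}(ii) that $Q\in\C$ forces $\mu=0^\infty$ and $\alpha=1^\infty$; this degeneracy of the extremal sequences is what makes the case comparatively easy and is the source of every simplification below.

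For the set equality $A_Q=B_Q=\V_Q\setminus\U_Q$, I would simply invoke Lemma \ref{l23}(ii), where exactly this identity is established. If a self-contained argument were preferred, one notes that any $x\in\V_Q\setminus\U_Q$ has, by Proposition \ref{p17qqq}(i) together with Lemma \ref{l23}(i), a non-unique expansion, so it falls into case (b) of Lemma \ref{l23}(i): its quasi-greedy expansion $a(x)$ ends in $01^\infty$ and its quasi-lazy expansion $m(x)$ ends in $10^\infty$. Since $\alpha=1^\infty$ and $\mu=0^\infty$, the defining conditions of both $A_Q$ and $B_Q$ are then met, placing $x$ in $A_Q\cap B_Q$, while the reverse inclusions are immediate from the general relation $A_Q\cup B_Q=\V_Q\setminus\U_Q$.

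To count expansions, I would use that $\V_Q=J_Q$ by Proposition \ref{p17qqq}(iii), and that $\U_Q$ is precisely the set of $x$ with a unique expansion by Proposition \ref{p17qqq}(i); hence $\V_Q\setminus\U_Q$ is exactly the set of $x\in J_Q$ admitting more than one expansion. By Lemma \ref{l23}(i) every such $x$ falls into case (b) and therefore has exactly two expansions, namely $b(x)=m(x)$ and $a(x)=l(x)$.

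I do not anticipate a real obstacle: the lemma is essentially a repackaging of Lemma \ref{l23} in the notation of $A_Q$ and $B_Q$. The only point requiring a line of care is the exclusion of a third expansion lying strictly between $b(x)$ and $a(x)$, but this is already guaranteed by the observation in Lemma \ref{l23}(i) that no sequence lies strictly between $10^\infty$ and $01^\infty$. Should a fully self-contained proof be desired, the work amounts to re-running that short lexicographic argument with $\mu=0^\infty$ and $\alpha=1^\infty$, using the monotonicity of the four expansion maps from Lemma \ref{l:monotonicity} together with Lemma \ref{l:rela-quasi-gl}.
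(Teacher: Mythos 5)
Your proposal is correct, but it takes a more citation-driven route than the paper's own proof. You obtain the set identity by invoking Lemma \ref{l23}(ii) verbatim (which indeed already states $A_Q=B_Q=\V_Q\setminus\U_Q$ for $Q\in\C$), and the expansion count from Lemma \ref{l23}(i) combined with Proposition \ref{p17qqq}(i), (iii); the paper instead re-derives both facts inside Section 3: it applies the structure Lemmas \ref{l:lA}(i)--(ii) and \ref{l:lB}(i)--(ii) to each $x\in A_Q$ (resp.\ $x\in B_Q$), uses Lemma \ref{l23} only for the identities $a(x)=l(x)$ and $m(x)=b(x)$, and closes with the mutual inclusions $A_Q\subseteq B_Q\subseteq A_Q$ together with the general relation $A_Q\cup B_Q=\V_Q\setminus\U_Q$. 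The mathematical engine is identical in both versions --- the degeneracy $\mu(Q)=0^\infty$, $\alpha(Q)=1^\infty$ and the fact that no sequence lies strictly between $01^\infty$ and $10^\infty$ --- so your reduction loses nothing and is shorter, while the paper's re-derivation keeps Section 3 uniform with the other cases, where Lemmas \ref{l:lA} and \ref{l:lB} carry the load. One point to tighten in your self-contained variant: the statement of Lemma \ref{l23}(i)(b) does not literally say that $a(x)$ ends in $01^\infty$ and $m(x)$ ends in $10^\infty$; this follows (as the paper itself notes, from the \emph{proof} of (i)) by combining ``none of the two expansions is doubly infinite'' with the facts that $a(x)$ is infinite and $m(x)$ is co-infinite for $x$ in the interior of $J_Q$, so that one extra line is needed there.
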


\begin{proof}
We recall from Remark \ref{r14} that $\mu(Q)=0^\infty$ and $\alpha(Q)=1^\infty$. 
If $x\in A_Q$, then it follows from Lemma \ref{l:lA} (i)--(ii) that
\begin{equation*}
b(x)=a_1a_2\cdots a_{n-1}10^\infty\qtq{and}
a(x)=a_1a_2\cdots a_{n-1}01^\infty;
\end{equation*}
it is clear that there is no expansion between $b(x)$ and $a(x)$.
Since $a(x)=l(x)$ by Lemma \ref{l23}, $x$ has exactly two expansions.
Furthermore, $m(x)=b(x)$ by Lemma \ref{l23}, and hence $m(x)$ ends with $10^\infty=1\mu(Q)$, whence $x\in B_Q$.

Similarly, if $x\in B_Q$, then it follows from Lemma \ref{l:lB} (i)--(ii) that
\begin{equation*}
m(x)=m_1m_2\cdots m_{n-1}10^\infty 
\qtq{and} 
l(x)=m_1m_2\cdots m_{n-1}01^\infty;
\end{equation*}
it is clear that there is no expansion between $m(x)$ and $l(x)$. 
Since $m(x)=b(x)$ by Lemma \ref{l23}, $x$ has exactly two expansions.
Furthermore, $a(x)=l(x)$ by Lemma \ref{l23}, and hence $a(x)$ ends with $01^\infty=1\alpha(Q)$, whence $x\in B_Q$.

Finally, since $A_Q\subseteqq B_Q$ and $B_Q\subseteqq A_Q$, we conclude that
\begin{equation*}
A_Q=B_Q=A_Q\cup B_Q=\V_Q\setminus\U_Q.\qedhere
\end{equation*}
\end{proof}

In the following two results we refer again to the cases \eqref{21}--\eqref{29} of Lemma  \ref{l:HuBarZou}.

\begin{lemma}\label{l:A-B-set}\label{l37}\mbox{}
Let $Q\in\V\setminus\C$.

\begin{enumerate}[\upshape (i)]
\item If $Q$ satisfies one of the conditions \eqref{21}, \eqref{23}, \eqref{26} and \eqref{28}, then $A_Q\cap B_Q=\emptyset$.

\item If $Q$ satisfies one of the conditions \eqref{21}, \eqref{22}, \eqref{23}, \eqref{26}, \eqref{27} and \eqref{28}, then $1/(q_0(q_1-1))\in B_Q\setminus A_Q$.

\item If $Q$ satisfies one of the conditions \eqref{21}, \eqref{25},  \eqref{23}, \eqref{26}, \eqref{24} or \eqref{28}, then $1/q_1\in A_Q\setminus B_Q$.

\item If $Q$ satisfies the condition \eqref{29}, then $A_Q=B_Q$.
\end{enumerate}
\end{lemma}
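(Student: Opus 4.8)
The plan is to run everything through the identity $a(x)=m(x)$, valid for every $x\in\V_Q$ when $Q\in\A\setminus\C$ (Lemma \ref{l:quasi-gr-la}), combined with the defining lexicographic descriptions: $x\in A_Q$ means $\sigma^j(a(x))=\alpha$ for some digit $a_j(x)=0$, and $x\in B_Q$ means $\sigma^i(m(x))=\mu$ for some digit $m_i(x)=1$. The governing principle is that an element of $A_Q\cap B_Q$ forces a shift relation between $\alpha$ and $\mu$, which the strict inequalities of the relevant cases of Lemma \ref{l:HuBarZou} forbid; conversely, when both equalities $\sigma^i(\mu)=\alpha$ and $\mu=\sigma^j(\alpha)$ are available, Lemma \ref{l:condition} will force the two sets to coincide.

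For part (i), suppose $x\in A_Q\cap B_Q$ and write $(c_i):=a(x)=m(x)$. Choose $j$ with $c_j=0$ and $\sigma^j((c_i))=\alpha$, and $i$ with $c_i=1$ and $\sigma^i((c_i))=\mu$; then $i\neq j$ because $c_i\neq c_j$. If $j<i$, applying $\sigma^{i-j}$ to $\sigma^j((c_i))=\alpha$ yields $\mu=\sigma^{i-j}(\alpha)$ with $i-j\ge 1$; if $j>i$, the same bookkeeping yields $\alpha=\sigma^{j-i}(\mu)$ with $j-i\ge 1$. In each of the cases \eqref{21}, \eqref{23}, \eqref{26}, \eqref{28} one has \emph{both} $\mu\prec\sigma^m(\alpha)$ and $\sigma^m(\mu)\prec\alpha$ strictly for all $m\in\N$, so both alternatives are impossible and $A_Q\cap B_Q=\emptyset$.

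Parts (ii) and (iii) are mirror images and treat two explicit points. For $y:=1/(q_0(q_1-1))$, Lemma \ref{l31}\,(ii) gives $y\in B_Q$ in all six listed cases, and since $y\in\V_Q$ Lemma \ref{l:quasi-gr-la} gives $a(y)=m(y)=1\mu$ (using $\pi_Q(1\mu)=y$ from Remark \ref{r14}\,(iv)). To see $y\notin A_Q$ I would test the $A_Q$-condition directly on $1\mu$: an equality $\sigma^k(a(y))=\alpha$ with $a_k(y)=0$ would force $\sigma^{k-1}(\mu)=\alpha$ for some $k-1\ge 1$ with $\mu_{k-1}=0$, contradicting $\sigma^i(\mu)\prec\alpha$ (strict for all $i$ in cases \eqref{21}, \eqref{22}, \eqref{23}, \eqref{26}, \eqref{27}, \eqref{28}). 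Symmetrically, for $x:=1/q_1$ one has $a(x)=m(x)=0\alpha$ and $x\in A_Q$ by Lemma \ref{l31}\,(i), while $x\in B_Q$ would require $\sigma^j(\alpha)=\mu$ with $\alpha_j=1$, excluded by $\mu\prec\sigma^j(\alpha)$ (strict for all $j$ in cases \eqref{21}, \eqref{25}, \eqref{23}, \eqref{26}, \eqref{24}, \eqref{28}).

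Part (iv) is immediate: in case \eqref{29} both equalities $\sigma^i(\mu)=\alpha$ and $\mu=\sigma^j(\alpha)$ hold for suitable $i,j\ge 1$, so Lemma \ref{l:condition}\,(i) gives $B_Q\subseteq A_Q$ and Lemma \ref{l:condition}\,(ii) gives $A_Q\subseteq B_Q$, whence $A_Q=B_Q$. The only genuine bookkeeping lives in part (i), where one must correctly identify which shift relation is produced and confront it with the precise strict/weak pattern of each case; the main thing to guard against is an off-by-one in the shift index or a case in which one of the two required inequalities is merely weak, since then the forbidden relation could slip through. Accordingly I would verify carefully that the case numbers listed in (i)--(iii) are exactly those for which the relevant inequality is \emph{strict}.
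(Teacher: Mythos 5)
Your proposal is correct and follows essentially the same route as the paper: part (i) via the shift relation forced by $a(x)=m(x)$ (Lemma \ref{l:quasi-gr-la}) clashing with the strict inequalities of cases \eqref{21}, \eqref{23}, \eqref{26}, \eqref{28}; parts (ii)--(iii) via Lemma \ref{l31} plus the same strictness argument applied to $1\mu$ and $0\alpha$; and part (iv) via the two implications of Lemma \ref{l:condition}. Your explicit off-by-one bookkeeping in (i)--(iii) is just a more detailed rendering of the paper's ``ends with $0\alpha$ and $1\mu$'' phrasing, so there is nothing substantive to distinguish the two proofs.
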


\begin{proof}
(i) By the definitions of $A_Q$ and $B_Q$, if $x\in A_Q\cap B_Q$, then $a(x)$ ends with $0\alpha(Q)$ and $m(x)$ ends with $1\mu(Q)$.
Since $a(x)=m(x)$ by Lemma \ref{l:quasi-gr-la}, hence either 
$\sigma^{i}(\mu(Q))=\alpha(Q)$ for some $i\ge 1$, or 
$\mu(Q)=\sigma^{j}(\alpha(Q))$ for some $j\ge 1$.
But this is impossible because in the four cases of Lemma \ref{l:HuBarZou} considered here we have
\begin{equation*}
\sigma^{i}(\mu(Q))\prec\alpha(Q) \qtq{and} 
\mu(Q)\prec\sigma^{j}(\alpha(Q))\text{ for all } i, j\in \N.
\end{equation*}
\medskip

(ii) We already know  from Lemma \ref{l31} that $x:=1/(q_0(q_1-1))\in B_Q$ and $a(x)=m(x)=1\mu(Q)$. 
It remains to prove that $x\notin A_Q$.

Assume on the contrary that $x\in A_Q$, i.e., $a(x)=1\mu(Q)$ ends with $0\alpha(Q)$.
Then there exists an integer $i\ge 1$ such that $\sigma^i(\mu(Q))=\alpha(Q)$.
But this is impossible because in the six cases of Lemma \ref{l:HuBarZou} considered here we have
$\sigma^i(\mu(Q))\prec \alpha(Q)$ for all $i\ge 1$.
\medskip

(iii) Similarly to (ii), we already know  from Lemma \ref{l31} that $x:=1/q_1\in A_Q$ and $a(x)=m(x)=0\alpha(Q)$.  
It remains to prove that $x\notin B_Q$.

Assume on the contrary that $x\in B_Q$, i.e., $m(x)=0\alpha(Q)$ ends with $1\mu(Q)$.
Then there exists an integer $j\ge 1$ such that $\sigma^j(\alpha(Q))=\mu(Q)$.
But this is impossible because in the six cases of Lemma \ref{l:HuBarZou} considered here we have
$\mu(Q)\prec \sigma^j(\alpha(Q))$ for all $i\ge 1$.
\medskip

(iv) In this case the hypotheses of Lemma \ref{l:condition} (i) and (ii) are satisfied, so that $B_Q\subseteq A_Q$ and $A_Q\subseteq B_Q$.
\end{proof}

Now we determine for each $Q\in\V$ the number of expansions of every $x\in \V_Q\setminus\U_Q=A_Q\cup B_Q$.

\begin{proposition}\label{p38}\ 
Let $Q\in\V$.

\begin{enumerate}[\upshape (i)]
\item If $Q\in\U$,  then every $x\in\V_Q\setminus\U_Q$ has exactly $2$ expansions.

\item If $Q$ satisfies the condition \eqref{22}, then
\begin{enumerate}[\upshape (a)]
\item every $x\in A_Q$ has exactly $3$ expansions;
\item every $x\in B_Q\setminus A_Q$ has exactly $2$ expansions.
\end{enumerate}

\item If $Q$ satisfies the condition \eqref{25}, then
\begin{enumerate}[\upshape (a)]
\item every  $x\in B_Q$ has exactly $3$ expansions;
\item every  $x\in A_Q\setminus B_Q$ has exactly $2$ expansions.
\end{enumerate}

\item If $Q$ satisfies the condition \eqref{23}, then

\begin{enumerate}[\upshape (a)]
\item every $x\in A_Q$ has exactly $\aleph_0$ expansions;
\item every $x\in B_Q$ has exactly $2$ expansions.		
\end{enumerate}

\item If $Q$ satisfies the condition \eqref{26}, then
\begin{enumerate}[\upshape (a)]
\item every $x\in A_Q$ has exactly $2$ expansions;
\item every  $x\in B_Q$ has exactly $\aleph_0$ expansions.
\end{enumerate}

\item If $Q$ satisfies the condition \eqref{24}, then every $x\in \V_Q\setminus \U_Q$ has exactly $\aleph_0$ expansions. 

\item If $Q$ satisfies the condition \eqref{27}, then every $x\in \V_Q\setminus \U_Q$ has exactly $\aleph_0$ expansions. 

\item If $Q$ satisfies the condition \eqref{28}, then every $x\in \V_Q\setminus \U_Q$ has exactly $\aleph_0$ expansions. 

\item If $Q$ satisfies the condition \eqref{29}, 
i.e., if $Q\in\V \setminus\overline{\U }$, \label{key76}
then  every $x\in\V_Q\setminus\U_Q$ has exactly $\aleph_0$ expansions. 
\end{enumerate}
\end{proposition}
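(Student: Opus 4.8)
The plan is to count the expansions of each $x\in\V_Q\setminus\U_Q=A_Q\cup B_Q$ by localizing them on the lexicographic line. When $Q\in\C$ the count follows directly from Lemma~\ref{l36qqq} (every such $x$ has exactly two expansions, settling the case $Q\in\C$ of part (i)), so we may assume $Q\in\V\setminus\C$. Then $a(x)=m(x)$ by Lemma~\ref{l25}, and since every expansion of $x$ lies between $l(x)$ and $b(x)$, splitting at the common sequence $a(x)=m(x)$ partitions the expansions into a \emph{lower block}, with sequences in $[l(x),m(x)]$, and an \emph{upper block}, with sequences in $[a(x),b(x)]$, overlapping only in $a(x)=m(x)$. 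Writing $L$ and $U$ for the numbers of expansions in the lower and upper blocks, the total number of expansions of $x$ is therefore $L+U-1$.

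The two blocks are governed by Lemmas~\ref{l33} and \ref{l34}. The upper block is nontrivial exactly when $x\in A_Q$, for then $a(x)\prec b(x)$; Lemma~\ref{l33} shows that it contains exactly two expansions if $\sigma^j(\alpha)\prec\alpha$ for all $j\ge1$, and exactly $\aleph_0$ expansions (the sequences $c^N$ together with $a(x)$ and $b(x)$) if $\sigma^k(\alpha)=\alpha$ for some $k$; if $x\notin A_Q$, then $a(x)=b(x)$ and $U=1$. By the symmetric Lemma~\ref{l34}, the lower block is nontrivial exactly when $x\in B_Q$, contributing $L=2$ if $\mu\prec\sigma^i(\mu)$ for all $i\ge1$, $L=\aleph_0$ if $\sigma^k(\mu)=\mu$ for some $k$, and $L=1$ otherwise.

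It then remains, for each of the conditions \eqref{21}--\eqref{29}, to identify (a) which blocks are nontrivial for a given $x$, i.e. the membership of $x$ in $A_Q$, $B_Q$, or both, and (b) whether each nontrivial block contributes $2$ or $\aleph_0$. Point (b) is read off immediately from the strict-versus-equality pattern of the inequalities $\sigma^j(\alpha)\preceq\alpha$ and $\mu\preceq\sigma^i(\mu)$ recorded in the respective condition. Point (a) is supplied by Lemmas~\ref{l37} and \ref{l35qqq}: disjointness $A_Q\cap B_Q=\emptyset$ in cases \eqref{21}, \eqref{23}, \eqref{26}, \eqref{28}; the strict inclusions $A_Q\subsetneqq B_Q$ in \eqref{22}, \eqref{27} and $B_Q\subsetneqq A_Q$ in \eqref{25}, \eqref{24}; and the equality $A_Q=B_Q$ in \eqref{29}. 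Moreover, whenever a ``pure'' element $x\in B_Q\setminus A_Q$ (respectively $x\in A_Q\setminus B_Q$) exists, the hypothesis of Lemma~\ref{l35qqq}(iii) (respectively (iv)) holds, forcing $U=1$ (respectively $L=1$). Feeding these into $L+U-1$ produces all nine counts; for example, in case \eqref{22} an $x\in A_Q\subseteq B_Q$ has $U=L=2$, giving $3$ expansions, while an $x\in B_Q\setminus A_Q$ has $U=1$ and $L=2$, giving $2$, exactly as claimed.

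I expect the only real difficulty to be the bookkeeping of the overlap of $A_Q$ and $B_Q$: one must check in each case that the hypotheses of Lemma~\ref{l35qqq} are met, so that the off-diagonal block collapses to a single expansion, and that the shared sequence $a(x)=m(x)$ is counted exactly once in $L+U-1$. Once the inclusion pattern is correctly matched to each of \eqref{21}--\eqref{29}, the remaining arithmetic is entirely routine.
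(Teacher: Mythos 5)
Your proposal is correct and follows essentially the same route as the paper: the paper's proof likewise splits the expansions of $x$ at the common sequence $a(x)=m(x)$ (Lemma \ref{l25}), counts the lower and upper blocks via Lemmas \ref{l34} and \ref{l33} according to whether $\mu$, respectively $\alpha$, is periodic, and determines the $A_Q$/$B_Q$ membership pattern in each of the cases \eqref{21}--\eqref{29} from Lemmas \ref{l37} and \ref{l35qqq}. Your uniform count $L+U-1$ is just a tidier packaging of the paper's case-by-case verification with the same decomposition and the same key lemmas.
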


\begin{proof}
(i) For $Q\in\C$ this was proved in Lemma \ref{l36qqq}. 
Henceforth we assume that $Q\in\U \setminus \C$.

We know from Lemma \ref{l:A-B-set} (i) that $A_Q\cap B_Q=\emptyset$. 
If $x\in A_Q$,  then  $a(x)=m(x)=l(x)$ by Lemma \ref{l:condition} \eqref{c5}. 
Since $a(x)=m(x)=l(x)$ Lemma \ref{l:lA} (i)--(ii) implies that $x\in A_Q$ has exactly 2 expansions, namely $a(x)=m(x)=l(x)$ and $b(x)$. 
	
Similarly, 	if $x\in B_Q$, then Lemma \ref{l:condition} \eqref{c4} and Lemma \ref{l:lB}  (i)--(ii) imply that $x\in B_Q$ has exactly 2 expansions: $a(x)=m(x)=b(x)$ and $l(x)$. 
\medskip

(iia) If $x\in A_Q$, then it follows from  Lemma \ref{l:condition}  \eqref{c3} that $x\in B_Q$ and $ a(x)=m(x)\succ l(x)$. 
Now applying Lemmas \ref{l:lA} (i)--(ii) and  \ref{l:lB} we obtain that $x$ has exactly 3 expansions: $b(x)$, $a(x)=m(x)$ and $l(x)$. 
\medskip

(iib) If $x\in B_Q\setminus A_Q$, then  Lemma \ref{l:lB} (i) and  Lemma \ref{l:condition} \eqref{c4} imply that
\begin{equation*}
l(x)=m_1m_2\cdots m_{n-1}01^\infty\qtq{and}
b(x)=a(x)=m(x)=m_1m_2\cdots m_{n-1}1\mu(Q).
\end{equation*}
Applying Lemma \ref{l:lB} (ii) hence we conclude that every $x\in B_Q\setminus A_Q$ has exactly 2 expansions.
\medskip
	
(iii) This follows from (ii) by symmetry.
\medskip

(iva) If $x\in A_Q$, then it follows from Lemmas \ref{l:condition}  \eqref{c5} and  \ref{l:lA} (i), (iii) that  $a(x)=m(x)=l(x)\prec b(x)$, and there are exactly $\aleph_0$ expansions between $a(x)$ and $b(x)$.
This implies our result.
\medskip

(ivb) If $x\in B_Q$, then we infer from Lemmas \ref{l:condition} \eqref{c4} and   \ref{l:lB} (i)--(ii), we obtain that $a(x)=m(x)=b(x)\succ l(x)$, and there are no expansions between $m(x)$ and $l(x)$.
\medskip

(v) This follows from (iii) by symmetry.
\medskip

(vi) We have $B_Q\subseteq A_Q$ by Lemma \ref{l:condition} (i).
For each $x\in B_Q\subseteq A_Q$,  using Lemmas \ref{l:condition} \eqref{c2}, \ref{l:lB} (ii) and \ref{l:lA}  (iii) we obtain that $a(x)=m(x)\prec b(x)$, there are no expansion of $x$ between $m(x)$ and $l(x)$, and there are exactly $\aleph_0$ expansions between $a(x)$ and $b(x)$.
\medskip
		
If $x\in A_Q\setminus B_Q$, then by Lemmas \ref{l:lA} (i) and  \ref{l:condition} \eqref{c5} we know that
\begin{equation*}
b(x)=a_1a_2\cdots a_{k-1}10^\infty
\end{equation*}
and
\begin{equation*}
a(x)=m(x)=l(x)=a_1a_2\cdots a_{k-1}0\alpha(Q).
\end{equation*} 
Therefore, applying Lemma \ref{l:lA} (iii) again, we conclude that every $x\in A_Q\setminus B_Q$ has exactly $\aleph_0$ expansions.
\medskip

(vii) follows from (vi) by symmetry.
\medskip        
			
(viii) Since $\alpha(Q)$ is periodic, using Lemmas  \ref{l:lA} (i), (iii) and   \ref{l:condition} \eqref{c5}  we obtain that every $x\in A_Q$ has exactly $\aleph_0$ expansions.
\medskip

Similarly, since $\mu(Q)$ is also periodic, by using  Lemmas  \ref{l:lB} (i), (iii) and  \ref{l:condition} \eqref{c4} we obtain that every  $x\in B_Q$ has exactly $\aleph_0$ expansions.
\medskip

(ix) Applying Lemma \ref{l:condition} (i) and (ii), we have $A_Q=B_Q$, and  $l(x)\prec m(x)=a(x)\prec b(x)$ for every $x\in A_Q=B_Q$. 
Furthermore, Lemma  \ref{l:lA}  (iii) implies that there are $\aleph_0$ expansions between $a(x)$ and $b(x)$, and  Lemma \ref{l:lB}  (iii) implies that there are $\aleph_0$ expansions between $l(x)$ and $m(x)$. 
Hence our claim follows.
\end{proof}

We illustrate Proposition \ref{p38} with two examples in Examples \ref{e37}.

\begin{proof}[Proof of Theorem \ref{T:expansion}]
The theorem follows from Lemma \ref{l:HuBarZou} and Proposition \ref{p38}.
\end{proof}	

\section{Proof of Theorem \ref{T:topology1} (i)--(iv), except (ii) for $Q\in\A\setminus\V$, and Corollary \ref{c116qqq}}\label{s4}

First we prove some preparatory results.
Lemmas \ref{l:leftrightcontinu} and \ref{l:neighbor} are  generalizations of  \cite [Lemmas 2.2, 2.8 and 4.7]{DeVKomLor2022}.
	
\begin{lemma}\label{l:leftrightcontinu}\label{l41}
Let $x,  y_n\in J_Q$ for $n\in \mathbb{N}$. 
Then:
\begin{enumerate}[\upshape (i)]
\item If $y_n\searrow x$, then $b(y_n)\rightarrow b(x)$  and $m(y_n)\rightarrow m(x)$. \label{10}

\item	If $y_n\nearrow x$, then $l(y_n)\rightarrow l(x)$ and $a(y_n)\rightarrow a(x)$ . 
				
\item\label{key41} Let $(d_i)\neq 1^\infty$ be a greedy sequence. Then for every positive integer $N$, there exists a greedy sequence $(c_i)\succ(d_i)$ such that $$d_1\cdots d_N=c_1\cdots c_N.$$

\item\label{key411}  Let $(d_i)\neq 0^\infty$ be a lazy sequence. Then for every positive integer $N$, there exists a lazy sequence $(c_i)\prec(d_i)$ such that $$d_1\cdots d_N=c_1\cdots c_N.$$
\end{enumerate}
\end{lemma}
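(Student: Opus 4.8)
The plan is to prove the two continuity statements (i) and (ii) first, and then obtain (iii) and (iv) as immediate consequences. Throughout I rely on two elementary identities that follow from the definition of $\pi_Q$, namely $\pi_Q(0s)=\frac1{q_0}\pi_Q(s)$ and $\pi_Q(1s)=\frac1{q_1}\bj{1+\pi_Q(s)}$ for any sequence $s$, together with the fact (Lemma \ref{l:monotonicity}) that each of the maps $b,a,l,m$ is a strictly increasing bijection onto its class of admissible sequences. Convergence in $\set{0,1}^\infty$ means digitwise convergence, so for (i) it suffices to fix $N$ and show that $b(y_n)$ and $m(y_n)$ agree with $b(x)$ and $m(x)$ on the first $N$ digits once $y_n$ is close enough to $x$ from above.

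For the greedy part of (i) I argue by a uniform gap estimate. Assuming $y_n>x$, monotonicity of $b$ gives $b(y_n)\succeq b(x)$, so if they first disagree at a position $j\le N$ then necessarily $b_j(x)=0$ and $b_j(y_n)=1$. Using $\pi_Q(1s)\ge 1/q_1$ I obtain the lower bound $y_n\ge\pi_Q(b_1\cdots b_{j-1}10^\infty)=:\tau_j$. The crucial point is that $\tau_j>x$ strictly: writing $x=\pi_Q(b_1\cdots b_{j-1}0\,\sigma^j b(x))$ and factoring out the common prefix, the inequality $\tau_j>x$ reduces to $\pi_Q(\sigma^j b(x))<r_Q$, which holds because $\sigma^j b(x)$ is greedy with $\sigma^j b(x)\prec\alpha\preceq b(r_Q)$ and $b$ is an increasing bijection. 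Setting $\delta_N:=\min\set{\tau_j-x}$ over the finitely many $j\le N$ with $b_j(x)=0$ (and $\delta_N:=+\infty$ when there are none, in which case agreement is automatic since $b(y_n)\succeq b(x)$), any $y_n\in[x,x+\delta_N)$ must agree with $b(x)$ on the first $N$ digits; as $y_n\searrow x$ this holds eventually.

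The quasi-lazy part of (i) runs along the same lines, but here the essential extra input is the admissibility constraint. If $m(y_n)$ first deviates from $m(x)$ at $j\le N$, then $m_j(x)=0$, $m_j(y_n)=1$, and, $m(y_n)$ being quasi-lazy, $\sigma^j m(y_n)\succeq\mu$; via the order-isomorphism this yields $\pi_Q(1\sigma^j m(y_n))\ge\pi_Q(1\mu)=1/(q_0(q_1-1))$, hence the improved lower bound $y_n\ge\pi_Q(m_1\cdots m_{j-1}01^\infty)=:\tau_j'$. Now $\tau_j'>x$ reduces to $\pi_Q(\sigma^j m(x))<1/(q_1-1)$, which holds simply because $m(x)$ is co-infinite, so its tail $\sigma^j m(x)\ne 1^\infty$. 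The same $\delta_N$-argument then closes (i). Statement (ii) is the mirror image: it follows by the reflection $Q=(q_0,q_1)\mapsto(q_1,q_0)$, which interchanges the greedy/quasi-lazy pair with the lazy/quasi-greedy pair and reverses both orders, turning $y_n\searrow x$ into an increasing approximation; alternatively one repeats the gap estimate from below.

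Finally, (iii) follows from (i): put $x:=\pi_Q((d_i))$, so that $b(x)=(d_i)$ since $(d_i)$ is greedy. Because $(d_i)\ne 1^\infty$ we have $x<1/(q_1-1)=\pi_Q(1^\infty)$, so we may pick $y>x$ in $J_Q$ approaching $x$; then $b(y)\succ(d_i)$ (as $b$ is strictly increasing) and $b(y)\to(d_i)$ by (i), so for $y$ close enough to $x$ the sequence $(c_i):=b(y)$ is greedy, strictly exceeds $(d_i)$, and agrees with it on the first $N$ digits. Statement (iv) is obtained identically from (ii) using the lazy map. The main obstacle, and the reason a naive argument fails, is that for $Q\in\A\setminus\C$ the map $\pi_Q$ is \emph{not} monotone with respect to the lexicographic order—indeed $\pi_Q(10^\infty)=1/q_1<1/(q_0(q_1-1))=\pi_Q(01^\infty)$—so one cannot pass from lexicographic inequalities of arbitrary sequences to inequalities of their values. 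This is circumvented by performing all order comparisons \emph{within} a single expansion class, where Lemma \ref{l:monotonicity} guarantees that lexicographic order and value order coincide, and by exploiting the quasi-lazy constraint $\sigma^j m(y_n)\succeq\mu$ in the $m$-part.
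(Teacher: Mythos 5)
Your proof is correct, and your deduction of (iii)--(iv) from (i)--(ii) coincides with the paper's; but your proof of (i) takes a genuinely different route. The paper argues by induction on the digit index, transferring the defining inequalities of the greedy and quasi-lazy \emph{algorithms} from $x$ to $y_n$: if $b_N(x)=1$, the algorithm's inequality persists for every $y_n\ge x$, while if $b_N(x)=0$ the reverse strict inequality persists once $y_n$ is close enough to $x$. The strict threshold implicit there is exactly your $\tau_j=\pi_Q(b_1\cdots b_{j-1}10^\infty)$ (resp.\ $\tau_j'=\pi_Q(m_1\cdots m_{j-1}01^\infty)$): the paper gets $\tau_j>x$ for free from the greedy algorithm itself (the digit $0$ was chosen at step $j$ precisely because the digit $1$ would overshoot), whereas you re-derive it from the lexicographic characterizations of Lemma \ref{l:monotonicity} via $\sigma^jb(x)\prec\alpha\preceq b(r_Q)$, and $\tau_j'>x$ from co-infiniteness of $m(x)$ together with $\pi_Q(1\mu)=\pi_Q(01^\infty)$. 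Your version therefore uses a heavier toolkit (the characterizations, which are themselves consequences of the algorithms) but dispenses with the induction, produces an explicit uniform gap $\delta_N$, and makes visible where the quasi-lazy constraint $\sigma^jm(y_n)\succeq\mu$ enters, a point the paper's computation keeps implicit. One caution: your primary justification of (ii) by the reflection $Q\mapsto(q_1,q_0)$ is not automatic, since digit reflection does \emph{not} preserve $\pi_Q$-values; it is legitimate only because reflection is conjugate, via the orientation-reversing affine bijection $x\mapsto\bigl(1-(q_1-1)x\bigr)/(q_0-1)$ of $J_Q$ onto $J_{(q_1,q_0)}$, to the expansion theory in the swapped base (the isomorphism behind \cite[Lemma 3.1]{KomSteZou2024}). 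Your stated fallback --- repeating the gap estimate from below --- is exactly what the paper does (``(ii) can be proved similarly''), so no gap remains.
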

		
\begin{proof}
We only prove (i); (ii) can be proved similarly, and  (iii) and (iv) follow from (i) and (ii), respectively.
				
Write $b(y_n):=(b_i(y_n))$ and $b(x):=(b_i(x))$. 
We have to prove that for every positive integer $N$ there exists a number $n_N$ such that
\begin{equation*}
b_1(y_n)b_2(y_n)\cdots b_N(y_n)=b_1(x)b_2(x)\cdots b_N(x)
\qtq{and}
m_1(y_n)\cdots m_N(y_n)=m_1(x)\cdots m_N(x)
\end{equation*}
for all $n\ge n_N$.

First we consider the greedy expansions.
We proceed by induction on $N$.
Let $N\ge 1$, and assume that there exists a number $n_{N-1}$ such that 
\begin{equation*}
b_1(y_n)b_2(y_n)\cdots b_{N-1}(y_n)=b_1(x)b_2(x)\cdots b_{N-1}(x)
\end{equation*}
for all $n\ge n_{N-1}$;
for $N=1$ we may simply take $n_0=1$.
In the rest of the proof we consider only indices $n\ge n_{N-1}$.

If $b_N(x)=1$, then
\begin{equation*}
\sum_{i=1}^{N-1}\frac{b_i(x)}{q_{b_1(x)}\cdots q_{b_{i}(x)}}+\frac{1}{q_{b_1(x)}\cdots q_{b_{N-1}(x)}q_1}\le x
\end{equation*}
by definition (see Section 2).
Since $y_n\ge x$ for every $n\ge 1$, this inequality remains valid if we change $x$ to $y_n$.
Using the definition again, it follows that $b_N(y_n)=1=b_N(x)$ for all $n\ge 1$.

If $b_N(x)=0$, then
\begin{equation*}
\sum_{i=1}^{N-1}\frac{b_i(x)}{q_{b_1(x)}\cdots q_{b_{i}(x)}}+\frac{1}{q_{b_1(x)}\cdots q_{b_{N-1}(x)}q_1}>x
\end{equation*}
by definition.
Thanks to the induction hypothesis and the assumption $y_n\to x$, there exists a number $n_N\ge n_{N-1}$ such that this inequality remains valid if we change $x$ to $y_n$ for any $n\ge n_N$.
Using the definition again, it follows that $b_N(y_n)=0=b_N(x)$ for all $n\ge n_N$.

The proof for the quasi-lazy expansions is analogous, we only have to replace the above inequalities to
\begin{equation*}
\sum_{i=1}^{N-1}\frac{m_i(x)}{q_{m_1(x)}\cdots q_{m_{i}(x)}} +\frac{1}{q_{m_1(x)}\cdots q_{m_{N-1}(x)}q_0(q_1-1)} \le  x
\end{equation*}				
if $m_N(x)=1$, and to 
\begin{equation*}
\sum_{i=1}^{N-1}\frac{m_i(x)}{q_{m_1(x)}\cdots q_{m_{i}(x)}} +\frac{1}{q_{m_1(x)}\cdots q_{m_{N-1}(x)}q_0(q_1-1)}> x
\end{equation*}	
if $m_N(x)=0$, respectively.
\end{proof}
		
The following Lemma directly follows from Lemma \ref{l:monotonicity}:
	
\begin{lemma}\label{l:trunca}\label{l42}\quad	
\begin{enumerate}[\upshape (i)]
\item  If $(d_i)=d_1d_2\cdots$ is a greedy or quasi-greedy sequence, then the sequence $d_1\cdots d_k0^\infty$ is greedy for every $k \geq1$.\label{key45-1}
			
\item If $(d_i)=d_1d_2\cdots $ is a lazy or quasi-lazy sequence, then the sequence $d_1\cdots d_k1^\infty$ is lazy for every $k \geq1$.
\end{enumerate}	
\end{lemma}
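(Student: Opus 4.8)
The plan is to reduce everything to the lexicographic descriptions of greedy and lazy sequences provided by Lemma~\ref{l:monotonicity}, and simply to check that the truncated-and-padded sequence satisfies the relevant one-sided condition. I would prove (i) in full and then obtain (ii) by the reflection symmetry $0\leftrightarrow 1$, $\alpha\leftrightarrow\mu$, $\prec\leftrightarrow\succ$; so let me concentrate on (i). Set $w:=d_1\cdots d_k0^\infty$. By Lemma~\ref{l:monotonicity}(i), $w$ is greedy precisely when $\sigma^n(w)\prec\alpha$ for every $n$ with $w_n=0$, and I would verify this by splitting on the size of $n$.

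For $n\ge k$ (where $w_n=0$ automatically) the shifted sequence is $\sigma^n(w)=0^\infty$, and since $\alpha$ begins with the digit $1$ by Remark~\ref{r14}, we get $0^\infty\prec\alpha$ at once. The only real content is the range $n<k$ with $d_n=0$. There I would use the hypothesis: whether $(d_i)$ is greedy or quasi-greedy, Lemma~\ref{l:monotonicity}(i) or (ii) yields the (at least weak) inequality $\sigma^n((d_i))\preceq\alpha$. Now $\sigma^n(w)=d_{n+1}\cdots d_k0^\infty$ is obtained from $\sigma^n((d_i))=d_{n+1}\cdots d_kd_{k+1}\cdots$ by replacing its tail with $0^\infty$, and replacing a tail by the lexicographically smallest sequence can only decrease a sequence, so $\sigma^n(w)\preceq\sigma^n((d_i))\preceq\alpha$.

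The one point needing care --- the main (and only mild) obstacle --- is upgrading this to the \emph{strict} inequality $\sigma^n(w)\prec\alpha$, since in the quasi-greedy case $\sigma^n((d_i))\preceq\alpha$ could be an equality. Here I would argue uniformly that $\sigma^n(w)=\alpha$ is impossible: $\sigma^n(w)$ ends with $0^\infty$, whereas $\alpha$ is an infinite sequence beginning with $1$ (it is a quasi-greedy sequence, equal to $1^\infty$ when $Q\in\C$), hence $\alpha\neq 0^\infty$ and $\alpha$ does not end with $0^\infty$. Combined with $\sigma^n(w)\preceq\alpha$ this forces $\sigma^n(w)\prec\alpha$, so $w$ is greedy and (i) is done.

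Finally, for (ii) I would run the mirror-image argument on $w:=d_1\cdots d_k1^\infty$: Lemma~\ref{l:monotonicity}(iii)/(iv) gives $\sigma^n((d_i))\succeq\mu$ whenever $d_n=1$, replacing a tail by $1^\infty$ can only increase a sequence, and equality $\sigma^n(w)=\mu$ is ruled out because $\mu$ begins with $0$ and is co-infinite (Remark~\ref{r14}), so it does not end with $1^\infty$ while $\sigma^n(w)$ does. This yields $\sigma^n(w)\succ\mu$ for every $n$ with $w_n=1$, and hence $w$ is lazy by Lemma~\ref{l:monotonicity}(iii).
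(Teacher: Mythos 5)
Your proof is correct and follows exactly the route the paper intends: the paper gives no separate argument but simply notes that the lemma ``directly follows from Lemma~\ref{l:monotonicity}'', and your verification (weak inequality from the greedy/quasi-greedy or lazy/quasi-lazy characterization, monotonicity under replacing the tail by $0^\infty$ resp.\ $1^\infty$, and strictness because $\alpha$ is infinite and starts with $1$ while $\mu$ is co-infinite and starts with $0$) is precisely the omitted routine check. The only cosmetic slip is the parenthetical claim that $w_n=0$ automatically for all $n\ge k$ (false at $n=k$ when $d_k=1$), but this is harmless since the condition need only be checked when $w_n=0$ and your bound $\sigma^n(w)=0^\infty\prec\alpha$ covers those indices.
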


\begin{lemma}\label{l:neighbor}\label{l43}
Let $Q\in\A\setminus\C$, $x\in J_Q$, and consider the greedy and lazy expansions $(b_i)$ and $(l_i)$ of $x$.

\begin{enumerate}[\upshape (i)]
\item  Assume that
\begin{equation*}
b_n=1 \qtq{and} b_{n+1}b_{n+2}\cdots\prec\mu(Q)
\qtq{for some}n\geq 1.
\end{equation*} 

\begin{enumerate}[\upshape (a)]
\item There exists a number $z>x$ such that $[x,z]\cap \U_Q=\emptyset$ and $(x,z]\cap \V_Q=\emptyset$.

\item If $b_j=1$ for  some $j>n$, there exists a number $y<x$ such that $[y,x]\cap \U_Q=\emptyset$.
\end{enumerate}

\item Assume that
\begin{equation*}
l_n=0 \qtq{and} l_{n+1}l_{n+2}\cdots\succ\alpha(Q)
\qtq{for some}n\geq 1.
\end{equation*}		

\begin{enumerate}[\upshape (a)]
\item There exists a number $z<x$ such that $[z,x]\cap \U_Q=\emptyset$ and $[z,x)\cap \V_Q=\emptyset$.

\item If  $l_j=0$ for some $j>n$, there exists a number $y>x$ such that $[x,y]\cap \U_Q=\emptyset$.
\end{enumerate}
\end{enumerate}
\end{lemma}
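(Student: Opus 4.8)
The plan is to prove part (i) in full and to deduce part (ii) from it by the reflection $(i_k)\mapsto\overline{(i_k)}$. This reflection interchanges the greedy and lazy algorithms, exchanges the roles of $\alpha(Q)$ and $\mu(Q)$, reverses both the lexicographic and the numerical order, and leaves $\U_Q$ and $\V_Q$ invariant; under it the hypotheses $b_n=1$, $\sigma^n(b(x))\prec\mu$ turn into $l_n=0$, $\sigma^n(l(x))\succ\alpha$, and right neighbourhoods become left neighbourhoods. So I concentrate on part (i). Throughout I write $P:=b_1\cdots b_{n-1}1$, so that $b(x)=P\,T$ with $T:=\sigma^n(b(x))\prec\mu$, and I set $x_R:=\pi_Q(b_1\cdots b_{n-1}01^\infty)$. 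By Remark~\ref{r14}(iv) we have $\pi_Q(1\mu)=\pi_Q(01^\infty)$, hence also $x_R=\pi_Q(P\mu)$; since $T\prec\mu$ while $P$ is a common prefix, monotonicity of $\pi_Q$ in the tail gives $x<x_R$. The hypothesis moreover already shows $x\notin\U_Q$: the greedy expansion has $b_n=1$ but $\sigma^n(b(x))=T\prec\mu$, so it violates the lazy condition of Lemma~\ref{l:monotonicity}(iii), whence $b(x)\neq l(x)$ and $x$ has more than one expansion.

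For (i)(a) I will show the stronger statement that the entire open interval $(x,x_R)$ misses $\V_Q$; then any $z\in(x,x_R)$ satisfies the claim, since $(x,z]\subset(x,x_R)$ and $\U_Q\subseteq\V_Q$, while $x\notin\U_Q$ gives $[x,z]\cap\U_Q=\emptyset$. Fix $y\in(x,x_R)$. From $x<y<x_R$ and $x_R=\pi_Q(P\mu)<\pi_Q(P1^\infty)$, together with $y>x\ge\pi_Q(P0^\infty)$, we get $y\in\bigl(\pi_Q(P0^\infty),\pi_Q(P1^\infty)\bigr)$, so the greedy expansion $b(y)$ begins with $P$; as $y>\pi_Q(P0^\infty)$ it is moreover strictly larger than $P0^\infty$, so $b(y)$ carries a further digit $1$ beyond position $n$. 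Consequently the last $1$ of $b(y)$ lies past $n$, and Lemma~\ref{l:rela-quasi-gl}(i) shows that the quasi-greedy expansion $a(y)$ still begins with $P$, i.e. $a_n(y)=1$.

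The other half of the argument produces a competing co-infinite expansion of $y$ with a $0$ in position $n$. Indeed, $y$ has an expansion $b_1\cdots b_{n-1}0\,s$, and because $y<x_R$ the residual value $v$ of the tail $s$ satisfies $0<v<1/(q_1-1)$ (positivity follows from $y>\pi_Q(b_1\cdots b_{n-1}10^\infty)\ge\pi_Q(b_1\cdots b_{n-1}00^\infty)$). Hence its quasi-lazy expansion $m(v)$ exists and is doubly infinite by Lemma~\ref{l:doublyinfinite}, so $b_1\cdots b_{n-1}0\,m(v)$ is a co-infinite expansion of $y$. Since $m(y)$ is the smallest co-infinite expansion, $m(y)\preceq b_1\cdots b_{n-1}0\,m(v)\prec a(y)$, the last inequality holding because the two sequences first differ at position $n$. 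Thus $a(y)\neq m(y)$, so $y\notin\V_Q$ by Lemma~\ref{l:quasi-gr-la}, proving $(x,x_R)\cap\V_Q=\emptyset$ and hence (a). For (i)(b), note that the extra hypothesis ``$b_j=1$ for some $j>n$'' is precisely $T\neq0^\infty$, so $y:=\pi_Q(b_1\cdots b_{n-1}10^\infty)<x$; a direct check with Lemma~\ref{l:monotonicity}(i) shows $b_1\cdots b_{n-1}10^\infty$ is greedy, hence equals $b(y)$. For any $w\in[y,x]$ the greedy expansion is squeezed between $b(y)$ and $b(x)$, both beginning with $P$, so $b(w)$ begins with $P$ and $\sigma^n(b(w))\preceq\sigma^n(b(x))=T\prec\mu$; thus $b_n(w)=1$ while $\sigma^n(b(w))\prec\mu$, violating the lazy condition, and $w\notin\U_Q$. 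This gives $[y,x]\cap\U_Q=\emptyset$.

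The main obstacle is the $\V_Q$-statement in (a): one must exclude $y$ having a \emph{unique} doubly infinite expansion, which amounts to the strict inequality $m(y)\prec a(y)$. The delicate mechanism is that $m(y)$ may drop to digit $0$ at position $n$ while $a(y)$ stays pinned at digit $1$ there; securing this needs both the value-interval (squeeze) argument for $a(y)$ and the existence of a co-infinite digit-$0$ continuation for $m(y)$. The latter relies essentially on the strict inequality $y<x_R$, so that the residual stays below $1/(q_1-1)$ and Lemma~\ref{l:doublyinfinite} applies — this is exactly why the right endpoint $x_R$ (which in fact lies in $\V_Q$) must be excluded and only $(x,z]\cap\V_Q=\emptyset$ is asserted.
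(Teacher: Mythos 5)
Your reduction of (ii) to (i) by reflection is acceptable (strictly, reflection conjugates expansions in base $Q=(q_0,q_1)$ with expansions in base $(q_1,q_0)$, so it permutes the family of statements rather than literally fixing $\U_Q$ and $\V_Q$; since the lemma is asserted for every $Q\in\A\setminus\C$, this is harmless), and your proof of (i)(b) is correct---it is the mirror image of the paper's argument for (ii)(b). The genuine gap is in (i)(a), where you twice invoke ``monotonicity of $\pi_Q$'': once to get $x<x_R$, and once to conclude that every $y\in\bigl(\pi_Q(P0^\infty),\pi_Q(P1^\infty)\bigr)$ has greedy expansion beginning with $P$. But $\pi_Q$ is \emph{not} monotone with respect to the lexicographic order in double-base systems: for every $Q\in\A\setminus\C$ we have $01^\infty\prec10^\infty$ yet $\pi_Q(01^\infty)=1/(q_0(q_1-1))>1/q_1=\pi_Q(10^\infty)$. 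Lemma \ref{l:monotonicity} gives monotonicity only \emph{within} the class of greedy (resp.\ quasi-greedy, lazy, quasi-lazy) sequences, and your prefix claim is in fact false. Take Examples \ref{e18}\,(xi): $\mu=0(01)^\infty$, $\alpha=11(001)^\infty$, realized by some $Q\in\A\setminus\V\subset\A\setminus\C$. The sequence $b(x)=0110^\infty$ is greedy and satisfies your hypotheses with $n=3$, $P=011$, $T=0^\infty$, and $x_R=\pi_Q(0101^\infty)=\pi_Q(011\mu)$. Here $\sigma^2(\alpha)=(001)^\infty$ is a quasi-greedy sequence with $(001)^\infty\prec\mu\preceq a(\ell_Q)$, so Lemma \ref{l:monotonicity}(ii) gives $\pi_Q(\sigma^2(\alpha))<\ell_Q$; consequently $1/q_1=\pi_Q(0\alpha)=\pi_Q\bigl(011\,\sigma^2(\alpha)\bigr)$ lies in $(x,x_R)$, yet $b(1/q_1)=10^\infty$ does not begin with $P$. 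Worse, the points $y_k:=\pi_Q\bigl(10^k(10)^\infty\bigr)$ decrease to $1/q_1$, hence lie in $(x,x_R)$ for large $k$, and satisfy $a(y_k)=b(y_k)=10^k(10)^\infty$; so even your intermediate conclusion $a_n(y)=1$ fails, and with it the comparison ``the two sequences first differ at position $n$'' on which your strict inequality $m(y)\prec a(y)$ rests.

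This pitfall is exactly what the paper's proof is engineered to avoid: instead of the far endpoint $x_R$, it uses Lemma \ref{l:leftrightcontinu}(iii) to produce a greedy sequence $(c_i)\succ b(x)$ agreeing with $b(x)$ up to an index $N$ chosen so large that $b_{n+1}\cdots b_N\prec\mu_1\cdots\mu_{N-n}$, sets $z:=\pi_Q((c_i))$, and then squeezes $b(v)$, for $v\in[x,z]$, between two \emph{greedy} sequences sharing the prefix $b_1\cdots b_N$ (Lemma \ref{l:monotonicity}(i)); the quasi-greedy expansions on $(x,z]$ are handled by comparison with an auxiliary point $t$ where $a(t)=b(t)$, using that only countably many points have $a\neq b$. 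Your second step---the competing co-infinite expansion $b_1\cdots b_{n-1}0\,m(v)$---is a good idea and could be turned into a correct proof of your stronger claim, but by a different route: if $y\in(x,x_R)\cap\V_Q$, then $a(y)=m(y)$ forces $a(y)=b_1\cdots b_{n-1}0\,m(v)$, whose tail $m(v)=\sigma^n(a(y))$ is then an infinite sequence that is $\preceq\alpha$ and all of whose tails at zero digits are $\preceq\alpha$, i.e.\ a quasi-greedy sequence; Lemma \ref{l:monotonicity}(ii) then yields $v\le r_Q$, while $y>x$ forces $v>q_0\bigl(1+\pi_Q(T)\bigr)/q_1\ge r_Q$, a contradiction. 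Neither this argument nor any substitute for the false prefix claim appears in your proposal; likewise the opening inequality $x<x_R$ is true but needs a proof (a greedy tail $T\prec\mu$ cannot be co-finite, which is what excludes $\pi_Q(T)\ge\ell_Q$), rather than the invalid appeal to monotonicity. As written, the proof of (i)(a) therefore has a genuine gap.
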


\begin{proof}
We only prove (ii), the proof of (i) is similar. 
\medskip

(a) By our assumption there exists a positive integer $N>n+1$ such that
\begin{equation*}
l_{n+1}\cdots l_{N}\succ \alpha_1\cdots\alpha_{N-n}.
\end{equation*}

By Lemma \ref{l:leftrightcontinu} \eqref{key411} we may choose a lazy sequence $(c_i)\prec(l_i)$ satisfying
\begin{equation*}
c_1\cdots c_N=l_1\cdots l_N.
\end{equation*}
Take $z=\pi_Q((c_i))$, then $(c_i)$ is the lazy expansion of $z$ and $z<x$. 
If $(d_i)$ is the lazy expansion of a number $v\in [z,x]$, then $(d_i)$  begins with $l_1\cdots l_N$ by the monotonicity part for  lazy  expansions in Lemma \ref{l:monotonicity}. 
We have thus
\begin{equation}\label{41}
d_n=0 \qtq{and} d_{n+1}d_{n+2}\cdots\succ\alpha(Q),
\end{equation}		
and hence $v\notin \U_Q$ by the definition of $\U_Q$.

We claim that \eqref{41} also holds if $(d_i)$ is the quasi-lazy expansion of a number $v\in [z,x)$.
This follows from the preceding paragraph if $m(v)=l(v)$.
Otherwise choose a number $v<t<x$ such that $m(t)=l(t)$.
This is possible because by Remark \ref{r11} there are only countable many numbers $t$ such that $m(t)\ne l(t)$, and the interval $(v,x)$ is uncountable.
Then we have
\begin{equation*}
l(z)\preceq m(z)\preceq m(v)\prec m(t)=l(t)\prec l(x),
\end{equation*}
and we conclude by recalling that both $l(z)$ and $l(x)$ start with $l_1\cdots l_N$.
Using the definition of $\V_Q$, we infer from \eqref{41} that
$v\notin \V_Q$.
\medskip
			
(b) If $j>n$ and $l_j(x)=0$, then $(c_i)=l_1(x)\cdots l_n(x)1^{\infty}$ is  the lazy expansion of some $y>x$ by Lemma \ref{l:trunca} (ii). 
If $(d_i)$ is the lazy expansion of a number $w\in [x,y]$, then $(d_i)$ also begins with $l_1\cdots l_n$ and hence 	
\begin{equation*}
d_{n+1}d_{n+2}\cdots\succeq l_{n+1}(x)l_{n+2}(x)\cdots\succ\alpha(Q).
\end{equation*}
The first inequality follows again from the monotonicity part of Lemma \ref{l:monotonicity} (iii). 
Therefore the relations \eqref{41} holds again, and therefore $w\notin\U_Q$.
\end{proof}

\begin{lemma}\label{c:rightneighbor}\label{l44}
Fix $Q\in\A\setminus\C$, then for each $x\in J_Q\setminus\V_Q$ there exists two numbers $y<x$ and $z>x$   such that $[y,z]\cap \V_Q=\emptyset$.
\end{lemma}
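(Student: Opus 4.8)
The plan is to reduce everything to the one-sided results of Lemma~\ref{l:neighbor}: part~(i), item~(a), produces a right half-neighbourhood $(x,z]$ disjoint from $\V_Q$, while part~(ii), item~(a), produces a left half-neighbourhood $[y,x)$ disjoint from $\V_Q$. Gluing these two across the point $x$ (which itself lies outside $\V_Q$) gives the interval $[y,z]$ with $[y,z]\cap\V_Q=\emptyset$. Thus the entire argument amounts to verifying, for $x\in J_Q\setminus\V_Q$, the hypothesis of part~(i) for the greedy expansion $b(x)$ and the hypothesis of part~(ii) for the lazy expansion $l(x)$.

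The crucial preliminary step is to show that $x\notin\V_Q$ forces \emph{both} of the following: there is an index $j$ with $a_j(x)=1$ and $\sigma^j(a(x))\prec\mu(Q)$, and there is an index $j'$ with $m_{j'}(x)=0$ and $\sigma^{j'}(m(x))\succ\alpha(Q)$. By Lemma~\ref{l:quasi-gr-la} we know $x\notin\V_Q$ is equivalent to $m(x)\prec a(x)$, but I would sharpen this by observing that each of the two defining conditions of $\V_Q$ is \emph{separately} equivalent to $a(x)=m(x)$. Indeed, for $Q\in\A\setminus\C$ the sequence $a(x)$ is doubly infinite (hence co-infinite) by Lemma~\ref{l:doublyinfinite}, so if the condition ``$\sigma^j(a(x))\succeq\mu(Q)$ whenever $a_j(x)=1$'' held, then $a(x)$ would satisfy the lexicographic characterization of quasi-lazy sequences in Lemma~\ref{l:monotonicity}(iv), forcing $a(x)=m(x)$; the reverse implication and the symmetric statement for the $m(x)$-condition are immediate. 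Since these two conditions are therefore equivalent to one another, $x\notin\V_Q$ forces both to fail simultaneously, which is exactly the pair of displayed violations above.

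With both violations available, I pass from the quasi-greedy/quasi-lazy expansions to the greedy/lazy ones. For the right side, if $b(x)$ is infinite then $b(x)=a(x)$ by Lemma~\ref{l:rela-quasi-gl}(i), so $b(x)$ inherits the pattern $b_j(x)=1$ with $\sigma^j(b(x))\prec\mu(Q)$; if $b(x)$ is finite then it ends in $10^\infty$, and at its last nonzero digit $b_n(x)=1$ we have $\sigma^n(b(x))=0^\infty\prec\mu(Q)$ because $Q\notin\C$ gives $0^\infty\prec\mu(Q)$ by Remark~\ref{r14}. Either way the hypothesis of Lemma~\ref{l:neighbor}(i) holds, producing $z>x$ with $(x,z]\cap\V_Q=\emptyset$. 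Symmetrically, inspecting $l(x)$ (the co-infinite case $l(x)=m(x)$, or the co-finite case ending in $01^\infty$, where at the last zero digit $\sigma^n(l(x))=1^\infty\succ\alpha(Q)$ since $\alpha(Q)\neq1^\infty$) verifies the hypothesis of Lemma~\ref{l:neighbor}(ii), producing $y<x$ with $[y,x)\cap\V_Q=\emptyset$. Because $x\notin\V_Q$, the decomposition $[y,z]=[y,x)\cup\{x\}\cup(x,z]$ shows $[y,z]\cap\V_Q=\emptyset$, which is the claim.

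The main obstacle is the middle step: establishing that a single point outside $\V_Q$ violates \emph{both} lexicographic conditions, not merely one, since only then are both half-neighbourhoods available to be glued. The remaining work is the routine translation between quasi-greedy/greedy and quasi-lazy/lazy expansions, which is already packaged in Lemmas~\ref{l:rela-quasi-gl} and~\ref{l:neighbor}.
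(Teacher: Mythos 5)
Your proof is correct, but it handles the left neighbourhood by a genuinely different route than the paper. The paper never establishes that \emph{both} lexicographic conditions fail: it takes the single violation guaranteed by $x\notin\V_Q$ (by symmetry, say $a_j(x)=1$ with $\sigma^j(a(x))\prec\mu(Q)$), obtains the right neighbourhood exactly as you do via Lemma~\ref{l:neighbor}(i), and then gets the left neighbourhood directly from that same violation: it chooses $k>j$ with $a_{j+1}\cdots a_k\prec\mu_1\cdots\mu_{k-j}$ and uses the left-continuity of quasi-greedy expansions (Lemma~\ref{l:leftrightcontinu}) to produce $y<x$ such that $a(t)$ begins with $a_1\cdots a_k$ for every $t\in[y,x]$, so that every such $t$ violates the same condition and $[y,x]\cap\V_Q=\emptyset$. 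You instead first upgrade the single violation to a simultaneous violation of both conditions, by observing that each defining condition of $\V_Q$ is \emph{separately} equivalent to $a(x)=m(x)$ when $Q\in\A\setminus\C$ (via Lemma~\ref{l:doublyinfinite} and the bijections of Lemma~\ref{l:monotonicity}(ii),(iv)); this is a correct sharpening of Lemma~\ref{l:quasi-gr-la} — its proof is implicit in the paper's proof of that lemma, which already derives $a(x)=m(x)$ from the quasi-greedy condition alone — though it is not stated there. With both violations available you can invoke the two symmetric halves of Lemma~\ref{l:neighbor} and glue $[y,x)\cup\{x\}\cup(x,z]$. The trade-off: your argument is more modular, letting Lemma~\ref{l:neighbor} do all the work twice and isolating a reusable equivalence, while the paper's avoids the sharpened equivalence at the cost of a short ad hoc continuity argument for the left side. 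Both case analyses you perform (infinite versus finite $b(x)$, co-infinite versus co-finite $l(x)$, using $0^\infty\prec\mu(Q)\prec\alpha(Q)\prec1^\infty$ for $Q\in\A\setminus\C$) are sound and, on the greedy side, coincide with the paper's.
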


\begin{proof}
Let $x\in J_Q\setminus\V_Q$.
By the definition of $\V_Q$, we have either
\begin{equation}\label{42}
a_j(x)=1\qtq{and}\sigma^j(a(x))\prec \mu(Q),
\end{equation} 
for some $j\ge 1$, or
\begin{equation*}
m_i(x)=0\qtq{and}\sigma^i(m(x))\succ\alpha(Q)
\end{equation*} 
for some $i\ge 1$. 
By symmetry we only consider the first case.

First we observe that the condition of Lemma \ref{l:neighbor} (i) is satisfied, and hence there exists a $x>x$ such that $[x,z]\cap \V_Q=\emptyset$.

Indeed this condition coincides with \eqref{42} if $a(x)=b(x)$.
Otherwise $b(x)$ is finite, and if $b_n=1$ is its last nonzero digit, then
\begin{equation*}
b_{n+1}b_{n+2}\cdots=0^\infty\prec\mu(Q);
\end{equation*}
the last inequality follows from our assumption that
$Q\in\A\setminus\C$.
	
It remains to find a $y<x$ such that $[y,x]\cap \V_Q=\emptyset$.
By \eqref{42} that there exists an integer $k>j$ such that 
\begin{equation*}
a_{j+1}\cdots a_k\prec \mu_1\cdots\mu_{k-j}.
\end{equation*}
Applying  Lemma \ref{l:leftrightcontinu} (i), there exists a number $y<x$ such that $a(t)=(c_i)$ starts with $a_1\cdots a_k$ for every $t\in [y,x]$.
Then 
\begin{equation*}
c_j=0\qtq{and}
c_{j+1}\cdots c_k\prec \mu_1\cdots\mu_{k-j},
\end{equation*}
whence $t\notin\V_Q$.
\end{proof}

\begin{lemma}\label{l:close set}
Fix $Q\in\A$.
\begin{enumerate}[\upshape (i)]
\item The set $\V_Q$ is closed.\label{l:close set-2}\label{l45}

\item ${\overline\U}_Q\subseteqq \V_Q.$\label{l:close set-3}
\end{enumerate}
\end{lemma}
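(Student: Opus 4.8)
The plan is to prove the two assertions of Lemma \ref{l:close set} separately, deducing the second from the first together with the already-established topological preparation lemmas. For part (i), that $\V_Q$ is closed, I would argue that its complement $J_Q\setminus\V_Q$ is open in $J_Q$. Take any $x\in J_Q\setminus\V_Q$. By Lemma \ref{l44} (which required $Q\in\A\setminus\C$) there exist $y<x<z$ with $[y,z]\cap\V_Q=\emptyset$, so $x$ is an interior point of the complement and we are done in that case. This handles all $Q\in\A\setminus\C$. The case $Q\in\C$ must be treated separately, but there Lemma \ref{l23} (iii) gives $\V_Q=J_Q$, which is trivially closed. Thus $\V_Q$ is closed for every $Q\in\A$.

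For part (ii), that $\overline{\U}_Q\subseteq\V_Q$, the cleanest route is to show $\U_Q\subseteq\V_Q$ first and then invoke closedness. The inclusion $\U_Q\subseteq\V_Q$ is immediate: it is recorded in Remark \ref{r16}, and follows directly from the defining lexicographic conditions, since the strict inequalities $\sigma^j((x_i))\prec\alpha(Q)$ and $\sigma^j((x_i))\succ\mu(Q)$ defining $\U_Q$ imply the corresponding weak inequalities $\sigma^j(m(x))\preceq\alpha(Q)$ and $\sigma^j(a(x))\succeq\mu(Q)$ defining $\V_Q$ (using that for $x\in\U_Q$ the unique expansion coincides with both $a(x)$ and $m(x)$). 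Once $\U_Q\subseteq\V_Q$ is known, taking topological closures and using that $\V_Q$ is closed by part (i) yields
\begin{equation*}
\overline{\U}_Q\subseteq\overline{\V_Q}=\V_Q,
\end{equation*}
which is exactly the claim.

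I expect no serious obstacle here; the lemma is essentially a bookkeeping consequence of the machinery already assembled. The one point requiring a little care is making sure the complement argument in part (i) genuinely covers every $x\in J_Q\setminus\V_Q$ and does not secretly assume $Q\notin\C$ in a way that leaves a gap — this is why I separate the $\C$ and $\A\setminus\C$ cases explicitly, relying on Lemma \ref{l44} in the latter and on $\V_Q=J_Q$ from Lemma \ref{l23} in the former. The inclusion $\U_Q\subseteq\V_Q$ should be stated by reference to Remark \ref{r16} (or re-derived in one line from the two sets of lexicographic conditions) rather than reproved from scratch, so that the whole proof of Lemma \ref{l:close set} reduces to citing Lemmas \ref{l23} and \ref{l44} for the closedness and then a one-step closure argument for the inclusion.
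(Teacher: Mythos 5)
Your proof is correct and follows essentially the same route as the paper: the case $Q\in\C$ is dispatched by Lemma \ref{l23} (iii) (where $\V_Q=J_Q$), the case $Q\in\A\setminus\C$ by showing that $J_Q\setminus\V_Q$ is open, and part (ii) follows from the inclusion $\U_Q\subseteq\V_Q$ (Remark \ref{r16}) combined with part (i). The only, harmless, difference is that you invoke the full two-sided statement of Lemma \ref{l44} to get $y<x<z$ with $[y,z]\cap\V_Q=\emptyset$ in one stroke, whereas the paper uses that lemma only for the right-hand neighborhood and re-constructs the left-hand neighborhood explicitly via truncation (Lemma \ref{l:trunca}) and the quasi-greedy characterization.
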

	
\begin{proof}
The case $Q\in\C$ has already been proved in Lemma \ref{l23} (iii).
Henceforth we assume that $Q\in\A\setminus\C$.
\medskip

(i) We prove that the complement  of $\V_Q$ is open. 
Given any $x\in J_Q\setminus\V_Q$, writing $a(x)=(a_i)$ and $m(x)=(m_i)$ for brevity, there exists an integer $n\ge 1$ such that either
\begin{equation*}
a_n=1, \qtq{and} \sigma^n(a(x))\prec\mu(Q),
\end{equation*}
or 
\begin{equation*}
m_n=0, \qtq{and} \sigma^n(m(x))\succ\alpha(Q).
\end{equation*}
By symmetry we consider the first possibility. 

Choose a sufficiently large $\ell$  such that
\begin{equation}\label{e:450}
a_{n+1}\cdots a_{n+\ell}\prec\mu_1(Q)\cdots \mu_\ell(Q).
\end{equation}
By Lemma \ref{c:rightneighbor} there exists a $z>x$ such that $[x, z]\cap\V_Q=\emptyset$. 
We consider the left neighborhood $(y,x]$ of $x$ with
\begin{equation*}
y:=\pi_Q(a_1\cdots a_{n+\ell}0^\infty)<x.
\end{equation*}
Then $a_1\cdots a_{n+\ell}0^\infty$ is the greedy expansion of $y$ by Lemma \ref{l:trunca}, and the quasi-greedy expansion of every number $p\in(y,x]$ starts with the block $a_1\cdots a_{n+\ell}$, and therefore $p\notin \V_Q$ by \eqref{e:450}.
It follows from these relations that $y<x<z$ and  that $(y,z)\cap\V_Q=\emptyset$.
\medskip

(ii) Since $\U_Q\subseteqq\V_Q$ by definition, this is a consequence of (i).
\end{proof}

For the next lemma we recall that a set $A\subseteq(1,M+1]$ is \emph{closed from above} (respectively\emph{from below}) if the limit of every decreasing (respectively increasing) sequence of elements in $A$ belongs to $A$.

\begin{lemma}\label{l:closed} \label{l46}
Fix $Q\in \A\setminus \C$.
\begin{enumerate}[\upshape (i)]
\item If $\V_Q\setminus\U_Q=A_Q$, then $\U_Q$ is closed from above. 

\item If $\V_Q\setminus\U_Q=B_Q$, then $\U_Q$ is closed from below. 
	
\item  If $Q\in \V \setminus\overline{\U }$, then $\U_Q$ is closed.
\end{enumerate}
\end{lemma}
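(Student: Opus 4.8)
The plan is to establish (i) and (ii) by a one-sided neighbourhood argument based on Lemma \ref{l:neighbor}, and then to deduce (iii) by combining them. Throughout I use that $Q\in\A\setminus\C$, so that $0^\infty\prec\mu(Q)$ and $\alpha(Q)\prec 1^\infty$ by Remark \ref{r14} (iii), and that $\V_Q$ is closed by Lemma \ref{l:close set}.

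For (i), I would take a decreasing sequence $x_n\searrow x$ with $x_n\in\U_Q$ and show $x\in\U_Q$. Discarding the trivial case $x_n=x$, I may assume $x_n>x$ for all $n$. Since $\U_Q\subseteq\V_Q$ and $\V_Q$ is closed, the limit satisfies $x\in\V_Q$, so either $x\in\U_Q$ (and we are done) or $x\in\V_Q\setminus\U_Q=A_Q$; it remains to exclude the latter. If $x\in A_Q$, then by Lemma \ref{l:lA} (i) its greedy expansion is finite, say with last nonzero digit $b_n=1$, and then $b_{n+1}b_{n+2}\cdots=0^\infty\prec\mu(Q)$. Thus the hypothesis of Lemma \ref{l:neighbor} (i) is met, and part (a) of that lemma produces $z>x$ with $(x,z]\cap\V_Q=\emptyset$. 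But $x_n\searrow x$ with $x_n>x$ forces $x_n\in(x,z]$ for all large $n$, contradicting $x_n\in\U_Q\subseteq\V_Q$. Hence $x\in\U_Q$ and $\U_Q$ is closed from above. Part (ii) is entirely symmetric: for $x_n\nearrow x$ with $x_n\in\U_Q$ and $x_n<x$, closedness of $\V_Q$ gives $x\in\V_Q$, and the only alternative to $x\in\U_Q$ is $x\in\V_Q\setminus\U_Q=B_Q$. In that case Lemma \ref{l:lB} (i) shows the lazy expansion is co-finite with a last zero digit $l_n=0$ and $l_{n+1}l_{n+2}\cdots=1^\infty\succ\alpha(Q)$, so Lemma \ref{l:neighbor} (ii)(a) furnishes $z<x$ with $[z,x]\cap\U_Q=\emptyset$, and the points $x_n\in[z,x)\cap\U_Q$ for large $n$ give the contradiction. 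Thus $\U_Q$ is closed from below.

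Finally, for (iii) I assume $Q\in\V\setminus\overline{\U}$, i.e.\ that $(\mu,\alpha)$ satisfies condition \eqref{29}. By Lemma \ref{l:A-B-set} (iv) we have $A_Q=B_Q$, and since $A_Q\cup B_Q=\V_Q\setminus\U_Q$ always holds, both $\V_Q\setminus\U_Q=A_Q$ and $\V_Q\setminus\U_Q=B_Q$ are valid. Hence (i) and (ii) apply simultaneously and $\U_Q$ is closed both from above and from below. A subset of $\R$ with both one-sided closedness properties is closed, because any convergent sequence in it admits a monotone subsequence converging to the same limit, which is captured by one of the two properties (a constant subsequence being trivial). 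This yields that $\U_Q$ is closed.

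I expect the only delicate point to be the precise verification that membership in $A_Q$ (respectively $B_Q$) supplies exactly the hypotheses of Lemma \ref{l:neighbor}; this is where $Q\notin\C$ is essential, via $\mu(Q)\succ 0^\infty$ and $\alpha(Q)\prec 1^\infty$, since otherwise the tails $0^\infty$ and $1^\infty$ would fail the strict inequalities required there. Everything else is the standard interplay between the closedness of $\V_Q$ and the existence of the excluded neighbourhoods.
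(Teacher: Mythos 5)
Your proof is correct and follows essentially the same route as the paper: both rest on Lemma \ref{l:neighbor} to exclude a one-sided neighbourhood of any point of $A_Q$ (resp.\ $B_Q$), and both deduce (iii) from (i)--(ii) via $A_Q=B_Q$ in case \eqref{29}. The only cosmetic difference is that you dispose of limit points in $J_Q\setminus\V_Q$ by invoking the closedness of $\V_Q$ (Lemma \ref{l:close set}), whereas the paper treats them directly with Lemma \ref{c:rightneighbor} --- the same lemma that underlies the closedness of $\V_Q$ in the first place.
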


\begin{proof}
(i) It suffices to prove for each $x \in J_Q\setminus\U_Q$, there exists a $z>x$ such that $[x, z)\cap U_Q=\emptyset$.
In case $x\in J_Q\setminus\V_Q$ this follows from Lemma \ref{c:rightneighbor}. 

Otherwise we have $x\in \V_Q\setminus\U_Q=A_Q$.
Then $x$ has a a finite greedy expansion, and then it satisfies the condition of Lemma \ref{l:neighbor} (i). 
Hence we obtain that there exists $z>x$ such that $[x,z]\cap \U_Q=\emptyset$. 
\medskip	

(ii) The proof is similar to (i), now using Lemmas \ref{c:rightneighbor} and \ref{l:neighbor} (ii).
\medskip	

(iii) This follows from (i), (ii) because $\V_Q\setminus\U_Q=A_Q=B_Q$
by Lemmas \ref{l:HuBarZou} and \ref{l:condition} \eqref{c2}, \eqref{c3}.
\end{proof}
	 
For the proof of Theorem \ref{T:topology1} (ii) we need the following two lemmas:

\begin{lemma}\cite[Theorem 2.1]{DeVKomLor2016}\label{l:important1}\label{l47}
Let $Q\in \A$.
\begin{enumerate}[\upshape (i)]
\item Assume that
\begin{equation*}
\sigma^j((x_i))\prec \alpha(Q)\text{ whenever }x_j=0.
\end{equation*} 
Then there exists a sequence  $1<k_1<k_2<\cdots$ of positive integers such that for each $i\geq 1$,
\begin{equation*}
x_{k_i}=0, \qtq{and} x_{n+1}\cdots x_{k_i}\prec\alpha_1\cdots \alpha_{k_{i}-n} \qtq{if} 1\leq n<k_i \qtq{and} x_n=0.
\end{equation*}

\item Assume that
\begin{equation*}
\sigma^{j}((x_i))\succ\mu(Q)\text{ whenever }x_j=1.
\end{equation*} 
Then there exists a sequence $1<\ell_1<\ell_2<\cdots$ of positive integers such that for each $i\geq 1$,
\begin{equation*}
x_{\ell_i}=1, \qtq{and} x_{n+1}\cdots x_{\ell_i}\succ\mu_1\cdots \mu_{\ell_{i}-n} \qtq{if} 1\leq n<\ell_i \qtq{and} x_n=1.
\end{equation*}
\end{enumerate}
\end{lemma}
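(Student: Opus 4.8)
The plan is to prove part (i) in full and to deduce part (ii) from it by symmetry, exchanging the roles of the digits $0$ and $1$, of $\prec$ and $\succ$, and of $\alpha$ and $\mu$; the argument below for (i) uses only the hypothesis together with the inequalities $\sigma^{j}(\alpha)\preceq\alpha$ for all $j\ge 0$ from Remark \ref{r22}~(ii), and its mirror image rests on $\sigma^{j}(\mu)\succeq\mu$ from Remark \ref{r22}~(i).

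For (i), first I would observe that $(x_i)$ has infinitely many zero digits. Indeed, if it had only finitely many it would end with $1^{\infty}$, and then its last zero $x_n=0$ would force $\sigma^{n}((x_i))=1^{\infty}\prec\alpha(Q)$, which is impossible since $\alpha(Q)\preceq 1^{\infty}$; the degenerate sequence $1^{\infty}$ itself has no zeros and is excluded. Next I would record the elementary fact that for every zero $x_n=0$ and every $k>n$ one has the \emph{weak} block inequality $x_{n+1}\cdots x_k\preceq\alpha_1\cdots\alpha_{k-n}$, since a strict reverse inequality on this finite block would already give $\sigma^{n}((x_i))\succ\alpha(Q)$. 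Thus, calling an index $k$ \emph{good} when $x_k=0$ and $x_{n+1}\cdots x_k\prec\alpha_1\cdots\alpha_{k-n}$ for every zero $n<k$, the failure of goodness of a pair $(n,k)$ means exactly the \emph{equality} $x_{n+1}\cdots x_k=\alpha_1\cdots\alpha_{k-n}$; in that case I call $n$ a \emph{witness} of the bad index $k$. The conclusion of (i) is precisely that there are infinitely many good indices greater than $1$.

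The heart of the argument is the claim that \emph{the smallest witness of any bad index is itself good}. Let $k$ be bad with smallest witness $n$, so $x_{n+1}\cdots x_k=\alpha_1\cdots\alpha_{k-n}$. If $n$ were not good, there would be a zero $n'<n$ with $x_{n'+1}\cdots x_n=\alpha_1\cdots\alpha_{n-n'}$. Concatenating,
\[
x_{n'+1}\cdots x_k=\alpha_1\cdots\alpha_{n-n'}\,\alpha_1\cdots\alpha_{k-n},
\qquad
\alpha_1\cdots\alpha_{k-n'}=\alpha_1\cdots\alpha_{n-n'}\,\alpha_{n-n'+1}\cdots\alpha_{k-n'}.
\]
These two words share the prefix $\alpha_1\cdots\alpha_{n-n'}$, and by $\sigma^{n-n'}(\alpha)\preceq\alpha$ (Remark \ref{r22}~(ii)) the trailing block satisfies $\alpha_{n-n'+1}\cdots\alpha_{k-n'}\preceq\alpha_1\cdots\alpha_{k-n}$, so $x_{n'+1}\cdots x_k\succeq\alpha_1\cdots\alpha_{k-n'}$. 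Combined with the weak inequality above (valid because $x_{n'}=0$), this forces equality, making $n'<n$ a witness of $k$ and contradicting the minimality of $n$. This proves the claim.

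Finally I would conclude by contradiction: suppose only finitely many good indices exist and let $G$ be the largest. Then every zero $k>G$ is bad, there are infinitely many such $k$, and by the claim each has a smallest witness that is good, hence lies in the finite set of good indices contained in $[1,G]$. By the pigeonhole principle some fixed good index $n^{*}$ is the smallest witness of infinitely many, and hence arbitrarily large, bad zeros $k$, for each of which $x_{n^{*}+1}\cdots x_k=\alpha_1\cdots\alpha_{k-n^{*}}$; letting $k\to\infty$ gives $\sigma^{n^{*}}((x_i))=\alpha(Q)$, contradicting the hypothesis $\sigma^{n^{*}}((x_i))\prec\alpha(Q)$ at the zero $n^{*}$. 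Therefore the good indices are infinite, and discarding the index $1$ if necessary yields the required sequence $1<k_1<k_2<\cdots$. The main obstacle is the key claim on smallest witnesses, where the quasi-greedy property $\sigma^{j}(\alpha)\preceq\alpha$ is decisive; the rest is bookkeeping and a pigeonhole step, and the whole scheme transfers verbatim to (ii) under the reflection described above.
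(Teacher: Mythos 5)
Your proof is correct, but it takes a route the paper never takes, because the paper does not actually prove this lemma: part (i) is imported wholesale as \cite[Theorem 2.1]{DeVKomLor2016}, and Remark \ref{r:important1} settles part (ii) by the same digit-reflection symmetry you describe. Your argument --- the weak block inequality $x_{n+1}\cdots x_k\preceq\alpha_1\cdots\alpha_{k-n}$ forced at every zero $x_n=0$, the good/bad dichotomy in which badness means block \emph{equality}, the key claim that the smallest witness of a bad index is itself good (this is where $\sigma^{j}(\alpha)\preceq\alpha$ does its work), and the pigeonhole step producing $\sigma^{n^{*}}((x_i))=\alpha(Q)$ against the hypothesis at the zero $n^{*}$ --- is complete, and each step checks out. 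It also buys something the bare citation does not: Theorem 2.1 of \cite{DeVKomLor2016} is stated and proved for equal-base expansions, where $\alpha$ is the quasi-greedy expansion of $1$ in base $q$, whereas here $\alpha(Q)$ is the quasi-greedy expansion of $r_Q$ in a double base, so the citation is legitimate only because that proof uses nothing about $\alpha$ beyond $\sigma^{j}(\alpha)\preceq\alpha$. Your self-contained proof makes exactly this transfer explicit, since Remark \ref{r22}~(ii) (resp.\ (i) for part (ii)) is its sole input.

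Two loose ends, neither a genuine gap. First, the sequences $(x_i)=1^\infty$ in (i) and $(x_i)=0^\infty$ in (ii) satisfy the hypotheses vacuously but not the conclusions; this is a blemish of the statement itself rather than of your proof, and it is harmless because the paper only applies the lemma to sequences with infinitely many zeros, resp.\ ones (Lemmas \ref{l49} and \ref{l51}); you flag the exclusion correctly. Second, ``let $G$ be the largest good index'' presupposes that at least one good index exists; if none did, then every zero index would be bad (and there are infinitely many zeros), so your key claim would immediately manufacture a good smallest witness --- a one-sentence contradiction worth writing down before invoking the pigeonhole.
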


\begin{remark}\label{r:important1}\label{r48}
Only Part (i) of Lemma \ref{l:important1} was proved in \cite{DeVKomLor2016}, but Part (ii) hence follows by symmetry.
\end{remark}

\begin{lemma}\label{l:density1}\label{l49}
Let $Q\in \V$.
\begin{enumerate}[\upshape (i)]
\item For each $x\in \U_Q$ there exists a sequence $(x^k)$  in $A_Q$ such that $b(x^k)\to b(x)$ and $x^k\to x$.
Moreover, $(x^k)$ may be chosen to be increasing if $x\in \U_Q\setminus \{0\}$, and decreasing if $x=0$.
			
\item  For each $x\in \U_Q$ there exists a sequence $(x^k)$ in $B_Q$ such that $l(x^k)\to l(x)$ and $x^k\to x$. 
Moreover, $(x^k)$ may be chosen to be decreasing if $x\in \U_Q\setminus \{1/(q_1-1)\}$, and increasing if $x=1/(q_1-1)$.
\end{enumerate}
\end{lemma}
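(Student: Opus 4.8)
The plan is to prove part (i) and deduce part (ii) from it by the reflection symmetry $0\leftrightarrow 1$, $\alpha\leftrightarrow\mu$, greedy$\leftrightarrow$lazy, $A_Q\leftrightarrow B_Q$ that is used throughout the paper (e.g.\ in Lemmas \ref{l34} and \ref{l:condition}); this symmetry interchanges the two statements and turns ``increasing'' into ``decreasing'' and the endpoint $0$ into $1/(q_1-1)$. So I fix $x\in\U_Q$ and write $(x_i)$ for its unique expansion, so that $b(x)=a(x)=m(x)=l(x)=(x_i)$. If $Q\in\C$ then $\V_Q=J_Q$ by Lemma \ref{l23}, and the membership $x^i\in\V_Q$ below will be automatic; so I may assume $Q\in\A\setminus\C$, in which case every unique expansion is doubly infinite by Lemmas \ref{l:doublyinfinite} and \ref{l:quasi-gr-la}, except at the two endpoints. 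The endpoint $x=0$ is dealt with directly by the explicit family $x_k=1/(q_0^kq_1)\in A_Q$ from Lemma \ref{l31}, which decreases to $0$ with $b(x_k)=0^k10^\infty\to 0^\infty$; this settles the ``$x=0$, decreasing'' case. For every other $x$ the sequence $(x_i)$ has infinitely many $1$'s.

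For $x\in\U_Q\setminus\{0\}$ the idea is to approximate $x$ \emph{from below} by truncating $(x_i)$ at carefully chosen $1$-positions. Since $x\in\U_Q$ we have $\sigma^j((x_i))\succ\mu$ whenever $x_j=1$, so Lemma \ref{l:important1}(ii) supplies integers $1<\ell_1<\ell_2<\cdots$ with $x_{\ell_i}=1$ and
\begin{equation*}
x_{n+1}\cdots x_{\ell_i}\succ\mu_1\cdots\mu_{\ell_i-n}\qtq{whenever}1\le n<\ell_i\text{ and }x_n=1.
\end{equation*}
Set $d^i:=x_1\cdots x_{\ell_i}0^\infty$ and $x^i:=\pi_Q(d^i)$. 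First, $d^i$ is a greedy sequence: if $d^i_j=0$ with $j\le\ell_i$ then $x_j=0$, and since truncation can only decrease a sequence, $\sigma^j(d^i)\preceq\sigma^j((x_i))\prec\alpha$; for $j>\ell_i$ one has $0^\infty\prec\alpha$. Hence $b(x^i)=d^i$ is finite, and Lemma \ref{l:rela-quasi-gl}(i) gives $a(x^i)=x_1\cdots x_{\ell_i-1}0\,\alpha$.

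The heart of the argument is to show $x^i\in\V_Q$, equivalently (Lemma \ref{l:quasi-gr-la}) that $a(x^i)=m(x^i)$; since $a(x^i)$ is co-infinite, by Lemma \ref{l:monotonicity}(iv) this reduces to verifying $\sigma^n(a(x^i))\succeq\mu$ at every position $n$ carrying the digit $1$. For $n$ inside the terminal block $\alpha$ this reads $\sigma^m(\alpha)\succeq\mu$, which holds for all $m$ because $Q\in\V$. For $n=j<\ell_i$ with $x_j=1$ I must compare $\sigma^j(a(x^i))=x_{j+1}\cdots x_{\ell_i-1}0\,\alpha$ with $\mu$. Writing $v:=x_{j+1}\cdots x_{\ell_i-1}$, the displayed inequality gives $v1\succ\mu_1\cdots\mu_{\ell_i-j}$, so either $v\succ\mu_1\cdots\mu_{\ell_i-1-j}$, whence $\sigma^j(a(x^i))\succ\mu$ at once, or $v=\mu_1\cdots\mu_{\ell_i-1-j}$ with $\mu_{\ell_i-j}=0$, in which case $\sigma^j(a(x^i))\succeq\mu$ reduces to $\alpha\succeq\sigma^{\ell_i-j}(\mu)$, again valid since $Q\in\V$. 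The obstructive possibility $\mu_{\ell_i-j}=1$ is exactly what the refined positions of Lemma \ref{l:important1}(ii) exclude; choosing the truncation points there rather than at arbitrary $1$-positions is the crucial point, and I expect this co-infiniteness/lazy check to be the main obstacle. Having a finite greedy expansion and lying in $\V_Q$, the point $x^i$ belongs to $A_Q$.

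It then remains to record convergence and monotonicity, which are routine. Since $d^i$ agrees with $(x_i)$ on the first $\ell_i$ digits and $\ell_i\to\infty$, we get $b(x^i)=d^i\to(x_i)=b(x)$ and, by the continuity of $\pi_Q$ under prefix agreement, $x^i\to x$. Because $(x_i)$ has $1$'s beyond each $\ell_i$, we have $d^i\prec d^{i+1}\prec(x_i)$, so by the monotonicity of the greedy map (Lemma \ref{l:monotonicity}(i)) the sequence $(x^i)$ is increasing with $x^i<x$, which is the ``$x\in\U_Q\setminus\{0\}$, increasing'' case. This completes (i), and (ii) follows by reflection.
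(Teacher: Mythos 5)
Your proposal is correct and follows essentially the same route as the paper's proof: the endpoint $x=0$ via the explicit family from Lemma \ref{l31}, truncation of the unique expansion at the special $1$-positions supplied by Lemma \ref{l:important1}(ii), verification of $x^k\in\V_Q$ by checking co-infiniteness and the quasi-lazy inequalities using $\sigma^i(\mu)\preceq\alpha$ and $\sigma^j(\alpha)\succeq\mu$ for $Q\in\V$, and part (ii) by reflection. Your two-case analysis at a digit $x_j=1$ (strict inequality of $v$ versus equality forcing $\mu_{\ell_i-j}=0$) is just an unpacked form of the paper's one-line estimate $x_{j+1}\cdots x_{\ell_k}^-\alpha\succeq\mu_1\cdots\mu_{\ell_k-j}\,\sigma^{\ell_k-j}(\mu)=\mu$.
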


\begin{proof}
The idea of the following proof originates from \cite[Lemma 5.1 ]{{DeVKomLor2022}}.
\medskip

(i) If $x=0$, then we may choose the quasi-greedy sequences $(x_i^k):=0^k\alpha(Q)$ for $k=1, 2,\ldots.$ 
It is clear that $(x_i^k)\searrow 0^\infty$, and hence  $x^k:=\pi_Q((x_i^k))\rightarrow 0$ as $k\rightarrow\infty$. 
We have seen in Lemma \ref{l31} that $x^1\in\V_Q$; a simple adaptation of the proof of Lemma \ref{l31} shows that $x^k\in\V_Q$ for every $k$.
Finally, $x^k\in A_Q$ because its greedy expansion  $b(x^k)=0^{k-1}10^\infty$ is finite.
		 
Now let $x\in \U_Q\setminus \{0\}$, and let $(x_i)$ denote its unique expansion.
We recall from Lemma \ref{l:important1} (ii) that there exists a sequence $1<\ell_1<\ell_2<\cdots$ of positive integers such that for each $i\geq 1$,
\begin{equation}\label{e:460}
x_{\ell_i}=1, \qtq{and} x_{n+1}\cdots x_{\ell_i}\succ\mu_1\cdots \mu_{\ell_{i}-n} \qtq{if} 1\leq n<\ell_i \qtq{and} x_n=1.
\end{equation}
		
Now consider for each $k\ge 1$ the finite greedy sequence
\begin{equation*}
(b^k_j):=x_1\cdots x_{\ell_{k}}0^{\infty},
\end{equation*}		
and set $x^k:=\pi_Q((b^{k}_j))$.
It is clear that ${(b^k_j)}\nearrow (x_i)$ and $x^k\nearrow x$ as $k\rightarrow \infty$.
It remains to prove that $x^k\in \V_Q\setminus\U_Q$ for each $k\geq 1$.	

Since the quasi-greedy expansion
\begin{equation*}
a(x^k)=x_1\cdots x^{-}_{\ell_{k}}\alpha(Q)
\end{equation*}
of $x^k$ is different from its greedy expansion, $x^k\notin \U_Q$.

If $Q\in \C$, then $x^k\in J_Q=\V_Q$ by Lemma \ref{l23} (iii).
If $Q\in \V\setminus\C$, then the relation $x^k\in \V_Q$ will follow by Lemma \ref{l:quasi-gr-la} if we show that $m(x^k)=a(x^k)$.
Since $a(x^k)$ is doubly infinite, and hence co-infinite by Lemma \ref{l:doublyinfinite}, it is sufficient to show that
\begin{equation*}
x_{i+1}\cdots x^{-}_{\ell_{k}}\alpha(Q)\succeq \mu(Q) \qtq{whenever} x_{i}=1 \text{ and } 1\le i\le \ell_k-1.
\end{equation*}	
This is true because 
\begin{equation*}
x_{i+1}\cdots x^{-}_{\ell_{k}}\succeq \mu_1\cdots \mu_{\ell_k-i}
\end{equation*}
by \eqref{e:460}, and $\alpha(Q)\succeq \sigma^{\ell_k-i}(\mu(Q)$ by our assumption $Q\in \V$ (see the different cases of Lemma \ref{l:HuBarZou}).
\medskip

(ii)  If $x=1/(q_1-1)$, then we choose the quasi-lazy sequences $(x^k_i):=1^k\mu(Q)$ for $k=1, 2,\ldots.$ 
It is clear that $(x^k_i)\nearrow 1^\infty$, and hence  $x^k:=\pi_Q((x_i^k))\rightarrow 1/(q_1-1)$ as $k\rightarrow\infty$. 
Furthermore, $x^k\in B_Q$ because $l(x^k)=1^{k-1}01^\infty$. 
The rest of the proof is similar to (i).
\end{proof}
	
\begin{proof}[Proof of Theorem \ref{T:topology1} (i), (iii), (iv), and (ii) for $Q\in\V$]\ 

(i) It was proved in Lemma \ref{l:close set}.
\medskip 

(ii) If $Q\in\V$, then the relation $|\V_Q\setminus\U_Q|=\aleph_0$ follows from Remark \ref{r16} and Lemma \ref{l31}, and the density of $\V_Q\setminus \U_Q$ in $\V_Q$ follows from Lemma \ref{l:density1}.
\medskip

(iii) Since $Q\in \V \setminus\overline{\U }$ by assumptions, $\U_Q$ is closed by Lemma \ref{l:closed} (iii).
Next we show that each $x\in \V_Q\setminus{\U}_Q$ is isolated in $\V_Q$.

It follows from Lemma \ref{l:condition} \eqref{c2}, \eqref{c3} that $x$ has a finite greedy expansion and a co-finite lazy expansion.
Therefore by Lemma \ref{l:neighbor} (i), (ii) there exist two numbers $z>x$ and $z<x$ such that $(x,z]\cap \V_Q=\emptyset$ and $[y,x)\cap \V_Q=\emptyset$.

Since $\V_Q\setminus\U_Q\ne\emptyset$ by Lemmas \ref{l31}, it
has isolated points, and therefore $\V_Q$ is not a Cantor set.
\medskip

(iv)  This is proved in Lemma \ref{l23} (iii).
\end{proof}	

\begin{proof}[Proof of Corollary \ref{c116qqq}]
The first two equivalences follow from the definitions of $\U_Q$ and $\V_Q$. 

To prove the third relation, we assume that $Q\notin \overline{\U}$. 
We have to prove that at least one of the numbers $\ell_Q$ and $\mu(Q)$ is outside $\overline{\U}_Q$.

If $Q\in\A\setminus\V$, then $a(r_Q)=\alpha(Q)$ and $m(\ell_Q)=\mu(Q)$ satisfy  one of the conditions \eqref{30}, \eqref{31}, \eqref{32} of Lemma \ref{l:HuBarZou}. 
By the definition of $\V_Q$ this implies that at least one of the numbers $\ell_Q$ and $\mu(Q)$ is even outside $\V_Q\supseteqq\overline{\U}_Q$.

If $Q\in\V \setminus \overline{\U}$, then $a(r_Q)=\alpha(Q)$ and $m(\ell_Q)=\mu(Q)$ satisfy the condition \eqref{29} of  Lemma \ref{l:HuBarZou}.
By the definition of $\U_Q$ this implies that none of the numbers $\ell_Q$ and $r_Q$ belongs to $\U_Q$.
We complete the proof by recalling that under the condition \eqref{29} we have $\overline{\U}_Q=\U_Q$ by  Theorem  \ref{T:topology1} (iv).
\end{proof}

\section{Proof of Proof of Theorems \ref{T:topology1} (v)--(vi) and \ref{T:topology2}} \label{s5}     

The following lemma plays a crucial role in this section.
Let $x=\pi_Q((x_i))\in J_Q$, we recall that a real number $x\in \U_Q$ if and only if the following two conditions are satisfied:
\begin{equation}\label{51}
\begin{split}
&\sigma^j((x_i))\prec\alpha(Q) \qtq{ whenever }x_j=0,\\
&\sigma^j((x_i))\succ\mu(Q) \qtq{ whenever }x_j=1.
\end{split}
\end{equation}
 
\begin{lemma}\label{l:density3}\label{l51}
Fix $Q\in\A$.
If $\sigma^i(\mu(Q))\prec\alpha(Q)$ and $\mu(Q)\prec\sigma^j(\alpha(Q))$ for all $i,j\ge 1$.\footnote{This assumption is satisfied in cases  \eqref{21}, \eqref{23}, \eqref{26}, \eqref{28} of Lemma \ref{l:HuBarZou}.}
Then:	
\begin{enumerate}[\upshape (i)]
\item  For each $x\in A_Q$, there exists a sequence $(a^\ell_j)$ such that $x^{\ell}=\pi_Q((a^{\ell}_k))\in \U_Q$ and $x^\ell\nearrow x$ as $\ell\to\infty$.

\item For each $x\in B_Q$, there exists a sequence $(b^\ell_k)$ such that $x^{\ell}=\pi_Q((b^{\ell}_k))\in \U_Q$ and $x^\ell\searrow x$ as $\ell\to\infty$.
\end{enumerate} 
\end{lemma}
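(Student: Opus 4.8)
The plan is to prove (i) directly and then obtain (ii) for free by the reflection symmetry $x_i\mapsto 1-x_i$, which interchanges the roles of $q_0$ and $q_1$, of $\alpha$ and $\mu$, of $\prec$ and $\succ$, and of $A_Q$ and $B_Q$ (this is the same symmetry already invoked in Lemma \ref{l34} and Remark \ref{r:important1}). Under it an increasing approximation of a point of $A_Q$ from below by points of $\U_Q$ becomes a decreasing approximation of a point of $B_Q$ from above, because reflection reverses the order of values. Hence statement (ii) is exactly the reflected form of (i), and from now on I focus on (i).

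Fix $x\in A_Q$. By Lemma \ref{l:lA} (i) its quasi-greedy expansion has the form $a(x)=a_1\cdots a_{n-1}0\alpha$, where the digit in position $n$ is $0$ and $\sigma^n(a(x))=\alpha$; since $\alpha$ is infinite there are infinitely many indices $m$ with $\alpha_m=1$. For such an $m$ I would follow $a(x)$ up to position $n+m-1$, lower the entry $\alpha_m=1$ in position $n+m$ to $0$, and append a fixed doubly infinite tail $t$, setting $a^m:=a_1\cdots a_{n-1}0\alpha_1\cdots\alpha_{m-1}0\,t$. By construction $a^m\prec a(x)$ and $a^m$ agrees with $a(x)$ in its first $n+m-1$ digits, so $x^m:=\pi_Q(a^m)<x$ (the two are distinct since $x\notin\U_Q$) and $x-x^m\to 0$ as $m\to\infty$, because agreement of the first $N$ digits forces the two values to differ by at most $(q_1-1)^{-1}$ times the corresponding product of bases, which tends to $0$; passing to a subsequence I may assume $x^m\nearrow x$. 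Everything then reduces to choosing $m$ and $t$ so that $a^m\in\U_Q$, i.e.\ so that $a^m$ satisfies the two lexicographic conditions \eqref{51}.

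For the zero digits of $a^m$ the condition $\sigma^j(a^m)\prec\alpha$ is the easy half: at each such position the suffix of $a^m$ is strictly smaller than the corresponding suffix of $a(x)$ (they first differ at position $n+m$, where $a^m$ carries the smaller digit $0$), and the latter is $\preceq\alpha$ because $a(x)$ is quasi-greedy; strict inequality at the cut itself is guaranteed by choosing the appended tail with $t\prec\alpha$. The delicate half is $\sigma^j(a^m)\succ\mu$ at the one digits. For the one digits sitting well inside the $\alpha$-block this follows from the standing hypothesis $\mu\prec\sigma^j(\alpha)$ for all $j\ge 1$, which is strict precisely in the four admissible cases \eqref{21}, \eqref{23}, \eqref{26}, \eqref{28}; for the one digits of $t$ it follows from taking $t$ to be itself a doubly infinite unique expansion (so that all of its shifts already satisfy \eqref{51}), together with $\sigma^i(\mu)\prec\alpha$ for all $i\ge 1$.

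The main obstacle is the junction: the finitely many one digits whose suffixes straddle the cut at position $n+m$, where a long prefix of $\alpha$ is followed by $0\,t$. For these, $\sigma^j(a^m)\succ\mu$ can fail if the relevant suffix of $\alpha$ happens to agree with a long prefix of $\mu$ right up to the cut. To control this I would select the cut positions $m$ from the increasing sequence of indices produced by Lemma \ref{l:important1} applied to $\alpha$, which singles out exactly the truncation points at which the admissibility margins are not exhausted, and fix $t$ to be a concrete doubly infinite unique expansion whose leading behaviour dominates $\mu$. Verifying that with these choices the strict hypotheses $\mu\prec\sigma^j(\alpha)$ and $\sigma^i(\mu)\prec\alpha$ propagate across the junction — so that no one digit of $a^m$ acquires a suffix $\preceq\mu$ — is the crux of the argument and the only place where the restriction to the four cases \eqref{21}, \eqref{23}, \eqref{26}, \eqref{28} is essential. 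Once this is established, Lemma \ref{l:leftrightcontinu} confirms $x^m\nearrow x$ with $x^m\in\U_Q$, completing (i), and (ii) then follows by reflection.
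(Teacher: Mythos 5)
Your reduction of (ii) to (i) by reflection is legitimate (it is the same symmetry the paper invokes repeatedly), and your skeleton for (i) --- edit $a(x)=a_1\cdots a_{n-1}0\alpha$ at a far-out position, append a controlled tail, and use Lemma \ref{l:important1} to choose the cut --- is the right general strategy. But the construction has a genuine gap, precisely at the point you yourself call ``the crux'' and leave unverified. Your sequence $a^m=a_1\cdots a_{n-1}0\alpha_1\cdots\alpha_{m-1}0\,t$ lowers the digit $\alpha_m=1$ to $0$ at the cut, and this destroys exactly the protection that Lemma \ref{l:important1} provides. That lemma, applied to $\alpha$, guarantees only a strict inequality of finite words $\alpha_{k+1}\cdots\alpha_{\ell_i}\succ\mu_1\cdots\mu_{\ell_i-k}$ \emph{including the final digit} $\alpha_{\ell_i}=1$; nothing prevents this inequality from being strict only at that last position, i.e.\ $\alpha_{k+1}\cdots\alpha_{\ell_i-1}=\mu_1\cdots\mu_{\ell_i-k-1}$ with $\mu_{\ell_i-k}=0$. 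In that case, after your truncation, $\sigma^{n+k}(a^m)=\mu_1\cdots\mu_{\ell_i-k-1}0\,t$, and the required condition $\sigma^{n+k}(a^m)\succ\mu$ becomes $t\succ\sigma^{\ell_i-k}(\mu)$, which nothing you assume guarantees. So your claim that the indices produced by Lemma \ref{l:important1} are ``truncation points at which the admissibility margins are not exhausted'' is unjustified, and in general false.

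This pushes all of the difficulty into the choice of $t$, and there the proposal is also incomplete: you would need a doubly infinite unique expansion $t$ with $t\prec\alpha$ that dominates the relevant shifts $\sigma^{p}(\mu)$ (``dominates $\mu$'' is not enough, since $\mu\preceq\sigma^{p}(\mu)$ and this can be strict), where the set of relevant $p$ varies with $m$. Producing such a $t$ is essentially the statement being proved --- that $\U_Q$ contains points whose expansions lie in prescribed lexicographic windows --- so the argument is circular as it stands. The paper's proof avoids both problems simultaneously: it keeps the digit $\alpha_{m_1}=1$ at the cut (so a margin exhausted exactly at the cut still survives, because $1>\mu_{m_1-k}=0$ there), and instead of a fixed tail it appends an inductively constructed alternation $\mu_1\cdots\mu_{m_2}\alpha_1\cdots\alpha_{m_3}\mu_1\cdots\mu_{m_4}\cdots$, each block length chosen, via the strict hypotheses $\sigma^i(\mu)\prec\alpha$ and $\mu\prec\sigma^j(\alpha)$, long enough to protect the finitely many finite-word margins created by the preceding blocks. (A smaller omission: you never treat the one digits among $a_1\cdots a_{n-1}$, whose suffixes also straddle the cut; the paper handles these using $m(x)=a(x)$ and $\sigma^{n-j}(\mu)\prec\alpha$, as in \eqref{52}--\eqref{54}.) To repair your proof you would either have to keep the cut digit equal to $1$ and build the tail recursively as the paper does, or prove the existence of the dominating tail $t$ --- which amounts to the same recursion.
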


\begin{proof}
It follows from assumptions and from Lemma \ref{l:HuBarZou} that $Q\in\V$.

If $Q\in\C$, then $\V_Q\setminus\U_Q$ is a countable set in the interior of $J_Q$ by Lemma \ref{l23} (iii); in particular, it does not contain any non-degenerate interval.
It follows that if  $x\in \V_Q\setminus\U_Q$, then every left and every right neighborhood of $x$ meets its complementer set in $J_Q$, i.e., the set $\U_Q$.
This implies the existence of the required sequences $(a^\ell_j)$ and $(b^\ell_j)$.

Henceforth we assume that $Q\in\V\setminus\C$.
\medskip

(i) As usual, we write sometimes $\mu=(\mu_i):=\mu(Q)$ and $\alpha=(\alpha_i):=\alpha(Q)$ for brevity.

Let $x\in A_Q$. 
From Lemma \ref{l:lA} (i) we have
\begin{equation*}
b(x)=a_1\cdots a_{n-1}10^\infty \qtq{and}a(x)=a_1\cdots a_{n-1}0\alpha(Q).
\end{equation*} 
We are going to construct for each $\ell\in\N$ a sequence $(a^{\ell}_i)\prec a(x)$,  starting with 
\begin{equation*}
a_1\cdots a_{n-1}0\alpha_1\cdots\alpha_{\ell},
\end{equation*}
and satisfying the conditions \eqref{51} with $(x_i):=(a^{\ell}_i)$.
Then we will have 
\begin{equation*}
(a^{\ell}_i)\to a(x),\quad
x^{\ell}:=\pi_Q((a^{\ell}_i))\to x,\qtq{and}
x^{\ell}\in\U_Q\qtq{for all}\ell,
\end{equation*} 
Furthermore, since $(a^{\ell}_i)\prec a(x)$ for every $\ell$, taking a subsequence if needed, the sequences $(a^1_i), (a^1_i), \ldots$ and $(x^{\ell})$ will be increasing, too.

We turn to the construction.
We fix an arbitrary $\ell\in\N$, and henceforth we do not indicate the dependence on $\ell$.

\emph{First step.}
Applying Lemma \ref{l:important1} (ii) with $(x_i):=(\alpha_i)=\alpha(Q)$ we choose an integer $m_1\ge\ell$ such that
\begin{equation*}\label{step1}
\alpha_{m_1}=1,\text{ and }
\alpha_{k+1}\cdots\alpha_{m_1}\succ\mu_1\cdots\mu_{m_1-k}
\text{ whenever }1\le k<m_1\text{ and }\alpha_k=1.
\end{equation*}

If $1\le k<m_1$ and $\alpha_k=0$, then
\begin{equation*}
\alpha_{k+1}\cdots\alpha_{m_1}\mu
\preceq \alpha_1\cdots\alpha_{m_1-k}\mu
\prec\alpha
\end{equation*}
because $\mu\prec \sigma^{m_1-k}(\alpha)$ by our assumption.

\emph{Second step.}
Since there are only finitely many such positive integers $k<m_1$, if $m_2\ge m_1$ is a sufficiently large integer, then we have
\begin{equation}\label{step2}
\alpha_{k+1}\cdots\alpha_{m_1}\mu_1\cdots\mu_{m_2}
\prec\alpha_1\cdots\alpha_{m_1+m_2-k}\text{ whenever }1\le k<m_1\text{ and }\alpha_k=0.
\end{equation}
Furthermore, since $\sigma^{m_1}(\alpha)\succ\mu$ by our assumption, by choosing a larger $m_2$ if necessary, we may also assume that
\begin{equation*}
\mu_1\cdots\mu_{m_2}\prec\alpha_{m_1+1}\cdots\alpha_{m_1+m_2}
\end{equation*}
Finally, applying Lemma \ref{l:important1} (i) with $(x_i):=\mu$, we choose an integer $m_2\ge m_1$ such that \eqref{step2} and the following condition are satisfied:
\begin{equation*}\label{step3}
\mu_{m_2}=0,\text{ and }
\mu_{k+1}\cdots\mu_{m_2}\prec\alpha_1\cdots\alpha_{m_2-k}
\text{ whenever }1\le k<m_2\text{ and }\mu_k=0.
\end{equation*}

\emph{Third step.}
If $1\le k<m_2$ and $\mu_k=1$, then
\begin{equation*}
\mu_{k+1}\cdots\mu_{m_2}\alpha
\succeq \mu_1\cdots\mu_{m_1-k}\alpha
\succ\mu
\end{equation*}
because $\alpha\succ \sigma^{m_2-k}(\mu)$ by our assumption.
Since there are only finitely many such $k$s, if $m_3\ge m_2$ is a sufficiently large integer, then we have
\begin{equation}\label{step4}
\mu_{k+1}\cdots\mu_{m_2}\alpha_1\cdots\alpha_{m_3}
\succ\mu_1\cdots\mu_{m_2+m_3-k}\text{ whenever }1\le k<m_2\text{ and }\mu_k=1.
\end{equation}
Applying Lemma \ref{l:important1} (ii) with $(x_i):=\alpha$, we choose an integer {\color{blue} $m_3\ge m_2$ } such that \eqref{step4} and the following condition are satisfied:
\begin{equation*}\label{step5}
\alpha_{m_3}=1,\text{ and }
\alpha_{k+1}\cdots\alpha_{m_3}\succ\mu_1\cdots\mu_{m_3-k}
\text{ whenever }1\le k<m_3\text{ and }\alpha_k=1.
\end{equation*}

Continuing by induction, we obtain a sequence
\begin{equation*}
\alpha_1\cdots\alpha_{m_1}\mu_1\cdots\mu_{m_2}\alpha_1\cdots\alpha_{m_3}\mu_1\cdots\mu_{m_4}\cdots
\end{equation*} 
satisfying the conditions \eqref{51}.

We claim that the sequences 
\begin{equation*}
(a^{\ell}_i):=a_1\cdots a_{n-1}0\alpha_1\cdots\alpha_{m_1}\mu_1\cdots\mu_{m_2}\alpha_1\cdots\alpha_{m_3}\mu_1\cdots\mu_{m_4}\cdots,\quad \ell=1,2,\ldots
\end{equation*}
have the required properties. 
The inequality $(a^{\ell}_i)\prec a(x)$ follows from \eqref{step2b} because $(a^{\ell}_i)$ and $a(x)$ start with 
\begin{equation*}
a_1\cdots a_{n-1}0\alpha_1\cdots\alpha_{m_1}\mu_1\cdots\mu_{m_2}\qtq{and}
a_1\cdots a_{n-1}0\alpha_1\cdots\alpha_{m_1}\alpha_{m_1+1}\cdots\alpha_{m_1+m_2},
\end{equation*}
respectively.

It remains to check the conditions \eqref{51}.
We have already seen that they are satisfied for $j>n$.
They are also satisfied for $j=n$ by \eqref{step2b}, because the $n$th digit of $(a^{\ell}_i)$ is equal to zero, and $\sigma^n(a^{\ell}_i)$ and $\alpha$ start with
\begin{equation}\label{step2b}
\alpha_1\cdots\alpha_{m_1}\mu_1\cdots\mu_{m_2}\qtq{and}\alpha_1\cdots\alpha_{m_1}\alpha_{m_1+1}\cdots\alpha_{m_1+m_2},
\end{equation}
respectively.

If $1\le j<n$ and $a^{\ell}_j=0$, then \eqref{51} holds because
\begin{equation*}
a^{\ell}_{j+1}\cdots a^{\ell}_n
=b_{j+1}\cdots b_n^-
\prec b_{j+1}\cdots b_n\preceq\alpha_1\cdots\alpha_{n-j};
\end{equation*}
the last inequality follows from the lexicographic characterization of greedy expansions.

Finally we consider the case where $1\le j<n$ and $a^{\ell}_j=1$.
We have to show that
\begin{equation}\label{52}
a_{j+1}\cdots a_{n-1}0\alpha_1\alpha_2\cdots
\succ\mu_1\mu_2\cdots .
\end{equation}
Since we consider now the case $Q\in\V\setminus\C$, $m(x)=a(x)$ by  Lemma \ref{l:quasi-gr-la}.
Hence the quasi-lazy expansion $m(x)$ starts with $a_1\cdots a_{n-1}0$, and therefore 
\begin{equation}\label{53}
a_{j+1}\cdots a_{n-1}0\succeq\mu_1\cdots\mu_{n-j}
\end{equation}
by Lemma \ref{l:monotonicity} (iv).
Furthermore, 
\begin{equation}\label{54}
\alpha_1\alpha_2\cdots\succ \mu_{n-j+1}\mu_{n-j+2}\cdots.
\end{equation}
by our assumption $\sigma^{n-j}(\mu)\prec\alpha$, and \eqref{52} follows from \eqref{53} and \eqref{54}.
\medskip

(ii) The proof is analogous to the proof of (i).
\end{proof}

The two results of following lemmas discuss the situations when $\sigma^i(\alpha(Q)=\mu(Q)$, $\sigma^j(\mu(Q)=\alpha(Q)$ for some $i,j\geq1.$

\begin{lemma}\label{l:isolated}\label{l52}\
Let $Q\in\A$.

\begin{enumerate}[\upshape (i)]
\item Every $x\in A_Q$ is isolated in $\V_Q$ from the right.
Furthermore, if $\mu(Q)=\sigma^j(\alpha(Q)$ for some $j\ge 1$,\footnote{This assumption is satisfied in cases   \eqref{22}, \eqref{27}, \eqref{29} of Lemma \ref{l:HuBarZou}.} then
$x$ is also isolated from the left. 
\color{black}

\item Every $x\in B_Q$ is isolated in $\V_Q$ from the left.
Furthermore, if $\sigma^j(\mu(Q)=\alpha(Q)$ for some $j\ge 1$,\footnote{This assumption is satisfied in cases   \eqref{25}, \eqref{24}, \eqref{29} of Lemma \ref{l:HuBarZou}.} then 
$x$ is also isolated from the right.
\end{enumerate}
\end{lemma}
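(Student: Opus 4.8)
The plan is to derive everything from the two neighbourhood estimates already recorded in Lemma~\ref{l:neighbor}, and to obtain the two ``Furthermore'' clauses from the cross-over Lemma~\ref{l:condition}. I work throughout with $Q\in\A\setminus\C$, which is the range in which Lemmas~\ref{l:neighbor} and \ref{l:condition} are formulated and in which this statement is actually applied; recall that for such $Q$ one has $0^\infty\prec\mu(Q)\prec\alpha(Q)\prec 1^\infty$ by Remark~\ref{r14}. The reflection $0\leftrightarrow 1$ swaps $\alpha$ with $\mu$, greedy with lazy, and $A_Q$ with $B_Q$, so (i) and (ii) are mirror images; I nevertheless prove the unconditional parts of \emph{both} (i) and (ii) first, since each ``Furthermore'' clause will invoke the other's unconditional part.

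For the unconditional part of (i), let $x\in A_Q$. By Lemma~\ref{l:lA}(i) its greedy expansion is finite, $b(x)=a_1\cdots a_{n-1}10^\infty$, so the last nonzero digit sits at position $n$ and $b_{n+1}b_{n+2}\cdots=0^\infty\prec\mu(Q)$. Thus the hypothesis of Lemma~\ref{l:neighbor}(i) is met, and part~(a) of that lemma yields a $z>x$ with $(x,z]\cap\V_Q=\emptyset$, which is exactly the assertion that $x$ is isolated in $\V_Q$ from the right. The unconditional part of (ii) is symmetric: for $x\in B_Q$, Lemma~\ref{l:lB}(i) gives the co-finite lazy expansion $l(x)=m_1\cdots m_{n-1}01^\infty$, so $l_n=0$ and $l_{n+1}l_{n+2}\cdots=1^\infty\succ\alpha(Q)$, and Lemma~\ref{l:neighbor}(ii)(a) produces a $z<x$ with $[z,x)\cap\V_Q=\emptyset$, i.e. isolation from the left.

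For the ``Furthermore'' clause of (i), assume $\mu(Q)=\sigma^j(\alpha(Q))$ for some $j\ge 1$. This hypothesis places $(\mu,\alpha)$ in one of the cases \eqref{22}, \eqref{27}, \eqref{29} of Lemma~\ref{l:HuBarZou}, so that $Q\in\V\setminus\C$ and Lemma~\ref{l:condition}(ii) applies, giving $x\in B_Q$. Hence, by the unconditional part of (ii) proved above, $x$ is also isolated in $\V_Q$ from the left. The ``Furthermore'' clause of (ii) is dual: if $\sigma^j(\mu(Q))=\alpha(Q)$ for some $j\ge 1$, then $Q\in\V\setminus\C$ and Lemma~\ref{l:condition}(i) gives $x\in A_Q$ for $x\in B_Q$, whence the unconditional part of (i) makes $x$ isolated from the right.

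The only genuinely delicate point is the passage $x\in A_Q\Rightarrow x\in B_Q$ (and its dual) underlying the ``Furthermore'' clauses. If one prefers a self-contained argument rather than quoting Lemma~\ref{l:condition}, the mechanism is this: since $x\in\V_Q$ and $Q\in\A\setminus\C$, Lemma~\ref{l:quasi-gr-la} gives $m(x)=a(x)=a_1\cdots a_{n-1}0\alpha(Q)$, whose tail is $\alpha(Q)$. Choosing the minimal $p\ge 1$ with $\sigma^p(\alpha(Q))=\mu(Q)$, one checks $\alpha_p=1$: this is automatic if $p=1$ (every $\alpha$ starts with $1$), while for $p\ge 2$ the alternative $\alpha_p=0$ would give $\sigma^{p-1}(\alpha(Q))=0\mu(Q)\prec\mu(Q)$ (since $\mu(Q)\ne 0^\infty$, prepending a $0$ strictly decreases the sequence), contradicting $\sigma^{p-1}(\alpha(Q))\succeq\mu(Q)$, which holds because $Q\in\V$. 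Then $m(x)$ carries a digit $1$ at position $n+p$ immediately followed by $\mu(Q)$, so $x\in B_Q$ and, by Lemma~\ref{l:rela-quasi-gl}(ii), $l(x)$ is co-finite, which is precisely what Lemma~\ref{l:neighbor}(ii)(a) requires. I expect this verification that the matching shift lands on a digit $1$ to be the main obstacle; everything else is a direct application of the neighbourhood lemmas.
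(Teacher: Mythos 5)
Your treatment of the two unconditional assertions is correct and is a genuine (if minor) shortcut relative to the paper: since $x\in A_Q$ has the finite greedy expansion $a_1\cdots a_{n-1}10^\infty$ (Lemma \ref{l:lA}(i)) and $0^\infty\prec\mu(Q)$ for $Q\in\A\setminus\C$, Lemma \ref{l:neighbor}(i)(a) indeed yields $(x,z]\cap\V_Q=\emptyset$, and dually for $B_Q$ via Lemma \ref{l:lB}(i) and Lemma \ref{l:neighbor}(ii)(a); the paper instead re-derives this neighbourhood estimate by hand from the continuity of quasi-greedy expansions (Lemma \ref{l:leftrightcontinu}). Your explicit restriction to $Q\in\A\setminus\C$ also matches what the paper's own proof does.

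The ``Furthermore'' clauses, however, contain a genuine gap, and it is the same gap in both of your routes: you claim that $\mu(Q)=\sigma^j(\alpha(Q))$ for some $j\ge 1$ forces $(\mu,\alpha)$ into one of the cases \eqref{22}, \eqref{27}, \eqref{29}, hence $Q\in\V\setminus\C$; and your backup argument justifies the key inequality $\sigma^{p-1}(\alpha)\succeq\mu$ ``because $Q\in\V$''. This implication is false: the footnote in the statement says the hypothesis \emph{is satisfied} in those three cases, not that it holds \emph{only} there. Concretely, the pair $\mu=(01)^\infty$, $\alpha=11001(01)^\infty$ satisfies all lexicographic requirements of a pair $(\mu(Q),\alpha(Q))$ and $\mu=\sigma^5(\alpha)$, yet $\alpha_2=1$ with $\sigma^2(\alpha)=001(01)^\infty\prec\mu$, so this $Q$ lies in case \eqref{31}, outside $\V$. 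For such $Q$ you may not invoke Lemma \ref{l:condition} (whose standing hypothesis is $Q\in\V\setminus\C$), and $\sigma^{p-1}(\alpha)\succeq\mu$ is simply unavailable. (It is true that in case \eqref{31} one has $A_Q=\emptyset$, so the clause is vacuous there; but that is exactly Lemma \ref{l:outV2}(i-a) from Section 6, i.e.\ the very fact your argument would need to supply.)

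The repair — and this is what the paper does — is to use $x\in\V_Q$ instead of $Q\in\V$. Write $a(x)=m(x)=a_1\cdots a_{n-1}0\alpha$ (Lemmas \ref{l:lA}(i) and \ref{l:quasi-gr-la}) and $\alpha=\alpha_1\cdots\alpha_j\mu$. If $\alpha_j=0$, pick the last index $k<j$ with $\alpha_k=1$ (it exists since $\alpha_1=1$); then $\sigma^k(\alpha)=0^{j-k}\mu\prec\mu$, so $a(x)$ carries the digit $1$ at position $n+k$ followed by a tail $\prec\mu$, contradicting $x\in\V_Q$. Note that your dichotomy (``$p=1$, or $\alpha_p=0$ gives $\sigma^{p-1}(\alpha)=0\mu\prec\mu$'') only produces a contradiction with $x\in\V_Q$ when $\alpha_{p-1}=1$; in general one must descend to the nearest digit $1$ before position $p$, as above. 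Once $\alpha_j=1$ is established, $m(x)$ ends with $1\mu$, so $x\in B_Q$ by definition, and from there your plan of concluding via the unconditional part of (ii) works verbatim (the paper instead finishes directly, showing $a(y)\prec a(x)$ for $y<x$ close to $x$ violates the $\V_Q$-condition at position $n+j$).
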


\begin{proof}
It follows from  Lemma \ref{l:HuBarZou} and our assumptions that $Q\in\A\setminus\C$, so that $\mu\succ 0^{\infty}$ and $\alpha\prec 1^{\infty}$.
\medskip

(i) If $x\in A_Q$, then 
\begin{equation*}
b(x)=a_1\cdots a_{n-1}10^\infty \qtq{and} a(x)=a_1\cdots a_{n-1}0\alpha
\end{equation*}
by Lemma \ref{l:lA} (i) for some $n\ge 1$.

Since $\mu\succ 0^{\infty}$, $\mu$ starts with $0^k1$ for some positive integer $k$.
If $y>x$ is sufficiently close to $x$, then $a(y)$ starts with $a_1\cdots a_{n-1}10^{k+1}$; then $a_n(y)=1$ and $\sigma^n(a(y))\prec\mu$, and therefore $y\notin\V_Q$ by the definition of  $\V_Q$.
This proves that $x$ is isolated in $\V_Q$ from the right.

Now assume that $\alpha=\alpha_1\cdots\alpha_j\mu$ for some $j\ge 1$.
Then $\alpha_j=1$.
Indeed, assume by the contrary that $\alpha_j=0$.
Then, since $\alpha_1=1$, there exists a positive integer $k<j$ such that $\alpha_k=1$ and $\alpha_{k+1}=\cdots\alpha_j=0$.
Then $\sigma^k(\alpha)=0^{j-k}\mu\prec\mu$, contradicting the relation $x\in \V_Q$.
\medskip

If $y<x$ is sufficiently close to $x$, then $a(y)$ starts with $a_1\cdots a_{n-1}0\alpha_1\cdots\alpha_j$, and $\sigma^{n+j}(a(y))\prec\mu$.
Since $a_{n+j}(y)=\alpha_j=1$, this implies again that $y\notin\V_Q$.
\medskip

(ii) The proof is similar to (i).
\end{proof}

\begin{lemma}\label{l53}\ 
\begin{enumerate}[\upshape(i)]
\item Assume that $\V_Q\setminus\U_Q=B_Q$.\footnote{This assumption is satisfied in cases   \eqref{22}, \eqref{27}, \eqref{29}, \eqref{31} of Lemma \ref{l:HuBarZou}.} 
Then:
\begin{enumerate}[\upshape(a)]
\item $\U_Q$ is closed $\Longleftrightarrow1/(q_0(q_1-1))\notin\overline\U_Q$.
\item  If, moreover, $A_Q\neq \emptyset$,\footnote{This occurs in cases   \eqref{22}, \eqref{27}, \eqref{29} of Lemma \ref{l:HuBarZou}.}  then  $1/(q_0(q_1-1))\in\overline\U_Q\Longleftrightarrow\V_Q\setminus\overline{\U}_Q$ is discrete;
\end{enumerate}

\item Assume that $\V_Q\setminus\U_Q=A_Q$.\footnote{This assumption is satisfied in cases   \eqref{23}, \eqref{24}, \eqref{29}, \eqref{30} of Lemma \ref{l:HuBarZou}.} 
Then:

\begin{enumerate}[\upshape(a)]
\item $\U_Q$ is closed  $\Longleftrightarrow 1/q_1\notin\overline\U_Q$.

\item If, moreover, $B_Q\neq \emptyset$,\footnote{This occurs in cases   \eqref{22}, \eqref{27}, \eqref{29}.} then  $1/q_1\in\overline\U_Q\Longleftrightarrow\V_Q\setminus\overline{\U}_Q$ is discrete.
\end{enumerate}
\end{enumerate}
\end{lemma}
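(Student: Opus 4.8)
The plan is to reason entirely with expansions, using that $\overline{\U}_Q\subseteq\V_Q$ by Lemma~\ref{l:close set}, so that $\overline{\U}_Q\setminus\U_Q\subseteq\V_Q\setminus\U_Q$. Hence under the hypothesis of (i) the set $\U_Q$ is closed if and only if $\overline{\U}_Q\cap B_Q=\emptyset$, and under that of (ii) if and only if $\overline{\U}_Q\cap A_Q=\emptyset$. Throughout I would use that $x_0:=1/(q_0(q_1-1))\in B_Q$ and $1/q_1\in A_Q$ by Lemma~\ref{l31}, that $\pi_Q(1\mu)=1/(q_0(q_1-1))$ and $\pi_Q(0\alpha)=1/q_1$ by Remark~\ref{r14}, and crucially that, by Lemma~\ref{l:isolated}, every point of $B_Q$ is isolated in $\V_Q$ from the left and every point of $A_Q$ from the right. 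Consequently any approximation of a point of $B_Q$ (resp.\ $A_Q$) by elements of $\U_Q\subseteq\V_Q$ must be one-sided, from the right (resp.\ left). I treat (i) in detail; (ii) follows by the reflection $0\leftrightarrow1$, $q_0\leftrightarrow q_1$, $A_Q\leftrightarrow B_Q$, $1/(q_0(q_1-1))\leftrightarrow 1/q_1$.

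For (i.a) the core is the implication: if some $x\in B_Q$ lies in $\overline{\U}_Q$, then $x_0\in\overline{\U}_Q$. Writing $m(x)=w1\mu$ with $w$ of length $p-1$ (Lemma~\ref{l:lB}), I would take $y_k\in\U_Q$ with $y_k\searrow x$; then the unique expansions $c(y_k)=m(y_k)$ converge to $w1\mu$ by Lemma~\ref{l:leftrightcontinu}, hence eventually start with $w1$. The key observation is that a shift of a sequence satisfying the lexicographic unique‑expansion conditions of Lemma~\ref{l:monotonicity} again satisfies them; so $\sigma^{p-1}(c(y_k))$ is the expansion of some $z_k\in\U_Q$, and $\sigma^{p-1}(c(y_k))\to 1\mu$ gives $z_k\to\pi_Q(1\mu)=x_0$. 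Since $x_0\in B_Q=\V_Q\setminus\U_Q$ is not in $\U_Q$, this proves the implication, and it also shows that $x_0\in\overline{\U}_Q$ forces $\U_Q\ne\overline{\U}_Q$; the two directions together give (i.a).

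For (i.b) I would split according to $x_0$. If $x_0\in\overline{\U}_Q$, choose $z_k\in\U_Q$ with $z_k\searrow x_0$, so $c(z_k)\succ 1\mu$ and $c(z_k)\to 1\mu$. For $x\in B_Q\setminus A_Q$ with $m(x)=w1\mu$ I prepend $w$ and claim that $w\,c(z_k)$ is, for large $k$, a unique expansion converging to $x$. The conditions at the $1$‑positions of $w$ follow from $\sigma^j(m(x))\succeq\mu$ together with $c(z_k)\succ 1\mu$; the conditions at the $0$‑positions require the \emph{strict} inequality $\sigma^j(m(x))\prec\alpha$, which holds precisely because $x\notin A_Q$ (no $0$‑position satisfies $\sigma^j(a(x))=\alpha$, and $a(x)=m(x)$ by Lemma~\ref{l:quasi-gr-la}). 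This puts $B_Q\setminus A_Q\subseteq\overline{\U}_Q$, whence $\V_Q\setminus\overline{\U}_Q\subseteq A_Q$, on which every point is two‑sidedly isolated in $\V_Q$ by Lemma~\ref{l:isolated}; so $\V_Q\setminus\overline{\U}_Q$ is discrete. Conversely, if $x_0\notin\overline{\U}_Q$ then $\U_Q$ is closed by (i.a) and $\V_Q\setminus\overline{\U}_Q=B_Q$; I would exhibit $y_N\in B_Q$ with $m(y_N)=1\mu_1\cdots\mu_{k_N}1\mu$, cutting at indices $k_N\to\infty$ with $\mu_{k_N+1}=0$, so that $m(y_N)\succ 1\mu$, $y_N\searrow x_0$, and each $m(y_N)$ is a valid quasi‑lazy expansion of a point of $\V_Q$. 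Then $x_0$ is a non‑isolated point of $B_Q=\V_Q\setminus\overline{\U}_Q$, so the latter is not discrete.

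The main obstacle is the prepending step in (i.b): one must verify that inserting the fixed word $w$ before a unique expansion again yields a unique expansion, and it is exactly at the $0$‑positions of $w$ that this is delicate, since perturbing the tail upward could push a shift above $\alpha$. This is controlled only by $x\notin A_Q$, which upgrades the weak quasi‑greedy inequality $\sigma^j(a(x))\preceq\alpha$ to a strict one, stable under small tail perturbations; dually, the validity of the construction $m(y_N)=1\mu_1\cdots\mu_{k_N}1\mu$ rests on having $\sigma^i(\mu)\prec\alpha$ \emph{strictly} for all $i$, which is where the relevant cases of Lemma~\ref{l:HuBarZou} enter. Part (ii) is then obtained by interchanging the roles of $\mu$ and $\alpha$, of $B_Q$ and $A_Q$, of left and right, and of $x_0$ and $1/q_1$, invoking Lemmas~\ref{l:lA}, \ref{l:isolated} and~\ref{l:leftrightcontinu} in their $A_Q$‑versions. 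Finally, the degenerate overlap situation $A_Q=B_Q$ has to be treated separately, since there the pertinent one‑sided isolation of Lemma~\ref{l:isolated} becomes two‑sided and the accumulation construction ceases to produce new points.
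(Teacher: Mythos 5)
Your part (i.a) and your proof that $x_0:=1/(q_0(q_1-1))\in\overline\U_Q$ implies discreteness are essentially the paper's own arguments: the paper also produces $x_0\in\overline\U_Q$ by shifting the unique expansions of a sequence $z_k\searrow z\in\overline\U_Q\setminus\U_Q$ (it gets the one-sidedness from Lemma \ref{l46}, closedness from below, where you invoke Lemma \ref{l:isolated}; both are legitimate), and it proves $B_Q\setminus A_Q\subseteq\overline\U_Q$ by exactly your prepending construction, namely $y^k=\pi_Q(a_1\cdots a_{n-1}a(z_k))$ with $z_k\searrow x_0$ in $\U_Q$. Your verification of the $1$-positions of $w$ via monotonicity ($c(z_k)\succ 1\mu$) is a clean alternative to the paper's use of the minimality of $n$. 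For the converse implication you depart from the paper: it shows that each point of $B_Q\setminus A_Q$ is non-isolated in $\V_Q$ (using the points $\pi_Q(a_1\cdots a_{k_i}\alpha)$ together with the closedness of $\U_Q$), while you make $x_0$ itself an accumulation point of $B_Q$ via $y_N=\pi_Q(1\mu_1\cdots\mu_{k_N}1\mu)$. This variant can be made to work, but your cut criterion ``$\mu_{k_N+1}=0$'' is not sufficient: if $\mu_i=0$, $\mu_{i+1}\cdots\mu_{k_N}=\alpha_1\cdots\alpha_{k_N-i}$ and $\alpha_{k_N-i+1}=0$, then $\sigma^{1+i}(m(y_N))\succ\alpha$ and $y_N\notin\V_Q$; the cut points must be chosen as in Lemma \ref{l:important1} (i), which is what the paper does in its own construction.

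The genuine gap is the case you explicitly defer, $A_Q=B_Q$, i.e.\ case \eqref{29} of Lemma \ref{l:HuBarZou}. No separate treatment can close it, because the equivalence in (i.b) is false there: by Theorem \ref{T:topology1} (iii), in case \eqref{29} the set $\U_Q$ is closed, so $x_0\in B_Q$ gives $x_0\notin\overline\U_Q$, while $\V_Q\setminus\overline\U_Q=\V_Q\setminus\U_Q$ is discrete — the left-hand side of the equivalence is false and the right-hand side is true. Accordingly your accumulation construction must fail there: $x_0$ is isolated in $\V_Q$ in that case, so no sequence $y_N\in\V_Q\setminus\set{x_0}$ can converge to $x_0$; indeed, using that $\sigma^i(\mu)=\alpha$ for some $i\ge 1$, one checks that with your cut rule every $y_N$ falls outside $\V_Q$. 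The paper's proof has the same hole: its argument for ``discrete $\Rightarrow x_0\in\overline\U_Q$'' starts from a point of $B_Q\setminus A_Q$, which exists only in cases \eqref{22} and \eqref{27}, so it quietly proves the equivalence only in those cases, although the statement (see its footnote) also claims case \eqref{29}. Your instinct to isolate the degenerate case was therefore sound, but neither your argument nor the paper's proves the lemma as stated; in case \eqref{29} the statement itself needs to be amended, e.g.\ by assuming $A_Q\subsetneqq B_Q$ instead of $A_Q\ne\emptyset$.
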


\begin{proof}
(i-a)
Since $1/(q_0(q_1-1))=\pi_Q(1\mu)=\pi_Q(01^{\infty})$, $1/(q_0(q_1-1))\notin\U_Q$.
Therefore $\U_Q$ is not closed if $1/(q_0(q_1-1))\in\overline\U_Q$.

Conversely, assume that $\U_Q$ is not closed.
Since  $\U_Q$ is closed from below by Lemma \ref{l46},  there exists a sequence of numbers $z_k\in\U_Q$ such that $z_k\searrow z\in\overline\U_Q\setminus\U_Q$, and then $m(z_k)\rightarrow m(z)$ by Lemma \ref{l41}.
Since $\overline\U_Q\setminus\U_Q\subseteqq\V_Q\setminus\U_Q=B_Q$ by our assumption, $m(z)$ ends with $1\mu$, i.e., $\sigma^{\ell}(m(z))=1\mu$ for some $\ell\ge 0$.
Then $\sigma^{\ell}(m(z_k))\to 1\mu$, and therefore $\pi_Q({\sigma^{\ell}(m(z_k))})\rightarrow\pi_Q(1\mu)=1/(q_0(q_1-1))$.
Since $\pi_Q({\sigma^{\ell}(m(z_k))})\in\U_Q$ for every $k$, this proves that $1/(q_0(q_1-1))\in\overline\U_Q$.
\medskip

(i-b) Given a point $x\in B_Q\setminus A_Q$,\footnote{This is possible in cases   \eqref{22} and \eqref{27}.} and write $m(x)=a(x)=(a_i)$.
By definition there exists a smallest positive integer $n$ such that 
\begin{equation}\label{e58-1}
a_n=1,\quad
a(x)=a_1\cdots a_n\mu,\qtq{and}
\sigma^i(a(x))\prec \alpha\qtq{whenever}a_i=0.
\end{equation}
Therefore, by Lemma \ref{l:important1} there exists a sequence  $1<k_1<k_2<\cdots$ of integers such that for each $i\geq 1$,
\begin{equation}\label{e58}
a_{k_i}=0, \qtq{and} a_{j+1}\cdots a_{k_i}\prec\alpha_1\cdots \alpha_{k_{i}-j} \qtq{whenever} 1\leq j<k_i \qtq{and} a_j=0.
\end{equation} 
Furthermore,
\begin{equation}\label{e59}
a_{j+1}\cdots a_n\succ \mu_1\cdots\mu_{n-j} \qtq{whenever} 1\le j<n \qtq{and} a_j=1.
\end{equation} 
Indeed, otherwise using Remark \ref{r22} (i) we would have
\begin{equation*}
\mu\preceq a_{j+1}\cdots a_n\mu
\preceq \mu_1\cdots\mu_{n-j}\mu
\preceq\mu_1\cdots\mu_{n-j}\sigma^{n-j}(\mu)=\mu;
\end{equation*}
this would imply $a_j=1$ and $a(x)=a_1\cdots a_j\mu$, contradicting the minimality of $n$.

It follows from \eqref{e58} and \eqref{e59} that each of the points
\begin{equation*}
y_i:=\pi_Q(a_1\cdots a_{k_i}\alpha),\quad i=1,2,\ldots
\end{equation*}
belongs to $\V_Q$, and is different from $x$.
Since they obviously converge to $x$, $x$ is not isolated in $\V_Q$. 

If $\U_Q$ is closed, then $(y_i)$ has a subsequence belonging to $\V_Q\setminus\U_Q$, and we conclude that $\V_Q\setminus\U_Q=\V_Q\setminus\overline\U_Q$ is not discrete.
Using (i-a) we conclude that if $A_Q\subsetneqq B_Q$ and $1/(q_0(q_1-1))\notin \overline{\U}_Q$, then $\V_Q\setminus\U_Q=\V_Q\setminus\overline\U_Q$ is not discrete.

Now assume that $1/(q_0(q_1-1))\in \overline{\U}_Q$.
Since $A_Q\ne\emptyset$ by our assumption, one of the cases \eqref{22}, \eqref{27} and \eqref{29} of Lemma \ref{l:HuBarZou} holds, so that we may apply Lemma  \ref{l:isolated} (i) to conclude that each point of $A_Q$ is isolated in $\V_Q$.

In case \eqref{29} we may also apply Lemma  \ref{l:isolated} (ii) to conclude that each point of $B_Q$ is isolated in $\V_Q$, so that in case \eqref{29} each point of $\V_Q\setminus{\U}_Q$  is isolated in $\V_Q$.\footnote{This has already been proved in a different way in Theorem \ref{T:topology1} (iii) at the end of Section \ref{s4}.}

Henceforth we consider the cases \eqref{22} and \eqref{27}.
We claim that $B_Q\setminus\A_Q\subseteq \overline{\U}_Q$; this will imply the inclusion $\V_Q\setminus \overline{\U}_Q\subseteq A_Q$, and hence that each point of $\V_Q\setminus \overline{\U}_Q$ is isolated in $\V_Q$.

Since $1/(q_0(q_1-1))\in\overline\U_Q\setminus\U_Q$, by Lemma \ref{l:closed} (ii) there exists a sequence $(z_k)$ in $\U_Q$ such that $z_k\searrow 1/(q_0(q_1-1))$.
Applying Lemma \ref{l41} this yields the relations
\begin{equation}\label{e510}
a(z_k)=m(z_k)\rightarrow m(1/(q_0(q_1-1)))=1\mu.
\end{equation}

For each $x\in B_Q\setminus\A_Q$ with $m(x)=a(x)=(a_i)$ satisfying \eqref{e58-1}--\eqref{e59} with a minimal $n$, the formula
\begin{equation*}
y^k:=\pi_Q(a_1\cdots a_{n-1}a(z_k))
\end{equation*}
defines a sequence satisfying $y^k\searrow x$, and the proof will be completed if we show that $y^k\in \U_Q$ for every sufficiently large $k$.

Since $x\in B_Q\setminus\A_Q$, we have
\begin{equation*}
\sigma^j(a(x))\prec\alpha\qtq{whenever}a_j=0,
\end{equation*}
and, using the minimality of $n$,
\begin{equation*}
\sigma^j(a(x))\succ\mu\qtq{whenever}1\le j\le n-1\qtq{and}a_j=1.
\end{equation*}
Therefore there exists an integer $\ell>n$  such that
\begin{align*}
&a_{j+1}\cdots a_\ell\prec\alpha_1\cdots\alpha_{\ell-j}\qtq{whenever}1\le j\le n-1\qtq{and}a_j=0,
\intertext{and}
&a_{j+1}\cdots a_\ell\succ\mu_1\cdots\mu_{\ell-j}\qtq{whenever}1\le j\le n-1\qtq{and}a_j=1.
\end{align*}

Since $m(y^k)\to (a_i)$,  $m(y^k)$ starts with $a_1\cdots a_\ell$ for every sufficiently large $k$, and then the lexicographic conditions ensuring $y^k\in\U_Q$ are satisfied for $j=1,\ldots, n-1$ by the choice of $m$.
The lexicographic conditions are also satisfied for $j\ge n$ because $z_k\in\U_Q$ and $\sigma^{n-1}(y_k)=a(z_k)$.
\medskip

(ii) follows from (i) by symmetry.
\end{proof}

\begin{proof}[Proof of Theorem \ref{T:topology1} (v)--(vi)]\

(v) The relation ${\overline\U}_Q=\V_Q$ follows  from Lemma \ref{l:close set} (\ref{l:close set-3}) and  Lemma \ref{l:density3}.
Since $\U_Q\subsetneqq\V_Q$ by Lemma \ref{l31}, this implies that $\U_Q$ is not closed.

Next we show that $\overline\U_Q$ has no isolated point. 
This follows by observing that for each $x\in \U_Q$, by  Lemma \ref{l:density1}, there exists a sequence $(y_i)$ in $\V_Q\setminus\U_Q$ such that $y_i\rightarrow x$, and for each $y\in \V_Q\setminus\U_Q$, by Lemma \ref{l:density3}, there exists a sequence $(z_i)$ in $\U_Q$ such that $z_i\rightarrow y$.

It remains to prove that if $Q\notin\C$, then $\overline\U_Q$ has no interior points. 
Assume on the contrary that $\overline\U_Q$ has an interior point $y$.
Then by Lemma \ref{l:trunca} (\ref{key45-1}), there also exists an interior point $x\le y$ of $\overline\U_Q$, having a finite greedy expansion. 
Then by Lemma \ref{l:neighbor} (i) there  exists a $z>x$ such that $(x,z]\cap \V_Q=\emptyset$.
But this is impossible because $x$ is an interior point of $\overline\U_Q$ and $\overline\U_Q\subseteqq\V_Q$.
\medskip
 
(vi) Lemmas \ref{l31} and \ref{l:isolated} imply that $\V_Q\setminus\U_Q$ has isolated points.
Hence $\V_Q$ is not a Cantor set, and $\overline\U_Q\subsetneqq\V_Q$.

The remaining assertions follow from Lemmas \ref{l:HuBarZou}, \ref{l:condition} and \ref{l53}.
\end{proof}

\begin{proof}[Proof of Theorem \ref{T:topology2} for $Q\in \V$]
(i) First we show that if $x_L\in A_Q$ with $b(x_L)=b_1\cdots b_{n-1}10^\infty$, then $l(x_R)=b_1\cdots b_{n-1}01^\infty$ for a suitable point $x_R\in B_Q$.

Indeed, we have $a(x_L)=b_1\cdots b_{n-1}0\alpha$ by Lemma \ref{l:lA}.
Since $a(x_L)$ is a quasi-greedy sequence, by Lemma \ref{l:monotonicity} (ii) we have
\begin{equation}\label{e:512}
b_j\cdots b_{n-1}0\succeq \mu_1\cdots\mu_{n-j}\qtq{whenever}1\le j<n\qtq{and}b_j=1.
\end{equation}
By Lemma \ref{l:monotonicity} (iii) this implies that $b_1\cdots b_{n-1}01^\infty$ is the lazy expansion of some number $x_R$, and then by Lemma \ref{l:lB} we have $m(x_R)=b_1\cdots b_{n-1}1\mu$.

It remains to show that $x_R\in B_Q$.
Since $m(x_R)$ ends with $1\mu$, by Lemma \ref{l:quasi-gr-la} it is sufficient to show that $m(x_R)=a(x_R)$.
Since $m(x_R)=b_1\cdots b_{n-1}1\mu$ is doubly infinite by Lemma \ref{l:doublyinfinite}, this will follow from the relations 
\begin{equation*}
\sigma^j(b_1\cdots b_{n-1}1\mu)\preceq \alpha\qtq{whenever}m_j(x_R)=0.
\end{equation*}
For $j>n$ this follows from the relations $\sigma^{j-n}(\mu)\preceq \alpha$.
For $j<n$ with $b_j=0$ we have 
\begin{equation*}
b_{j+1}\cdots b_{n-1}1\preceq \alpha_1\cdots\alpha_{n-j}
\end{equation*}
because $b_1\cdots b_{n-1}10^\infty$ is a greedy sequence, and hence
\begin{equation*}
\sigma^j(b_1\cdots b_{n-1}1\mu)
\preceq \alpha_1\cdots\alpha_{n-j}\mu
\preceq \alpha_1\cdots\alpha_{n-j}\sigma^{n-j}(\alpha)=\alpha.
\end{equation*}
We have used the relations $\sigma^{j-n}(\mu)\preceq \alpha$ and $\mu\preceq\sigma^{n-j}(\alpha)$ that hold for all $Q\in\V$ by Lemma \ref{l:HuBarZou}.

By symmetry, if $x_R\in B_Q$ with   $l(x_R)=b_1\cdots b_{n-1}01^\infty$, then $b(x_L)=b_1\cdots b_{n-1}10^\infty$ for a suitable point $x_L\in A_Q$.

We claim that $(x_L, x_R)\cap \V_Q=\emptyset$ for every $x_L\in A_Q$.
Indeed, assume on the contrary that there exists an  $x\in (x_L, x_R)\cap \V_Q$ with some $x_L\in A_Q$, and write $b(x_L)=b_1\cdots b_{n-1}10^\infty$.
Then $a(x)=m(x)$ by Lemma \ref{l:quasi-gr-la}, and therefore
\begin{equation*}
b_1\cdots b_{n-1}0\alpha=a(x_L)\prec a(x)=m(x)\prec m(x_R)=b_1\cdots b_{n-1}1\mu.
\end{equation*} 
It follows that $(c_i):=a(x)=m(x)$ starts with $b_1\cdots b_{n-1}$.
If  $c_n=0$, then $c_{n+1}c_{n+2}\cdots\preceq\alpha$ because $(c_i)$ is a quasi-greedy sequence, but this contradicts the relation $b_1\cdots b_{n-1}0\alpha\prec a(x)$.
Similarly, if  $c_n=1$, then $c_{n+1}c_{n+2}\cdots\succeq\mu$ because $(c_i)$ is also a quasi-lazy sequence, and this contradicts the relation $m(x)\prec b_1\cdots b_{n-1}1\mu$.

Since $|\V_Q\setminus \U_Q|=|A_Q\cup B_Q|=\aleph_0$ by Theorem \ref{T:topology1} (ii), there are $\aleph_0$ such intervals $(x_L,x_R)$.

It remains to show that the intervals $(x_L,x_R)$ cover the set $J_Q\setminus \V_Q$.
Take an arbitrary point $x\in J_Q\setminus \V_Q$.
Then there exists a smallest integer $N \ge 1$ such that either
\begin{equation*}
m_N(x)=0\qtq{and}\sigma^N(m(x))\succ\alpha,
\end{equation*}
or
\begin{equation*}
a_N(x)=1\qtq{and}\sigma^N(a(x))\prec\mu.
\end{equation*}
By symmetry we consider only the first case.
Writing $m(x)=(m_i)$ for simplicity, first we observe that $(c_i):=m_1\cdots m_{N-1}01^\infty$ is a lazy sequence by Lemma \ref{l:trunca} (ii).
Write $l(x_R)=(c_i)$, then $m(x_R)=m_1\cdots m_{N-1}1\mu$ by Lemma \ref{l:rela-quasi-gl}.
We are going to show that
\begin{equation*}
\sigma^j(m_1\cdots m_{N-1}1\mu)\preceq \alpha\qtq{whenever}m_j(x_R)=0;
\end{equation*}
this will imply $m(x_R)=a(x_R)$ and then $x_R\in B_Q$ as in the first part of the proof.

As before, the case $j>N$ is obvious.
If $j<N$, then 
\begin{equation}\label{e:512-2}
m_{j+1}\cdots m_{N-1}0\prec \alpha_1\cdots\alpha_{N-j}.
\end{equation}
Indeed, the weak inequality $\preceq$ follows from the minimality of $N$.
Furthermore, equality cannot hold because this would imply
\begin{equation*}
m_{j+1}m_{j+2}\cdots
=\alpha_1\cdots\alpha_{N-j}m_{N+1}m_{N+2}\cdots
\succ \alpha_1\cdots\alpha_{N-j}\alpha\succeq \alpha_1\cdots\alpha_{N-j}\alpha_{N-j+1}\cdots=\alpha,
\end{equation*}
contradicting the choice of $N$ again.

It follows from \eqref{e:512-2} that $m_{j+1}\cdots m_{N-1}1\preceq \alpha_1\cdots\alpha_{N-j}$, and therefore, since $\mu\preceq\sigma^{N-j}(\alpha)$,
\begin{equation*}
m_{j+1}\cdots m_{N-1}1\mu\preceq \alpha_1\cdots\alpha_{N-j}\sigma^{N-j}(\alpha)=\alpha,
\end{equation*}
as required.

Since $x_R\in B_Q$, the corresponding interval $(x_L,x_R)$ is given by  $x_L\in A_Q$ such that
\begin{equation*}
a(x_L)=m(x_L)=m_1\cdots m_{N-1}0\alpha\qtq{and}
a(x_R)=m(x_R)=m_1\cdots m_{N-1}1\mu
\end{equation*}
by the first part of the proof.
This implies the relation $x\in (x_L, x_R)$ because $m(x)$ begins with $m_1\cdots m_{N-1}0$, and satisfies $\sigma^N(m(x))\succ\alpha$ by the choice of $N$.
\medskip

(ii) For $Q\in{\V \setminus\overline{\U }}$, we know from Lemmas \ref{l53} and  \ref{l:A-B-set} (iv) that $\U_Q$ is closed, and
\begin{equation*}
\V_Q\setminus \U_Q=\V_Q\setminus \overline{\U}_Q=A_Q=B_Q
\end{equation*}
is a discrete set. 
Since $\U_Q$ is closed, and contains the endpoints of $J_Q$, the components of  $J_Q\setminus \U_Q$  are open intervals $(x_L, x_R)$ with $x_L, x_R \in \U_Q$. 
Since $\V_Q\setminus \U_Q$ is a discrete set, the elements of $\V_Q$ form in each interval $(x_L, x_R)$  an increasing sequence $(x_k)$. 
By Lemma \ref{l:density1} these sequences are infinite in both directions, with
\begin{equation*}
x_k\rightarrow x_L \text{ as } k\rightarrow-\infty,\qtq{and}x_k\rightarrow x_R \text{ as } k\rightarrow\infty.
\end{equation*} 

Since $A_Q=B_Q=\V_Q\setminus\U_Q$, every $x_k\in \V_Q\setminus\U_Q$ has a finite greedy expansion.
We are going to show that
\begin{equation*}
b(x_k)=b_1\cdots b_{n-1}10^{\infty}\Longleftrightarrow
a(x_{k+1})=b_1\cdots b_{n-1}1\mu.
\end{equation*}
We prove the implication $\Longrightarrow$; the proof of the other implication is similar.

If $b(x_k)=b_1\cdots b_{n-1}10^{\infty}$, then $a(x_k)=m(x_k)=b_1\cdots b_{n-1}0\alpha$, and therefore
\begin{equation}\label{510}
\begin{split}
&b_{i+1}\cdots b_{n-1}1\preceq \alpha_1\cdots\alpha_{n-i}\qtq{if}1\le i<n\qtq{and}b_i=0,\\
&b_{i+1}\cdots b_{n-1}1\succ b_{i+1}\cdots b_{n-1}0\succeq \mu_1\cdots \mu_{n-i}\qtq{if}1\le i<n\qtq{and}b_i=1.
\end{split}
\end{equation}
Furthermore, if $x>x_k$ and $x\in\V_Q$, then $a(x)=m(x)\succeq b_1\cdots b_{n-1}1\mu$ by the definition of quasi-lazy expansions.
We complete the proof by showing that the sequence $(c_j):=b_1\cdots b_{n-1}1\mu$ is  both quasi-greedy and quasi-lazy, so that $a(x_{k+1})=m(x_{k+1})=b_1\cdots b_{n-1}1\mu$ for some number $x_{k+1}$.
Then we have obviously $x_{k+1}>x_k$, $x_{k+1}\in\V_Q$ by Lemma \ref{l:quasi-gr-la}, and $x_{k+1}\notin\U_Q$ because  $b_1\cdots b_{n-1}01^{\infty}$ is another expansion of $x_{k+1}$.

It follows from \eqref{510} and the inequalities $\mu\preceq\sigma^k(\alpha)\preceq\alpha$ for all $k\ge 0$ that 
\begin{align*}
\sigma^i((c_j))&\preceq\alpha_1\cdots\alpha_{n-i}\mu 
\preceq \alpha_1\cdots\alpha_{n-i}\sigma^{n-i}(\alpha)
=\alpha\qtq{if}1\le i<n\qtq{and}c_i=0,\\
\sigma^i((c_j))&\succeq\mu\qtq{if}1\le i<n\qtq{and}c_i=1.
\end{align*}
Using \eqref{510} and the inequalities $\mu\preceq\sigma^k(\mu)\preceq\alpha$ for all $k\ge 0$, we conclude that the  sequence $(c_j)$ is both quasi-greedy and quasi-lazy, as required.

Since $J_Q\setminus\U_Q$ is the disjoint union of the open intervals $(x_L,x_R)$, the endpoints $x_L, x_R$ belong to $\U_Q$.
\end{proof}

\section{Proof of Theorems \ref{T:expansion} (iv) and  \ref{T:topology1} (ii), (vii) and (viii) for $Q\in\A\setminus\V$}\label{s6}
	
In this section we  mainly discuss the topological properties of sets $\U_Q$ and $\V_Q$ when $Q\in \A\setminus{\V }$. 
As usual we use the notations
\begin{equation*}
\alpha=(\alpha_i):=\alpha(Q)\qtq{and}
\mu=(\mu_i)=:\mu(Q).
\end{equation*}

The following Lemma \ref{l:outV1} implies the new part of Lemma \ref{l:HuBarZou} with respect to the paper \cite{HuBarZou2024}:
\begin{equation*}
Q\in \A \setminus\V\Longleftrightarrow (\mu,\alpha)\text{ satisfies one of the conditions \eqref{30}--\eqref{32} of Lemma \ref{l:HuBarZou}}.
\end{equation*}

\begin{lemma}\label{l:outV1}\label{l61}\
Let $Q\in \A$.
\begin{enumerate}[\upshape(i)]
\item \label{l:outV1-1} If there exists a smallest integer $k\ge 1$ such that $\mu\succ\sigma^k(\alpha)$, then $\alpha_k=1$.
If, in addition, $\sigma^j(\mu)\preceq\alpha$ for all $j\geq k$, then in fact $\sigma^i(\mu)\prec\alpha$ for all $i\geq 0$.

\item \label{l:outV1-2} If there exists a smallest positive integer $k$ such that $\sigma^k(\mu)\succ\alpha$, then $\mu_k=0$.
If, in addition, $\mu\preceq\sigma^j(\alpha)$ for all $j\geq 1$, then in fact $\mu\prec\sigma^j(\alpha)$ for all $j\geq 1$.
\end{enumerate}		
\end{lemma}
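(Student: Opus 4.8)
The plan is to prove the two statements of each part separately, exploiting the self-improving inequalities $\sigma^n(\mu)\succeq\mu$ and $\sigma^n(\alpha)\preceq\alpha$ (valid for all $n\ge 0$ by Remark \ref{r22}), together with the facts $\mu_1=0$, $\alpha_1=1$ and $\mu\prec\alpha$. First I would note that the hypothesis of either part forces $Q\in\A\setminus\C$: if $Q\in\C$ then $\mu=0^\infty$ and $\alpha=1^\infty$ by Remark \ref{r14}, so no shift of $\alpha$ can be $\prec\mu$ and no shift of $\mu$ can be $\succ\alpha$. Hence throughout we may assume $0^\infty\prec\mu\prec\alpha\prec1^\infty$.

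For the first claim of (i), namely $\alpha_k=1$, the case $k=1$ is immediate since $\alpha_1=1$. For $k\ge 2$ I argue by contradiction: if $\alpha_k=0$ then $\sigma^{k-1}(\alpha)=0\,\sigma^k(\alpha)$, while $\mu=0\,\sigma(\mu)$; since $\sigma(\mu)\succeq\mu\succ\sigma^k(\alpha)$, prepending the common digit $0$ gives $\mu\succ\sigma^{k-1}(\alpha)$ with $1\le k-1<k$, contradicting the minimality of $k$. The first claim of (ii) is the mirror image: for $k\ge 2$, assuming $\mu_k=1$ yields $\sigma^{k-1}(\mu)=1\,\sigma^k(\mu)$, and $\sigma^k(\mu)\succ\alpha\succeq\sigma(\alpha)$ gives $\sigma^{k-1}(\mu)\succ 1\,\sigma(\alpha)=\alpha$, again contradicting minimality.

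For the second claims I would first record a uniform ``no-equality'' fact: $\sigma^i(\mu)=\alpha$ is impossible for every $i$, since it would give $\sigma^{i+k}(\mu)=\sigma^k(\alpha)\prec\mu\preceq\sigma^{i+k}(\mu)$. Granting this, the second claim of (ii) is merely an upgrade of $\preceq$ to $\prec$ on the same index set: $\mu=\sigma^j(\alpha)$ would force $\sigma^k(\mu)=\sigma^{j+k}(\alpha)\preceq\alpha$, contradicting $\sigma^k(\mu)\succ\alpha$, so $\mu\prec\sigma^j(\alpha)$ for all $j\ge 1$. Part (i) is the delicate one: there it suffices to prove $\sigma^i(\mu)\preceq\alpha$ for all $i\ge 1$ (the no-equality fact then makes the inequality strict, and $i=0$ is just $\mu\prec\alpha$). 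Since the range $i\ge k$ is exactly the additional hypothesis, the entire difficulty lies in the finitely many indices $1\le i<k$.

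The main obstacle is thus ruling out $\sigma^i(\mu)\succ\alpha$ for some $1\le i<k$, which I would attack by a contradiction combining minimality with quasi-laziness. Minimality of $k$ gives $\mu\preceq\sigma^i(\alpha)$; comparing the length-$i$ prefixes $\mu_1\cdots\mu_i$ and $\alpha_{i+1}\cdots\alpha_{2i}$, if they coincide then $\mu\preceq\sigma^i(\alpha)$ peels off to $\sigma^i(\mu)\preceq\sigma^{2i}(\alpha)\preceq\alpha$, contradicting $\sigma^i(\mu)\succ\alpha$. The residual case, where the prefixes differ so that $\mu_1\cdots\mu_i\prec\alpha_{i+1}\cdots\alpha_{2i}$, is where the real work sits: here I would show that an early defect $\sigma^i(\mu)\succ\alpha$ propagates, through $\sigma^n(\mu)\succeq\mu$, into a defect $\sigma^j(\mu)\succ\alpha$ at some index $j\ge k$ — intuitively, the long run of $\mu$ responsible for the early defect must, by quasi-laziness, recur arbitrarily late — contradicting the hypothesis $\sigma^j(\mu)\preceq\alpha$ for $j\ge k$. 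Making this propagation precise, in particular controlling the position $j$ and bridging the gap between $\sigma^k(\alpha)$ and $\alpha$, is the step I expect to be hardest; I would formalize it via an extremal (minimal) choice of the defect index $i$ and a careful first-difference analysis, after which the symmetric argument settles part (ii).
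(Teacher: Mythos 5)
Your treatment of the two first claims, your ``no-equality'' fact, and your upgrade of $\preceq$ to $\prec$ in part (ii) are all correct, and they coincide with the paper's own proof (the paper proves (i) and invokes symmetry for (ii)). The problem is the step you explicitly leave open: ruling out $\sigma^i(\mu)\succ\alpha$ for $1\le i<k$ in part (i). That is a genuine gap in your proposal, and moreover it is one that cannot be closed, because part (i) with its hypothesis read literally (only for $j\ge k$) is false. Consider the pair
\begin{equation*}
\mu=001(110)^\infty,\qquad \alpha=1110(011)^\infty,
\end{equation*}
which satisfies $\mu_1=0$, $\alpha_1=1$, $\sigma^n(\mu)\succeq\mu$ and $\sigma^n(\alpha)\preceq\alpha$ for all $n\ge 0$, with $\mu$ co-infinite and $\alpha$ infinite; such an admissible pair is realized as $(\mu(Q),\alpha(Q))$ for some $Q\in\A$ by the same appeal to \cite[Theorem 1]{HuBarZou2024} that justifies Examples \ref{e18}--\ref{e72}. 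Here $\sigma^1(\alpha)$ and $\sigma^2(\alpha)$ begin with $1$, while $\sigma^3(\alpha)=0011011\cdots\prec 0011101\cdots=\mu$, so $k=3$ (and indeed $\alpha_3=1$); furthermore $\sigma^j(\mu)\in\{(110)^\infty,(101)^\infty,(011)^\infty\}$ for every $j\ge 3$, and each of these is $\prec\alpha$, so the additional hypothesis holds for all $j\ge k$. Nevertheless $\sigma^2(\mu)=1110110\cdots\succ 1110011\cdots=\alpha$. So an early defect at $i=2<k$ need not propagate to any index $j\ge k$, and the propagation mechanism you were hoping for (and any other argument from these hypotheses alone) must fail.

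What this reveals is that the lemma is only true, and in the paper only ever applied, with the stronger hypothesis $\sigma^j(\mu)\preceq\alpha$ for all $j\ge 1$ --- note that part (ii) is phrased exactly this way, with matching index sets in hypothesis and conclusion. In the one place part (i) is used (the partition of $\A\setminus\V$ into cases \eqref{30}--\eqref{32} of Lemma \ref{l:HuBarZou}), one is in the situation where $\sigma^j(\mu)\preceq\alpha$ holds for every $j\in\N$, and then the whole proof is the three lines that both you and the paper give: any failure $\sigma^i(\mu)\succeq\alpha$ must be an equality, and $\sigma^i(\mu)=\alpha$ yields $\sigma^{i+k}(\mu)=\sigma^k(\alpha)\prec\mu$, contradicting $\sigma^{i+k}(\mu)\succeq\mu$. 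The paper's own proof silently makes this identification --- it passes from ``the second assertion fails'' to ``$\sigma^i(\mu)=\alpha$ for some $i$'', which is unjustified precisely for $i<k$ --- so you were right to single out the indices $1\le i<k$ as the real issue; but the correct resolution is to strengthen the stated hypothesis to all $j\ge 1$, after which your argument (and the paper's) is complete, rather than to search for a finer first-difference analysis.
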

  
\begin{proof}
(i) The case $k=1$ follows from Remark \ref{r14} (v).
Assume on the contrary that $k\ge 2$ and $\alpha_k=0$. 
Then
\begin{equation*}
\sigma^{k-1}(\alpha)=0\sigma^k(\alpha)\prec 0\mu\preceq\mu,
\end{equation*}
contradicting  the minimality of $k$.

Now assume on the contrary that the second assertion fails.
Then $\sigma^i(\mu)=\alpha$ for some $i\geq 0$; hence
\begin{equation*}
\sigma^{i+k}(\mu)=\sigma^k(\alpha)\prec\mu,
\end{equation*}
contradicting the quasi-lazy property of the sequence $\mu$.
\medskip

(ii) follows from (i) by symmetry.  	
\end{proof}
  
\begin{lemma}\label{l:outV2}\label{l62}
Fix $Q\in \A\setminus{\V }.$ 
\begin{enumerate}[\upshape (i)]
\item  Let $(\mu,\alpha)$ satisfy the condition \eqref{31} of  Lemma \ref{l:HuBarZou}
Then:
\begin{enumerate}[\upshape (a)]
\item $A_Q=\emptyset$ and $\abs{B_Q}=\aleph_0$.

\item Each $x\in \V_Q\setminus\U_Q$ has exactly $\aleph_0$ expansions if $\mu$ is periodic, and 2 expansions otherwise.

\item No expansion of any $x\in \V_Q$  ends with $\alpha$.

\item $\U_Q$ is closed from below.

\item Let $n$ be the smallest positive integer such that  $\sigma^n(\alpha)\prec \mu$; then $\alpha_n=1$ by Lemma \ref{l:outV1}, so that $\alpha'=(\alpha_1\cdots\alpha_n^-)^\infty$ is well defined. 
Furthermore,
\begin{enumerate}[\upshape (1)]
\item $\mu\preceq\sigma^i(\alpha')\preceq\alpha'\prec\alpha$ for all $i\ge 0$.

\item If $\sigma^i(\mu)\prec \alpha'$ and $\mu\prec\sigma^j(\alpha')$ for all $i, j\in \N_0$, then $\U_Q\subsetneqq\overline\U_Q=\V_Q$. 
Otherwise, $\U_Q=\overline\U_Q\subsetneqq\V_Q$, and $\V_Q\setminus{\U}_Q$ a discrete set.
\end{enumerate}

\item $\V_Q\setminus\U_Q$ is dense in $\V_Q$.
\end{enumerate}

\item  Let $(\mu,\alpha)$ satisfy the condition \eqref{30} of  Lemma \ref{l:HuBarZou}
Then:
  		
\begin{enumerate}[\upshape (a)] 
\item $B_Q=\emptyset$ and $\abs{A_Q}=\aleph_0$.

\item Each $x\in \V_Q\setminus\U_Q$ has exactly $\aleph_0$ expansions if $\alpha$ is periodic, and 2 expansions otherwise.

\item No expansion of any $x\in \V_Q$  ends with $\mu$.

\item $\U_Q$ is closed from above.
\item Let $n$ be the smallest positive integer such that  $\sigma^n(\mu)\succ \alpha$; then $\mu_n=0$ by Lemma \ref{l:outV1}, so that $\mu'=(\mu_1\cdots\mu_n^+)^\infty$ is well defined. 
Furthermore,
\begin{enumerate}[\upshape (1)]
\item $\mu\prec\mu'\preceq\sigma^i(\mu')\preceq\alpha$ for all $i\ge 0$.

\item If $\sigma^i(\mu')\prec \alpha$ and $\mu'\prec\sigma^j(\alpha)$ for all $i, j\in \N_0$, then
$\U_Q\subsetneqq\overline\U_Q=\V_Q$. 
Otherwise, $\U_Q=\overline\U_Q\subsetneqq\V_Q$, and $\V_Q\setminus{\U}_Q$ a discrete set.
\end{enumerate}
\item $\V_Q\setminus\U_Q$ is dense in $\V_Q$.
\end{enumerate}

\item Let $(\mu,\alpha)$ satisfy the condition \eqref{32} of  Lemma \ref{l:HuBarZou}
Then:
  		
\begin{enumerate}[\upshape (a)]  
\item $\U_Q=\overline{\U}_Q=\V_Q$.

\item No expansion of any $x\in \V_Q$  ends with $\mu$ or $\alpha$.			
\end{enumerate}

\end{enumerate}
\end{lemma}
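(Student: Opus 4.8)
The plan is to exploit that each of the conditions \eqref{30}, \eqref{31}, \eqref{32} forces $Q\in\A\setminus\C$, so throughout I may use Lemma \ref{l:quasi-gr-la} (every $x\in\V_Q$ satisfies $a(x)=m(x)$, doubly infinite by Lemma \ref{l:doublyinfinite}) and the normalisations $\mu_1=0$, $\alpha_1=1$ of Remark \ref{r14}. I would begin with the two elementary suffix arguments. If some $x\in\V_Q$ had an expansion ending in $\alpha$, this expansion is infinite and co-infinite, hence squeezed between $a(x)$ and $m(x)$, so it equals $a(x)=m(x)$; then at the index $j$ furnished by the \eqref{31}-clause ($\alpha_j=1$, $\mu\succ\sigma^j(\alpha)$) the digit $a_{p+j}(x)=\alpha_j=1$ has shift $\sigma^j(\alpha)\prec\mu$, contradicting the defining inequality $\sigma^{p+j}(a(x))\succeq\mu$ of $\V_Q$. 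This proves (i-c), and in particular $A_Q=\emptyset$, the first half of (i-a). Symmetrically the \eqref{30}-clause forbids the suffix $\mu$ and yields $B_Q=\emptyset$. In case \eqref{32} both clauses are present, so $A_Q=B_Q=\emptyset$, whence $\V_Q\setminus\U_Q=A_Q\cup B_Q=\emptyset$; together with Lemma \ref{l:close set} this gives (iii-a) $\U_Q=\overline\U_Q=\V_Q$, and (iii-b) is exactly the two suffix statements. Finally, case (ii) is the mirror image of (i) under the reflection symmetry already invoked for Lemmas \ref{l:lB} and \ref{l:outV1}, so I would prove (i) in full and read off (ii) verbatim.

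For the rest of (i) I use $\V_Q\setminus\U_Q=B_Q$ (from $A_Q=\emptyset$); since Lemma \ref{l31} puts each $1/(q_0^k(q_1-1))$ into $B_Q$ and Remark \ref{r16} bounds $\V_Q\setminus\U_Q$ by $\aleph_0$, this completes (i-a). For (i-b) I would show that every $x\in B_Q$ has $b(x)=a(x)=m(x)=m_1\cdots m_{n-1}1\mu$: for a zero digit inside the $\mu$-tail the greedy inequality $\sigma^{\cdot}(a(x))\prec\alpha$ is exactly the hypothesis $\sigma^i(\mu)\prec\alpha$ of \eqref{31}, while for a zero digit in $m_1\cdots m_{n-1}$ the quasi-greedy inequality $\preceq\alpha$ cannot be an equality, as that would place $x$ in $A_Q=\emptyset$; hence $a(x)$ is greedy. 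All expansions of $x$ then lie between $l(x)=m_1\cdots m_{n-1}01^\infty$ and $m(x)$, and Lemma \ref{l:lB}(ii)--(iii) counts them as $2$ when $\mu$ is aperiodic and $\aleph_0$ when $\mu$ is periodic, which is (i-b). Part (i-d) is immediate from Lemma \ref{l:closed}(ii) since $\V_Q\setminus\U_Q=B_Q$.

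The core is (i-e). Writing $w:=\alpha_1\cdots\alpha_{n-1}0$ (legitimate since $\alpha_n=1$ by Lemma \ref{l:outV1}) I set $\alpha'=w^\infty$. The inequality $\alpha'\prec\alpha$ is visible at position $n$. For the self-admissibility $\sigma^i(\alpha')\preceq\alpha'$ with $1\le i<n$ I compare the first $n-i$ digits: using $\sigma^i(\alpha)\preceq\alpha$ from Remark \ref{r22} the block $\alpha_{i+1}\cdots\alpha_{n-1}$ is dominated by $\alpha_1\cdots\alpha_{n-i-1}$, and the terminal $0$ forces strictness. The delicate bound is $\mu\preceq\sigma^i(\alpha')$. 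I would run a block-by-block comparison of $\mu$ against the period $w$: at each potential break I use $\sigma^{kn}(\mu)\prec\alpha$ (the \eqref{31}-hypothesis) to force the comparison to resolve downward, and I use the quasi-lazy property $\sigma^k(\mu)\succeq\mu$ together with $\sigma^n(\alpha)\prec\mu$ to exclude a digit $1$ at a period boundary; the recursion either terminates strictly or yields $\mu=\alpha'$. This simultaneous tracking of the minimality of $n$, the quasi-lazy inequalities for $\mu$, and the quasi-greedy inequalities for $\alpha$ is the main obstacle of the proof.

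For (e2) I would establish the decisive \emph{effective-}$\alpha'$ equivalence: for any sequence obeying $\sigma^j\succ\mu$ at its one-digits, the condition $\sigma^j\prec\alpha$ at its zero-digits is equivalent to $\sigma^j\prec\alpha'$, the only exception being a suffix exactly equal to $\alpha'$ --- indeed a suffix in $[\alpha',\alpha)$ either begins $\alpha_1\cdots\alpha_n$ and then violates the $\mu$-side via $\sigma^n(\alpha)\prec\mu$, or begins $w$ and recurses. Applying this also to the quasi-lazy sequence $\mu$ certifies $\sigma^i(\mu)\preceq\alpha'$, so that $(\mu,\alpha')$ is a genuine $\V$-pair to which the earlier machinery applies with $\alpha'$ in place of $\alpha$: in the strict subcase Lemma \ref{l:density3} and the argument of Theorem \ref{T:topology1}(v) give $\U_Q\subsetneqq\overline\U_Q=\V_Q$, while in the equality subcase the isolation arguments of Lemma \ref{l:isolated} give that $\V_Q\setminus\U_Q$ is discrete and $\U_Q=\overline\U_Q\subsetneqq\V_Q$; the density statement (e-f) follows by re-running the construction of Lemma \ref{l:density1} with $\alpha'$. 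The subtle point here, which I would isolate as a separate step, is to show that an equality $\sigma^{i}(\mu)=\alpha'$ or $\mu=\sigma^{j}(\alpha')$ propagates (using periodicity of $\alpha'$ and the quasi-lazy structure of $\mu$) to the two-sided configuration, so that the equality subcase genuinely lands in the discrete regime rather than a mixed one.
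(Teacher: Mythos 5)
Your overall architecture parallels the paper's own proof quite closely: the suffix arguments for (a) and (c) (your route -- a tail equal to $\alpha$ is doubly infinite, hence the whole expansion equals $a(x)=m(x)$, which then violates the quasi-lazy inequality of $\V_Q$ -- is in fact cleaner than the paper's detour through Lemma \ref{l:lB}); part (b) via $b(x)=a(x)=m(x)$ plus Lemma \ref{l:lB}; part (d) via Lemma \ref{l:closed}; case (iii) by combining the two suffix arguments; and the reduction of (e-2) to the pair $(\mu,\alpha')$, whose key \emph{effective-$\alpha'$ equivalence} is exactly the paper's claim \eqref{63}. However, there are genuine gaps.

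The step you yourself isolate as ``the subtle point'' of (e-2) is false as stated: an equality $\sigma^{i}(\mu)=\alpha'$ does \emph{not} propagate to the two-sided configuration. The paper's own Example \ref{e18} (xi) is a counterexample: $\mu=0(01)^\infty$, $\alpha=11(001)^\infty$ give $n=2$ and $\alpha'=(10)^\infty$; here $\sigma^2(\mu)=\alpha'$, but $\mu$ is not periodic and is not a shift of $\alpha'$, so equality is never attained in $\mu\preceq\sigma^i(\mu)$ nor in $\mu\preceq\sigma^j(\alpha')$. Consequently $(\mu,\alpha')$ satisfies condition \eqref{24}, not \eqref{29}, and the case-\eqref{29} machinery (Theorem \ref{T:topology1} (iii)) is unavailable; invoking Theorem \ref{T:topology1} (vi) for case \eqref{24} yields only a \emph{conditional} discreteness statement, so your plan stalls exactly where you predicted difficulty. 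The repair is that you never needed two-sidedness: since $A_Q=\emptyset$, only points of $B_Q$ must be isolated, left-isolation is unconditional by Lemma \ref{l:isolated} (ii), and right-isolation (for the pair $(\mu,\alpha')$ governing $\V_Q=\V_{Q'}$) requires only the one-sided equality $\sigma^t(\mu)=\alpha'$ -- which holds in both equality branches, since $\mu=\sigma^j(\alpha')$ together with the periodicity of $\alpha'$ forces $\sigma^t(\mu)=\alpha'$ (this is the direction of propagation that is true). The paper instead handles this branch by a direct proof that $\U_Q$ is closed from above, via an explicit right neighbourhood $(x,z)$ disjoint from $\V_Q$.

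Two further points. First, your density argument (f), ``re-run Lemma \ref{l:density1} with $\alpha'$'', only approximates the points of $\U_{Q'}$; it misses $\U_Q\setminus\U_{Q'}$, the points whose unique expansions end with $\alpha'$, and these exist (e.g.\ $\pi_Q(0^k\alpha')$ in the strict subcase). The paper needs a separate construction for them: applying Lemma \ref{l:important1} (ii) to the unique expansion and replacing the tail after a suitable $1$-digit $a_{\ell_k}$ by $\mu$, then verifying the resulting points lie in $\V_Q\setminus\U_Q$. Second, your (e-1) is a plan rather than a proof: the block-by-block comparison does go through, but its actual content is the two word inequalities $\alpha_{i+1}\cdots\alpha_n\succ\mu_1\cdots\mu_{n-i}$ for $0\le i<n$ (from the minimality of $n$ and $\mu\preceq\sigma^{n-i}(\mu)$) and $\alpha_1\cdots\alpha_n\succ\mu_{k+1}\cdots\mu_{k+n}$ for all $k\ge0$ (from $\sigma^k(\mu)\prec\alpha$ and $\sigma^n(\alpha)\prec\mu$), which must be stated and proved before the comparison can be closed; also the terminal equality case of your recursion should read $\mu=\sigma^i(\alpha')$, not $\mu=\alpha'$, which is impossible since $\mu_1=0\ne1=\alpha_1$.
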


\begin{proof}
(i-a) First we show that $A_Q=\emptyset$. 
Assume on the contrary that there exists an $x\in A_Q$. 
Then $a(x)$ ends with $0\alpha$.
It follows from our assumption and from Lemma \ref{l:outV1} that $\alpha_k=1$ and $\mu\succ\sigma^k(\alpha)$ for some $k\ge 1$.
Therefore $a(x)$ ends with $1\sigma^k(\alpha)\prec 1\mu$, contradicting the definition of $x\in \V_Q$.

Since $A_Q=\emptyset$, $\abs{B_Q}=\abs{\V_Q\setminus\U_Q}=\aleph_0$ by Theorem \ref{T:topology1} (ii).
\medskip

(i-b) Let $x\in \V_Q\setminus\U_Q$, then  $x\in B_Q\setminus A_Q$ by (i-a).
Therefore $m(x)=a(x)$ by Lemma \ref{l:quasi-gr-la}, and $a(x)=b(x)$  by Proposition \ref{p111qqq} because $x\notin A_Q$.
We conclude by applying Lemma \ref{l:lB}.
\medskip

(i-c) Let $x\in \V_Q$, and assume on the contrary that $x$ has an expansion $(x_i)$ ending with $\alpha$.
Then by the condition \eqref{31} in  Lemma \ref{l:HuBarZou} there exists an integer $k\ge 1$ such that $x_k=1$ and $\sigma^k((x_i))\prec\mu$.
This implies that $(x_i)\ne m(x)$; in particular, $x\in \V_Q\setminus\U_Q$.

Using the last property, we infer from  (i-a) that $x\in B_Q$.
Therefore, applying Lemma \ref{l:lB} and using again the property $(x_i)\ne m(x)$ we conclude that $(x_i)$ ends with $01^\infty$.
This implies that $\alpha$ ends with $1^\infty$, and then $\alpha=1^\infty$ by Lemma \ref{l:monotonicity} (ii).
(Indeed, if $\alpha$ had a last zero digit $\alpha_n=0$, then we would have $\sigma^n(\alpha)=1^\infty\succ\alpha$, contradicting the lexicographic characterization of quasi-greedy expansions.)
But this is contradiction because for $\alpha=1^\infty$ the assumption $\sigma^j(\alpha)\prec\mu$ of the condition \eqref{31} in  Lemma \ref{l:HuBarZou} is not satisfied.
\medskip

(i-d) Since $\V_Q\setminus\U_Q=B_Q$, every  $x\in \V_Q\setminus\U_Q$ has a co-finite lazy expansion, and then satisfies the condition of Lemma \ref{l:neighbor} (ii). 
Therefore there exists a number $z<x$ such that $[z, x]\cap \U_Q=\emptyset$. 

The same conclusion holds for every $x\in J_Q\setminus\V_Q$, too, by applying  Lemma \ref{c:rightneighbor} instead of Lemma \ref{l:neighbor}.

The two properties imply that $\U_Q$ is closed from below.
\medskip

(i-e) Assume on the contrary that $\alpha_n=0$.
Then $n\ge 2$ because $\alpha_1=1$, and 
\begin{equation*}
\sigma^{n-1}(\alpha)=0\sigma^n(\alpha)\prec\sigma^n(\alpha)\prec\mu,
\end{equation*}
contradicting the minimality of $n$.
\medskip

(i-e-1) We claim that 
\begin{equation}\label{e:71}
\alpha_{i+1}\cdots\alpha_n\succ \mu_1\cdots\mu_{n-i}\qtq{for all} 0\le i\le n-1.
\end{equation}
The case $i=0$ is obvious because $\alpha_1=1>0=\mu_1$.
Next assume that \eqref{e:71} fails for some $1\le i\le n-1$.
Then, using our assumption $\sigma^n(\alpha)\prec \mu$ we obtain the relations
\begin{equation*}
\alpha_{i+1}\cdots\alpha_n\;\sigma^n(\alpha)\prec\mu_1\cdots\mu_{n-i}\;\mu\preceq \mu_1\cdots\mu_{n-i}\;\mu_{n-i+1}\cdots=\mu,
\end{equation*}
contradicting again the minimality of $n$.

Next we claim that
\begin{equation}\label{e:72}
\alpha_1\cdots\alpha_n\succ \mu_{k+1}\cdots\mu_{k+n} \qtq{for all} k\ge 0.
\end{equation} 
The case $k=0$ is obvious again.
Assume on the contrary that \eqref{e:71} fails for some $k\ge 1$.
Then we have
\begin{equation*}
\sigma^k(\mu)
=\mu_{k+1}\cdots\mu_{k+n}\sigma^{k+n}(\mu)
\succeq \mu_{k+1}\cdots\mu_{k+n}\mu
\succ\mu_{k+1}\cdots\mu_{k+n}\sigma^n(\alpha)
\succeq\alpha_1\cdots\alpha_n\sigma^n(\alpha)
=\alpha,
\end{equation*}
contradicting one of the the assumptions in case \eqref{31}. 

Now for each $i\ge 0$ we have obviously $\sigma^i(\alpha')\preceq\alpha'\prec \alpha $, and we infer from \eqref{e:71} and \eqref{e:72} that
\begin{equation*}
\sigma^i(\alpha')
=\alpha_{i+1}\cdots\alpha_n^-(\alpha_1\cdots\alpha_n^{-})^\infty\succeq
	\mu_1\cdots\mu_{n-i}\mu_{n-i+1}\cdots=\mu.
\end{equation*}

We will need in the proof of (i-e-2) the following property: for any sequence $(c_k)$,
\begin{equation}\label{63}
\text{if}\quad\alpha'\prec \sigma^i((c_k))\preceq\alpha\qtq{for some}c_i=0,\qtq{then}(c_k)\notin\V_Q'.
\end{equation}
Assume on the contrary that a sequence $(c_k)\in\V_Q'$ satisfies $\alpha'\prec \sigma^i((c_k))\preceq\alpha$ for some $c_i=0$.
Then there exists an integer $m\ge 0$ such that $\sigma^i((c_k))$ starts with $(\alpha_1\cdots\alpha_n^-)^m$, and the following word of length $n$ is $\succeq\alpha_1\cdots\alpha_n$.
On the other hand, since $(c_k)\in \V'_Q$, $\sigma^{i+mn}((c_k))\preceq\alpha$.
We infer from the last two observations that $\sigma^i((c_k))$  starts with $(\alpha_1\cdots\alpha_n^-)^m\alpha_1\cdots\alpha_n$.
Using the definition of $\V_Q'$ it follows that
\begin{equation*}
\alpha_1\cdots\alpha_n\mu\preceq\sigma^{i+mn}((c_k))\preceq\alpha,
\end{equation*}
contradicting our assumption $\sigma^n(\alpha)\prec\mu$.
\medskip
\medskip

(i-e-2) Set $\U'_Q:=\pi^{-1}_Q(\U_Q)$ and let $\V'_Q$ be the set of unique doubly infinite expansions of the elements of $\V_Q$; the are well defined by Proposition \ref{p17qqq}.
Furthermore, we introduce the sets
\begin{equation*}
\begin{split}
&\U'_{Q'}:=\set{(c_k)\in \set{0,1}^\infty: \sigma^i((c_k))\prec \alpha' \text{ whenever } c_i=0; \sigma^i((c_k))\succ \mu \text{ whenever } c_i=1},\\
&\V'_{Q'}:=\set{(c_k)\in \set{0,1}^\infty: \sigma^i((c_k))\preceq \alpha' \text{ whenever } c_i=0; \sigma^i((c_k))\succeq \mu \text{ whenever } c_i=1},\\
&\U_{Q'}:=\pi_Q(\U'_{Q'}),\\
&\V_{Q'}=\pi_Q(\V'_{Q'}).
\end{split}
\end{equation*}
Since $\alpha'\prec\alpha$, we infer from the definitions that  
$\U'_{Q'}\subseteq\U'_Q$ and $\V'_{Q'}\subseteq\V'_Q$. 
In fact, $\V'_Q=\V'_{Q'}$.
For otherwise there exists a sequence $(c_k)\in \V'_Q\setminus\V'_{Q'}$, and then the lexicographic conditions in \eqref{63} are satisfied for some $i$, contradicting our assumption that $(c_k)\notin\V'_{Q'}$.

We infer from the relations $\U'_{Q'}\subseteq\U'_Q$ and $\V'_{Q'}=\V'_Q$ that 
\begin{equation}\label{e63}
\U_{Q'}\subseteq\U_Q,\quad
\overline{\U}_{Q'}\subseteq\overline{\U}_Q\qtq{and}
\V_Q=\V_{Q'}.
\end{equation} 
\medskip

Now we distinguish three subcases.
\medskip

\emph{First subcase.} 
Assume that $\sigma^i(\mu)\prec \alpha'$ and $\mu\prec\sigma^j(\alpha')$ and  for all $i, j\ge 0$.
Then $(\mu,\alpha')$ satisfies Lemma \ref{l:HuBarZou} \eqref{23} or \eqref{28}, and applying Theorem \ref{T:topology1} (vi), we obtain that
\begin{equation*}
\U_{Q'}\subsetneqq\overline{\U}_{Q'}=\V_{Q'}.
\end{equation*} 
Combining this with \eqref{e63} we get
\begin{equation*}
\V_{Q}=\V_{Q'}=\overline{\U}_{Q'}\subseteq\overline{\U}_Q\subseteq\V_Q,
\end{equation*}
whence $\overline{\U}_Q=\V_Q$.
Since $\U_Q\ne\V_Q$ by Lemma \ref{l31}, we conclude that $\U_Q\subsetneqq\overline{\U}_Q=\V_Q$.
\medskip

\emph{Second subcase.} 
Assume that $\sigma^i(\alpha')=\mu$ for some $i\ge 1$.Then, since $\alpha'$ is periodic, $(\mu,\alpha')$ satisfies Lemma \ref{l:HuBarZou} \eqref{29}, and we infer from Theorem \ref{T:topology1} (iii) and Lemma \ref{l46} (iii) that $\U_{Q'}=\overline{\U}_{Q'}\subsetneqq \V_{Q'}$ and $\V_{Q'}\setminus \overline{\U}_{Q'}$ is a discrete set.
	
We claim that $\U_Q=\U_{Q'}$. 
Assume on the contrary that $\U_Q\ne\U_{Q'}$, then by \eqref{e63} there exists a point $x\in \U_Q\setminus\U_{Q'}$ and then $(c_k):=a(x,Q)$ satisfies for some $i\ge 1$ the relations
\begin{equation*}
c_i=0,\qtq{and}	(\alpha_1\cdots\alpha_n^-)^\infty\preceq \sigma^i((c_k)) \prec\alpha.
\end{equation*}
Since $\alpha'=(\alpha_1\cdots\alpha_n^-)^\infty$ is not a unique expansion in  double-base $Q$ by our assumption $\sigma^i(\alpha')=\mu$, we cannot have $(\alpha_1\cdots\alpha_n^-)^\infty=\sigma^i((c_k))$.
Therefore $x\notin\V_Q$ by \eqref{63}, contradicting our assumption $x\in\U_Q$.
We have thus $\U_Q=\U_{Q'}$, and hence also $\overline\U_Q=\overline\U_{Q'}$.
Since $\V_Q=\V_{Q'}$ by \eqref{e63}, we conclude from the relations $\U_{Q'}=\overline{\U}_{Q'}\subsetneqq \V_{Q'}$ that $\U_{Q}=\overline{\U}_{Q}\subsetneqq \V_{Q}$  and $\V_{Q}\setminus \overline{\U}_{Q}$ is a discrete set.
\medskip

\emph{Third subcase.} 
If $\sigma^t(\mu)=\alpha'$ for some $t\ge 1$, then $\U_Q$ is closed.
Indeed, we already know from (i-d) that $\U_Q$ is closed from below.
It remains to show that $\U_Q$ is closed from above. 

Assume on the contrary that a decreasing sequence $(x^k)$ in $\U_Q$ converges to some point $x\notin\U_Q$.Then $x\in\V_Q\setminus\U_Q$ because $\overline\U_Q\subseteq\V_Q$, and then  $x\in B_Q\setminus A_Q$ by (i-a).
By Proposition \ref{p111qqq} the last property implies that $m(Q, x)=a(Q, x)=b(Q, x)$  ends with $1\mu$.

Since $\sigma^t(\mu)=\alpha'$ for some $t\ge 1$, $m(x,Q)=a(x,Q)=b(x,Q)=a_1\cdots a_s\mu_1\cdots\mu_t\alpha'$ for some $s\ge 1$.
By Lemmas \ref{l41} and \ref{l:trunca} there exists a $z>x$, close enough to $x$, such that
\begin{equation*}
b(z,Q)=a_1\cdots a_s\mu_1\cdots\mu_t\alpha_1\cdots\alpha_n0^{\infty}.
\end{equation*}
Then for every $y\in (x,z)$, we have 
\begin{equation*}
b(y,Q)=(b_i)=a_1\cdots a_s\mu_1\cdots\mu_t(\alpha_1\cdots\alpha_n^-)^m \alpha_1\cdots\alpha_nc_1c_2\cdots
\end{equation*}
with some positive integer $m$ and $c_1c_2\cdots\prec\sigma^n(\alpha)$ by Lemma \ref{l:monotonicity}.
Since $\sigma^n(\alpha)\prec\mu$ by our assumption \eqref{31}, hence $b_{s+t+mn+n}=\alpha_n=1$ is followed by $c_1c_2\cdots\prec\mu$, so that $y\notin\V_Q$ by Lemma \ref{l:monotonicity}.
Therefore $(x, z)\cap \V_Q=\emptyset$, contradicting the existence of the sequence $(x^k)$ at the beginning of the proof.

We have shown that $\U_Q$ is closed.
Since $\U_Q\subsetneqq\V_Q$ by Lemma \ref{l31}, we conclude that $\U_Q=\overline{\U}_Q\subsetneqq\V_Q$.

We have also shown that every $x\in \V_Q\setminus {\U}_Q$ is isolated from the right in $V_Q$.
Since $\V_Q\setminus \overline{\U}_Q=B_Q$ by (i-a), $x\in \V_Q\setminus {\U}_Q$ is also isolated from the left in $V_Q$ by Lemma \ref{l:isolated}.
Therefore $\V_Q\setminus \overline{\U}_Q$ is a discrete set.
\medskip

(f) It follows from (e-i) and the condition Lemma \ref{l:HuBarZou} \eqref{31} that  $(\mu, \alpha')$ satisfies  one of the conditions Lemma \ref{l:HuBarZou} \eqref{21}--\eqref{29}.
Therefore $\V_{Q'}\setminus \U_{Q'}$ is dense in $\V_{Q'}$ by Theorem \ref{T:topology1} (iii).
Furthermore, $\V_{Q}=\V_{Q'}$ by \eqref{e63}.
This implies the density of $\V_Q\setminus \U_Q$ in $\V_Q$
if $\U_{Q}=\U_{Q'}$.

Otherwise we have $\U_{Q'}\subsetneqq\U_{Q}$ by \eqref{e63}, and it remains to find for each fixed $x\in\U_{Q}$ a sequence of points $y^k\in\V_Q\setminus \U_Q$ converging to $x$.

If $x\in\U_{Q}\setminus\U_{Q'}$, then we may apply Lemma \ref{l:important1} (ii) to $a(x,Q)=(a_i)$: there exists a sequence $1<\ell_1<\ell_2<\cdots$ of  integers such that for each $i\geq 1$,
\begin{equation}\label{e64uj}
a_{\ell_i}=1, \qtq{and} a_{j+1}\cdots a_{\ell_i}\succ\mu_1\cdots \mu_{\ell_{i}-j} \qtq{whenever} 1\leq j<\ell_i \qtq{and} a_j=1.
\end{equation}
Since 
\begin{equation*}
y^k=(x_i):=\pi_Q( a_{1}\cdots a_{\ell_k}\mu )\rightarrow x \text { as } \ell_k\rightarrow \infty,
\end{equation*}
it remains to show that $y^k\in \V_{Q}\setminus\U_Q$ for every $k$.

Since $x_{\ell_k}=a_{\ell_k}=1$ is followed by $\mu$, $y^k\notin \U_Q$.
If $x_j=1$ for some $j\ge 1$, then $\sigma^j((x_i))\succ \mu$ by \eqref{e64uj} if $j<\ell_k$, and $\sigma^j((x_i))=\sigma^{j-\ell_k}(\mu)\succeq \mu$ if $j\ge\ell_k$.

It remains to show that $\sigma^j((x_i))\preceq \alpha$ whenever $x_j=0$.
For this first we infer from \eqref{63} that
\begin{equation}\label{e63-2}
\U'_{Q}=\set{(c_k)\in \set{0,1}^\infty: \sigma^i((c_k))\preceq \alpha' \text{ whenever } c_i=0; \sigma^i((c_k))\succ \mu \text{ whenever } c_i=1}.
\end{equation}

Now let $x_j=0$ for some $j\ge 1$; we have to show that $\sigma^j(y^k)\preceq\alpha$.

If $j<\ell_k-n$, then using \eqref{e63-2} we get
\begin{equation*}
x_{j+1}\cdots x_{j+n}\le \alpha_1\cdots\alpha_n^-\prec\alpha_1\cdots\alpha_n,
\end{equation*}
and therefore $\sigma^j(y^k)\prec\alpha$.

If $\ell_k-n\le j<\ell_k$, then using the relation $\sigma^j((a_i))\prec\alpha$ we obtain that
\begin{equation*}
\sigma^j(y^k)\preceq\alpha_1\cdots\alpha_{\ell_k-j}\mu
\preceq \alpha 
\end{equation*}
because $\mu\preceq\sigma^{\ell_k-j}(\alpha)$ by the minimality of $n$.

Finally, if $j\ge\ell_k$, then
\begin{equation*}
\sigma^j(y^k)=\sigma^{j-\ell_k}(\mu)\prec\alpha
\end{equation*}
by the condition \eqref{31}.

If $x\in \U_{Q'}$, then by Lemma \ref{l:density1} there exists a sequence $y^k\rightarrow x$ with $y^k\in B_{Q'}\subseteq\V_{Q'}\setminus\U_{Q'}$ for every $k$.
We complete the proof by observing that $y^k\in\V_Q\setminus\U_Q$. 
Since $\V_Q=\V_{Q'}$, this follows by observing that $a(y^k,Q)=m(y^k,Q)$ ends with $1\mu$, and  the unique expansion of an element of $\U_Q$ cannot end with $1\mu$ by the lexicographic characterization of $\U_Q$.
\medskip

(ii) It follows  from (i) by symmetry.
\medskip

(iii-a) By the same argument as the proof of (i-a) and (ii-a), one may show that $A_Q=\emptyset$ and  $B_Q=\emptyset$. 
Therefore $\U_Q=\V_Q$, and hence $\U_Q=\overline{\U}_Q=\V_Q$ by the general relations $\U_Q\subseteqq\overline{\U}_Q\subseteqq\V_Q$.
\medskip

(iii-b) By (iii-a) every $x\in \V_Q$ has a unique expansion $(x_i)$.
If $(x_i)$ ends with $\mu$, then by our assumption \eqref{32} there exists a $j\ge 1$ such that $x_j=0$ and $\sigma^j((x_i))\succ\alpha$.
Similarly, if $(x_i)$ ends with $\alpha$, then by \eqref{32} there exists a $j\ge 1$ such that $x_j=1$ and $\sigma^j((x_i))\prec\mu$.
Both inequalities contradict the definition of $x\in \U_Q$
\end{proof}

\begin{proof}[Proof of Theorem \ref{T:expansion} (iv)]
This follow from Lemma \ref{l:outV2}.
\end{proof}	

\begin{proof}[Proof of Theorem \ref{T:topology1} (ii), (vii) and (viii)]
The required results follow from Lemmas \ref{l53} and \ref{l:outV2}.
\end{proof}	

\section{Examples}\label{s7}

In this section the conditions \eqref{21}--\eqref{32} refer to the cases of Lemma \ref{l:HuBarZou}, and the items in the examples are also labeled with these conditions.

\begin{examples}\label{e18}\label{l71}
All cases of Lemma \ref{l:HuBarZou} may occur.
Indeed, the following pairs of sequences $(\mu,\alpha)$ satisfy the corresponding conditions \eqref{21}--\eqref{32}, respectively, and each pair $(\mu,\alpha)$ is equal to $(\mu(Q),\alpha(Q))$ for some $Q\in \A$ by \cite[Theorem 1]{HuBarZou2024}.
\begin{enumerate}[\upshape (i)]
\item $\mu=0(01)^\infty$ and $\alpha=1(110)^\infty$,%1
\item $\mu=0(01)^\infty$ and $\alpha=110(01)^\infty$,%2
\item $\mu=001(110)^\infty$ and $\alpha=1(110)^\infty$, %5	
\item $\mu=0(01)^\infty$ and $\alpha=(110)^\infty$, %3
\item $\mu=(01)^\infty$	 and $\alpha=1(110)^\infty$, %6
\item $\mu=0(01)^\infty$ and $\alpha=(10)^\infty$, 	%4
\item $\mu=(01)^\infty$ and $\alpha=11(01)^\infty$, %7
\item $\mu=(01)^\infty$	and $\alpha=(110)^\infty$ (outside $\C$)
\item $\mu=(00011)^\infty$ and $\alpha=(11000)^\infty$, %9
\item $\mu=00(110)^\infty$ and $\alpha=(10)^\infty$,%10
\item $\mu=0(01)^\infty$ and $\alpha=11(001)^\infty$,
\item $\mu=00(110)^\infty$ and $\alpha=11(001)^\infty$.
\end{enumerate}
\end{examples}

\begin{examples}\label{e72}
We recall from Remark \ref{r19} that the sets of double-bases satisfying one of the conditions  \eqref{24}, \eqref{27}, \eqref{28} and \eqref{29} are countable.
Now we show that each of these sets is countably infinite, while the other eight sets have $2^{\aleph_0}$ elements.
By symmetry it is sufficient to consider the cases \eqref{21}, \eqref{22}, \eqref{23}, \eqref{24}, \eqref{28}, \eqref{29}, \eqref{30} and \eqref{32}.

\begin{enumerate}
\item [(i)] is satisfied for all sequences
\begin{equation*}
\mu\in 0\set{01,011}^{\infty}\qtq{and}\alpha\in 111\set{01,011}^{\infty}.
\end{equation*}

\item [(ii)] is satisfied for all sequences
\begin{equation*}
\mu\in 0\set{01,011}^{\infty}\qtq{and}\alpha=111\mu.
\end{equation*}

\item [(iv)] is satisfied for all sequences
\begin{equation*}
\mu\in 0\set{01,011}^{\infty}\qtq{and}\alpha=(1110)^{\infty}.
\end{equation*}

\item [(vi)] is satisfied for all sequences
\begin{equation*}
\mu=00(1^k0)^{\infty}\qtq{and}\alpha=(1^k0)^{\infty},\quad k\in\N.
\end{equation*}

\item [(viii)] is satisfied for all sequences
\begin{equation*}
\mu=(001)^{\infty}\qtq{and}\alpha=(1^k0)^{\infty},\quad k\in\N.
\end{equation*}

\item [(ix)] is satisfied for all sequences
\begin{equation*}
\mu=(01^k)^{\infty}\qtq{and}\alpha=(1^k0)^{\infty},\quad k\in\N.
\end{equation*}

\item [(x)] is satisfied for all sequences
\begin{equation*}
\mu\in 00\set{11110,111110}^{\infty}\qtq{and}\alpha\in 111(01)^{\infty}.
\end{equation*}

\item [(xii)] is satisfied for all sequences
\begin{equation*}
\mu\in 00\set{11110,111110}^{\infty}\qtq{and}\alpha= 111(0001)^{\infty}.
\end{equation*}
\end{enumerate}
\end{examples}

\begin{examples}\label{ex:116}\label{l73}\
We illustrate Theorem \ref{T:topology1} and Table \ref{t:two-base-A-B}.
Since the cases \eqref{25}, \eqref{26}, \eqref{24}, \eqref{30} of Lemma \ref{l:HuBarZou} are the reflections of \eqref{22}, \eqref{23}, \eqref{27}, \eqref{31}, respectively, by symmetry we consider only the cases \eqref{21}, \eqref{22}, \eqref{23}, \eqref{27}, \eqref{28}, \eqref{29}, \eqref{31}, \eqref{32}.

We recall that $\U_Q\subsetneqq\V_Q$ in cases \eqref{21}--\eqref{31} by Lemma \ref{l31}.

Since most of the following properties readily follow from the definitions and the lexicographic descriptions, the proofs are omitted.

\begin{enumerate}[\upshape (i)]
\item [(i),] (iv), (viii) Set
\begin{equation*}
(\mu(Q^i),\alpha(Q^i)):=
\begin{cases}
\bj{0(001)^\infty,1(1110)^\infty}&\text{for }i=1,\\
\bj{0(001)^\infty,(1110)^\infty}&\text{for }i=4,\\
\bj{(001)^\infty,(1110)^\infty}&\text{for }i=8,
\end{cases}
\end{equation*}
then $Q^1, Q^4, Q^8$ satisfy (i), (iv) and (viii), respectively. 
Note that $\U_{Q^i}\subsetneqq\V_{Q^i}$ because
\begin{equation*}
1/q_1\pi_{Q^i}\bj{10^{\infty}}\in A_{Q^i}\qtq{and}1/(q_0(q_1-1))=\pi_{Q^i}\bj{01^{\infty}}\in B_{Q^i}
\end{equation*}
by an easy lexicographic verification.

Write $\mu(Q^i)=(\mu^i_j)$ and $\alpha(Q^i)=(\alpha^i_j)$ for brevity.
If $x\in A_{Q^{i}}$, then $a(x)=m(x)=a_1\cdots a_n\alpha(Q^i)$ with $a_n=0$. 
A direct verification shows that
\begin{equation*}
\pi_{Q^i}(a_1\cdots a_n\alpha^i_1\cdots\alpha^i_k (10)^\infty)\rightarrow x \text{ as } k\rightarrow\infty,
\end{equation*}
and
\begin{equation*}
\pi_{Q^i}(a_1\cdots a_n\alpha^i_1\cdots\alpha^i_k (10)^\infty)\in \U_Q^i
\end{equation*}
for every $k$.

Similarly, if $x\in B_Q^i$  then $a(x)=m(x)=a_1\cdots a_n\mu(Q^i)$  with $a_n=1$,
\begin{equation*}
\pi_{Q^i}(a_1\cdots a_n\mu^i_1\cdots\mu^i_k (10)^\infty)\rightarrow x\text{ as } k\rightarrow\infty,
\end{equation*}
and
\begin{equation*}
\pi_{Q^i}(a_1\cdots a_n\mu^i_1\cdots\mu^i_k (10)^\infty)\in \U_{Q^i}
\end{equation*}
for every $k$.
This shows that $\V_{Q^i}\subseteqq{\overline\U}_{Q^i}$.

Since $\U_{Q^i}\subsetneqq\V_{Q^i}$  and $\V_{Q^i}$ is closed, we conclude that $\U_{Q^i}\subsetneqq{\overline\U}_{Q^i}=\V_{Q^i}$. 

\item[(ii-a)] \label{ex:6}
$(\mu,\alpha):=\bj{0(01)^\infty,110(01)^\infty}$  satisfies \eqref{22}.\footnote{This is Example \ref{e18} (ii).} 
The unique expansions are
\begin{equation*}
0^{\infty},\quad 1^{\infty},\quad 0(01)^\infty,\qtq{furthermore}0^k(10)^\infty\qtq{and}1^k(01)^\infty\qtq{for}k\ge 0,
\end{equation*}
whence $\U_Q$ is closed.
Furthermore, $\V_Q\setminus\overline\U_Q$ is not discrete because
the points
\begin{equation*}
\pi_Q\bj{10(01)^\infty} \qtq{and}\pi_Q\bj{10(01)^k0110(01)^\infty}\qtq{belong to}\V_Q\setminus\overline\U_Q,
\end{equation*} 
and
\begin{equation*}
\pi_Q\bj{10(01)^k0110(01)^\infty}\rightarrow\pi_Q\bj{10(01)^\infty}.
\end{equation*}

\item[(ii-b)] \label{ex:5}
$(\mu,\alpha):=\bj{0(01)^\infty,1110(01)^\infty}$ satisfies \eqref{22}.
The expansions $110(01)^k(10)^\infty$ are unique, and they converge to $110(01)^\infty$ as $k\to\infty$, but the limit expansion is not unique.
Hence $\U_Q$ is not closed.

\item[(vii-a)]  \label{ex:4}
$(\mu,\alpha):=\bj{(01)^\infty,11(01)^\infty}$ satisfies \eqref{27}.\footnote{This is Example \ref{e18} (vii).}
Now $\U_Q=\set{0, 1/(q_1-1)}$, so that $\U_Q$ is closed. 

Furthermore, $\V_Q\setminus\overline\U_Q$ is not discrete because
the points
\begin{equation*}
\pi_Q\bj{(01)^\infty} \qtq{and}\pi_Q\bj{(01)^k011(01)^\infty}\qtq{belong to}\V_Q\setminus\overline\U_Q,
\end{equation*} 
and
\begin{equation*}
\pi_Q\bj{(01)^k011(01)^\infty}\rightarrow\pi_Q\bj{(01)^\infty}.
\end{equation*}

\item[(vii-b)]  \label{ex:4-2}
$(\mu,\alpha):=\bj{(001)^\infty,111(01)^\infty}$ satisfies \eqref{27}.
The expansions $11(001)^k(10)^\infty$ are unique, they converge to $11(001)^\infty$, but $11(001)^\infty$ is not unique.
Hence $\U_Q$ is not closed.

\item[(ix)] 
$(\mu,\alpha):=\bj{(01)^\infty,(10)^\infty}$ satisfies \eqref{29}.
We have $\U_Q=\set{0, 1/(q_1-1)}$ and 
\begin{equation*}
\V_Q\setminus\U_Q
=\set{\pi_Q(0^k(10)^\infty), \pi_Q(1^k(01)^\infty)\ :\ k\ge 0},
\end{equation*}
$\U_Q=\overline\U_Q\subsetneqq\V_Q$.

\item[(xi-a)] \label{ex:2} 
$(\mu,\alpha):=\bj{(01)^\infty,11(001)^\infty}$ satisfies \eqref{31}.\footnote{This is slightly different from Example \ref{e18} (xi).}
Like in the preceding example, we have $\U_Q=\set{0, 1/(q_1-1)}$ and 
\begin{equation*}
\V_Q\setminus\U_Q
=\set{\pi_Q(0^k(10)^\infty), \pi_Q(1^k(01)^\infty)\ :\ k\ge 0},
\end{equation*}
whence $\U_Q=\overline\U_Q\subsetneqq\V_Q$, and  
$\V_Q\setminus\overline{\U}_Q$ is discrete.

\item[(xi-b)]  \label{ex:3} 
$(\mu,\alpha):=\bj{(01)^\infty,111(001)^\infty}$ satisfies \eqref{31}.\footnote{This is different from Example \ref{e18} (xi).}
Now $\U_Q$ is not closed, because the expansions $(01)^k(011)^\infty$ are unique, they converge to $(01)^\infty$, but $(01)^\infty$ is not unique.

\item[(xii)] \label{ex:1}  
$(\mu,\alpha):=\bj{00(110)^\infty,11(001)^\infty}$ satisfies \eqref{32}.\footnote{This is Example \ref{e18} (xii).} 
We have
\begin{equation*}
\U_Q=\overline\U_Q=\V_Q
=\set{0, \frac{1}{q_1-1}}\cup\set{\pi_Q(0^k(10)^\infty), \pi_Q(1^k(01)^\infty)\ :\ k\ge 0}
\end{equation*}
by a direct verification.
\end{enumerate}

\end{examples}
	
\begin{examples}\label{e37}\label{l74}\ 
We illustrate Lemma \ref{l:condition}.

\begin{enumerate}[\upshape (i)]
\item[(ii)]
$(\mu,\alpha):=\bj{0(01)^\infty,110(01)^\infty}$ satisfies \eqref{22}.\footnote{This is Example \ref{e18} (ii).}
If $x\in A_Q$, then $a(x)$ ends with $0\alpha(Q)=0110(01)^\infty$.
Since $m(x)=a(x)$, hence $m(x)$ ends with $10(01)^\infty=1\mu(Q)$, so that $x\in B_Q$.

We have thus $A_Q\subseteqq B_Q$.
The inclusion is strict because $\pi_Q(0(01)^\infty)\in B_Q\setminus A_Q$.

\item[(ix)] 
$(\mu,\alpha):=\bj{(00011)^\infty,(11000)^\infty}$ satisfies \eqref{29}.\footnote{This is Example \ref{e18} (ix).}
If $x\in A_Q$, then
\begin{equation*}
a(x)=m(x)=a_1\cdots a_j(11100)^\infty=a_1\cdots a_j111(00111)^\infty\in B_Q
\end{equation*}
for some $j\ge 1$ with $a_j=0$, whence $A_Q\subseteqq B_Q$. 
Similarly,  $B_Q\subseteqq A_Q$.
\end{enumerate}
\end{examples}

\end{document}